\numberwithin{equation}{section}
 \newtheorem{theorem}{Theorem}[section]
 \newtheorem{corollary}[theorem]{Corollary}
 \newtheorem{lemma}[theorem]{Lemma}
\theoremstyle{definition}
 \newtheorem{definition}[theorem]{Definition}
\theoremstyle{remark}
\begin{document}
\title[Geodesic X-ray transform]{Geodesic X-ray transform and streaking artifacts \\ on simple surfaces or on spaces of constant curvature}
\author[H.~Chihara]{Hiroyuki Chihara}
\address{College of Education, University of the Ryukyus, Nishihara, Okinawa 903-0213, Japan}
\email{hc@trevally.net}
\thanks{Supported by the JSPS Grant-in-Aid for Scientific Research \#23K03186.}
\subjclass[2020]{Primary 58J40, Secondary 53C65}
\keywords{geodesic X-ray transform, Fourier integral operators, paired Lagrangian distributions, streaking artifacts}
\begin{abstract}
The X-ray transform on the plane or on the three-dimensional Euclidean space can be considered as the measurements of CT scanners for normal human tissue. If the human body contains metal regions such as dental implants, stents in blood vessels, metal bones, etc., the beam hardening effect for the energy level of the X-ray causes streaking artifacts in its CT image. More precisely, if there are two strictly convex metal regions contained in the cross-section of normal human tissue, then streaking artifacts occur along the common tangent lines of the two regions.

In this paper we study the geodesic X-ray transform and streaking artifacts on simple Riemannian manifolds. We show that the streaking artifacts result from the propagation of conormal singularities on the boundary of metal regions along the common tangent geodesics under the strong and seemingly strange assumption that the manifolds are two dimensional or spaces of constant curvature. This condition ensures that every Jacobi field takes the form of the product of a scalar function and parallel transport along the geodesic. Our results clarify the geometric meaning of the theory, which was imperceptible in the known results on the Euclidean space. 
\end{abstract}
\maketitle
\section{Introduction}
\label{section:introduction}
The present paper studies the geodesic X-ray transform and a nonlinear phenomenon called the streaking artifact on some compact Riemannian manifolds with boundaries. We begin with the X-ray transform on the Euclidean plane $\mathbb{R}^2$ to state the background and the motivation of the present paper. Every planar line $\ell$ is parameterized by $(\theta,t)\in[0,\pi)\times\mathbb{R}$ as %
$$
\ell
=
\{(-s\sin\theta+t\cos\theta,s\cos\theta+t\sin\theta) : s\in\mathbb{R}\}, 
$$   
and the X-ray transform of a function $f(x,y)$ on $\mathbb{R}^2$ is defined by 
$$
\mathcal{R}f(\theta,t)
:=
\int_\ell f
=
\int_{-\infty}^\infty
f(-s\sin\theta+t\cos\theta,s\cos\theta+t\sin\theta)
ds.
$$
This is also defined for $\theta\in[\pi,2\pi)$, and 
$\mathcal{R}f(\theta,t)=\mathcal{R}f(\theta+\pi,-t)$,  
$(\theta,t)\in[0,\pi)\times\mathbb{R}$ holds. 
The adjoint $\mathcal{R}^T$ of $\mathcal{R}$ is explicitly given by 
$$
\mathcal{R}^Tg(x,y)
=
\frac{1}{2\pi}
\int_0^\pi
g(\theta,x\cos\theta+y\sin\theta)
d\theta,
\quad
(x,y)\in\mathbb{R}^2
$$
for a function $g(\theta,t)$ of $(\theta,t)\in[0,\pi)\times\mathbb{R}$. If $f(x,y)$ describes the density distribution of the attenuation coefficient on a cross-section of a human body of normal tissue, then the set $\{\mathcal{R}f(\theta,t) : (\theta,t)\in[0,\pi)\times\mathbb{R}\}$ is the complete data of the measurements of a CT scanner of old generation. Furthermore it is well-known that $f(x,y)$ can be reconstructed theoretically from the measurements by the inversion formula 
$f=(-\partial_x^2-\partial_y^2)^{1/2}\circ\mathcal{R}^T\circ\mathcal{R}f$. 
See \cite{Helgason} for instance. 
Here we assumed from a point of view of physics that the X-ray beams have no width, are monochromatic, and traverse the objects along straight lines.  
\par
Actually, however, there are many factors affecting CT images: 
beam width, 
partial volume effect, 
beam hardening effect, 
noise in measurements, 
numerical errors, 
and etc. 
They cause the artifacts in CT image. 
See Epstein's textbook \cite{Epstein} on the mathematical theory of medical imaging for the detail. 
The X-ray beams consist of photons which actually have a wide range of energies $E\geqq0$. This is described by a spectral function, which is a probability density function $\rho(E)$ of $E\in[0,\infty)$, and pick up the strong part of the measurement. This is called the beam hardening in medical imaging. In this case the measurements of CT scanners are corrected from 
$\mathcal{R}f_E$ to 
$$
P=-\operatorname{log}\left\{\int_0^\infty\rho(E)\exp(-\mathcal{R}f_E)dE\right\}. 
$$
If $f_E$ is independent of $E\geqq0$, say, $f_E=f_{E_0}$ with some fixed $E_0>0$, 
then $P=\mathcal{R}f_{E_0}$. See \cite[Section~3.3]{Epstein} again for the detail of the above. 
It is known that the beam hardening effect causes streaking artifacts in the CT images. 
\par
The mathematical analysis of the streaking artifacts was originated 
by Park, Choi and Seo in their pioneering work \cite{ParkChoiSeo}. 
They studied the case that there is a metal region $D$, 
which is a disjoint union of finite number of strictly convex bounded domains 
$D_1,\dotsc,D_J$ with smooth boundaries. 
They assumed that the spectral function is a characteristic function of an interval. 
Let $E_0>0$ be the central energy level of the beam, 
and let $\varepsilon$ be a small positive constant smaller than $E_0$. 
Their spectral function $\rho(E)$ and the distribution $f_E(x,y)$ of attenuation coefficients 
are supposed to be of the forms   
\begin{align*}
  \rho(E)
& =
  \frac{1}{2\varepsilon}1_{[E_0-\varepsilon,E_0+\varepsilon]}(E),
\\
  f_E(x,y)
& =
  f_{E_0}(x,y)
  +
  \alpha(E-E_0)1_D(x,y),
\end{align*} 
where $1_A$ is the characteristic function of a set $A$, 
$f_{E_0}$ is the distribution of attenuation coefficients for the normal human tissue, 
and $\alpha$ is a positive constant. In this case the measurements become 
$$
P
=
\mathcal{R}f_{E_0}
-
\log\left\{\frac{\sinh\bigl(\alpha\varepsilon\mathcal{R}1_D\bigr)}{\alpha\varepsilon\mathcal{R}1_D}\right\}
=
\mathcal{R}f_{E_0}
+
\sum_{l=1}^\infty
B_l\bigl(\alpha\varepsilon\mathcal{R}1_D\bigr)^{2l}, 
$$
where $\{B_l\}_{l=1}^\infty$ is a sequence of real numbers. 
Then $\bigl(\mathcal{R}1_D\bigr)^2$ and its filtered back-projection 
$(-\partial_x^2-\partial_y^2)^{1/2}\circ\mathcal{R}^T\bigl[\bigl(\mathcal{R}1_D\bigr)^2\bigr]$ 
are considered to be the principal parts of the beam hardening effect and of the streaking artifacts respectively. 
We here show figures of an original grayscale image of a simple model of two dental implants in the cross-section of oral cavity, 
its X-ray transform, the standard filtered back-projection, and the filtered back-projection of $\bigl(\mathcal{R}1_D\bigr)^2$ using the Julia Programming Language. 
\vspace{11pt}
\\
\begin{center}
\includegraphics[width=90mm]{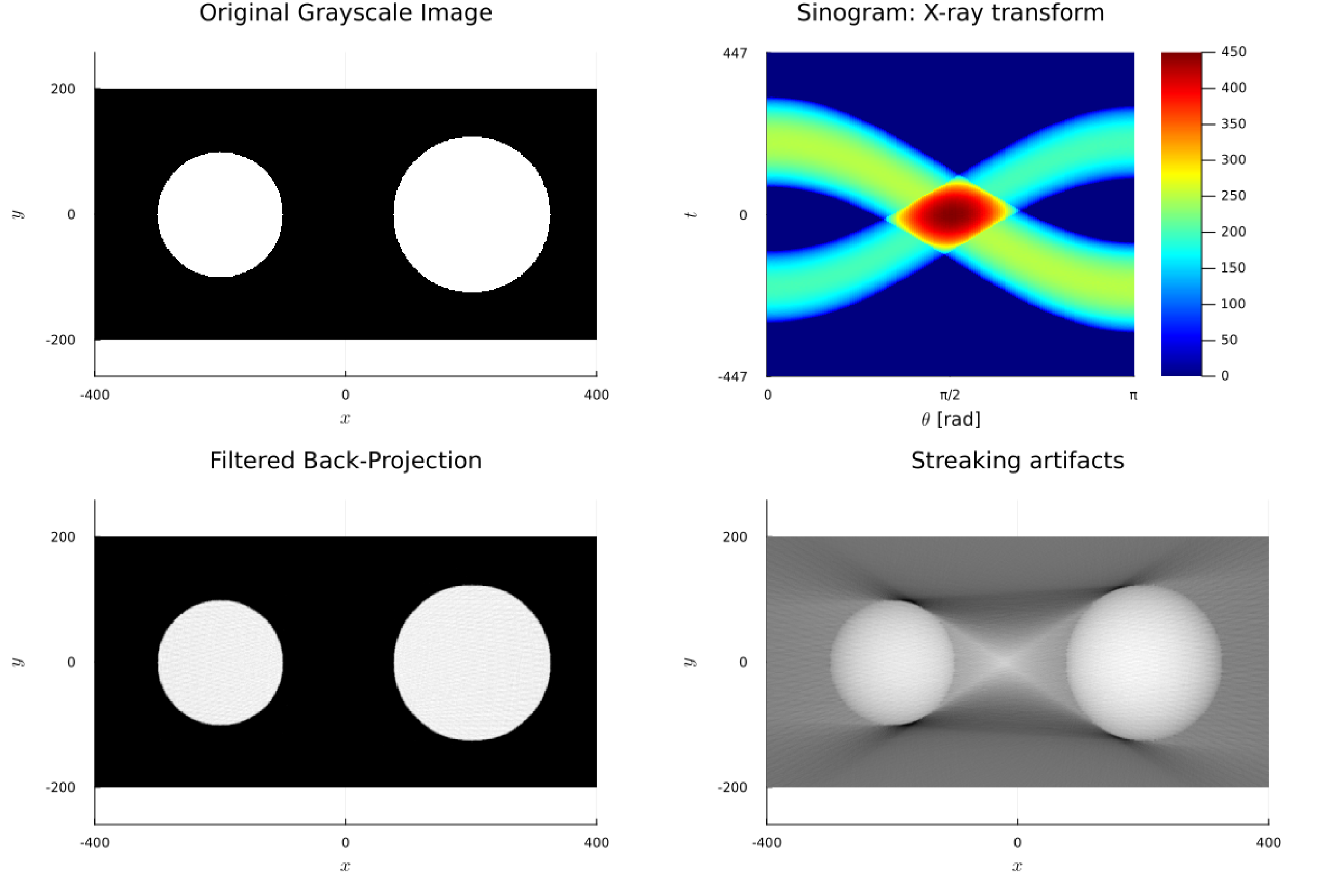}
\\
Figure~1.\ 
The upper left is an original grayscale image of a simple model of two dental implants in the cross-section of oral cavity, 
the upper right is its X-ray transform, 
the lower left is the standard filtered back-projection, 
and the lower right is the filtered back-projection of $\bigl(\mathcal{R}1_D\bigr)^2$.  
\end{center}
\vspace{11pt}
We can see streaking artifacts along four straight lines which are tangent to both implants. Set 
\begin{align*}
  f_\text{CT}
& :=
  (-\partial_x^2-\partial_y^2)^{1/2}\circ\mathcal{R}^TP,
\\
  f_\text{MA}
& :=
  (-\partial_x^2-\partial_y^2)^{1/2}\circ\mathcal{R}^T\bigl[P-\mathcal{R}f_{E_0}\bigr]
  =
  f_\text{CT}-f_{E_0}. 
\end{align*}
In \cite{ParkChoiSeo} Park, Choi and Seo studied the wave front sets of $P-\mathcal{R}f_{E_0}$ 
and $f_\text{MA}$ using the canonical relations of $\mathcal{R}$ and $\mathcal{R}^T$, 
and proved that the streaking artifacts are the singular support of $f_\text{MA}$. 
More precisely, the streaking artifacts are the propagation of conormal singularities 
on the boundary of $D$ along the common tangent lines of $\partial{D_j}$ and $\partial{D_k}$ for $j{\ne}k$.  
\par
In \cite{PalaciosUhlmannWang} Palacios, Uhlmann and Wang developed the arguments of \cite{ParkChoiSeo} 
making use of the modern microlocal analysis, and proved that 
the streaking artifacts are the sum of conormal distributions of the same order, whose singular supports 
are the common tangent lines of $\partial{D_j}$ and $\partial{D_k}$ for $j{\ne}k$. 
\par
In \cite{WangZou} Wang and Zou studied the case that $D$ is a non-convex bounded domain. In this case the relations of lines which are tangent to the non-convex part of $\partial{D}$ are complicated.  
\par
The above mentioned results were studied on the plane $\mathbb{R}^2$. 
More recently in \cite{Chihara} the author developed the arguments in \cite{PalaciosUhlmannWang}, and studied this problem for the $d$-plane transform on the $n$-dimensional Euclidean space $\mathbb{R}^n$, where $n=2,3,4,\dotsc$ and $d=1,\dotsc,n-1$. 
The $d$-plane transform of a function $f(x)$ on $\mathbb{R}^n$ is defined by 
$$
\mathcal{R}_df(\sigma,x^{\prime\prime})
:=
\int_\sigma f(x^\prime+x^{\prime\prime}) dx^\prime,
$$
where $\sigma \in G_{d,n}$, 
$G_{d,n}$ is the Grassmannian which is the set of all $d$-dimensional vector subspaces of $\mathbb{R}^n$, 
$x^\prime\in\sigma$, $x^{\prime\prime}\in\sigma^\perp$, 
and $\sigma^\perp$ is the orthogonal complement of $\sigma$ in $\mathbb{R}^n$.  
We remark that if $n\geqq3$, then there are two types of common tangent lines of $\partial{D}_j$ and $\partial{D}_k$ for $j{\ne}k$.
\begin{itemize}
\item 
Case~B1:  
The common tangent line is contained in a common tangent hyperplane. 
In other words, the directions of the conormal singularities 
at the tangent points are same. 
\item 
Case~B2: 
The tangent hyperplanes at the tangent points are different. 
In other words, the directions of the conormal singularities 
at the tangent points are different.  
\end{itemize}
We show figures illustrating Cases~B1 and B2 below. 
\vspace{2pt}
\begin{center}
\includegraphics[width=150mm]{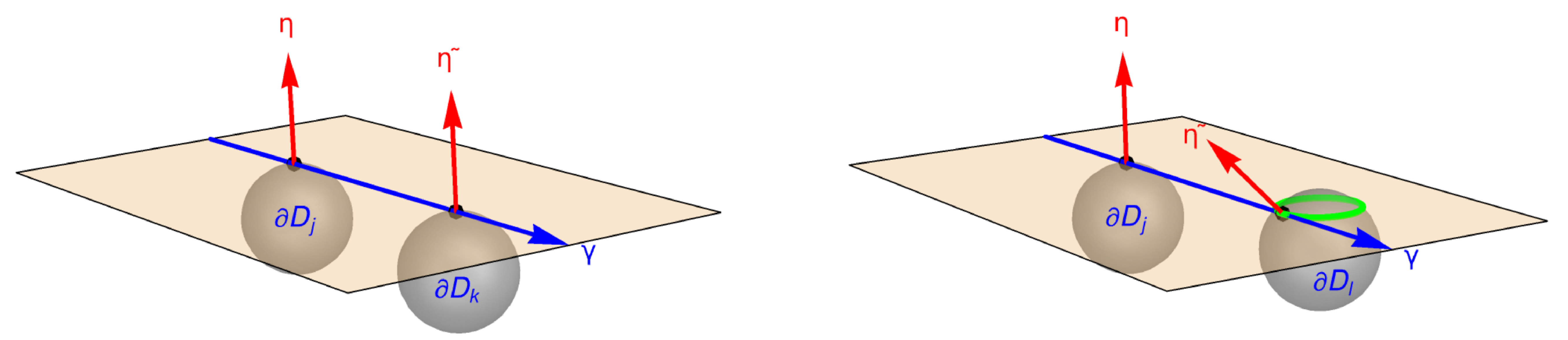}
\\
Figure~2. 
The left is Case~B1 where $\partial{D_j}$ and $\partial{D_k}$ have a common tangent hyperplane, 
and the right is Case~B2 where the tangent hyperplane of $\partial{D_j}$ cuts through the interior of $D_l$. 
\end{center}
\vspace{11pt}
The author generalized the results of \cite{PalaciosUhlmannWang} 
for the $d$-plane transform on $\mathbb{R}^n$. 
More precisely, we proved the following. 
\begin{itemize}
\item 
The conormal singularities at the common tangent points of 
$\partial{D}_j$ and $\partial{D}_k$ of Case~B1 
propagates along the common tangent line. 
The set of all the common tangent lines of Case~B1 for 
$\partial{D}_j$ and $\partial{D}_k$ form two hypersurfaces $\mathcal{L}_{jk}^{(\pm)}$, 
which are cylindrical or conic. 
These hypersurfaces are surrounding $\partial{D}_j$ and $\partial{D}_k$.  
\begin{center}
\includegraphics[width=70mm]{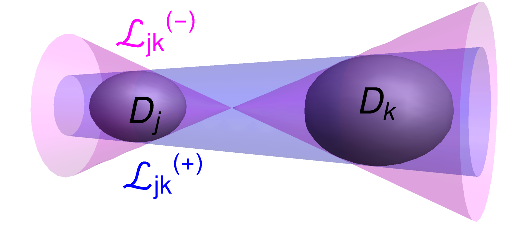}
\\
Figure~3.\ 
Common surrounding hypersurfaces for $D_j$ and $D_k$. 
\end{center}
The filtered back-projection $f_\text{MA}$ is a sum of conormal distributions of the same order determined by $n$ and $d$. 
The singular supports of these conormal distributions are such hypersurfaces. 
\item 
The conormal singularities at the common tangent points of 
$\partial{D}_j$ and $\partial{D}_k$ of Case~B2 do not propagate. 
\end{itemize}
What the author could understand in \cite{Chihara} is that these results are the natural extension of those of \cite{PalaciosUhlmannWang}. 
Unfortunately, however, the author could not understand almost all the geometric meanings of the facts proved in \cite{Chihara}. It was too difficult for the author to see the essential parts in the setting of the Euclidean space. 
\par
The purpose of this paper is to clarify the geometric meaning of the facts proved in \cite{Chihara}. For this purpose we study the geodesic X-ray transform on a compact Riemannian manifold $(M,g)$ with a Riemannian metric $g$ and the smooth boundary $\partial{M}$. 
Set $\langle{u,v}\rangle:=g_p(u,v)$ for $u,v \in T_p(M)$. 
The geodesic X-ray transform for functions and symmetric tensor fields 
has been extensively studied so far. 
See e.g.,\cite{FrigyikStefanovUhlmann,MonardStefanov,PaternainSaloUhlmann1,Quinto1,Quinto2,StefanovUhlmannVasy} and references therein. 
\par
The main results of this paper are stated in Theorem~\ref{theorem:51}, 
which are essentially same as those of \cite{Chihara}. 
In the present paper we assume on $(M,g)$ as follows. 
\begin{description}
\item[(A0)] 
$(M,g)$ is a simple Riemannian manifold, that is, 
\begin{itemize}
\item 
$(M,g)$ is a compact connected Riemannian manifold with smooth boundary. 
\item 
$(M,g)$ is non-trapping, that is, every geodesic exits $M^\text{int}$ in finite time. 
\item 
The boundary $\partial{M}$ is strictly convex.
\item 
There are no conjugate points.  
\end{itemize}
See \cite[Definition~3.8.1]{PaternainSaloUhlmann2}. 
Moreover, there are some equivalent conditions for the simplicity. 
Some of them seemingly look stronger than the above. 
See \cite[Theorem~3.8.2]{PaternainSaloUhlmann2} for the detail. 
\item[(A1)] 
The Riemannian manifold $(M,g)$ is two-dimensional or a space of constant curvature. 
\end{description}
The assumptions on the metal region $D$ and on the beam hardening effect are 
essentially same as those of \cite{ParkChoiSeo}, but we will state them later. 
Let $\mathcal{G}$ be the set of all the normal geodesics of $(M,g)$, that is, 
the set of all the unit speed geodesics of $(M,g)$. 
The X-ray transform of a compactly supported smooth function $f$ on 
$M^\text{int}$ is defined by 
$$
\mathcal{X}f(\gamma)
:=
\int_\gamma f,
\quad
\gamma \in \mathscr{G}. 
$$
Actually, in this paper we deal with the X-ray transform of 
compactly supported half-densities on $M^\text{int}$. 
We describe the detail later. 
The assumption (A1) is very strong and may seem to be strange. 
This is effectively used for composing the canonical relation 
$\mathcal{C}_{\mathcal{X}^T}$ of $\mathcal{X}^T$ 
and the Lagrangian describing the microlocal singularities of the conormal distribution $P-\mathcal{X}f$. 
\par
This paper relies on the foundations of microlocal analysis and Fourier integral operators. 
See celebrated standard textbooks \cite{Duistermaat,GrigisSjoestrand,Hoermander1,Hoermander2,Hoermander3,Hoermander4} for this. One can also refer expository papers \cite{KrishnanQuinto} and \cite[Section~2]{WebberHolmanQuinto} 
for microlocal analysis for computer tomography and related fields. 
Also this paper deeply relies on the methods and the ideas dealing with the geodesic X-ray transform developed by Holman and Uhlmann in \cite{HolmanUhlmann}. 
\par
The plan of this paper is as follows. 
Section~2 consists of basic facts on the X-ray transform $\mathcal{X}$ and its adjoint $\mathcal{X}^T$. 
Section~3 is devoted to studying Jacobi fields and parallel transport along normal geodesics under the assumption ({\bf A1}). 
They play crucial role in clarifying the geometric meanings of the facts in this paper. 
In Section~4 we discuss the geometry of the metal region $D$ for the intersection calculus for $\mathcal{X}$. 
In Section~5 we state the main theorem of the paper, and discuss the intersection calculus for $\mathcal{X}^T$. 
Finally in Section~6 we prove the main theorem in several steps. 
\section{Preliminaries}
\label{section:preliminaries} 
This section deeply relies on the results and the ideas developed by Holman and Uhlmann in \cite{HolmanUhlmann}. 
In this section we assume ({\bf A0}) and not ({\bf A1}). We prepare some basic facts on the geodesic X-ray transform on $(M,g)$. Throughout this paper we use the notation which is the same as or similar to that of \cite{HolmanUhlmann}. We denote by $S(M)$ the unit tangent sphere bundle, that is, the principal bundle on $M$ consisting of all the unit tangent vectors, and $\pi:S(M){\rightarrow}M$ denotes the natural projection. This is a $(2n-1)$-dimensional manifold. The restriction of $\pi$ on $M^\text{int}$ is denoted by the same notation $\pi$. 
Here we introduce the unit outer normal vector $\nu_+(x)$ at $x \in \partial{M}$, and set 
$$
\partial_{\pm}S(M)
:=
\bigl\{w \in S(M) : \pi(w)\in\partial{M}, \pm\bigl\langle{w},\nu_+\bigl(\pi(w)\bigr)\bigr\rangle>0\}.
$$
Then $\partial_-S(M)$ is the set of all incoming unit tangent vectors on $\partial{M}$.  
For $v \in S(M^\text{int}){\cup}\partial_-S(M)$, $\gamma_v$ denotes the normal geodesic such that $\gamma_v(0)=\pi(v)$ and $\dot{\gamma}_v(0)=v$. 
Note that if $v \in S(M^\text{int})$, there exists a constant $\varepsilon>0$ such that 
$\gamma_v(t) \in M$ for $t\in(-\varepsilon,\varepsilon)$, and if $w \in \partial_-S(M)$, there exists a constant $\varepsilon>0$ such that $\gamma_w(t) \in M$ for $t\in[0,\varepsilon)$. 
If we set 
$$
\tau_-(v)
=
\sup\{t\leqq0 : \gamma_v(t) \not\in M^\text{int}\}, 
\quad
\tau_+(v)
=
\inf\{t\geqq0 : \gamma_v(t) \not\in M^\text{int}\}, 
$$
then the non-trapping condition on $(M,g)$ implies that $[\tau_-(v),\tau_+(v)]$ is a closed finite interval. 
We also denote by $\Psi$ the geodesic flow on $S(M^\text{int})$, that is, 
$\pi\bigl(\Psi(v,t)\bigr)=\gamma_v(t)$ and $\Psi(v,t)=\dot{\gamma}_v(t)$ 
for $v \in S(M^\text{int})$ and $t\in[\tau_-(v),\tau_+(v)]$. 
The simplicity condition on $(M,g)$ implies that for any $w\in\partial_-S(M)$ there exists a $w^\prime\in\partial_+S(M)$ uniquely such that $w^\prime=\dot{\gamma}_w\bigl(\tau_+(w)\bigr)$. 
Let $\mathcal{G}$ be the set of all normal geodesics on $M$. 
In the same way, for any $\gamma \in \mathcal{G}$ there exists a $w \in \partial_-S(M)$ uniquely such that $\gamma=\gamma_w(\cdot)$, and $\partial_-S(M)$ is identified with $\mathcal{G}$. More precisely $\mathcal{G}$ becomes a $(2n-2)$-dimensional smooth manifold with the parametrization $\partial_-S(M)$. 
\par
We now introduce an important submersion $F:S(M^\text{int}) \rightarrow \partial_-S(M)$. For $v \in S(M^\text{int})$ set $F(v):=\dot{\gamma}_v\bigl(\tau_-(v)\bigr)$. Then we have $v \mapsto \tau_-(v)$ is smooth and $F$ becomes a smooth mapping since $\partial{M}$ is a strictly convex smooth boundary. 
If $F(v)=F(\tilde{v})$ for $v,\tilde{v} \in S(M^\text{int})$, then there exists a time $t_0\in\bigl(\tau_-(v),\tau_+(v)\bigr)$ such that $\tilde{v}=\dot{\gamma}_v(t_0)$. Hence we have 
$$
\operatorname{rank}(DF\vert_v)
=
\operatorname{dim}\bigl(S(M^\text{int})\bigr)-1
=
2n-2
=
\operatorname{dim}\bigl(\partial_-S(M)\bigr), 
$$ 
and we conclude that $F$ is a submersion. 
\par
We shall define the geodesic X-ray transform $\mathcal{X}$ for half densities. 
To obtain its canonical relation we see that 
$\mathcal{X}=F_\ast{\circ}\pi^\ast$ and $\mathcal{X}^T=\pi_\ast{\circ}F^\ast$, 
where $\mathcal{X}^T$ is the adjoint of $\mathcal{X}$, 
$F_\ast$ and $F^\ast$ are the pushforward and the pullback induced by $F$ respectively, 
and $\pi_\ast$ and $\pi^\ast$ are the pushforward and the pullback induced by $\pi$ respectively. 
We denote by $dv_g$ the Riemannian measure of $(M,g)$, 
by $\lvert{dS(M)}\rvert$ the Riemannian measure of $S(M)$ with respect to the Sasaki metric, 
and by $\lvert{d\partial{S(M)}}\rvert$  the Riemannian measure of 
$$
{\partial}S(M):=S(M)\lvert_{\partial{M}}=\partial_-S(M) \cup S(\partial{M}) \cup \partial_+S(M)
$$
with respect to the Sasaki metric. 
Santal\'o's formula takes the form   
\begin{equation}
\int_{S(M)}h(v)\lvert{dS(M)}\rvert
=
-
\int_{\partial_-S(M)}
\int_0^{\tau_+(w)}
h\bigl(\dot{\gamma}_w(s)\bigr)
\bigl\langle{w,\nu_+\bigl(\pi(w)\bigr)}\bigr\rangle
ds
\lvert{d\partial{S(M)}}\rvert
\label{equation:santalo}
\end{equation}
for any continuous function $h$ on $S(M)$. 
See \cite[Theorem~8.4.1]{Sharafutdinov} for instance. 
So we set 
$$
\lvert{d\mu(w)}\rvert:=-\bigl\langle{w,\nu_+\bigl(\pi(w)\bigr)}\bigr\rangle\lvert{d\partial{S(M)}}\rvert,
$$
and use it as the standard measure on $\partial_-S(M)$. 
\par
Let $\Omega_X^{1/2}$ be the half density bundle on a manifold $X$. See, e.g., \cite[page 92]{Hoermander3} for the definition of the powers of density. We denote by $C^\infty(\Omega_X^{1/2})$ the set of all smooth half densities on $X$, and by $C^\infty_0(\Omega_X^{1/2})$ the set of all compactly supported smooth half densities on $X$. Let $\mathscr{D}(\Omega_X^{1/2})$ be the locally convex space which is $C^\infty_0(\Omega_X^{1/2})$ with the standard inductive limit topology, and let $\mathscr{D}^\prime(\Omega_X^{1/2})$ be the topological dual of $\mathscr{D}(\Omega_X^{1/2})$, that is, the space of distribution sections of half density bundle on $X$. Let $\mathscr{E}(\Omega_X^{1/2})$ be the Fr\'echet space which is $C^\infty(\Omega_X^{1/2})$ with the standard topology, and let $\mathscr{E}^\prime(\Omega_X^{1/2})$ be the topological dual of $\mathscr{E}(\Omega_X^{1/2})$, that is, the space of compactly supported distribution sections of the half density bundle on $X$. We now define the geodesic X-ray transform of a compactly supported smooth half density. 
\begin{definition}[{\bf Geodesic X-ray transform of a compactly supported half density}] 
\label{theorem:geodesicxraytransform}
The geodesic X-ray transform on $(M,g)$ is defined for $f \in C^\infty_0(\Omega_{M^\text{int}}^{1/2})$ as a half density on $\partial_-S(M)$ by 
\begin{equation}
\mathcal{X}[f](w)
:=
\left(
\int_0^{\tau_+(w)}
\frac{f}{\lvert{dv_g}\rvert^{1/2}}
\bigl(\dot{\gamma}_w(s)\bigr)
ds
\right)
\lvert{d\mu(w)}\rvert^{1/2},
\quad
w \in \partial_-S(M). 
\label{equation:geodesicxraytransform} 
\end{equation}
\end{definition} 
It is easy to see that 
$\mathcal{X}: \mathscr{D}(\Omega_{M^\text{int}}^{1/2}) \rightarrow \mathscr{D}(\Omega_{\partial_-S(M)}^{1/2})$ 
is a continuous mapping. Furthermore $\mathcal{X}$ can be extended on 
$\mathscr{E}^\prime(\Omega_{M^\text{int}}^{1/2})$ since $\mathcal{X}$ is a Fourier integral operator. See \cite[Sections~25.1 and 25.2]{Hoermander4} for the foundation of Fourier integral operators. 
\par
We now introduce a pushforward and a pullback of half densities induced by a submersion. 
\begin{definition}[{\bf Pushforward and pullback induced by a submersion}]
\label{theorem:pushforwardpullback1}
Let $X$ be an $n_X$-dimensional smooth manifold equipped with a density $\lvert{d\mu_X}\rvert$, and let $Y$ be an $n_Y$-dimensional smooth manifold equipped with a density $\lvert{d\mu_Y}\rvert$. 
Suppose that $G: X \rightarrow Y$ is a submersion. 
Then the pushforward $G_\ast: C^\infty_0(\Omega_X^{1/2}) \rightarrow C^\infty_0(\Omega_Y^{1/2})$ and 
the pullback $G^\ast: C^\infty_0(\Omega_Y^{1/2}) \rightarrow C^\infty_0(\Omega_X^{1/2})$ 
are defined by the requirement that 
\begin{equation}
\int_Y h(y){\cdot}G_\ast[f](y)
=
\int_X \frac{h}{\lvert{d\mu_Y}\rvert^{1/2}}\bigl(G(x)\bigr)\lvert{d\mu_X}\rvert^{1/2}{\cdot}f(x)
=
\int_X G^\ast[h](x){\cdot}f(x)
\label{equation:adjoint}
\end{equation}
for $f \in C^\infty_0(\Omega_X^{1/2})$ and $h \in C^\infty_0(\Omega_Y^{1/2})$. 
\end{definition}
Apply \eqref{equation:santalo} and \eqref{equation:adjoint} to $F$ and $\pi$.  
Then for 
$x \in M^\text{int}$, 
$v \in S(M^\text{int})$, 
$w \in \partial_-S(M)$, 
$f \in C^\infty_0(\Omega_{M^\text{int}}^{1/2})$, 
$h \in C^\infty_0(\Omega_{S(M^\text{int})}^{1/2})$ 
and 
$k \in C^\infty_0(\Omega_{\partial_-S(M)}^{1/2})$, 
we have 
\begin{align*}
  F_\ast[h](w)
& =
  \left(
  \int_0^{\tau_+(w)}
  \frac{h}{\lvert{dS(M^\text{int})}\rvert^{1/2}}\bigl(\dot{\gamma}_w(s)\bigr)
  ds
  \right)
  \lvert{d\mu(w)}\rvert^{1/2}, 
\\
  F^\ast[k](v)
& =
  \frac{k}{\lvert{d\mu}\rvert^{1/2}}\bigl(F(v)\bigr)
  \lvert{dS(M^\text{int})(v)}\rvert^{1/2},
\\
  \pi_\ast[h](x)
& =
  \left(
  \int_{S_x(M^\text{int})}
  \frac{h}{\lvert{dS(M^\text{int})}\rvert^{1/2}}(v)
  \lvert{dS_x(M^\text{int})(v)}\rvert
  \right)
  \lvert{dv_g}(x)\rvert^{1/2},
\\
  \pi^\ast[f](v)
& =
  \frac{f}{\lvert{dv_g}\rvert^{1/2}}\bigl(\pi(v)\bigr)
  \lvert{dS(M^\text{int})(v)}\rvert^{1/2},
\\
  \mathcal{X}[f](w)
& =
  F_\ast{\circ}\pi^\ast[f](w),
\\
  \mathcal{X}^T[k](x)
& =
  \pi_\ast{\circ}F^\ast[k](x)
\\
& =
  \left(
  \int_{S_x(M^\text{int})}
  \frac{k}{\lvert{d\mu}\rvert^{1/2}}\bigl(F(v)\bigr)
  \lvert{dS_x(M^\text{int})(v)}\rvert
  \right)
  \lvert{dv_g}(x)\rvert^{1/2}.
\end{align*}
\par
To state the basic facts on $F_\ast$, $F^\ast$, $\pi_\ast$, and $\pi^\ast$, we now introduce the notation of the space of Lagrangian distributions, and the space of conormal distributions. See \cite[Sections~25.1 and 25.2]{Hoermander4} and \cite[Section~18.2]{Hoermander3} for the precise definitions and detailed information. 
Let $X$ and $Y$ be smooth manifolds, and let $Z$ be a closed submanifold of $X$. 
We denote by $I^m(X,\Lambda;\Omega_X^{1/2})$ the set of all Lagrangian distribution sections of the half density bundle $\Omega^{1/2}_X$ of order $m\in\mathbb{R}$ associated to $\Lambda$ which is a conic Lagrangian submanifold of $T^\ast(X)\setminus0$. In particular, if $\Lambda=N^\ast(Z)\setminus0$, then elements of $I^m(X,N^\ast(Z)\setminus0;\Omega_X^{1/2})$ are said to be conormal distributions and we abbreviate $I^m(X,N^\ast(Z)\setminus0;\Omega_X^{1/2})$ as $I^m(N^\ast(Z)\setminus0;\Omega_X^{1/2})$. 
Let $C$ be a homogeneous canonical relation from $T^\ast(Y)\setminus0$ to $T^\ast(X)\setminus0$, that is, $C$ is a Lagrangian submanifold of $\bigl(T^\ast(X)\setminus0\bigr) \times \bigl(T^\ast(Y)\setminus0\bigr)$ with respect to the symplectic form $\sigma_X-\sigma_Y$, and conic in the fiber variables, where $\sigma_X$ and $\sigma_Y$ are the canonical symplectic forms of $T^\ast(X)$ and $T^\ast(Y)$ respectively. See \cite[Chapter~21]{Hoermander3} for the detail. 

We set 
\begin{align*} 
  C^\prime
& :=
  \{(\xi,-\eta) \in T^\ast(X){\times}T^\ast(Y) : (\xi,\eta) \in C\}, 
\\
  C^T
& :=
  \{(\eta,\xi) \in T^\ast(Y){\times}T^\ast(X) : (\xi,\eta) \in C\}.
\end{align*}
A linear operator whose distribution kernel belongs to $I^m(X{\times}Y,C^\prime;\Omega_{X{\times}Y}^{1/2})$ is said to be a Fourier integral operator of order $m$ from $Y$ to $X$ associated to the canonical relation $C$. 
If the distribution kernel of an operator $A$ belongs to $I^m(X{\times}Y,C^\prime;\Omega_{X{\times}Y}^{1/2})$, we express $A \in I^m(X{\times}Y,C^\prime;\Omega_{X{\times}Y}^{1/2})$ abusing symbols. We believe that any confusion will not occur. 
\par
Linear operators induced by submersions such as Definition~\ref{theorem:pushforwardpullback1} are Fourier integral operators, and it is easy to obtain their canonical relations. This is basically due to the pioneering paper \cite{GuilleminSchaeffer} of Guillemin and Schaeffer. We here quote the fact needed in the present paper from \cite{HolmanUhlmann}. 
\begin{lemma}[{\bf Holman and Uhlmann \cite[Lemma~1]{HolmanUhlmann}}]
\label{theorem:pushforwardpullback2}
In the same setting as in Definition~\ref{theorem:pushforwardpullback1}, we have
\begin{align*}
  G_\ast
& \in 
  I^{(n_Y-n_X)/4}(Y{\times}X,C_{G_\ast}^\prime;\Omega_{Y{\times}X}^{1/2}),
\\
  C_{G_\ast}
& =
  \bigl\{
  (\xi,DG\vert^T_x\xi)
  : 
  x \in X, 
  \xi \in T^\ast_{G(x)}(Y)\setminus\{0\}
  \bigr\},
\\
  G^\ast
& \in 
  I^{(n_Y-n_X)/4}(X{\times}Y,C_{G^\ast}^\prime;\Omega_{X{\times}Y}^{1/2}),
\\
  C_{G^\ast}
& =
  C_{G_\ast}^T.
\end{align*}
\end{lemma}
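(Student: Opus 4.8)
The plan is to compute the Schwartz kernels of $G_\ast$ and $G^\ast$ directly from the defining identity \eqref{equation:adjoint} and to recognise them as conormal distributions attached to the graph of $G$; once this is done, both the order and the canonical relations can be read off mechanically. Writing a half density $h$ on $Y$ as $h = \tilde{h}\lvert d\mu_Y\rvert^{1/2}$ with $\tilde{h} \in C^\infty_0(Y)$, Definition~\ref{theorem:pushforwardpullback1} gives $G^\ast[h](x) = \tilde{h}\bigl(G(x)\bigr)\lvert d\mu_X(x)\rvert^{1/2}$, so in a trivialisation of the half density bundle by a smooth nowhere-vanishing section the kernel of $G^\ast$ is a smooth nonvanishing multiple of the scalar delta distribution carried by the graph $\Gamma := \{(x,G(x)) : x \in X\} \subset X \times Y$; by the transposition built into \eqref{equation:adjoint} the kernel of $G_\ast$ is likewise carried by $\Gamma^T := \{(G(x),x) : x \in X\} \subset Y \times X$. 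Since $G$ is smooth, $\Gamma$ and $\Gamma^T$ are embedded submanifolds diffeomorphic to $X$, hence of codimension $n_Y$, and the delta distribution on such a submanifold is a conormal distribution associated to its conormal bundle with the zero section removed.

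To obtain the order and the Lagrangian I would pass to the submersion normal form, in which $G$ is the projection $(y,z) \mapsto y$ on $\mathbb{R}^{n_Y}_y \times \mathbb{R}^{n_X - n_Y}_z$. Then $\Gamma$ becomes $\{((y,z),y)\}$ and, up to a smooth nonvanishing half density factor, the kernel is $(2\pi)^{-n_Y}\int e^{i(y-y')\cdot\theta}\,d\theta$, an oscillatory integral with $n_Y$ phase variables and a symbol of order $0$; with $\dim(X\times Y) = n_X+n_Y$ this places the kernel in $I^{(n_Y-n_X)/4}$, and the identical count on $Y\times X$ handles $G_\ast$, which gives the two stated orders. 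For the canonical relation, a direct computation of the conormal bundle shows that a covector $(x,\xi;y,\eta) \in T^\ast(X\times Y)$ annihilates $T_{(x,G(x))}\Gamma$ precisely when $y = G(x)$ and $\xi = -DG\vert^T_x\eta$, so that $N^\ast(\Gamma)\setminus 0 = \{(x,-DG\vert^T_x\eta;\, G(x),\eta) : \eta \ne 0\}$; untwisting according to the definition of $C^\prime$ (which reverses the sign of the $T^\ast(Y)$ covector) turns this into $C_{G^\ast} = \{(x,DG\vert^T_x\eta;\, G(x),\eta) : \eta \ne 0\} = C_{G_\ast}^T$, and the same computation on $\Gamma^T$ yields $C_{G_\ast} = \{(\xi, DG\vert^T_x\xi) : x\in X,\ \xi\in T^\ast_{G(x)}(Y)\setminus\{0\}\}$ as asserted. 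The submersion hypothesis is used at exactly two points: it is what makes $G_\ast$ (fibre integration) well defined, and it guarantees that $DG\vert^T_x$ is injective, so that $\eta \ne 0$ forces $DG\vert^T_x\eta \ne 0$ and the canonical relations lie in $(T^\ast(X)\setminus0)\times(T^\ast(Y)\setminus0)$.

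There is no serious conceptual obstacle here; the step requiring care — and the one most prone to sign and normalisation errors — is the bookkeeping: tracking the smooth nonvanishing half density factors coming from $\lvert d\mu_X\rvert^{1/2}$ and $\lvert d\mu_Y\rvert^{1/2}$ to confirm that the kernels are honest sections of $\Omega^{1/2}_{X\times Y}$ and $\Omega^{1/2}_{Y\times X}$, and keeping the $(2\pi)$- and dimension-dependent normalisations in H\"ormander's definition of $I^m$ straight through the order computation. Since the statement is exactly \cite[Lemma~1]{HolmanUhlmann}, resting in turn on the classical analysis of \cite{GuilleminSchaeffer}, the argument above is a verification rather than a new construction; in particular, once the kernels have been identified as conormal distributions, the continuity of $G_\ast$ and $G^\ast$ and their extensions to $\mathscr{D}^\prime(\Omega^{1/2})$ and $\mathscr{E}^\prime(\Omega^{1/2})$ follow from the standard mapping properties of Fourier integral operators.
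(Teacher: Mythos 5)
The paper does not prove this lemma: it is quoted verbatim as \cite[Lemma~1]{HolmanUhlmann}, with the remark that the method goes back to \cite{GuilleminSchaeffer}. Your proposal therefore supplies a verification rather than paralleling a proof in the text, but it is a correct verification and follows exactly the route the paper gestures at: identify the Schwartz kernels of $G^\ast$ and $G_\ast$ as conormal distributions on the graph $\Gamma=\{(x,G(x))\}\subset X\times Y$ and its transpose $\Gamma^T\subset Y\times X$, read off the order in the submersion normal form $G(y,z)=y$ (one delta in $n_Y$ variables on an $(n_X+n_Y)$-dimensional product, giving $0+n_Y/2-(n_X+n_Y)/4=(n_Y-n_X)/4$), and read off the canonical relations from $N^\ast\Gamma\setminus0=\{(x,-DG\vert_x^T\eta;G(x),\eta):\eta\neq0\}$ after applying the untwisting in the paper's definition of $C'$ and using injectivity of $DG\vert_x^T$ to stay away from the zero section. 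Your sign bookkeeping is consistent with the paper's convention $C'=\{(\xi,-\eta):(x,\xi;y,\eta)\in C\}$, and the final identification $C_{G^\ast}=C_{G_\ast}^T$ is obtained correctly by the same computation on $\Gamma^T$.
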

Since 
$\operatorname{dim}\bigl(S(M^\text{int})\bigr)=2n-1$, 
$\operatorname{dim}\bigl(\partial_-S(M)\bigr)=2n-2$, 
and 
$\operatorname{dim}(M)=n$, Lemma~\ref{theorem:pushforwardpullback2} implies the following. 
\begin{lemma}
\label{theorem:Fpi}
Assume {\rm ({\bf A0})}. 
We have 
\begin{align*}
  F_\ast
& \in 
  I^{-1/4}\bigl(\partial_-S(M){\times}S(M^\text{int}),C_{F_\ast}^\prime,\Omega_{\partial_-S(M){\times}S(M^\text{int})}^{1/2}\bigr),
\\
  F^\ast
& \in 
  I^{-1/4}\bigl(S(M^\text{int}){\times}\partial_-S(M),C_{F^\ast}^\prime,\Omega_{S(M^\text{int})\times\partial_-S(M))}^{1/2}\bigr),
\\
  C_{F_\ast}
& =
  \bigl\{
  (\xi,DF\vert^T_v\xi)
  : 
  v \in S(M^\text{int}), 
  \xi \in T^\ast_{F(v)}(\partial_-S(M))\setminus\{0\}
  \bigr\},
\\
  C_{F^\ast}
& =
  C_{F_\ast}^T,
\\
  \pi_\ast
& \in 
  I^{-(n-1)/4}\bigl(M^\text{int}{\times}S(M^\text{int}),C_{\pi_\ast}^\prime,\Omega_{M^\text{int}{\times}S(M^\text{int})}^{1/2}\bigr),
\\
  \pi^\ast
& \in 
  I^{-(n-1)/4}\bigl(S(M^\text{int}){\times}M^\text{int},C_{\pi^\ast}^\prime,\Omega_{S(M^\text{int}){\times}M^\text{int}}^{1/2}\bigr),
\\
  C_{\pi_\ast}
& =
  \bigl\{
  (\eta,D\pi\vert^T_v\eta)
  :
  v \in S(M^\text{int}), 
  \eta \in T^\ast_{\pi(v)}(M^\text{int})\setminus\{0\}
  \bigr\},
\\
  C_{\pi^\ast}
& =
  C_{\pi_\ast}^T.
\end{align*}
\end{lemma}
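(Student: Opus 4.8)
The plan is to apply Lemma~\ref{theorem:pushforwardpullback2} of Holman and Uhlmann directly to the two submersions $F$ and $\pi$, after recording the relevant dimension counts and checking that both maps satisfy the hypotheses of Definition~\ref{theorem:pushforwardpullback1}. So the proof is essentially a specialization of the general statement, with all of the analytic content already contained in the cited lemma.

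First I would note that $\pi:S(M^\text{int})\rightarrow M^\text{int}$ is a submersion: its fibers are the unit spheres $S_x(M^\text{int})$, which are $(n-1)$-dimensional, so at every $v$ the differential $D\pi\vert_v$ has rank $\operatorname{dim}\bigl(S(M^\text{int})\bigr)-(n-1)=(2n-1)-(n-1)=n=\operatorname{dim}(M^\text{int})$, hence $\pi$ is a submersion onto $M^\text{int}$. That $F:S(M^\text{int})\rightarrow\partial_-S(M)$ is a submersion has already been established in Section~\ref{section:preliminaries}: if $F(v)=F(\tilde v)$ then $\tilde v=\dot\gamma_v(t_0)$ for some $t_0\in\bigl(\tau_-(v),\tau_+(v)\bigr)$, so the fibers of $F$ are one-dimensional and $\operatorname{rank}(DF\vert_v)=(2n-1)-1=2n-2=\operatorname{dim}\bigl(\partial_-S(M)\bigr)$. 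Both source and target manifolds carry the densities fixed earlier: $S(M^\text{int})$ and $\partial_-S(M)$ are equipped with the Sasaki-Riemannian density $\lvert dS(M^\text{int})\rvert$ and with the measure $\lvert d\mu\rvert$ built from $\lvert d\partial S(M)\rvert$, respectively, while $M^\text{int}$ carries $\lvert dv_g\rvert$, so that $F_\ast,F^\ast,\pi_\ast,\pi^\ast$ in the sense of Definition~\ref{theorem:pushforwardpullback1} agree with the explicit formulas listed after that definition; this matching is exactly what Santal\'o's formula \eqref{equation:santalo} guarantees, via \eqref{equation:adjoint}.

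Next I would substitute the three dimension values $\operatorname{dim}\bigl(S(M^\text{int})\bigr)=2n-1$, $\operatorname{dim}\bigl(\partial_-S(M)\bigr)=2n-2$, $\operatorname{dim}(M^\text{int})=n$ into the orders and canonical relations provided by Lemma~\ref{theorem:pushforwardpullback2}. Taking $G=F$ with $n_X=2n-1$ and $n_Y=2n-2$ gives $(n_Y-n_X)/4=-1/4$, hence $F_\ast$ and $F^\ast$ lie in the asserted classes $I^{-1/4}$ with $C_{F_\ast}=\bigl\{(\xi,DF\vert^T_v\xi):v\in S(M^\text{int}),\ \xi\in T^\ast_{F(v)}(\partial_-S(M))\setminus\{0\}\bigr\}$ and $C_{F^\ast}=C_{F_\ast}^T$. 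Taking $G=\pi$ with $n_X=2n-1$ and $n_Y=n$ gives $(n_Y-n_X)/4=-(n-1)/4$, hence $\pi_\ast$ and $\pi^\ast$ lie in $I^{-(n-1)/4}$ with $C_{\pi_\ast}=\bigl\{(\eta,D\pi\vert^T_v\eta):v\in S(M^\text{int}),\ \eta\in T^\ast_{\pi(v)}(M^\text{int})\setminus\{0\}\bigr\}$ and $C_{\pi^\ast}=C_{\pi_\ast}^T$. This is precisely the content of the lemma.

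Since the claim is a direct reading of Lemma~\ref{theorem:pushforwardpullback2}, there is no substantial obstacle. The only two points deserving a remark are the submersion property of $\pi$, which is immediate from the sphere-bundle structure, and the identification of the abstract densities of Definition~\ref{theorem:pushforwardpullback1} with the Sasaki and Riemannian measures fixed in Section~\ref{section:preliminaries}; the latter was arranged so that the adjoint relation \eqref{equation:adjoint} holds, which is all that is needed to invoke the lemma.
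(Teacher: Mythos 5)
Your proposal is correct and follows exactly the same route as the paper: the paper simply records the dimension counts $\operatorname{dim}\bigl(S(M^{\mathrm{int}})\bigr)=2n-1$, $\operatorname{dim}\bigl(\partial_-S(M)\bigr)=2n-2$, $\operatorname{dim}(M)=n$ and states that Lemma~\ref{theorem:pushforwardpullback2} immediately yields the result. You have merely spelled out the routine verifications (that $\pi$ is a submersion, and that the fixed densities match Definition~\ref{theorem:pushforwardpullback1}) which the paper leaves implicit.
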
 
To state the basic properties of the X-ray transform $\mathcal{X}$, 
we now recall the notion of clean composition for canonical relations. 
One may refer \cite[page~490, Appendix~C.3]{Hoermander3} for the definition of clean intersection for more general manifolds.  
\begin{definition}
\label{theorem:clean} 
Let $X$, $Y$ and $Z$ be smooth manifolds, 
and let $C_1$ and $C_2$ be homogeneous canonical relations such that 
$C_1 \subset T^\ast(X){\times}T^\ast(Y)$ and 
$C_2 \subset T^\ast(Y){\times}T^\ast(Z)$. 
\begin{itemize}
\item 
The composition $C_1{\circ}C_2$ is defined by 
$$
C_1{\circ}C_2
=
\{
(\xi,\zeta) 
: 
\exists \eta \in T^\ast(Y)\ \text{s.t.}\ 
(\xi,\eta) \in C_1, (\eta,\zeta) \in C_2
\}.
$$
\item 
We say that the composition $C_1{\circ}C_2$ is clean if the following conditions are satisfied. 
\begin{itemize}
\item[(i)] 
The intersection 
$$
C
=
(C_1{\times}C_2)
\cap
\Bigl(
T^\ast(X){\times}\Delta\bigl(T^\ast(Y)\bigr){\times}T^\ast(Z)
\Bigr)
$$
is clean, that is, $C$ is an embedded submanifold of 
$T^\ast(X){\times}T^\ast(Y){\times}T^\ast(Y){\times}T^\ast(Z)$, 
and 
$$
T_p(C)
=
T_p\bigl(C_1{\times}C_2\bigr)
\cap
T_p\Bigl(T^\ast(X){\times}\Delta\bigl(T^\ast(Y)\bigr){\times}T^\ast(Z)\Bigr)
$$
for any point $p \in C$, 
where $\Delta\bigl(T^\ast(Y)\bigr)$ is the diagonal part of $T^\ast(Y){\times}T^\ast(Y)$. 
\item[(ii)]
The projection map $\pi_C: C \rightarrow T^\ast(X){\times}T^\ast(Z)$ is proper. 
\item[(iii)] 
For any $\Xi \in T^\ast(X){\times}T^\ast(Z)$, $\pi_C^{-1}(\Xi)$ is connected. 
\end{itemize}
\item 
If the composition $C_1{\circ}C_2$ is clean, then 
$$
e
:=
\operatorname{codim}(C_1{\times}C_2)
+
\operatorname{codim}\Bigl(T^\ast(X){\times}\Delta\bigl(T^\ast(Y)\bigr){\times}T^\ast(Z)\Bigr)
-
\operatorname{codim}(C)
$$
is said to be the excess of the composition $C_1{\circ}C_2$. 
In particular, if $e=0$, that is, 
$$
N_p^\ast(C_1{\times}C_2)
\cap
N_p^\ast\Bigl(T^\ast(X){\times}\Delta\bigl(T^\ast(Y)\bigr){\times}T^\ast(Z)\Bigr)
=
\{0\}
$$
for any $p \in C$, we say that the composition $C_1{\circ}C_2$ is transversal. 
It is worth mentioning that $e=\dim\bigl(\pi_C^{-1}(\Xi)\bigr)$ and this can be employed as the definition of the excess. See \cite[Section~21.2 and Appendix~C.3]{Hoermander3} for the detail. 
\end{itemize}
\end{definition}
Using Lemma~\ref{theorem:Fpi} we have the basic properties of $\mathcal{X}$. In particular, we will often use the concrete expression of the canonical relation of $\mathcal{X}$ later.
\begin{theorem}
\label{theorem:geodesicxraytransform1}
Assume {\rm ({\bf A0})}. 
The composition $C_{F_\ast}{\circ}C_{\pi^\ast}$ is transversal, and 
\begin{align*}
  \mathcal{X}
& =
  F_\ast{\circ}\pi^\ast
  \in
  I^{-n/4}
  \bigl(
  \partial_-S(M){\times}M^\text{int},
  C_\mathcal{X}^\prime;
  \Omega_{\partial_-S(M){\times}M^\text{int}}^{1/2}
  \bigr),
\\
  C_\mathcal{X}
& =
  \bigl\{
  (\xi,\eta) 
  \in 
  T^\ast\bigl(\partial_-S(M)\bigr){\times}T^\ast(M^\text{int}): 
  \exists v \in S(M^\text{int})\ \text{s.t.}\ 
\\
& \qquad 
  \xi \in T^\ast_{F(v)}\bigl(\partial_-S(M)\bigr)\setminus\{0\}, 
  \eta \in T^\ast_{\pi(v)}(M^\text{int})\setminus\{0\}, 
  DF\vert^T_v\xi=D\pi\vert^T_v\eta
  \bigr\}.
\end{align*}
Moreover, the principal symbol of $\mathcal{X}$ does not vanish. 
\end{theorem}
\begin{proof}
It is well-known that $\mathcal{X}$ is an elliptic Fourier integral operator. 
We use the same method of \cite{HolmanUhlmann} to obtain the concrete expression of $C_\mathcal{X}$, and carefully compute the excess of the composition. Set 
\begin{align*}
  \mathcal{Z}
& :=
  T^\ast\bigl(\partial_-S(M)\bigr)
  \times
  T^\ast\bigl(S(M^\text{int})\bigr)
  \times
  T^\ast\bigl(S(M^\text{int})\bigr)
  \times
  T^\ast(M^\text{int})\\
  \mathcal{D}
& :=
  T^\ast\bigl(\partial_-S(M)\bigr)
  \times
  \Delta\Bigl(T^\ast\bigl(S(M^\text{int})\bigr)\Bigr)
  \times
  T^\ast(M^\text{int})
\end{align*}
for short. We have 
$\operatorname{codim}(\mathcal{D})=\operatorname{dim}\Bigl(T^\ast\bigl(S(M^\text{int})\bigr)\Bigr)=4n-2$. 
Lemma~\ref{theorem:Fpi} implies that 
\begin{align*}
  C_{F_\ast}{\times}C_{\pi^\ast}
& =
  \bigl\{
  (\xi,DF\vert^T_v\xi,D\pi\vert^T_u\eta,\eta)
  : 
  v,u \in S(M^\text{int}),
\\
& \qquad  
  \xi \in T^\ast_{F(v)}\bigl(\partial_-S(M)\bigr)\setminus\{0\}, 
  \eta \in T^\ast_{\pi(u)}(M^\text{int})\setminus\{0\}
  \bigr\}. 
\end{align*}
We have 
\begin{align*}
  \operatorname{dim}(C_{F_\ast}{\times}C_{\pi^\ast})
& =
  \operatorname{dim}\bigl(S(M^\text{int})\bigr)
  +
  \operatorname{dim}\Bigl(T^\ast_{F(v)}\bigl(\partial_-S(M)\bigr)\Bigr)
\\
& +
  \operatorname{dim}\bigl(S(M^\text{int})\bigr)
  +
  \operatorname{dim}\bigl(T^\ast_{\pi(u)}(M^\text{int})\bigr)
\\
& =
  (2n-1)+(2n-2)+(2n-1)+n=7n-4.
\end{align*}
Then we have 
\begin{align}
  C
& :=
  (C_{F_\ast}{\times}C_{\pi^\ast})
  \cap
  \mathcal{D}
\nonumber
\\
& =
  \bigl\{
  (\xi,DF\vert^T_v\xi,D\pi\vert^T_v\eta,\eta)
  : 
  v \in S(M^\text{int}),
\nonumber 
\\
& \qquad 
  \xi \in T^\ast_{F(v)}\bigl(\partial_-S(M)\bigr)\setminus\{0\}, 
  \eta \in T^\ast_{\pi(v)}(M^\text{int})\setminus\{0\}, 
  DF\vert^T_v\xi=D\pi\vert^T_v\eta
  \bigr\}.
\label{equation:transversal1} 
\end{align}
To compute the dimension of $C$, we need to know the properties of 
$\xi \in T^\ast_{F(v)}\bigl(\partial_-S(M)\bigr)$ and  
$\eta \in T^\ast_{\pi(v)}(M^\text{int})$ 
satisfying $DF\vert^T_v\xi=D\pi\vert^T_v\eta$. 
In this case we have $\eta(v)=0$. 
Indeed $F(v)=F\bigl(\dot{\gamma}_v(t)\bigr)$ for any $t \in \bigl(\tau_-(v),\tau_+(v)\bigr)$, and this implies that 
$DF\vert_v\ddot{\gamma}_v(0)=0$. Hence we have 
\begin{equation}
\eta(v)
=
\eta\bigl(D\pi\vert_v\ddot{\gamma}_v(0)\bigr)
=
D\pi\vert^T_v\eta\bigl(\ddot{\gamma}_v(0)\bigr)
=
DF\vert^T_v\xi\bigl(\ddot{\gamma}_v(0)\bigr)
=
\xi\bigl(DF\vert_v\ddot{\gamma}_v(0)\bigr)
=
0. 
\label{equation:etav}
\end{equation}
Since $F:S(M^\text{int}) \rightarrow \partial_-S(M)$ and $\pi:S(M^\text{int}) \rightarrow M^\text{int}$ are submersions, 
we deduce that $DF\vert^T_v:T^\ast_{F(v)}\bigl( \partial_-S(M)\bigr) \rightarrow T^\ast_v\bigl(S(M^\text{int})\bigr)$ 
and $D\pi\vert^T_v:T^\ast_{\pi(v)}(M^\text{int}) \rightarrow T^\ast_v\bigl(S(M^\text{int})\bigr)$ are linear injections for any $v \in S(M^\text{int})$. 
We note that 
$T^\ast_v(T_{\pi(v)}(M^\text{int}))$ is identified with $T^\ast_{\pi(v)}(M^\text{int})$  
and $\zeta(v)=0$ holds for any $\zeta \in T^\ast_v(S_{\pi(v)}(M^\text{int}))$ 
since $v \in S_{\pi(v)}(M^\text{int})$. Then we can identify 
$T^\ast_v\bigl(S(M^\text{int})\bigr)$ with 
$$
\{(\eta,\zeta) \in T^\ast_{\pi(v)}(M^\text{int}){\times}T^\ast_{\pi(v)}(M^\text{int}): \zeta(v)=0\},
$$
and we have $\operatorname{dim}\Bigl(\operatorname{Ran}\bigl(DF\vert^T_v\bigr)\Bigr)=2n-2$. 
Hence we obtain two identifications denoted by $\simeq$ and an identity:  
\begin{align*}
  \operatorname{Ran}\bigl(DF\vert^T_v\bigr)
& \simeq
  \{(\eta,\zeta) \in T^\ast_{\pi(v)}(M^\text{int}){\times}T^\ast_{\pi(v)}(M^\text{int}): \eta(v)=0, \zeta(v)=0\},
\\
  \operatorname{Ran}\bigl(D\pi\vert^T_v\bigr)
& =
  \{(\eta,0) : \eta \in T^\ast_{\pi(v)}(M^\text{int})\},
\\
  \operatorname{Ran}\bigl(DF\vert^T_v\bigr)\cap\operatorname{Ran}\bigl(D\pi\vert^T_v\bigr)
& \simeq
  \{(\eta,0): \eta \in T^\ast_{\pi(v)}(M^\text{int}), \eta(v)=0\}. 
\end{align*}
Hence if 
$\xi \in T^\ast_{F(v)}\bigl(\partial_-S(M)\bigr)$ and  
$\eta \in T^\ast_{\pi(v)}(M^\text{int})$ 
satisfying $DF\vert^T_v\xi=D\pi\vert^T_v\eta$, then 
$\eta(v)=0$ and $\xi$ is uniquely determined by 
$\xi=(DF\vert^T_v)^{-1}{\circ}D\pi\vert^T_v\eta$, 
where $(DF\vert^T_v)^{-1}$ is the inverse linear mapping of the linear bijection 
$DF\vert^T_v:T^\ast_{F(v)}\bigl(\partial_-S(M)\bigr) \rightarrow \operatorname{Ran}\bigl(DF\vert^T_v\bigr)$. 
Then we have 
\begin{align*}
  \operatorname{dim}(C)
& =
  \operatorname{dim}\bigl(S(M^\text{int})\bigr)
  +
  \operatorname{dim}\bigl(\{\eta \in T^\ast_{\pi(v)}(M^\text{int}): \eta(v)=0\}\bigr)
\\
& =
  (2n-1)+(n-1)
  =
  3n-2. 
\end{align*}
Thus we have 
\begin{align*}
  e
& =
  \operatorname{codim}(\mathcal{D})
  +
  \operatorname{codim}(C_{F_\ast}{\times}C_{\pi^\ast})
  -
  \operatorname{codim}(C)
\\
& =
  \operatorname{codim}(\mathcal{D})
  +
  \operatorname{dim}(\mathcal{Z})
  -
  \operatorname{dim}(C_{F_\ast}{\times}C_{\pi^\ast})
  -
  \operatorname{dim}(\mathcal{Z})
  +
  \operatorname{dim}(C)   
\\
& =
  \operatorname{codim}(\mathcal{D})
  -
  \operatorname{dim}(C_{F_\ast}{\times}C_{\pi^\ast})
  +
  \operatorname{dim}(C)
\\
& =
  (4n-2)-(7n-4)+(3n-2)=0.
\end{align*}
If we show that $C_{F_\ast}{\circ}C_{\pi^\ast}$ is clean, 
it follows that $C_{F_\ast}{\circ}C_{\pi^\ast}$ is transversal, and therefore  
\cite[Theorem~25.2.3]{Hoermander4} proves the first part of Theorem~\ref{theorem:geodesicxraytransform1}. 
It follows that $\mathcal{X}$ is an elliptic Fourier integral operator 
since the amplitude function is identically equal to $1$.   
\par
So it suffices to show that $C_{F_\ast}{\circ}C_{\pi^\ast}$ is clean. 
Since $DF\vert^T_v\xi$ and $D\pi\vert^T_v\eta$ are continuous in $(v,\xi)$ and $(v,\eta)$ respectively, 
it is easy to see that $\pi_C$ is proper, and that $\pi_C^{-1}\bigl((\xi,\eta)\bigr)$ is connected. 
Hence it suffices to show that $C=(C_{F_\ast}{\times}C_{\pi^\ast})\cap\mathcal{D}$ is a clean intersection. 
Fix arbitrary 
$$
c=(\xi,DF\vert^T_v\xi,D\pi\vert^T_v\eta,\eta) \in C,
\quad
X=(X_1,X_2,X_3,X_4) \in T_c(C_{F_\ast}{\times}C_{\pi^\ast})
$$ 
with $DF\vert^T_v\xi=D\pi\vert^T_v\eta$. 
Consider a curve on $C_{F_\ast}{\times}C_{\pi^\ast}$ of the form
\begin{align*}
  \mathbb{R}\ni\alpha 
& \mapsto
  \bigl(\xi(\alpha),Y(\alpha),Z(\alpha),\eta(\alpha)\bigr)
\\
& =
  c+\alpha{X}+\mathcal{O}(\alpha^2)
\\
& =
  (\xi+\alpha{X}_1,DF\vert^T_v\xi+\alpha{X_2},D\pi\vert^T_v\eta+\alpha{X_3},\eta+\alpha{X_4})
  +
  \mathcal{O}(\alpha^2)
\end{align*}
near $\alpha=0$. If $X \in T_c(\mathcal{D})$, then we have $\dot{Y}(0)=X_2=X_3=\dot{Z}(0)$. Hence 
\begin{align*}
  X
& =
  \frac{d}{d\alpha}\bigg\vert_{\alpha=0}
  \bigl(\xi(\alpha),Y(\alpha),Z(\alpha),\eta(\alpha)\bigr)
\\
& =
  \frac{d}{d\alpha}\bigg\vert_{\alpha=0}
  \bigl(\xi(\alpha),Y(\alpha),Y(\alpha),\eta(\alpha)\bigr)
  \in 
  T_c(C), 
\end{align*}
and we have $T_c(C) \supset T_c(C_{F_\ast}{\times}C_{\pi^\ast}){\cap}T_c(\mathcal{D})$. 
Since $T_c(C) \subset T_c(C_{F_\ast}{\times}C_{\pi^\ast}){\cap}T_c(\mathcal{D})$ is obvious, 
we deduce that $T_c(C) = T_c(C_{F_\ast}{\times}C_{\pi^\ast}){\cap}T_c(\mathcal{D})$, and that 
$(C_{F_\ast}{\times}C_{\pi^\ast}){\cap}\mathcal{D}$ is a clean intersection. This completes the proof.
\end{proof}
Combining Theorem~\ref{theorem:geodesicxraytransform} and \cite[Theorem~25.2.2]{Hoermander4},  
we immediately conclude that the adjoint $\mathcal{X}^T$ of $\mathcal{X}$ is also a similar elliptic Fourier integral operator. 
It is well-known that if there is no conjugate point in $(M,g)$, 
the normal operator $\mathcal{X}^T\circ\mathcal{X}$ becomes an elliptic pseudodifferential operator of order $-1$, 
and that there exists a parametrix for 
$\mathcal{X}^T\circ\mathcal{X}$ locally in $M$. 
See \cite{GuilleminSternberg, HolmanUhlmann} for instance.  
Holman and Uhlmann in \cite{HolmanUhlmann} studied the case that there are conjugate points, 
and proved the decomposition formula of the normal operator according to the order of the conjugate points. This type of results was firstly obtained by Stefanov and Uhlmann in their pioneering work \cite{PlamenGunther}. 
\begin{theorem}
\label{theorem:adointofgeodesicxraytransform} 
Assume {\rm ({\bf A0})}. 
$\mathcal{X}^T$ is also an elliptic Fourier integral operator such that 
\begin{align*}
  \mathcal{X}^T
& =
  F_\ast{\circ}\pi^\ast
  \in
  I^{-n/4}
  \bigl(
  M^\text{int}\times\partial_-S(M),
  C_{\mathcal{X}^T}^\prime;
  \Omega_{M^\text{int}\times\partial_-S(M)}^{1/2}
  \bigr),
\\
  C_{\mathcal{X}^T}
& =
  \bigl\{
  (\eta,\xi) 
  \in 
  T^\ast\bigl(\partial_-S(M)\bigr){\times}T^\ast(M^\text{int}): 
  \exists v \in S(M^\text{int})\ \text{s.t.}\ 
\\
& \qquad 
  \xi \in T^\ast_{F(v)}\bigl(\partial_-S(M)\bigr)\setminus\{0\}, 
  \eta \in T^\ast_{\pi(v)}(M^\text{int})\setminus\{0\}, 
  DF\vert^T_v\xi=D\pi\vert^T_v\eta
  \bigr\}.
\end{align*}
Furthermore the normal operator $\mathcal{X}^T\circ\mathcal{X}$ is an elliptic pseudodifferential operator of order $-1$. 
\end{theorem}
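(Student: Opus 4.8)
The plan is to read off the first assertion from Theorem~\ref{theorem:geodesicxraytransform} via the general theory of adjoints of Fourier integral operators, and to obtain the statement on the normal operator by composing the two canonical relations, the decisive point being that the absence of conjugate points in $(M,g)$ collapses the composed relation onto the diagonal. For the first assertion I would invoke \cite[Theorem~25.2.2]{Hoermander4}: since the distribution kernel of $\mathcal{X}$ lies in $I^{-n/4}\bigl(\partial_-S(M){\times}M^\text{int},C_\mathcal{X}^\prime;\Omega_{\partial_-S(M){\times}M^\text{int}}^{1/2}\bigr)$, the kernel of the formal adjoint $\mathcal{X}^T$ lies in $I^{-n/4}\bigl(M^\text{int}{\times}\partial_-S(M),(C_\mathcal{X}^T)^\prime;\Omega_{M^\text{int}{\times}\partial_-S(M)}^{1/2}\bigr)$, and $C_{\mathcal{X}^T}=C_\mathcal{X}^T$ is obtained from $C_\mathcal{X}$ by interchanging the two factors of $T^\ast\bigl(\partial_-S(M)\bigr){\times}T^\ast(M^\text{int})$, which is precisely the set displayed in the statement. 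The principal symbol of $\mathcal{X}^T$ equals, up to a Maslov factor, the complex conjugate of the principal symbol of $\mathcal{X}$ transported by this flip, so it is nonvanishing by the last assertion of Theorem~\ref{theorem:geodesicxraytransform}; hence $\mathcal{X}^T$ is elliptic.

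For the normal operator I would analyze the composition $C_{\mathcal{X}^T}{\circ}C_\mathcal{X}\subset T^\ast(M^\text{int}){\times}T^\ast(M^\text{int})$. Unwinding the definitions, a point $(x,\eta;x^\prime,\eta^\prime)$ lies in it exactly when there are a normal geodesic $\gamma$, the velocities $v=\dot\gamma$ at $x$ and $u=\dot\gamma$ at $x^\prime$, and a covector $\xi\in T^\ast_w\bigl(\partial_-S(M)\bigr)$ with $w=F(v)=F(u)$, such that $\eta(v)=0$, $\eta^\prime(u)=0$, $DF\vert^T_v\xi=D\pi\vert^T_v\eta$ and $DF\vert^T_u\xi=D\pi\vert^T_u\eta^\prime$. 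As in the proof of Theorem~\ref{theorem:geodesicxraytransform}, the descriptions of $\operatorname{Ran}\bigl(DF\vert^T_u\bigr)$ and $\operatorname{Ran}\bigl(D\pi\vert^T_u\bigr)$ recorded there show that, with $v$ and $u$ fixed, $\xi=(DF\vert^T_v)^{-1}D\pi\vert^T_v\eta$ is determined by $\eta$, and $\eta^\prime$ exists if and only if $DF\vert^T_u\xi$ has vanishing vertical component in the splitting $T^\ast_u\bigl(S(M^\text{int})\bigr)\simeq\{(\eta^\prime,\zeta):\zeta(u)=0\}$. Transporting $\xi$ along $\gamma$ by the linearized geodesic flow (using $F\circ\Psi(\cdot,t)=F$), this vertical component is governed by the Jacobi fields along $\gamma$ and vanishes at $x^\prime$ precisely when $x^\prime$ is conjugate to $x$ along $\gamma$; since $(M,g)$ is simple this forces $u=v$, whence $\eta^\prime=\eta$. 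Therefore $C_{\mathcal{X}^T}{\circ}C_\mathcal{X}=\Delta\bigl(T^\ast(M^\text{int})\setminus0\bigr)$, the conormal bundle of the diagonal.

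It then remains to check that this composition is clean and to compute its excess. In the notation of Definition~\ref{theorem:clean} the relevant intersection $C$ fibers over the $2n$-dimensional diagonal with fiber the sphere $S^{n-2}$ of directions $v\in S_x(M^\text{int})$ with $\eta(v)=0$; using $\operatorname{dim}(C_\mathcal{X})=\operatorname{dim}(C_{\mathcal{X}^T})=3n-2$ one gets $\operatorname{dim}(C)=3n-2$, hence excess $e=\operatorname{dim}(C)-2n=n-2$, the composition being transversal exactly when $n=2$. Cleanness is verified by a tangent-space computation at each point of $C$, entirely analogous to the one in the proof of Theorem~\ref{theorem:geodesicxraytransform}. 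Granting this, \cite[Theorem~25.2.3]{Hoermander4} gives that the kernel of $\mathcal{X}^T{\circ}\mathcal{X}$ lies in $I^{-n/4-n/4+e/2}\bigl(M^\text{int}{\times}M^\text{int},N^\ast(\Delta)\setminus0;\Omega_{M^\text{int}{\times}M^\text{int}}^{1/2}\bigr)=I^{-1}$, so $\mathcal{X}^T{\circ}\mathcal{X}$ is a pseudodifferential operator of order $-1$; its principal symbol is, up to a positive constant, the fiber integral over $S^{n-2}$ of the squared modulus of the nonvanishing principal symbol of $\mathcal{X}$, hence strictly positive, so the operator is elliptic.

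I expect the main obstacle to be the cleanness of $C_{\mathcal{X}^T}{\circ}C_\mathcal{X}$ --- establishing the tangent-space identity, not merely that the composition coincides set-theoretically with the diagonal --- together with the bookkeeping of the excess; this is exactly the step where simplicity, i.e.\ the nonexistence of conjugate points along normal geodesics, enters at the infinitesimal level, through the invertibility of the pertinent block of the linearized geodesic flow. Assumption ({\bf A1}) is not used here. Once cleanness and $e=n-2$ are in hand the remaining order computation and the ellipticity are routine; alternatively, for this classical fact one may simply cite \cite{GuilleminSternberg,HolmanUhlmann}.
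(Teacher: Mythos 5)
Your proposal is correct and follows the same route the paper takes, except that the paper does not supply a detailed proof of this theorem: it invokes \cite[Theorem~25.2.2]{Hoermander4} for the adjoint and then simply refers to \cite{GuilleminSternberg,HolmanUhlmann} for the normal operator being an elliptic $\Psi$DO of order $-1$. Your argument fills in what the paper leaves to the references, and the bookkeeping checks out: citing Theorem~25.2.2 gives $\mathcal{X}^T\in I^{-n/4}$ with the flipped canonical relation; the analysis of $C_{\mathcal{X}^T}\circ C_{\mathcal{X}}$ by means of the uniqueness statement of Lemma~\ref{theorem:holmanuhlmann} (which, as you reproduce by a Jacobi-field argument, is exactly where simplicity enters) collapses the composition to the diagonal; the fiber over a point of the diagonal is the $(n-2)$-sphere $\{v\in S_x(M^{\mathrm{int}}):\eta(v)=0\}$, so $\dim(C)=3n-2$ and the excess is $e=n-2$; and $-n/4-n/4+(n-2)/2=-1$ matches. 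Two small remarks: first, for cleanness it is worth writing out the tangent-space identity as in the proof of Theorem~\ref{theorem:geodesicxraytransform}, rather than asserting it by analogy, because here the fiber is genuinely positive-dimensional for $n\geq 3$; second, your ellipticity argument (the symbol of the normal operator is a fiber integral over $S^{n-2}$ of $\lvert\sigma(\mathcal{X})\rvert^2$, hence strictly positive) is standard and correct, and is the same observation that underlies the cited references.
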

%
%
\section{Jacobi Fields and Parallel Transport}
\label{section:jacobifields}
In this section we assume both ({\bf A0}) and ({\bf A1}), 
and study Jacobi fields on $M^\text{int}$. 
Let $Y(t)$ be the Jacobi field along $\gamma_v(t)$ with $v \in S(M^\text{int})$ 
such that 
$\bigl(Y(0),\nabla{Y(0)}\bigr)=\Xi \in T_v\bigl(S(M^\text{int})\bigr)$, 
where $\nabla{Y}(t):=\nabla_{\partial/\partial{t}}Y(t)$. 
More precisely, if we express things in the local coordinates such as 
$$
x=(x_1,\dotsc,x_n) \in M^\text{int}, 
\quad
\gamma_v(t)=\bigl(c_1(t),\dotsc,c_n(t)\bigr)
\quad
Y(t)
=
\sum_{i=1}^n
Y_i(t)
\left(\frac{\partial}{\partial x_i}\right)_{\gamma_v(t)},
$$
$$
\nabla{Y}(t)
=
\nabla_{\partial/\partial{t}}Y(t)
:=
\sum_{i=1}^n
\left\{
\frac{dY_i}{dt}(t)
+
\Gamma^i_{jk}\bigl(\gamma_v(t)\bigr)\dot{c}_j(t)Y_k(t)
\right\}
\left(\frac{\partial}{\partial x_i}\right)_{\gamma_v(t)}, 
$$
where $\Gamma^i_{jk}(x)$ are the Christoffel symbols of $(M,g)$ at $x \in M$. 
Then we have 
$$
D\pi\vert_{\Psi(v,s)}{\circ}D_v\Psi\vert_{(v,s)}\Xi=Y(s), 
\quad
D_v\Psi\vert_{(v,s)}\Xi=\nabla{Y(s)}. 
$$
See \cite[Lemma~4.3 in Page~56]{Sakai} for instance. 
Note that 
\begin{align*}
  \operatorname{Ker}(D\pi\vert_v)
& =
  \bigl\{
  \Xi \in T_v\bigl(S(M^\text{int})\bigr): 
  D\pi\vert_v(\Xi)=0
  \bigr\}
\\
& =
  \bigl\{(0,\zeta): \zeta \in T_v\bigl(S_{\pi(v)}(M^\text{int})\bigr)\bigr\}
\\
& \simeq
  \{(0,\zeta): \zeta \in T_{\pi(v)}(M^\text{int}), \langle{v,\zeta}\rangle=0\}. 
\end{align*}
Then we have for any $\Xi=(0,\zeta) \in \operatorname{Ker}(D\pi\vert_v)$ 
$$
Y(t)
=
\frac{d}{ds}\bigg\vert_{s=0}\exp_{\pi(v)}t(v+s\zeta)
=
tD\exp_{\pi(v)}\Big\vert_{tv}\zeta. 
$$
We now recall the Gauss lemma for the property of Jacobi fields.  
\begin{lemma}[{\bf Gauss Lemma}]
\label{theorem:gauss}
Let $Y$ be a Jacobi field along a geodesic $\gamma$. Then 
$$
\langle{Y(t),\dot{\gamma}(t)\rangle}
=
\langle{Y(0),\dot{\gamma}(0)\rangle}
+
t
\langle{{\nabla}Y(0),\dot{\gamma}(0)\rangle}.
$$
\end{lemma}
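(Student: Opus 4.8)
The plan is to derive the Gauss Lemma by differentiating the inner product $\langle Y(t),\dot\gamma(t)\rangle$ twice in $t$ and showing the second derivative vanishes, so that the function is affine in $t$; evaluating at $t=0$ then identifies the two coefficients. First I would recall that $\dot\gamma$ is parallel along $\gamma$, i.e. $\nabla_{\partial/\partial t}\dot\gamma=0$, since $\gamma$ is a geodesic, and that the Jacobi field satisfies the Jacobi equation $\nabla_{\partial/\partial t}\nabla_{\partial/\partial t}Y+R(Y,\dot\gamma)\dot\gamma=0$, where $R$ is the Riemann curvature tensor. Compatibility of the Levi-Civita connection with the metric $\langle\cdot,\cdot\rangle$ gives
\begin{equation*}
\frac{d}{dt}\langle Y(t),\dot\gamma(t)\rangle
=
\langle\nabla Y(t),\dot\gamma(t)\rangle
+
\langle Y(t),\nabla_{\partial/\partial t}\dot\gamma(t)\rangle
=
\langle\nabla Y(t),\dot\gamma(t)\rangle,
\end{equation*}
using the geodesic equation in the last step.

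Next I would differentiate once more, again using metric compatibility and then the Jacobi equation:
\begin{equation*}
\frac{d^2}{dt^2}\langle Y(t),\dot\gamma(t)\rangle
=
\langle\nabla_{\partial/\partial t}\nabla Y(t),\dot\gamma(t)\rangle
=
-\langle R\bigl(Y(t),\dot\gamma(t)\bigr)\dot\gamma(t),\dot\gamma(t)\rangle.
\end{equation*}
By the standard antisymmetry of the curvature tensor in its last two arguments (equivalently, $\langle R(X,Z)W,W\rangle=0$ for all $X,Z,W$), the right-hand side is identically zero. Hence $t\mapsto\langle Y(t),\dot\gamma(t)\rangle$ is an affine function of $t$: its value is $\langle Y(0),\dot\gamma(0)\rangle+t\,c$ for a constant $c$, and evaluating the first-derivative formula above at $t=0$ gives $c=\langle\nabla Y(0),\dot\gamma(0)\rangle$, which is exactly the claimed identity.

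There is essentially no obstacle here — the statement is a classical computation and every ingredient (metric compatibility, geodesic equation, Jacobi equation, curvature antisymmetry) is standard and available without invoking assumption ({\bf A1}) at all; indeed this lemma holds on any Riemannian manifold. The only point requiring a word of care is the reference frame for the second covariant derivative: one should note that $\nabla_{\partial/\partial t}$ commutes past the pairing in the expected way because $\dot\gamma$ is parallel, so no extra curvature terms appear when passing the derivative onto $\nabla Y$. If one prefers a coordinate-free one-liner, the whole argument can be summarized as: the function is affine because its second derivative is $-\langle R(Y,\dot\gamma)\dot\gamma,\dot\gamma\rangle=0$, and its zeroth- and first-order Taylor coefficients at $t=0$ are read off from metric compatibility together with $\nabla_{\partial/\partial t}\dot\gamma=0$.
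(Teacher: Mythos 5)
Your proof is correct and takes exactly the same approach as the paper: you compute $\frac{d^2}{dt^2}\langle Y(t),\dot\gamma(t)\rangle = -\langle R(Y,\dot\gamma)\dot\gamma,\dot\gamma\rangle = 0$ via the Jacobi equation, metric compatibility, and the geodesic condition, then read off the affine coefficients at $t=0$. The paper states this same computation in one line and refers to Sakai for details; your write-up simply spells out those details.
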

\begin{proof} 
Lemma~\ref{theorem:gauss} follows from 
$$
\frac{d^2}{dt^2}
\langle{Y(t),\dot{\gamma}(t)\rangle}
=
-
\bigl\langle
R\bigl(Y(t),\dot{\gamma}(t)\bigr)\dot{\gamma}(t),\dot{\gamma}(t)\bigr)
\bigr\rangle
=
0,
$$
where $R$ is the Riemann curvature tensor. 
See \cite[Proposition~2.3 in Page 36]{Sakai} for the detail. 
\end{proof}
The assumption ({\bf A1}) ensures that all the Jacobi fields on $M^\text{int}$ 
are of the form of the product of a scalar function and a parallel transport of a tangent vector. This property plays a crucial role in the computation of the composition of $C_{\mathcal{X}^T}$ with some Lagrangian submanifolds. So we need to introduce the parallel transport of covectors along geodesics in $M^\text{int}$. Let $c$ be a smooth curve in $M^\text{int}$. 
We denote by $P(t_0,t_1;c)v$ the parallel transport of $v \in T_{c(t_0)}$ at $c(t_1)$ along the curve $c$.  
Suppose that $t_0{\ne}t_1$ and 
$\tilde{\eta} \in T^\ast_{c(t_1)}(M^\text{int})$. 
Then we have 
$$
\tilde{\eta} \bigl(P(t_0,t_1;c)v\bigr)
=
P(t_0,t_1;c)^T\tilde{\eta}(v),
\quad
v \in T_{c(t_0)}(M^\text{int}).
$$ 
We say that $P(t_0,t_1;c)^T\tilde{\eta} \in T^\ast_{c(t_0)}(M^\text{int})$ is the parallel transport of 
$\tilde{\eta} \in T_{c(t_1)}(M^\text{int})$ at $c(t_0)$ along the curve $c$. 
\par
We now recall F.~Schur's lemma for the property of sectional curvatures. 
See, e.g., \cite[Proposition~3.6 in page~46]{Sakai} for this. 
\begin{lemma}
\label{theorem:schur} 
Let $(M,g)$ be a Riemannian manifold, 
and let $K_x(\sigma)$ be the sectional curvature at $x \in M$ 
for a two-dimensional vector subspace $\sigma$ of $T_x(M)$. 
\begin{itemize}
\item 
The sectional curvature $K_x(\sigma)$ is independent of the choice of $\sigma$ 
at a point $x \in M$ if and only if 
$$
R(X,Y)Z
=
k(x)
\{
\langle{Y,Z}\rangle{X}
-
\langle{X,Z}\rangle{Y}
\}
$$
for any $X,Y,Z \in T_x(M)$, where $k(x):=K_x(\cdot)$. 
\item
{\rm ({\bf F.~Schur's lemma})}\ 
If $K_x(\sigma)$ is independent of the choice of 
$\sigma$ at every point $x \in M$, and $\operatorname{dim}(M)\geqq3$, 
then $k(x):=K_x(\cdot)$ is independent of $x \in M$, that is, 
$k(x)$ is a constant function on $M$.  
\end{itemize}
\end{lemma}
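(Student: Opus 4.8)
\textbf{Proof proposal for Lemma~\ref{theorem:schur}.}
The plan is to prove the two bullets separately, as the first is a pointwise algebraic identity and the second is the genuine differential-geometric statement. For the first bullet, the ``if'' direction is immediate: if $R(X,Y)Z=k(x)\{\langle Y,Z\rangle X-\langle X,Z\rangle Y\}$, then for any two-plane $\sigma=\operatorname{span}\{X,Y\}$ with $X,Y$ orthonormal, the sectional curvature is $K_x(\sigma)=\langle R(X,Y)Y,X\rangle=k(x)\{\langle Y,Y\rangle\langle X,X\rangle-\langle X,Y\rangle^2\}=k(x)$, which does not depend on $\sigma$. For the ``only if'' direction I would use the standard polarization argument: assuming $K_x(\sigma)=k(x)$ for every $\sigma$, one knows $\langle R(X,Y)Y,X\rangle=k(x)(\langle X,X\rangle\langle Y,Y\rangle-\langle X,Y\rangle^2)$ for all $X,Y$, and then one defines the $(0,4)$-tensor $T(X,Y,Z,W):=\langle R(X,Y)Z,W\rangle-k(x)(\langle X,W\rangle\langle Y,Z\rangle-\langle X,Z\rangle\langle Y,W\rangle)$. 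Both terms have the same symmetries of the curvature tensor (antisymmetry in the first and last pairs, symmetry under interchange of pairs, and the first Bianchi identity), so $T$ is an algebraic curvature tensor, and it vanishes on the diagonal $T(X,Y,Y,X)=0$; a standard lemma (the curvature tensor is determined by its sectional curvatures) then gives $T\equiv0$, which rearranges to the claimed formula. I would simply cite \cite[Page 36]{Sakai} or an analogous reference for this polarization lemma rather than redo it.

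For the second bullet (F.~Schur's lemma proper), the plan is to differentiate the identity from the first bullet and invoke the second Bianchi identity. Assume $K_x(\sigma)=k(x)$ at every $x$, so by the first bullet $R=k\cdot R_0$ where $R_0(X,Y)Z:=\langle Y,Z\rangle X-\langle X,Z\rangle Y$ is the ``constant-curvature-$1$'' model tensor; note $R_0$ is parallel, $\nabla R_0=0$. Then $(\nabla_U R)(X,Y)Z=(Uk)\,R_0(X,Y)Z$. Now write the second Bianchi identity,
\begin{equation*}
(\nabla_U R)(X,Y)Z+(\nabla_X R)(Y,U)Z+(\nabla_Y R)(U,X)Z=0,
\end{equation*}
and substitute to get
\begin{equation*}
(Uk)\bigl(\langle Y,Z\rangle X-\langle X,Z\rangle Y\bigr)
+(Xk)\bigl(\langle U,Z\rangle Y-\langle Y,Z\rangle U\bigr)
+(Yk)\bigl(\langle X,Z\rangle U-\langle U,Z\rangle X\bigr)=0.
\end{equation*}
Fix a point, pick $X=U$ a unit vector, choose $Y$ and $Z=Y$ orthonormal to $X$ with $\operatorname{dim}(M)\geqq3$ so that such a $Y$ exists and is linearly independent from $X$; the equation collapses to $(Xk)\,Y-(Yk)\,X=0$ modulo the relevant inner-product coefficients, forcing $Xk=0$. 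Since $X$ was an arbitrary unit vector and the point was arbitrary, $dk\equiv0$, so $k$ is locally constant, hence constant on a connected $M$ (and on each component in general).

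The main obstacle is bookkeeping in the second step: one must choose the test vectors $X,Y,Z,U$ in the polarized Bianchi identity carefully so that the resulting linear combination of $X,Y,U$ actually isolates a single directional derivative $Xk$, and this is precisely where $\operatorname{dim}(M)\geqq3$ enters—in dimension $2$ there is no room to pick three relevant directions and the argument genuinely fails (every surface has ``constant'' sectional curvature at each point trivially, but $k$ need not be constant). Apart from that, everything is a direct computation with the Bianchi identities and can be attributed to \cite{Sakai}; I would keep the write-up short and lean on that reference for the polarization lemma in the first bullet.
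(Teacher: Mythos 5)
The paper states Lemma~\ref{theorem:schur} without proof, treating it as a classical fact with a reference to Sakai's book, so there is no argument in the paper to compare against. Your overall strategy is the standard textbook one (polarization for the first bullet, the second Bianchi identity for the second), and the first bullet is handled correctly.

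There is, however, a concrete error in your treatment of the second bullet. You propose to ``pick $X=U$ a unit vector''; but with $U=X$ the second Bianchi identity
\begin{equation*}
(\nabla_U R)(X,Y)Z+(\nabla_X R)(Y,U)Z+(\nabla_Y R)(U,X)Z=0
\end{equation*}
collapses to $0=0$ identically: $(\nabla_Y R)(X,X)Z=0$ because $R$ is antisymmetric in its first two slots, and the first two terms cancel for the same reason. Plugging $R=k\,R_0$ into the resulting tautology yields no information about $dk$. What you want is the opposite configuration: take $X$, $Y$, $U$ mutually orthogonal unit vectors (this is exactly where $\dim M\geqq3$ is used) and set $Z=Y$. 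Then the displayed identity reduces to
\begin{equation*}
(Uk)\,X-(Xk)\,U=0,
\end{equation*}
and since $X$ and $U$ are linearly independent you obtain $Xk=Uk=0$. As $X$ ranges over unit vectors this gives $dk\equiv0$, hence $k$ is locally constant. Your concluding remark about the failure in dimension $2$ is correct, but your sketch as written would make the Bianchi step vacuous; the fix above is the standard one.
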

If we suppose ({\bf A1}), the equation for the Jacobi field is reduced to a simpler one. 
\begin{lemma}
\label{theorem:jacobi1}
Suppose that $(M,g)$ is a Riemannian manifold satisfying {\rm ({\bf A1})}. 
Let $Y(t)$ be a Jacobi field along a normal geodesic $\gamma(t)$ with 
$$
\langle{Y(0),\dot{\gamma}(0)}\rangle=0,
\quad
\langle{{\nabla}Y(0),\dot{\gamma}(0)}\rangle=0. 
$$
If $(M,g)$ is a space of constant curvature $K$, then $Y(t)$ solves 
$$
\nabla\nabla{Y(t)}+KY(t)=0.
$$
If $(M,g)$ is a two-dimensional manifold 
with a sectional curvature $k(x)$ at $x \in M$, 
then $Y(t)$ solves 
$$
\nabla\nabla{Y(t)}+k\bigl(\gamma(t)\bigr)Y(t)=0.
$$
\end{lemma}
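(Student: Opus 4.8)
The plan is to use the standard decomposition of a Jacobi field into its component along the geodesic and its normal component, and then to exploit assumption ({\bf A1}) to reduce the Jacobi equation on the normal bundle to a scalar ODE. First I would recall the Jacobi equation in its general form $\nabla\nabla Y(t)+R\bigl(Y(t),\dot\gamma(t)\bigr)\dot\gamma(t)=0$. By Lemma~\ref{theorem:gauss} together with the hypotheses $\langle Y(0),\dot\gamma(0)\rangle=0$ and $\langle\nabla Y(0),\dot\gamma(0)\rangle=0$, the tangential component $\langle Y(t),\dot\gamma(t)\rangle$ vanishes identically, so $Y(t)$ stays in the orthogonal complement of $\dot\gamma(t)$ for all $t$; in particular $R\bigl(Y(t),\dot\gamma(t)\bigr)\dot\gamma(t)$ is the only term that needs to be understood.

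The key step is to compute $R\bigl(Y,\dot\gamma\bigr)\dot\gamma$ under ({\bf A1}). If $(M,g)$ has constant curvature $K$, the first item of Lemma~\ref{theorem:schur} gives $R(X,Y)Z=K\{\langle Y,Z\rangle X-\langle X,Z\rangle Y\}$, so setting $X=Y(t)$, $Y=Z=\dot\gamma(t)$ and using $\lvert\dot\gamma\rvert=1$ together with $\langle Y(t),\dot\gamma(t)\rangle=0$ yields $R\bigl(Y(t),\dot\gamma(t)\bigr)\dot\gamma(t)=K\,Y(t)$. Substituting into the Jacobi equation gives $\nabla\nabla Y(t)+KY(t)=0$, the first claimed identity. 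If instead $\dim(M)=2$, then at each point there is only one two-dimensional subspace of the tangent space, so $K_x(\sigma)$ is trivially independent of $\sigma$, and the first item of Lemma~\ref{theorem:schur} again applies with $k(x):=K_x(\cdot)$; the same contraction, now evaluated along the curve, gives $R\bigl(Y(t),\dot\gamma(t)\bigr)\dot\gamma(t)=k\bigl(\gamma(t)\bigr)Y(t)$ and hence $\nabla\nabla Y(t)+k\bigl(\gamma(t)\bigr)Y(t)=0$, the second claimed identity. Note that in the two-dimensional case F.~Schur's lemma is \emph{not} invoked, since $k$ is allowed to depend on the point; Schur's lemma is only relevant for ruling out non-constant $k$ when $\dim(M)\geqq3$, which is precisely why ({\bf A1}) splits into these two regimes.

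I do not expect a genuine obstacle here: the only subtlety is making sure the tangential/normal splitting is legitimate, i.e. that $Y(t)$ remains normal to $\dot\gamma(t)$ for all $t$ and not merely at $t=0$. This is exactly what Lemma~\ref{theorem:gauss} supplies, since under the stated initial conditions the affine function $\langle Y(0),\dot\gamma(0)\rangle+t\langle\nabla Y(0),\dot\gamma(0)\rangle$ is identically zero. Everything else is a one-line tensor contraction using the curvature identity from Lemma~\ref{theorem:schur}, so the proof is short; the real work of the paper lies in what one does with this reduced ODE afterwards — namely that its solutions are scalar multiples of parallel vector fields along $\gamma$ — rather than in the Lemma itself.
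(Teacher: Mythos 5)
Your proof is correct and follows essentially the same route as the paper: substitute the curvature identity from Lemma~\ref{theorem:schur} into the Jacobi equation and use Lemma~\ref{theorem:gauss} with the initial orthogonality conditions to kill the tangential term. The paper simply remarks that the constant-curvature case follows from the two-dimensional computation and writes out only the latter, whereas you treat both explicitly, but the argument is identical.
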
 
\begin{proof}
In view of Lemma~\ref{theorem:schur}, we have only to show the latter one. 
Applying Lemma~\ref{theorem:schur} to the equation of the Jacobi field, we have 
\begin{align*}
  \nabla\nabla{Y(t)}
& =
  -R\bigl(Y(t),\dot{\gamma}(t)\bigr)\dot{\gamma}(t)
\\
& =
  -
  k\bigl(\gamma(t)\bigr)
  \{
  \langle{\dot{\gamma}(t),\dot{\gamma}(t)}\rangle{Y(t)}
  -
  \langle{Y(t),\dot{\gamma}(y)}\rangle\dot{\gamma}(t)
  \} 
\\
& =
  - 
  k\bigl(\gamma(t)\bigr)
  \{
  Y(t)
  -
  \langle{Y(t),\dot{\gamma}(y)}\rangle\dot{\gamma}(t)
  \} 
\end{align*}
Using Lemma~\ref{theorem:gauss} 
and the orthogonal condition on $Y(0)$ and $\nabla{Y(0)}$, 
we deduce that 
\begin{align*}
  \nabla\nabla{Y(t)}
& =
  -
  k\bigl(\gamma(t)\bigr)
  \{
  Y(t)
  -
  \langle{Y(0),\dot{\gamma}(0)}\rangle\dot{\gamma}(t)
  -
  t\langle{{\nabla}Y(0),\dot{\gamma}(0)}\rangle\dot{\gamma}(t)
  \}
\\
& =
  -
  k\bigl(\gamma(t)\bigr)Y(t).  
\end{align*}
This completes the proof.
\end{proof}
We show that if ({\bf A1}) holds, then all the Jacobi fields are 
of the form of the product of a scalar function and a parallel transport of 
a tangent vector. 
\begin{lemma}
\label{theorem:jacobi2} 
Suppose that $(M,g)$ is a Riemannian manifold satisfying {\rm ({\bf A1})}. 
Let $Y(t)$ be a Jacobi field along a normal geodesic $\gamma(t)$ with 
$$
\langle{Y(0),\dot{\gamma}(0)}\rangle=0,
\quad
\langle{{\nabla}Y(0),\dot{\gamma}(0)}\rangle=0. 
$$
Set 
$$
X(t):=P(0,t;\gamma)Y(0),
\quad
\zeta(t):=P(0,t;\gamma)\nabla{Y}(0). 
$$
If $(M,g)$ is a space of constant curvature $K$,  
then $Y(t)$ is given by 
$$
Y(t)=a(t)X(t)+b(t)\zeta(t),
$$
\begin{alignat*}{3}
  a^{\prime\prime}(t)+Ka(t)
& =
  0,
& \quad 
  a(0)
& =
  1,
& \quad
  a^\prime(0)
& =
  0,
\\
  b^{\prime\prime}(t)+Kb(t)
& =
  0,
& \quad 
  b(0)
& =
  0,
& \quad
  b^\prime(0)
& =
  1. 
\end{alignat*}
If $(M,g)$ is a two-dimensional manifold 
with a sectional curvature $k(x)$ at $x \in M$, 
then $Y(t)$ is given by 
$$
Y(t)=a(t)X(t)+b(t)\zeta(t),
$$
\begin{alignat*}{3}
  a^{\prime\prime}(t)+k\bigl(\gamma(t)\bigr)a(t)
& =
  0,
& \quad 
  a(0)
& =
  1,
& \quad
  a^\prime(0)
& =
  0,
\\
  b^{\prime\prime}(t)+k\bigl(\gamma(t)\bigr)b(t)
& =
  0,
& \quad 
  b(0)
& =
  0,
& \quad
  b^\prime(0)
& =
  1. 
\end{alignat*}
\end{lemma}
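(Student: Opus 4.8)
The plan is to show that the candidate field $\tilde Y(t):=a(t)X(t)+b(t)\zeta(t)$ is itself a Jacobi field along $\gamma$ with the same initial data as $Y$, and then to conclude $Y=\tilde Y$ by uniqueness. The point is that $X$ and $\zeta$ are parallel along $\gamma$, i.e. $\nabla X(t)=\nabla\zeta(t)=0$, so the Leibniz rule for the covariant derivative along $\gamma$ gives $\nabla\tilde Y(t)=a'(t)X(t)+b'(t)\zeta(t)$ and $\nabla\nabla\tilde Y(t)=a''(t)X(t)+b''(t)\zeta(t)$. Substituting the scalar ODEs satisfied by $a$ and $b$ then yields $\nabla\nabla\tilde Y(t)=-k\bigl(\gamma(t)\bigr)\tilde Y(t)$ in the two-dimensional case, with $k(\gamma(t))$ replaced by the constant $K$ in the constant-curvature case. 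Thus $\tilde Y$ solves exactly the reduced Jacobi equation of Lemma~\ref{theorem:jacobi1}.

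\textbf{Orthogonality, curvature term, initial data.} Next I would note that $\tilde Y$ satisfies the orthogonality conditions along the whole geodesic: since parallel transport is a linear isometry and $\dot\gamma(t)=P(0,t;\gamma)\dot\gamma(0)$, we get $\langle X(t),\dot\gamma(t)\rangle=\langle Y(0),\dot\gamma(0)\rangle=0$ and $\langle\zeta(t),\dot\gamma(t)\rangle=\langle\nabla Y(0),\dot\gamma(0)\rangle=0$, hence $\langle\tilde Y(t),\dot\gamma(t)\rangle=0$ for all $t$. Combined with the algebraic form of the curvature tensor in Lemma~\ref{theorem:schur} (valid for a space of constant curvature by definition, and for a surface with $k=k(x)$), this gives $R\bigl(\tilde Y(t),\dot\gamma(t)\bigr)\dot\gamma(t)=k\bigl(\gamma(t)\bigr)\tilde Y(t)$, so $\tilde Y$ in fact solves the full Jacobi equation $\nabla\nabla\tilde Y+R(\tilde Y,\dot\gamma)\dot\gamma=0$. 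From $a(0)=1$, $a'(0)=0$, $b(0)=0$, $b'(0)=1$ together with $X(0)=Y(0)$ and $\zeta(0)=\nabla Y(0)$ we read off $\tilde Y(0)=Y(0)$ and $\nabla\tilde Y(0)=\nabla Y(0)$. Two Jacobi fields along $\gamma$ with identical initial data coincide, because in a parallel frame along $\gamma$ the Jacobi equation becomes a linear second-order ODE system with smooth coefficients, to which Picard--Lindelöf applies; hence $Y(t)=\tilde Y(t)=a(t)X(t)+b(t)\zeta(t)$. By Lemma~\ref{theorem:schur} both cases are covered at once, reading $k(\gamma(t))$ as the constant $K$ in the constant-curvature case.

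\textbf{Main (minor) obstacle.} The only step that deserves a sentence of care is the reduction from the full Jacobi equation to the scalar system, which rests on the two observations above: $\tilde Y$ remains orthogonal to $\dot\gamma$, so the curvature term collapses to $k(\gamma(t))\tilde Y$; and covariant differentiation of $a(t)X(t)+b(t)\zeta(t)$ only differentiates the scalar factors because $X,\zeta$ are parallel. In fact one can avoid the full Jacobi equation altogether: by Lemma~\ref{theorem:jacobi1} the given $Y$ already solves $\nabla\nabla Y+k(\gamma(\cdot))Y=0$, the computation above shows $\tilde Y$ solves the same linear ODE, the initial data agree, and uniqueness closes the argument. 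I do not expect a genuine difficulty here; the content is entirely in Lemmas~\ref{theorem:jacobi1} and \ref{theorem:schur}, and the present lemma is essentially a repackaging by variation of a parallel frame.
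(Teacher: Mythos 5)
Your proposal is correct and takes essentially the same approach as the paper: define the candidate $\tilde Y=aX+b\zeta$, observe that parallelism of $X,\zeta$ reduces covariant differentiation to differentiation of the scalar coefficients, verify the initial data, and invoke uniqueness against the reduced ODE of Lemma~\ref{theorem:jacobi1}. The paper goes directly via the reduced scalar equation (exactly the shortcut you describe in your final paragraph), so the extra detour through the full Jacobi equation and orthogonality of $\tilde Y$ is harmless but unnecessary.
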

\begin{proof}
We show only the latter one. The former one can be proved in the same way. 
Suppose that $\operatorname{dim}(M)=2$. 
Lemma~\ref{theorem:jacobi1} shows that $Y(t)$ solves 
$\nabla\nabla{Y(t)}+k\bigl(\gamma(t)\bigr)Y(t)=0$. 
Set $Z(t)=a(t)X(t)+b(t)\zeta(t)$. 
Note that $\nabla{X}(t)=0$ and $\nabla{\zeta}(t)=0$. 
Then we have 
\begin{align*}
  Z(0)
& =
  a(0)X(0)+b(0)\zeta(0)=X(0)=P(0,0;\gamma)Y(0)=Y(0),
\\
  \nabla{Z}(t)
& =
  a^\prime(t)X(t)+a(t)\nabla{X}(t)+b^\prime(t)\zeta(t)+b(t)\nabla\zeta(t)
\\
& =
  a^\prime(t)X(t)+b^\prime(t)\zeta(t),
\\
  \nabla{Z}(0)
& =
  a^\prime(0)X(0)+b^\prime(0)\zeta(0), 
\\
& =
  \zeta(0)
  =
  P(0,0;\gamma)\nabla{Y(0)}=\nabla{Y(0)}. 
\end{align*}
If we prove that $Z(t)$ solves 
$\nabla\nabla{Z(t)}+k\bigl(\gamma(t)\bigr)Z(t)=0$, 
then $Y=Z$ follows from the uniqueness of the solution to the initial value problem for the system of linear ordinary differential equations. 
We deduce that 
\begin{align*}
& \nabla\nabla{Z(t)}+k\bigl(\gamma(t)\bigr)Z(t)
\\
  =
& \nabla\{a^\prime(t)X(t)+b^\prime(t)\zeta(t)\}
  +
  k\bigl(\gamma(t)\bigr)\{a(t)X(t)+b(t)\zeta(t)\}
\\
  =
& a^{\prime\prime}(t)X(t)+a^\prime(t)\nabla{X(t)}
  +
  b^{\prime\prime}(t)\zeta(t)+b^\prime(t)\nabla\zeta(t)
\\
  +
& k\bigl(\gamma(t)\bigr)\{a(t)X(t)+b(t)\zeta(t)\}
\\
  =
& a^{\prime\prime}(t)X(t)+b^{\prime\prime}(t)\zeta(t)
  +
  k\bigl(\gamma(t)\bigr)\{a(t)X(t)+b(t)\zeta(t)\}
\\
  =
& \{a^{\prime\prime}(t)+k\bigl(\gamma(t)\bigr)a(t)\}X(t)
  +
  \{b^{\prime\prime}(t)+k\bigl(\gamma(t)\bigr)b(t)\}\zeta(t)
\\
  =
& 0.
\end{align*}
This completes the proof. 
\end{proof}
Finally we show the properties of solutions $a(t)$ and $b(t)$.  
These properties are essentially related to the assumption that there is no conjugate point in $M^\text{int}$.  
\begin{lemma}
\label{theorem:noconjugatepoint}
Suppose {\rm ({\bf A0})} and {\rm ({\bf A1})}. 
Fix arbitrary $w \in \partial_-S(M)$ and 
$s,t_0,\tilde{t}_0 \in \bigl(0,\tau_+(w)\bigr)$ 
with $t_0{\ne}\tilde{t}_0$. 
If $(M,g)$ is a space of constant curvature $K$, 
we assume that $a(t;s)$ and $b(t;s)$ are solutions to 
\begin{alignat}{3}
  a_{tt}(t;s)+Ka(t;s)
& =
  0,
& \quad 
  a(s;s)
& =
  1,
& \quad
  a_t(s;s)
& =
  0,
\label{equation:aK}
\\
  b_{tt}(t;s)+Kb(t;s)
& =
  0,
& \quad 
  b(s;s)
& =
  0,
& \quad
  b_t(s;s)
& =
  1. 
\label{equation:bK}
\end{alignat}
If $(M,g)$ is a two-dimensional manifold 
with a sectional curvature $k(x)$ at $x \in M$, 
we assume that $a(t;s)$ and $b(t;s)$ are solutions to 
\begin{alignat}{3}
  a_{tt}(t;s)+k\bigl(\gamma_w(t)\bigr)a(t;s)
& =
  0,
& \quad 
  a(s;s)
& =
  1,
& \quad
  a_t(s;s)
& =
  0,
\label{equation:akappa}
\\
  b_{tt}(t;s)+k\bigl(\gamma_w(t)\bigr)b(t;s)
& =
  0,
& \quad 
  b(s;s)
& =
  0,
& \quad
  b_t(s;s)
& =
  1. 
\label{equation:bkappa}
\end{alignat}
If we set 
$\Delta(t_0,\tilde{t}_0;s):=a(t_0;s)b(\tilde{t}_0;s)-a(\tilde{t}_0;s)b(t_0;s)$, 
then we have $\Delta(t_0,\tilde{t}_0;s)\ne0$.   
\end{lemma}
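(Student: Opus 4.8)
The plan is to recognize $\Delta(t_0,\tilde{t}_0;s)$ as the value at $t_0$ of a solution of the scalar Jacobi equation that vanishes at $\tilde{t}_0$, and then to rule out a second zero at $t_0$ using the absence of conjugate points in ({\bf A0}).

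First I would set up the ODE picture. By ({\bf A1}) and Lemma~\ref{theorem:jacobi1}, in either case the functions $a(\cdot;s)$ and $b(\cdot;s)$ are solutions of a second order linear equation $u^{\prime\prime}+q(t)u=0$ with $q\equiv K$ or $q(t)=k\bigl(\gamma_w(t)\bigr)$; since this equation has no first order term its Wronskian is constant, so from the initial conditions at $t=s$ we get $a(t;s)b_t(t;s)-a_t(t;s)b(t;s)\equiv1$ and $\{a(\cdot;s),b(\cdot;s)\}$ is a basis of the two-dimensional solution space, a solution being nontrivial exactly when its coordinates in this basis are not both zero. Let $e(t)$ be a parallel unit field along $\gamma_w$ orthogonal to $\dot{\gamma}_w$ --- unique up to sign when $\dim M=2$, and obtained by parallel transport of any unit vector orthogonal to $\dot{\gamma}_w(0)$ when $(M,g)$ has constant curvature. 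By the computation carried out in the proof of Lemma~\ref{theorem:jacobi2}, for any solution $u$ of this scalar equation the field $u(t)e(t)$ is a Jacobi field along $\gamma_w$, and it is nontrivial precisely when $u$ is.

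Now introduce
$$
\phi(t):=b(\tilde{t}_0;s)\,a(t;s)-a(\tilde{t}_0;s)\,b(t;s),
$$
which solves the scalar equation and satisfies $\phi(\tilde{t}_0)=0$ and $\phi(t_0)=\Delta(t_0,\tilde{t}_0;s)$. The function $\phi$ is not identically zero: otherwise both coefficients vanish, in particular $b(\tilde{t}_0;s)=0$, and then $b(t;s)e(t)$ is a nontrivial Jacobi field along $\gamma_w$ vanishing at the distinct parameters $s$ and $\tilde{t}_0$, so $\gamma_w(s)$ and $\gamma_w(\tilde{t}_0)$ are conjugate, contradicting simplicity in ({\bf A0}). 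Finally, if $\Delta(t_0,\tilde{t}_0;s)=\phi(t_0)=0$, then $\phi$ is a nontrivial solution of the scalar equation vanishing at the two distinct parameters $t_0$ and $\tilde{t}_0$, so $\phi(t)e(t)$ is a nontrivial Jacobi field along $\gamma_w$ vanishing at $\gamma_w(t_0)$ and $\gamma_w(\tilde{t}_0)$, making these points conjugate and again contradicting ({\bf A0}). Hence $\Delta(t_0,\tilde{t}_0;s)\ne0$.

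I expect no genuine analytic obstacle here beyond the uniqueness theorem for linear ordinary differential equations; the only thing requiring care is applying Lemmas~\ref{theorem:jacobi1} and \ref{theorem:jacobi2} uniformly to the two alternatives in ({\bf A1}), and keeping straight that a conjugate pair along $\gamma_w$ is detected by a normal Jacobi field --- equivalently, by a nontrivial solution of the scalar equation --- vanishing twice, which is where simplicity enters.
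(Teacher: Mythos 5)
Your proof is correct, and it takes a genuinely different route from the paper. The paper divides into four cases: for $K=\kappa^2>0$, $K=0$, and $K=-\kappa^2<0$ it substitutes the explicit trigonometric, linear, or hyperbolic solutions of the constant-coefficient Jacobi equation and evaluates $\Delta$ directly (e.g., $\Delta=\kappa^{-1}\sin\{\kappa(\tilde{t}_0-t_0)\}$), and only in the two-dimensional case does it argue abstractly: assuming $\Delta=0$, it observes the two columns of the associated $2\times2$ matrix are proportional, applies a rotation through an angle $\theta$ to produce a nontrivial linear combination $B(t)=\sin\theta\,a(t;s)+\cos\theta\,b(t;s)$ vanishing at $t_0$, checks (via $\alpha$) that it also vanishes at $\tilde{t}_0$, and then multiplies by a parallel field to get a Jacobi field contradicting simplicity. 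Your argument achieves the same endpoint by observing outright that $\phi(t):=b(\tilde{t}_0;s)a(t;s)-a(\tilde{t}_0;s)b(t;s)$ is a solution of the scalar Jacobi equation with $\phi(\tilde{t}_0)=0$ and $\phi(t_0)=\Delta$, so the vanishing of $\Delta$ would directly produce a nontrivial scalar solution with two zeros; this dispenses with both the rotation trick and the three explicit computations and treats all cases of ({\bf A1}) uniformly. Your step showing $\phi\not\equiv 0$ is also fine (if both coefficients vanish then $a(\tilde{t}_0;s)=b(\tilde{t}_0;s)=0$, which forces $\tilde{t}_0\ne s$ because $a(s;s)=1$, and then $b(\cdot;s)$ vanishes at the distinct times $s$ and $\tilde{t}_0$), although it can be shortened further: $a(\tilde{t}_0;s)=b(\tilde{t}_0;s)=0$ directly contradicts the Wronskian identity $a\,b_t-a_t\,b\equiv 1$ at $t=\tilde{t}_0$, without invoking conjugate points at all. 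In short, the idea underlying both proofs is the same --- no nontrivial solution of the normal Jacobi equation along a simple geodesic can vanish twice --- but your formulation of $\Delta$ as $\phi(t_0)$ makes that reduction immediate and uniform, which is cleaner than the case analysis in the paper.
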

\begin{proof}
Let $\kappa$ be a positive constant. 
We prove Lemma~\ref{theorem:noconjugatepoint} for four cases: 
$K=\kappa^2>0$, $K=0$, $K=-\kappa^2<0$, and $\operatorname{dim}(M)=2$.
\vspace{6pt}
\\
\underline{Case of $K=\kappa^2>0$}.\ 
In this case $\overline{M}$ must be contained 
in the interior of the hemisphere of the radius $1/\kappa$ 
since there is no conjugate point in $M$. 
Otherwise, there are multiple great circles in $M^\text{int}$, 
and the pair of the crossing points of two great circles are conjugate points to each other. 
One may refer to \cite[Chapter~13, 2.4 Example]{doCarmo}. 
Hence $0 \leqq s, t_0, \tilde{t_0} < \pi/\kappa$ follows, 
and we have $0<\lvert{t_0-\tilde{t}_0}\rvert<\pi/\kappa$. 
In this case the solutions $a(t;s)$ and $b(t;s)$ are given by 
$$
a(t;s)
=
\cos\{\kappa(t-s)\}, 
\quad
b(t;s)
=
\frac{\sin\{\kappa(t-s)\}}{\kappa}. 
$$
Since $0<\lvert{t_0-\tilde{t}_0}\rvert<\pi/\kappa$, we have 
$$
\Delta(t_0,\tilde{t}_0;s)
=
\frac{\sin\{\kappa(\tilde{t}_0-t_0)\}}{\kappa}
\ne
0.
$$
\vspace{6pt}
\\
\underline{Case of $K=0$}.\ 
In this case we have $a(t;s)\equiv1$, $b(t;s)=t-s$, 
and $\Delta(t_0,\tilde{t}_0;s)=\tilde{t}_0-t_0\ne0$.
\vspace{6pt}
\\
\underline{Case of $K=-\kappa^2<0$}.\ 
In this case we have 
$$
a(t;s)=\cosh\{\kappa(t-s)\}, 
\quad 
b(t;s)=\frac{\sinh\{\kappa(t-s)\}}{\kappa}, 
$$
$$
\Delta(t_0,\tilde{t}_0;s)=\frac{\sinh\{\kappa(\tilde{t}_0-t_0)\}}{\kappa}\ne0.
$$
\vspace{6pt}
\\
\underline{Case of $\operatorname{dim}(M)=2$}.\ 
Fix arbitrary $s$. Suppose that $\Delta(t_0,\tilde{t}_0;s)=0$.  
Then there exists a nontrivial solution $[c_1,c_2]^T$ to the linear system of the form 
$$
c_1a(t_0;s) + c_2b(t_0;s)=0, 
\quad
c_1a(\tilde{t}_0;s) + c_2b(\tilde{t}_0;s)=0,  
$$
since $\Delta(t_0,\tilde{t}_0;s)$ is a determinant of a $2\times2$ matrix:
$$
\Delta(t_0,\tilde{t}_0;s)
=
\operatorname{det}
\begin{bmatrix}
a(t_0;s) & b(t_0;s) 
\\
a(\tilde{t}_0;s) & b(\tilde{t}_0;s) 
\end{bmatrix}. 
$$
Pick up a unit tangent vector field $\dot{\gamma}_w^\perp$  which is orthogonal to $\dot{\gamma}_w(s) $ 
in $T_{\gamma_w(s)}(M^\text{int})$, and set $\dot{\gamma}_w^\perp(t):=P(s,t;\gamma_w)\dot{\gamma}_w^\perp$. 
Then $\langle\dot{\gamma}_w^\perp(t),\dot{\gamma}_w(t)\rangle=0$ for any $t$, 
$$
Y(t)
:=
c_1a(t;s)P(s,t;\gamma_w)\dot{\gamma}_w^\perp+c_2b(t;s)P(s,t;\gamma_w)\dot{\gamma}_w^\perp
=
\bigl(c_1a(t;s)+c_2b(t;s)\bigr)\dot{\gamma}_w^\perp(t)
$$
becomes a nontrivial Jacobi field along $\gamma_w$, and $\langle{Y(t),\dot{\gamma}_w(t)}\rangle=0$ for any $t$. 
If we take the inner product of $Y(t)$  and $\dot{\gamma}_w^\perp(t)$ at $t=t_0$ and $t=\tilde{t}_0$, we obtain 
$$
\langle{Y(t_0),\dot{\gamma}_w^\perp(t_0)}\rangle
=
c_1a(t_0;s) + c_2b(t_0;s)=0, 
\quad
\langle{Y(\tilde{t}_0),\dot{\gamma}_w^\perp(\tilde{t}_0)}\rangle
=
c_1a(\tilde{t}_0;s) + c_2b(\tilde{t}_0;s)=0. 
$$
Then we have $Y(t_0)=0$ and $Y(\tilde{t}_0)=0$. This shows that $\gamma_w(t_0)$ is a conjugate point of $\gamma_w(\tilde{t}_0)$ along $\gamma_w$. This is a contradiction. 
\end{proof}
%
%
\section{Geometry of Metal Region}
\label{section:geometry}
In this section we study the geometry of metal region. 
We now assume ({\bf A0}), and will assume ({\bf A1}) later. We now state the assumption on the metal region $D$ such that $\overline{D}$ is contained in $M^\text{int}$. Suppose the following. 
\begin{description}
\item[(A2)] 
$D$ is of the form $D=\displaystyle\bigcup_{j=1}^JD_j$ with $\overline{D_j}\cap\overline{D_k}=\emptyset$ ($j{\ne}k$), that is, $D$ is a disjoint union of finite number of strictly convex bounded domains $D_j$ ($j=1,\dotsc,J$) with smooth boundary.  
\end{description}  
\begin{center}
\includegraphics[width=50mm]{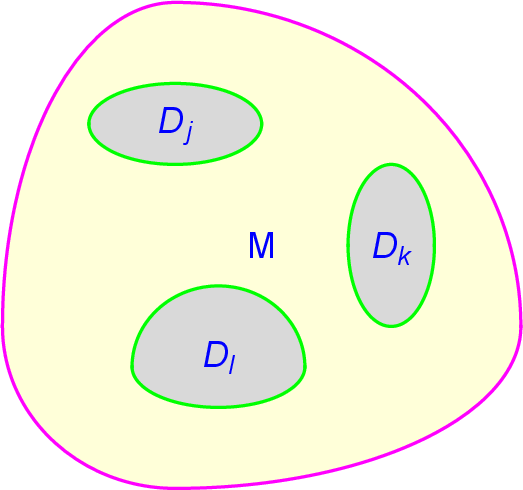}
\vspace{11pt}
\\
Figure~4. $M$ and the metal region $D$.
\end{center}
In this case $M^\text{int}\setminus\overline{D}$ is the region of normal tissue. 
The streaking artifacts are caused by 
$$
\mathcal{X}^T
\left[
\left(
\frac{\mathcal{X}[1_D]}{\lvert{d\mu}\rvert^{1/2}}
\right)^{2l}
\lvert{d\mu}\rvert^{1/2}
\right],
\quad
l=1,2,3,\dotsc.
$$ 
See \eqref{equation:singularity} for this. 
In this section we prepare for studying the microlocal singularities. 
More precisely, we study so-called the intersection calculus needed in the next section. 
It is well-known that $1_D$ is a conormal distribution section whose singular support is the boundary $\partial{D}$. 
So the next lemma will be used for characterizing the conormal singularities of $\mathcal{X}[1_D]$. 
\begin{lemma}
\label{theorem:4-1}
Let $j=1,\dotsc,J$. 
Then the composition $C_\mathcal{X}{\circ}N^\ast(\partial{D_j})$  is transversal. 
Moreover if we set 
$$
\Sigma_j
:=
\pi_{\partial_-S(M)}
\bigl(C_\mathcal{X}{\circ}N^\ast(\partial{D_j})\bigr), 
\quad
j=1,\dotsc,J,
$$
then $\operatorname{codim}(\Sigma_j)=1$ and $N^\ast(\Sigma_j)=C_\mathcal{X}{\circ}N^\ast(\partial{D_j})$. 
\end{lemma}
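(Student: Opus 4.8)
The plan is to make the composition $C_\mathcal{X}\circ N^\ast(\partial D_j)$ completely explicit, deduce transversality from the submersion property already exploited in the proof of Theorem~\ref{theorem:geodesicxraytransform}, and then identify its base with the manifold of oriented geodesics tangent to $\partial D_j$. Since $\partial D_j$ is a hypersurface, $N^\ast_x(\partial D_j)\setminus0$ is a punctured line and its annihilator is $T_x(\partial D_j)$. If $(\xi,\eta)\in C_\mathcal{X}$ with $\eta\in N^\ast(\partial D_j)\setminus0$ and associated $v\in S(M^\text{int})$, then $x:=\pi(v)\in\partial D_j$, the identity $\eta(v)=0$ of \eqref{equation:etav} forces $v\in T_x(\partial D_j)$ --- so $\gamma_v$ is tangent to $\partial D_j$ at $x$ --- and $\xi=(DF\vert^T_v)^{-1}D\pi\vert^T_v\eta$ lives over $w:=F(v)$. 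Writing
$$
S(\partial D_j):=\{v\in S(M^\text{int}):\pi(v)\in\partial D_j,\ v\in T_{\pi(v)}(\partial D_j)\},
$$
a compact $(2n-3)$-dimensional manifold, the fiber product $C:=(C_\mathcal{X}{\times}N^\ast(\partial D_j))\cap\bigl(T^\ast(\partial_-S(M)){\times}\Delta(T^\ast(M^\text{int}))\bigr)$ is then smoothly parametrized by $S(\partial D_j)$ and the ray of $\eta$, so $\operatorname{dim}(C)=2n-2$.

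For transversality, I would show that the projection $C_\mathcal{X}\ni(\xi,\eta)\mapsto\eta$ is a submersion onto $T^\ast(M^\text{int})\setminus0$; this is checked exactly as the range computation in the proof of Theorem~\ref{theorem:geodesicxraytransform} (the fiber over $\eta$ at $x$ is the sphere $\{v\in S_x(M^\text{int}):\eta(v)=0\}$, and the differential is onto because one moves the base point, then moves $\eta$ within $\{\eta(v)=0\}$, and recovers the last direction of $\eta$ by compensating the constraint with a move of $v$). A submersion is transversal to any submanifold, in particular to $N^\ast(\partial D_j)\setminus0$; this is precisely the transversality of the composition, the excess vanishing in agreement with the dimension count above. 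The remaining conditions of Definition~\ref{theorem:clean} go as in that proof: properness of $\pi_C$ from continuity and homogeneity of $DF\vert^T_v$, $D\pi\vert^T_v$ together with compactness of $\partial D_j$, and connectedness of $\pi_C^{-1}(\xi)$ because a geodesic tangent to the strictly convex $D_j$ touches $\partial D_j$ at exactly one point, so $\xi$ determines $w$, then the tangency point $x$, then $v$, then the ray of $\eta$. By \cite[Theorem~25.2.3]{Hoermander4}, $C_\mathcal{X}\circ N^\ast(\partial D_j)$ is a conic Lagrangian submanifold of $T^\ast(\partial_-S(M))\setminus0$.

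Next I would show $\Sigma_j:=\pi_{\partial_-S(M)}(C_\mathcal{X}\circ N^\ast(\partial D_j))=F(S(\partial D_j))$ is an embedded hypersurface, by proving $F\vert_{S(\partial D_j)}$ is an embedding. One checks it is injective: $F(v)=F(\tilde v)$ makes $\gamma_v,\gamma_{\tilde v}$ the same oriented geodesic, tangent to $\partial D_j$ at $\pi(v)$ and $\pi(\tilde v)$, hence $\pi(v)=\pi(\tilde v)$ by uniqueness of the tangency point and therefore $v=\tilde v$; and it is an immersion: $\operatorname{Ker}(DF\vert_v)$ is the line through the geodesic spray at $v$, which is not tangent to $S(\partial D_j)$, since otherwise $\gamma_v(s)$ would remain on $\partial D_j$ for small $s$, contradicting that by strict convexity $\gamma_v$ leaves $\overline{D_j}$ immediately after being tangent, whence $\operatorname{Ker}(DF\vert_v)\cap T_v(S(\partial D_j))=\{0\}$. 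Compactness of $S(\partial D_j)$ then upgrades this to an embedding, so $\operatorname{dim}(\Sigma_j)=2n-3$ and $\operatorname{codim}(\Sigma_j)=1$. I expect this embedding step --- where strict convexity of $D_j$ really enters --- to be the main obstacle; everything else is bookkeeping with the machinery already in place.

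Finally, for $N^\ast(\Sigma_j)=C_\mathcal{X}\circ N^\ast(\partial D_j)$: if $\xi$ lies in the composition over $w=F(v)$ and $V\in T_v(S(\partial D_j))$, then
$$
\xi(DF\vert_v V)=(DF\vert^T_v\xi)(V)=(D\pi\vert^T_v\eta)(V)=\eta(D\pi\vert_v V)=0,
$$
since $D\pi\vert_v V\in T_{\pi(v)}(\partial D_j)$ and $\eta\in N^\ast_{\pi(v)}(\partial D_j)$; as $T_w(\Sigma_j)=DF\vert_v(T_v S(\partial D_j))$, this gives $\xi\in N^\ast_w(\Sigma_j)$, so $C_\mathcal{X}\circ N^\ast(\partial D_j)\subset N^\ast(\Sigma_j)\setminus0$. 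Conversely $\xi=(DF\vert^T_v)^{-1}D\pi\vert^T_v\eta$ depends linearly and injectively on $\eta$, so as $\eta$ runs over $N^\ast_{\pi(v)}(\partial D_j)\setminus0$ and $v$ runs over $S(\partial D_j)$ the covector $\xi$ fills out all of $N^\ast(\Sigma_j)\setminus0$; comparing dimensions ($2n-2$ on both sides) yields the asserted equality.
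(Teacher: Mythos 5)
Your proposal is correct, and it takes a genuinely different and in places more illuminating route than the paper's proof. The paper proves transversality by first computing $\operatorname{dim}(C)=2n-2$ and showing the excess $e=0$, then verifying cleanness of the intersection through the curve argument; you instead observe that the projection $C_{\mathcal{X}}\ni(\xi,\eta)\mapsto\eta\in T^\ast(M^\text{int})\setminus0$ is a submersion (using the same $\operatorname{Ran}(DF\vert_v^T)\cap\operatorname{Ran}(D\pi\vert_v^T)$ computation already done in Theorem~\ref{theorem:geodesicxraytransform}), which directly yields transversality of the fiber product $(C_\mathcal{X}\times N^\ast(\partial D_j))\cap\mathcal{D}$ --- and in fact transversality of $C_\mathcal{X}\circ\Lambda$ for \emph{any} conic Lagrangian $\Lambda\subset T^\ast(M^\text{int})\setminus0$, a cleaner and more reusable fact. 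A second real difference is the hypersurface claim: the paper deduces it from $\operatorname{dim}(\Sigma_j)=\dim S(\partial D_j)=2n-3$ without establishing that $F\vert_{S(\partial D_j)}$ is an embedding, whereas you supply the missing geometry --- injectivity from the single tangency point of a geodesic with the strictly convex $\partial D_j$, and immersion from the transversality of the geodesic spray (the kernel of $DF\vert_v$) to $S(\partial D_j)$, the latter following from $\nabla df_j(v,v)>0$. Your direct two-inclusion proof of $N^\ast(\Sigma_j)=C_\mathcal{X}\circ N^\ast(\partial D_j)$ is also more explicit than the paper's appeal to the conic Lagrangian structure. One small imprecision: in the connectedness discussion you say $\xi$ determines ``the ray of $\eta$,'' but in fact $\xi$ determines $\eta$ itself uniquely (via $w\to v\to\eta$ with $\eta$ the unique covector satisfying $DF\vert_v^T\xi=D\pi\vert_v^T\eta$), so $\pi_C^{-1}(\xi)$ is a single point rather than a ray --- which of course is still connected, so the conclusion stands.
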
 
\begin{proof}
We first show that $C_\mathcal{X}{\circ}N^\ast(D_j)$  is transversal. 
Set 
\begin{align*}
  \mathcal{Z}
& =
  T^\ast\bigl(\partial_-S(M)\bigr)
  \times
  T^\ast(M^\text{int})
  \times
  T^\ast(M^\text{int}),
\\
  \mathcal{D}
& =
  T^\ast\bigl(\partial_-S(M)\bigr)
  \times
  \Delta\bigl(T^\ast(M^\text{int})\bigr)
\end{align*}
for short. We have 
$\operatorname{codim}(\mathcal{D})=\operatorname{dim}\bigl(T^\ast(M^\text{int})\bigr)=2n$. 
In view of Theorem~\ref{theorem:geodesicxraytransform} and \eqref{equation:etav}, we have  
\begin{align*}
  C_\mathcal{X}{\times}N^\ast({\partial}D_j)
& =
  \bigl\{
  (\xi,\eta,\zeta) 
  \in 
  T^\ast\bigl(\partial_-S(M)\bigr){\times}T^\ast(M^\text{int}){\times}N^\ast(\partial{D_j}): 
\\
& \qquad
  \exists v \in S(M^\text{int})\ \text{s.t.}\  
  \eta \in T^\ast_{\pi(v)}(M^\text{int})\setminus\{0\}, 
  \eta(v)=0, 
\\
& \qquad 
  \xi \in T^\ast_{F(v)}\bigl(\partial_-S(M)\bigr)\setminus\{0\} 
  \ \text{is determined by}\ 
  DF\vert^T_v\xi=D\pi\vert^T_v\eta
  \bigr\}. 
\end{align*}
Since $C_\mathcal{X}$ and $N^\ast(\partial{D_j})$ are Lagrangian submanifolds, we have 
\begin{align*}
  \operatorname{dim}\bigl(C_\mathcal{X}{\times}N^\ast({\partial}D_j)\bigr)
& = 
  \operatorname{dim}\bigl(\partial_-S(M)\bigr)
  +
  \operatorname{dim}(M^\text{int})
  +
  \operatorname{dim}(M^\text{int})
\\
& =
  (2n-2)+n+n
  =4n-2.
\end{align*} 
We set 
\begin{align}
  C
& :=
  \bigl(C_\mathcal{X}{\times}N^\ast({\partial}D_j)\bigr)\cap\mathcal{D}
\nonumber
\\
& =
  \bigl\{
  (\xi,\eta,\zeta) 
  \in 
  T^\ast\bigl(\partial_-S(M)\bigr){\times}T^\ast(M^\text{int}){\times}N^\ast(\partial{D_j}): 
\nonumber
\\
& \qquad 
  \exists v \in S(M^\text{int})\ \text{s.t.}\  
  \xi \in T^\ast_{F(v)}\bigl(\partial_-S(M)\bigr)\setminus\{0\}, 
\nonumber
\\
& \qquad
  \eta \in T^\ast_{\pi(v)}(M^\text{int})\setminus\{0\}, 
  \eta(v)=0, 
  DF\vert^T_v\xi=D\pi\vert^T_v\eta,\eta=\zeta\}
\nonumber
\\
& =
  \bigl\{
  (\xi,\eta,\eta): 
  \exists v \in S({\partial}D_j)\ \text{s.t.}\  
  \xi \in T^\ast_{F(v)}\bigl(\partial_-S(M)\bigr)\setminus\{0\},
\nonumber
\\
& \qquad 
  \eta \in N^\ast_{\pi(v)}(\partial{D_j})\setminus\{0\},  
  DF\vert^T_v\xi=D\pi\vert^T_v\eta\bigr\}.
\label{equation:intersection4-1}
\end{align}
Since $\xi$ is uniquely determined by $\eta$ and $v$ for $(\xi,\eta,\eta) \in C$, we deduce that 
$$
\operatorname{dim}(C)
=
\operatorname{dim}\bigl(S(\partial{D}_j)\bigr)
+
\operatorname{dim}\bigl(N^\ast_{\pi(v)}(\partial{D_j})\bigr)
=
(2n-3)+1
=
2n-2.
$$
The excess of $C$ is computed as  
\begin{align*}
  e
& =
  \operatorname{codim}(\mathcal{D})
  +
  \operatorname{codim}\bigl(C_\mathcal{X}{\times}N^\ast(\partial{D_j})\bigr)
  -
  \operatorname{codim}(C)
\\
& =
  \operatorname{codim}(\mathcal{D})
  -
  \operatorname{dim}\bigl(C_\mathcal{X}{\times}N^\ast(\partial{D_j})\bigr)
  +
  \operatorname{dim}(C)
\\
& =
  2n-(4n-2)+(2n-2)
  =
  0. 
\end{align*} 
If we show that $C$ is a clean intersection, then it follows that $C_\mathcal{X}{\circ}N^\ast(\partial{D_j})$  is transversal. 
Fix arbitrary $c=(\xi,\eta,\eta) \in C$, and pick up arbitrary 
$X=(X_1,X_2,X_3) \in T_c\bigl(C_\mathcal{X}{\times}N^\ast(\partial{D_j})\bigr)$. Consider a smooth curve on 
$C_\mathcal{X}{\times}N^\ast(\partial{D_j})$ of the form 
\begin{align*}
  \mathbb{R}\ni\alpha 
& \mapsto 
  \bigl(\xi(\alpha),\eta(\alpha),\zeta(\alpha)\bigr)
  =
  c+\alpha{X}+\mathcal{O}(\alpha^2) 
\\
& =
  (\xi+\alpha{X}_1,\eta+\alpha{X}_2,\eta+\alpha{X}_3)+\mathcal{O}(\alpha^2)
\end{align*}
near $\alpha=0$. Then we have 
$$
X
=
\frac{d}{d\alpha}\bigg\vert_{\alpha=0}\bigl(\xi(\alpha),\eta(\alpha),\zeta(\alpha)\bigr)
$$
If $X \in T_c(\mathcal{D})$, then we have 
$$
\frac{d}{d\alpha}\bigg\vert_{\alpha=0}\eta(\alpha)
=
\frac{d}{d\alpha}\bigg\vert_{\alpha=0}\zeta(\alpha),
$$
and 
$$
X=
\frac{d}{d\alpha}\bigg\vert_{\alpha=0}\bigl(\xi(\alpha),\eta(\alpha),\zeta(\alpha)\bigr)
=
\frac{d}{d\alpha}\bigg\vert_{\alpha=0}\bigl(\xi(\alpha),\eta(\alpha),\eta(\alpha)\bigr)
\in 
T_c(C). 
$$
Hence we obtain $T_c(\bigl(C_\mathcal{X}{\times}N^\ast(\partial{D_j})\bigr)){\cap}T_c(\mathcal{D}){\subset}T_c(C)$. 
Therefore we conclude that 
$$
T_c(\bigl(C_\mathcal{X}{\times}N^\ast(\partial{D_j})\bigr)){\cap}T_c(\mathcal{D})=T_c(C)
$$
and that $C$ is clean intersection. 
Consider the projection 
$\pi_C:C\ni(\xi,\eta,\eta) \mapsto \xi \in T^\ast\bigl(\partial_-S(M)\bigr)$.  
It follows that $C$ is proper and $\pi_C^{-1}(\xi)$ for $\xi \in \pi_C(C)$ is connected since $\xi$ is uniquely determined by 
$\xi=(DF\vert^T_v)^{-1}{\circ}D\pi\vert^T_v$ for 
$\eta \in N^\ast_{\pi(v)}(\partial{D_j})$.    
\par
Next we show that $N^\ast(\Sigma_j)=C_\mathcal{X}{\circ}N^\ast(\partial{D_j})$. 
Using the expression \eqref{equation:intersection4-1} of $C$, we deduce that 
\begin{align}
  C_\mathcal{X}{\circ}N^\ast(\partial{D_j})
& =
  \bigl\{
  \xi: 
  \exists v \in S({\partial}D_j), \exists \eta \in N^\ast_{\pi(v)}(\partial{D_j})\setminus\{0\}, \text{s.t.}\
\nonumber
\\
& \qquad   
  \xi \in T^\ast_{F(v)}\bigl(\partial_-S(M)\bigr)\setminus\{0\},
  DF\vert^T_v\xi=D\pi\vert^T_v\eta\bigr\},
\label{equation:nstarsigmaj}
\\
  \Sigma_j
& :=
  \pi_{\partial_-S(M)}\bigl(C_\mathcal{X}{\circ}N^\ast(\partial{D_j})\bigr)
  =
  \{F(v) : v \in S(\partial{D_j})\}.
\nonumber 
\end{align}
Then we have $\operatorname{dim}(\Sigma_j)=\operatorname{dim}\bigl(S(\partial{D_j})\bigr)=2n-3$. 
Therefore we deduce that 
$\operatorname{codim}(\Sigma_j)=1$ in $\partial_-S(M)$ 
and that $\Sigma_j$ is a hypersurface in $\partial_-S(M)$. 
Since $C_\mathcal{X}$ is a homogeneous canonical relation and 
$N^\ast(\partial{D_j})$ is a conic Lagrangian submanifold, 
we conclude that $C_\mathcal{X}{\circ}N^\ast(\partial{D_j})$ is also a conic Lagrangian submanifold 
and that $N^\ast(\Sigma_j)=C_\mathcal{X}{\circ}N^\ast(\partial{D_j})$. 
This completes the proof. 
\end{proof}
Next we study the intersection 
$\Sigma_{j}\cap\Sigma_k$ for $j{\ne}k$. 
Suppose that $\Sigma_{jk}:=\Sigma_{j}\cap\Sigma_k\ne\emptyset$. Then we have 
\begin{align*}
  \Sigma_{jk}
& =
  \{
  F(v)
  : 
  v \in S(\partial{D_j}),\ 
  \exists \tilde{v} \in S(\partial{D_k})\ \text{s.t.}\ F(v)=F(\tilde{v})
  \}
\\
& =
  \bigl\{
  w \in \partial_-S(M)
  :
  \exists v \in S(\partial{D_j}), 
  \exists s \in \bigl(\tau_-(v),\tau_+(v)\bigr)\setminus\{0\}
  \ \text{s.t.}\ 
\\
& \qquad
  w=F(v), 
  \dot{\gamma}_v(s) \in S(\partial{D_k})
  \bigr\}.  
\end{align*}
We have the following. 
\begin{lemma}
\label{theorem:4-3}
Suppose that $\Sigma_{jk}:=\Sigma_{j}\cap\Sigma_k\ne\emptyset$ with $j{\ne}k$. 
Then  $\Sigma_{jk}$ is a transversal intersection. 
\end{lemma}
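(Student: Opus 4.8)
The plan is to reduce the transversality of $\Sigma_j$ and $\Sigma_k$ along $\Sigma_{jk}$ to the existence of one suitably chosen Jacobi field along the common tangent geodesic, and then to produce that Jacobi field using only the absence of conjugate points. Fix $w\in\Sigma_{jk}$. From the description of $\Sigma_{jk}$ displayed above, $\gamma_w$ is tangent to $\partial{D_j}$ at a point $p_j=\gamma_w(t_j)$ with $v_j:=\dot{\gamma}_w(t_j)\in T_{p_j}(\partial{D_j})$ and tangent to $\partial{D_k}$ at $p_k=\gamma_w(t_k)$ with $v_k:=\dot{\gamma}_w(t_k)\in T_{p_k}(\partial{D_k})$, each tangency point being unique by strict convexity; since $\overline{D_j}\cap\overline{D_k}=\emptyset$ we have $t_j\ne t_k$, and $t_j,t_k\in\bigl(0,\tau_+(w)\bigr)$. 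I would fix nonzero conormals $\eta_j\in N_{p_j}^\ast(\partial{D_j})$ and $\eta_k\in N_{p_k}^\ast(\partial{D_k})$, so that $\eta_j(v_j)=0$ and $\eta_k(v_k)=0$. By Lemma~\ref{theorem:4-1}, $\Sigma_j$ and $\Sigma_k$ are hypersurfaces, so it suffices to show that their conormal lines at $w$ are distinct. Combining the formula $N^\ast(\Sigma_j)=C_\mathcal{X}\circ N^\ast(\partial{D_j})$ from Lemma~\ref{theorem:4-1}, the relation \eqref{equation:etav} (which forces the underlying vector $v$ to be tangent to $\partial{D_j}$), and the identities between $DF$, $D_v\Psi$ and Jacobi fields recalled at the start of Section~\ref{section:jacobifields}, I would identify $T_w\bigl(\partial_-S(M)\bigr)$ with the $(2n-2)$-dimensional space of Jacobi fields $Y$ along $\gamma_w$ orthogonal to $\dot{\gamma}_w$ (parametrized by the initial data $\bigl(Y(t_j),\nabla Y(t_j)\bigr)$ at $t_j$), under which $N_w^\ast(\Sigma_j)$ is spanned by the functional $Y\mapsto\eta_j\bigl(Y(t_j)\bigr)$ and $N_w^\ast(\Sigma_k)$ by $Y\mapsto\eta_k\bigl(Y(t_k)\bigr)$. (The same identification can be read off from a direct envelope computation, since $T_w\Sigma_j$ is exactly the space of variations of $\gamma_w$ preserving tangency to $\partial{D_j}$ to first order.) It then remains to exhibit a Jacobi field $Y$ orthogonal to $\dot{\gamma}_w$ with $\eta_j\bigl(Y(t_j)\bigr)=0$ and $\eta_k\bigl(Y(t_k)\bigr)\ne0$.

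For this I would take the Jacobi field $Y$ along $\gamma_w$ with $Y(t_j)=0$ and $\nabla Y(t_j)=u_1$, where $u_1$ is a vector in $v_j^\perp=\{u\in T_{p_j}M:\langle u,v_j\rangle=0\}$ to be chosen. By the Gauss lemma (Lemma~\ref{theorem:gauss}), $\langle Y(t),\dot{\gamma}_w(t)\rangle\equiv0$, so $Y$ is orthogonal to $\dot{\gamma}_w$ and $Y(t_k)=(t_k-t_j)\,D\exp_{p_j}\big\vert_{(t_k-t_j)v_j}u_1\in v_k^\perp$. Since $(M,g)$ is simple, $\gamma_w(t_k)$ is not conjugate to $\gamma_w(t_j)$ along $\gamma_w$, so $D\exp_{p_j}\big\vert_{(t_k-t_j)v_j}$ is a linear isomorphism, which by the Gauss lemma restricts to an isomorphism $v_j^\perp\to v_k^\perp$ of $(n-1)$-dimensional spaces. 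Because $\eta_k(v_k)=0$, the metric dual $\eta_k^\sharp$ lies in $v_k^\perp\setminus\{0\}$, so I may choose $u_1\in v_j^\perp$ with $(t_k-t_j)\,D\exp_{p_j}\big\vert_{(t_k-t_j)v_j}u_1=\eta_k^\sharp$. For this $Y$ one has $\eta_j\bigl(Y(t_j)\bigr)=0$ while $\eta_k\bigl(Y(t_k)\bigr)=\eta_k(\eta_k^\sharp)=\lvert\eta_k\rvert^2\ne0$. Hence the two functionals spanning $N_w^\ast(\Sigma_j)$ and $N_w^\ast(\Sigma_k)$ are linearly independent, so $N_w^\ast(\Sigma_j)\cap N_w^\ast(\Sigma_k)=\{0\}$ and $T_w\Sigma_j+T_w\Sigma_k=T_w\bigl(\partial_-S(M)\bigr)$. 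Since $w\in\Sigma_{jk}$ was arbitrary, $\Sigma_{jk}=\Sigma_j\cap\Sigma_k$ is a transversal intersection.

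The Jacobi-field step in the second paragraph is routine; the main obstacle I anticipate is the identification in the first paragraph, namely making precise that the conormal line of $\Sigma_j$ at $w$ is exactly $\mathbb{R}\bigl(Y\mapsto\eta_j(Y(t_j))\bigr)$ on the space of $\dot{\gamma}_w$-orthogonal Jacobi fields. Carried out through Lemma~\ref{theorem:4-1} this means chasing the adjoints $DF\vert^T_{v_j}$, $DF\vert^T_{v_k}$, using that $F$ is constant along the geodesic flow together with the Jacobi-field formulas of Section~\ref{section:jacobifields}, and checking that the resulting functional is well defined modulo the geodesic spray direction and nonzero; carried out through the envelope description it requires the strict convexity of $D_j$ (so that $\phi_j$ restricted to a tangent geodesic has a nondegenerate minimum) together with the implicit function theorem. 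I would also note that, like Lemma~\ref{theorem:4-1}, this argument uses only ({\bf A0}) and not ({\bf A1}).
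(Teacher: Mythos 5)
Your proposal is correct and is essentially the paper's own argument: both reduce the claim to $N_w^\ast(\Sigma_j)\cap N_w^\ast(\Sigma_k)=\{0\}$ and prove this by the same mechanism, namely that Jacobi fields along $\gamma_w$ vanishing at $t_j$ surject (via the no-conjugate-point hypothesis and the Gauss lemma) onto the $\dot\gamma_w$-orthogonal vectors at $t_k$, so the $\Sigma_k$-conormal cannot be annihilated by everything that annihilates the $\Sigma_j$-conormal. The paper carries out this step by writing $DF\vert^T_{v_j}\tilde{\xi}=D_v\Psi\vert^T_{(v_j,\,t_k-t_j)}\circ D\pi\vert^T_{v_k}\tilde{\eta}$ and evaluating $(\xi+\tilde{\xi})\circ DF\vert_{v_j}$ on $\operatorname{Ker}(D\pi\vert_{v_j})$ to force $\tilde{\eta}=0$ and then $\eta=0$, whereas you exhibit the separating Jacobi field explicitly via the radial isomorphism of $D\exp_{p_j}\big\vert_{(t_k-t_j)v_j}$ restricted to $v_j^\perp\to v_k^\perp$ — the same computation in slightly different clothing — and your remark that only ({\bf A0}), not ({\bf A1}), is needed agrees with the paper, which first invokes ({\bf A1}) in Lemma~\ref{theorem:mjk}.
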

\begin{proof}
Fix arbitrary $w \in \Sigma_{jk}$. Then there exist 
$v \in S(\partial{D_j})$ and $s\in\bigl(\tau_-(v),\tau_+(v)\bigr)$ such that 
$w=F(v)$ and $\Psi(v,s)=\dot{\gamma}_v(s) \in S(\partial{D_k})$.  Then \eqref{equation:etav} implies that 
$\eta(v)=0$ for any $\eta \in N^\ast_{\pi(v)}(\partial{D_j})$ and 
$\tilde{\eta}\bigl(\dot{\gamma}_v(s)\bigr)=0$ for any $\tilde{\eta} \in N^\ast_{\gamma_v(s)}(\partial{D_k})$. 
We shall prove that $N^\ast_w(\Sigma_j){\cap}N^\ast_w(\Sigma_k)=\{0\}$.
\par
Fix arbitrary $\xi \in N^\ast_w(\Sigma_j)$ and arbitrary $\tilde{\xi} \in N^\ast_w(\Sigma_k)$.  
Since $N^\ast(\Sigma_l)=C_\mathcal{X}{\circ}N^\ast(\partial{D_l})$ ($l=1,\dotsc,J$), 
there exist $\eta \in N^\ast_{\pi(v)}(\partial{D_j})$ and $\tilde{\eta} \in N^\ast_{\gamma_v(s)}(\partial{D_k})$ 
uniquely such that 
$$
DF\vert^T_v\xi=D\pi\vert^T_v\eta,
\quad
DF\vert^T_{\Psi(v,s)}\tilde{\xi}=D\pi\vert^T_{\Psi(v,s)}\tilde{\eta}.
$$
Since $w=F(v)=F\bigl(\Psi(v,s)\bigr)$, we have 
$$
DF\vert_v\Xi=DF\vert_{\Psi(v,s)}{\circ}D_v\Psi\vert_{(v,s)}\Xi
$$
for any $\Xi \in T_v\bigl(S(M^\text{int})\bigr)$. This implies that
\begin{equation}
DF\vert^T_v\tilde{\xi}
=
D_v\Psi\vert^T_{(v,s)}{\circ}D\pi\vert^T_{\Psi(v,s)}\tilde{\eta}. 
\label{equation:tildexitildeeta} 
\end{equation}
Then we deduce that for any $\Xi \in T_v(S(M^\text{int}))$  
\begin{align}
  (\xi+\tilde{\xi})(DF\vert_v\Xi)
& =
  DF\vert^T_v\xi(\Xi)+DF\vert^T_v\tilde{\xi}(\Xi)
\nonumber
\\
& =
  D\pi\vert^T_v\eta(\Xi)+D_v\Psi\vert^T_{(v,s)}{\circ}D\pi\vert^T_{\Psi(v,s)}\tilde{\eta}(\Xi)
\nonumber
\\
& =
  \eta(D\pi\vert_v\Xi)+\tilde{\eta}(D\pi\vert_{\Psi(v,s)}{\circ}D_v\Psi\vert_{(v,s)}\Xi).
\label{equation:xitildexi}
\end{align}
Since $\gamma_v(s)$ is not a conjugate point of $\pi(v)=\gamma_v(0)$, we deduce that 
$$
\{
D\pi\vert_{\Psi(v,s)}{\circ}D_v\Psi\vert_{(v,s)}\Xi: 
\Xi \in \operatorname{Ker}(D\pi\vert_v)
\}
=
\{
\tilde{u} \in T_{\gamma_v(s)}(M^\text{int}): 
\langle{\tilde{u},\dot{\gamma}_v(s)}\rangle=0
\}. 
$$
Here we used Lemma~\ref{theorem:gauss}. 
\par
We now suppose that $\xi+\tilde{\xi}=0$. 
We consider \eqref{equation:xitildexi}. 
On one hand  we have $\tilde{\eta}(\tilde{u})=0$ 
for any $\tilde{u} \in T_{\gamma_v(s)}(M^\text{int})$ satisfying $\langle{\tilde{u},\dot{\gamma}_v(s)}\rangle=0$. 
On the other hand $\tilde{\eta}\bigl(\dot{\gamma}_v(s)\bigr)=0$. 
Then we have $\tilde{\eta}\bigl(\tilde{u}+\sigma\dot{\gamma}_v(s)\bigr)=0$ 
for any $\sigma\in\mathbb{R}$ and 
for any $\tilde{u} \in T_{\gamma_v(s)}(M^\text{int})$ satisfying $\langle{\tilde{u},\dot{\gamma}_v(s)}\rangle=0$. 
This means that $\tilde{\eta}=0$ in $T^\ast_{\pi\bigl(\dot{\gamma}_v(s)\bigr)}(M^\text{int})$. 
Hence \eqref{equation:xitildexi} becomes $\eta(D\pi\vert_v\Xi)=0$ for any $\Xi \in T_v\bigl(S(M^\text{int})\bigr)$. 
Therefore we have $\eta=0$ in $T^\ast_{\pi(v)}(M^\text{int})$. 
We deduce that if $\xi+\tilde{\xi}=0$, then $\xi=0$ and $\tilde{\xi}=0$ 
since $\xi$ and $\tilde{\xi}$ are uniquely determined by $\eta$ and $\tilde{\eta}$ respectively. 
Thus $\xi \in N^\ast_w(\Sigma_j)\setminus\{0\}$ and $\tilde{\xi} \in N^\ast_w(\Sigma_k)\setminus\{0\}$ 
are linearly independent, and $N^\ast_w(\Sigma_j){\cap}N^\ast_w(\Sigma_k)=\{0\}$. This completes the proof. 
\end{proof}
Lemma~\ref{theorem:4-3} implies that $\Sigma_{jk}$ is a submanifold of $\partial_-S(M)$ with 
$\operatorname{codim}(\Sigma_{jk})=2$, and 
$$
N^\ast(\Sigma_{jk})
=
\bigcup_{w \in \Sigma_{jk}}
N^\ast_w(\Sigma_j) \oplus N^\ast_w(\Sigma_k). 
$$
In other words, the proof of Lemma~\ref{theorem:4-3} shows that all the Jacobi fields along a common tangent geodesic $\gamma$ of $\partial{D}_j$ and $\partial{D_k}$ collaborate to distinguish $\gamma{\cap}\partial{D_j}$ and $\gamma{\cap}\partial{D_k}$ which are the locations of sources of conormal singularities. The author could not understand this fact for the Euclidean case. See \cite[Lemma~4.2]{Chihara} and its proof. 
\par
We have shown that $N_w^\ast(\Sigma_{jk})$ is a two-dimensional vector space for any $w \in \Sigma_{jk} \subset \partial_-S(M)$, 
and we need to study what $C_{\mathcal{X}^T}{\circ}N^\ast(\Sigma_{jk})$ is. 
We denote by $\nu_j(x)$ the unit outer normal vector for $D_j$ at $x \in \partial{D_j}$, and set 
$\eta_j(x):=\langle\nu_j(x),\cdot\rangle=g_x(\nu_j,\cdot) \in N^\ast_x(\partial{D_j})$ for $x \in \partial{D_j}$. 
\par
If $w \in \Sigma_{jk}$, 
then there exist $v \in S(\partial{D_j})$ and $s\in\bigl(\tau_-(v),\tau_+(v)\bigr)\setminus\{0\}$ 
such that 
$w=F(v)$ and $\tilde{v}:=\Psi(v,s)=\dot{\gamma}_v(s) \in S(\partial{D_k})$. 
There are two cases:
\begin{itemize}
\item 
{\bf Case~C1}.\ 
$P(0,s;\gamma_v)^T\eta_k\bigl(\gamma_v(s)\bigr)=\eta_j\bigl(\pi(v)\bigr)$
or 
$-\eta_j\bigl(\pi(v)\bigr)$. 
\item 
{\bf Case~C2}.\ 
$P(0,s;\gamma_v)^T\eta_k\bigl(\gamma_v(s)\bigr){\ne}\pm\eta_j\bigl(\pi(v)\bigr)$. 
\end{itemize}
Replace $\eta$ and $\tilde{\eta}$ by $\eta_j\bigl(\pi(v)\bigr)$ and $\eta_k\bigl(\gamma_v(s)\bigr)$ respectively, 
and see Figure~2 in Section~1. 
We remark that when $n=2$, Case~C2 never occurs. 
When $n\geqq3$ we split $\Sigma_{jk}$ into two parts:
\begin{align*}
  \Sigma_{jk}^{(1)}
& :=
  \bigl\{
  F(v): 
  \exists v \in S(\partial{D_j}), 
  \exists s \in \bigl(\tau_-(v),\tau_+(v)\bigr)\setminus\{0\}
  \ \text{s.t.}\
\\ 
& \qquad
  \Psi(v,s)=\dot{\gamma}_v(s) \in S(\partial{D_k}), 
\\ 
& \qquad
  P(0,s;\gamma_v)^T\eta_k\bigl(\gamma_v(s)\bigr) = \eta_j\bigl(\pi(v)\bigr)
  \ \text{or}\ 
  -\eta_j\bigl(\pi(v)\bigr)
  \bigr\}, 
\\
  \Sigma_{jk}^{(2)}
& :=
  \bigl\{
  F(v): 
  \exists v \in S(\partial{D_j}), 
  \exists s \in \bigl(\tau_-(v),\tau_+(v)\bigr)\setminus\{0\}
  \ \text{s.t.}\
\\ 
& \qquad
  \Psi(v,s)=\dot{\gamma}_v(s) \in S(\partial{D_k}), 
  P(0,s;\gamma_v)^T\eta_k\bigl(\gamma_v(s)\bigr)\ne\pm\eta_j\bigl(\pi(v)\bigr)
  \bigr\}.
\end{align*}

\par
We now study the strictly convexity of $D_j$ to make full use of this property. 
Pick up arbitrary $x_0 \in \partial{D_j}$. 
Then there exist 
a neighborhood $U$ at $x_0$ in $M^\text{int}$ 
and $f_j \in C^\infty(U)$ such that 
\begin{align*}
  U{\cap}D_j
& =
  \{x{\in}U : f_j(x)<0\},
\\
  U{\cap}\partial{D_j}
& =
  \{x{\in}U : f_j(x)=0\},
\\
  U\setminus\overline{D_j}
& =
  \{x{\in}U : f_j(x)>0\}.
\end{align*}
Then for any $v \in S_{x_0}(\partial{D_j})$, we have 
$$
f_j\bigl(\gamma_v(0)\bigr)=f(x_0)=0,
\qquad
f_j\bigl(\gamma_v(t)\bigr)>0
\quad
(t\ne0)
$$
since $D_j$ is strictly convex. 
The function $f_j\bigl(\gamma_v(t)\bigr)$ is smooth near $t=0$ and $f_j\bigl(\gamma_v(0)\bigr)=0$ 
is the unique local minimum of $f_j\bigl(\gamma_v(t)\bigr)$. Hence we have 
$$
\frac{d}{dt}\bigg\vert_{t=0}f_j\bigl(\gamma_v(t)\bigr)=0,
$$
which is 
$$
0
=
df_j\bigl(\dot{\gamma}_v(0)\bigr)
=
df_j(v)
=
\langle\operatorname{grad}f_j,v\rangle
=
g_{x_0}(\operatorname{grad}f_j,v).
$$
The Hessian $\nabla{df_j}$ is given by 
\begin{align*}
  \nabla{df_j}(X,Y)
& =
  X\bigl(df_j(Y)\bigr)-df_j(\nabla_XY)
\\
& =
  X\bigl(Y(f_j)\bigr)-\nabla_XY(f_j)
\\
& =
  \langle\nabla_X\operatorname{grad}f_j,Y\rangle  
\end{align*}
for any vector fields $X$ and $Y$ in $U$. 
If $\gamma$ is a geodesic, then 
$$
\frac{d^2}{dt^2}
f\bigl(\gamma(t)\bigr)
=
\nabla{df_j}\bigl(\dot{\gamma}(t),\dot{\gamma}(t)\bigr)
+
df_j\bigl(\nabla_{\dot{\gamma}(t)}\dot{\gamma}(t)\bigr)
=
\nabla{df_j}\bigl(\dot{\gamma}(t),\dot{\gamma}(t)\bigr). 
$$
Then we have 
\begin{align*}
  f_j\bigl(\gamma_v(t)\bigr)
& =
  f_j\bigl(\gamma_v(0)\bigr)
  +
  t
  \frac{d}{dt}\bigg\vert_{t=0}
  f_j\bigl(\gamma_v(t)\bigr)
  +
  \frac{t^2}{2}
  \frac{d^2}{dt^2}\bigg\vert_{t=0}
  f_j\bigl(\gamma_v(t)\bigr)
  +
  \mathcal{O}(t^3)
\\
& =
  \frac{t^2}{2}\nabla{df_j}(v,v)+\mathcal{O}(t^3)
  >0 
\end{align*}
for any $v \in S_{x_0}(\partial{D_j})$ 
and $t\in\mathbb{R}$ with $0<\lvert{t}\rvert\ll1$. 
Since $x_0 \in U{\cap}\partial{D_j}$ is arbitrary, we obtain
\begin{equation}
\nabla{df_j}(v,v)>0
\quad\text{for}\quad
v \in S(U{\cap}\partial{D_j}).
\label{equation:hessian} 
\end{equation} 
\par
We may assume that $\operatorname{grad}f_j\ne0$ in $U$ 
provided that $U$ is sufficiently small, 
and we can define $f_j/\lvert\operatorname{grad}{f_j}\rvert$ on $U$, 
where 
$\lvert\operatorname{grad}{f_j}\rvert^2=\langle\operatorname{grad}{f_j},\operatorname{grad}{f_j}\rangle$. 
Since 
$$
\operatorname{grad}
\left(\frac{f_j}{\lvert\operatorname{grad}{f_j}\rvert}\right)
=
\frac{\operatorname{grad}f_j}{\lvert\operatorname{grad}{f_j}\rvert}
+
f_j
\cdot
\operatorname{grad}
\left(\frac{1}{\lvert\operatorname{grad}{f_j}\rvert}\right)
$$
in $U$, we have 
$$
\operatorname{grad}_x
\left(\frac{f_j}{\lvert\operatorname{grad}{f_j}\rvert}\right)
=
\frac{\operatorname{grad}_xf_j}{\lvert\operatorname{grad}_x{f_j}\rvert}
=
\nu_j(x),
\quad
x \in U{\cap}\partial{D_j}. 
$$ 
If we replace $f_j$ by $f_j/\lvert\operatorname{grad}f_j\rvert$ and use the same notation $f_j$, we have 
$$
\operatorname{grad}_xf_j=\nu_j(x),
\quad
x \in U\cap\partial{D_j}.
$$
In what follows we choose $f_j$ ($j=1,\dotsc,J$) satisfying this property. 
Then $\lvert\operatorname{grad}f_j\rvert^2$ is a smooth function on $U$ 
and identically equal to one on $U{\cap}\partial{D_j}$. Moreover we have the following. 
\begin{lemma}
\label{theorem:gradient}
Suppose that there exist a neighborhood $U$ at 
$x_0 \in \partial{D_j}$ in $M^\text{int}$ and a function $f_j \in C^\infty(U)$ such that
$$
U{\cap}\partial{D_j}
=
\{x \in U : f_j(x)=0\},
\quad
\operatorname{grad}_xf_j=\nu_j(x)
\quad
(x \in U{\cap}\partial{D_j}).
$$
If we set $G_j(x):=\lvert\operatorname{grad}_xf_j\rvert^2$, 
then we have 
\begin{equation}
\frac{d}{ds}\bigg\vert_{s=0}
G_j\bigl(\gamma_u(s)\bigr)
=0
\label{equation:gradient}
\end{equation}
for any $u \in S_x(\partial{D_j})$ and $x \in U{\cap}\partial{D_j}$.   
\end{lemma}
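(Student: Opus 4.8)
The plan is to reduce the identity \eqref{equation:gradient} to two elementary observations: that $G_j$ is \emph{constant} on $U\cap\partial{D_j}$, and that the $s$-derivative of a function along $\gamma_u$ at $s=0$ depends only on $\dot{\gamma}_u(0)=u\in T_x(\partial{D_j})$.

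First I would record that the normalization hypothesis $\operatorname{grad}_xf_j=\nu_j(x)$ on $U\cap\partial{D_j}$ already forces
$$
G_j(x)
=
\lvert\operatorname{grad}_xf_j\rvert^2
=
\langle\nu_j(x),\nu_j(x)\rangle
=
1,
\qquad
x\in U\cap\partial{D_j},
$$
since $\nu_j(x)$ is a unit vector. Thus $G_j$ restricted to the hypersurface $U\cap\partial{D_j}$ is the constant function $1$; writing $\iota:\partial{D_j}\hookrightarrow M^\text{int}$ for the inclusion, this says $\iota^\ast G_j\equiv1$, hence $\iota^\ast(dG_j)=d(\iota^\ast G_j)=0$. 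Equivalently, for every $x\in U\cap\partial{D_j}$ the covector $dG_j\vert_x\in T^\ast_x(M^\text{int})$ annihilates the subspace $T_x(\partial{D_j})$.

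Next, fix $x\in U\cap\partial{D_j}$ and $u\in S_x(\partial{D_j})$. Because $\gamma_u$ is the geodesic with $\gamma_u(0)=\pi(u)=x$ and $\dot{\gamma}_u(0)=u$, the chain rule gives
$$
\frac{d}{ds}\bigg\vert_{s=0}G_j\bigl(\gamma_u(s)\bigr)
=
dG_j\vert_x\bigl(\dot{\gamma}_u(0)\bigr)
=
dG_j\vert_x(u).
$$
Since $u\in S_x(\partial{D_j})\subset T_x(\partial{D_j})$ and $dG_j\vert_x$ vanishes on $T_x(\partial{D_j})$ by the previous step, the right-hand side is zero, which is precisely \eqref{equation:gradient}.

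There is no real obstacle to overcome; the only point requiring a word of care is that, although by strict convexity the geodesic $\gamma_u(s)$ leaves $\partial{D_j}$ for all small $s\ne0$, the quantity $\frac{d}{ds}\big\vert_{s=0}G_j(\gamma_u(s))$ depends only on the $1$-jet of $\gamma_u$ at $s=0$, i.e.\ only on $u\in T_x(\partial{D_j})$, so the constancy of $G_j$ \emph{on} $\partial{D_j}$ is enough. If one prefers a computation in terms of covariant derivatives, one may instead differentiate $G_j=\langle\operatorname{grad}f_j,\operatorname{grad}f_j\rangle$ along $\gamma_u$ and evaluate at $s=0$ to get $2\langle\nabla_u\operatorname{grad}f_j,\nu_j\rangle$, which again vanishes as the $u$-directional derivative of $\lvert\operatorname{grad}f_j\rvert^2\equiv1$ on $U\cap\partial{D_j}$; this is a convenient companion to the Hessian positivity \eqref{equation:hessian}.
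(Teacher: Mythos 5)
Your proof is correct and takes a genuinely different, and shorter, route than the paper's. You observe that the normalization $\operatorname{grad}_x f_j = \nu_j(x)$ makes $G_j \equiv 1$ on $U\cap\partial{D_j}$, so $\iota^\ast(dG_j)=d(\iota^\ast G_j)=0$ and $dG_j\vert_x$ annihilates $T_x(\partial{D_j})$; combined with the chain rule $\frac{d}{ds}\big\vert_{s=0}G_j(\gamma_u(s))=dG_j\vert_x(\dot{\gamma}_u(0))=dG_j\vert_x(u)$ and the fact that $u\in T_x(\partial{D_j})$, this gives \eqref{equation:gradient} immediately. The key point you isolate---that the quantity to be computed depends only on the $1$-jet of $\gamma_u$ at $s=0$, so the geodesic may be replaced by any curve in $\partial{D_j}$ with the same initial velocity---is exactly right and makes the argument one line. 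The paper instead sets up geodesic normal coordinates at $x$, applies the implicit function theorem to write $U\cap\partial{D_j}$ as a graph $h=\psi(s,t_1,\dotsc,t_{n-2})$ with $\psi=\mathcal{O}(s^2)$ along the $s$-axis, defines $\tilde{G}_j(s,h)=G_j\bigl(\exp_x(su+h\nu_j(x))\bigr)$, and compares $\tilde{G}_j(s,0)$ with $\tilde{G}_j\bigl(s,\tilde{\psi}(s)\bigr)\equiv1$ via a Taylor-remainder identity to conclude $G_j(\gamma_u(s))=1+\mathcal{O}(s^2)$. Conceptually the paper is carrying out your argument by hand, explicitly exhibiting a curve in $\partial{D_j}$ tangent to $\gamma_u$ and comparing; the extra machinery (normal coordinates, the graph $\psi$) is consistent with the coordinate computations the paper needs elsewhere (e.g.\ in the proof of Lemma~\ref{theorem:mjk}), but for this lemma taken in isolation your argument is the more economical one and buys the same conclusion with no coordinates and no implicit function theorem.
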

\begin{proof} 
We have only to show \eqref{equation:gradient}. 
Fix arbitrary $x \in U{\cap}\partial{D_j}$ and $u \in S_x(\partial{D_j})$. 
Let $y(s)$ be a curve on $\partial{D_j}$ near $x$ such that $y(0)=x$ and $y^\prime(0)=u$.  
Then we have $G_j\bigl(y(s)\bigr)\equiv1$, 
$$
\gamma_u(s)=x+su+\mathcal{O}(s^2), 
\quad
y(s)=x+su+\mathcal{O}(s^2), 
\quad
\gamma_u(s)-y(s)=\mathcal{O}(s^2)
$$
near $s=0$. The mean value theorem for $G_j$ implies that 
$$
G_j\bigl(\gamma_u(s)\bigr)
=
G_j\bigl(y(s)\bigr)+\mathcal{O}(s^2)
=
1+\mathcal{O}(s^2). 
$$
Thus we can obtain \eqref{equation:gradient} immediately. 
\end{proof}
\par 
We now study the relationship between $\Sigma_{jk}^{(l)}$ ($l=1,2$), 
$\partial{D_j}$ and $\partial{D_k}$ 
to compute $C_\mathcal{X}^T{\circ}N^\ast(\Sigma_{jk}^{(l)})$. 
Fix arbitrary $x_0 \in \partial{D_j}$. Consider the hyperplane
$$
H_j(x_0)
:=
\bigl\{
\gamma_u(t)
: 
u \in S_{x_0}(\partial{D_j}), 
t\in\bigl(\tau_-(u),\tau_+(u)\bigr)
\bigr\}. 
$$
Consider the relationship between $H_j(x_0)$ and $\partial{D_k}$. 
Logically the following three cases occur: 
\begin{itemize}
\item
Case~D1: 
There exists $y_0 \in \partial{D_k}$ uniquely such that $H_j(x_0)=H_k(y_0)$. 
See Case~B1 of Figure~2. 
In this case there exists $v \in S_{x_0}(\partial{D_j})$ and 
$s \in \bigl(\tau_-(v),\tau_+(v)\bigr)$ such that 
$\dot{\gamma}_v(s) \in S_{y_0}(\partial{D_k})$ and $\nu_k(y_0)={\pm}P(0,s;\gamma_v)\nu_j(x_0)$. 
See Figure~3. 
This case corresponds to the case of $F(v) \in \Sigma_{jk}^{(1)}$. 
\vspace{6pt}
\item 
Case~D2: 
$H(x_0){\cap}D_k\ne\emptyset$. 
Note that this implies that $H_j(x_0)$ is never a tangent hyperplane 
for $\partial{D_k}$. See Case~B2 of Figure~2. 
In this case there exist two common tangential geodesics for $\partial{D_j}$ and $\partial{D_k}$, 
and the normal directions at the common tangential points are different, 
which means that the two normal directions are not parallel. 
Then we deduce that 

$$
y_1, y_2 \in \partial{D_k},
\quad 
v_1, v_2 \in S_{x_0}(\partial{D_j}),
\quad 
s_1 \in \bigl(\tau_-(v_1),\tau_+(v_1)\bigr)
\quad
s_2 \in \bigl(\tau_-(v_2),\tau_+(v_2)\bigr)
$$
such that 
$$
y_1{\ne}y_2,
\quad 
\dot{\gamma}_{v_1}(s_1) \in S_{y_1}(\partial{D_k}),
\quad
\dot{\gamma}_{v_2}(s_2) \in S_{y_2}(\partial{D_k}),
$$
$$
\nu_k(y_1)\ne{\pm}P(0,s_1;\gamma_{v_1})\nu_j(x_0),
\quad
\nu_k(y_2)\ne{\pm}P(0,s_2;\gamma_{v_2})\nu_j(x_0),
$$
These cases correspond to the cases of 
$F(v_1), F(v_2) \in \Sigma_{jk}^{(2)}$. 
\vspace{6pt}
\item 
Case~D3: 
$H_j(x_0){\cap}\overline{D_k}=\emptyset$. 
\end{itemize}
Case~D2 is the intermediate case between 
$\nu_k(y_0)=+P(0,s;\gamma_v)\nu_j(x_0)$ and 
$\nu_k(y_0)=-P(0,s;\gamma_v)\nu_j(x_0)$ in Case~D1 
from a view point of the location of $x_0$ on $\partial{D_j}$. 
So we shall study Case~D1. 
\par
We now introduce a subset of $S(\partial{D_j})$ of the form   
\begin{align*}
  \mathcal{M}_{jk}^{(\pm)}
& :=
  \bigl\{
  v \in S(\partial{D_j})
  : 
  \exists s \bigl(\tau_-(v),\tau_+(v)\bigr)
  \ \text{s.t.}\ 
\\
& \qquad
  \dot{\gamma}_v(s) \in S(\partial{D_k}), 
  \nu_k\bigl(\gamma_v(s)\bigr)
  =
  \pm
  P(0,s;\gamma_v)\nu_j\bigl(\pi(v)\bigr)
  \bigr\}. 
\end{align*}
Since $D_j$ and $D_k$ are strictly convex, 
we deduce that for any $x_0$ in Case~D1, 
$v \in S_{x_0}(\partial{D_j})$, 
$s \in \bigl(\tau_-(v),\tau_+(v)\bigr)$, 
$y_0=\gamma_v(s)$, 
and the signature $+$ or $-$ are determined uniquely. 
When $n=2$ and $\Sigma_{jk}\ne\emptyset$, 
$\mathcal{M}_{jk}^{(\pm)}$ become sets of two elements respectively. 
Furthermore the properties of $\mathcal{M}_{jk}^{(\pm)}$ 
for $n\geqq3$ are the following. 
\begin{lemma}
\label{theorem:mjk} 
Suppose {\rm ({\bf A0})}, {\rm ({\bf A1})} and {\rm ({\bf A2})}. 
When $n\geqq3$ and $\Sigma_{jk}^{(1)}\ne\emptyset$, 
$\mathcal{M}_{jk}^{(\pm)}$ are connected $(n-2)$-dimensional 
submanifolds of $S(\partial{D_j})$. 
\end{lemma}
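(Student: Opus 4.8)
The plan is to realize $\mathcal{M}_{jk}^{(\pm)}$ as a space of common tangent totally geodesic hypersurfaces and then read off its smooth structure and connectedness from the geometry.

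Since $n\geq3$, assumption ({\bf A1}) means $(M,g)$ has constant curvature $\kappa$, so each $H_j(x_0)=\exp_{x_0}\bigl(\nu_j(x_0)^\perp\bigr)$ is (a piece of) a totally geodesic hypersurface, and parallel transport along a geodesic contained in $H_j(x_0)$ sends the unit normal of $H_j(x_0)$ at one point to the unit normal at another. Consequently, for $v\in S(\partial D_j)$ whose geodesic is tangent to $\partial D_k$ at $\gamma_v(s)$, the condition $P(0,s;\gamma_v)\nu_j(\pi(v))=\pm\nu_k(\gamma_v(s))$ is equivalent to $H_k(\gamma_v(s))=H_j(\pi(v))$. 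First I would check that $v\mapsto\bigl(H_j(\pi(v)),\mathrm{sign}\bigr)$ is a bijection of $\mathcal{M}_{jk}^{(\pm)}$ onto the set $\mathcal{T}_{jk}^{(\pm)}$ of totally geodesic hypersurfaces tangent to both $\partial D_j$ and $\partial D_k$, with the two bodies on the same ($+$), respectively opposite ($-$), side: the inverse is built by taking the (unique, by strict convexity) tangent points $x_0\in\partial D_j$ and $y_0\in\partial D_k$ and then the (unique, by simplicity) geodesic from $x_0$ to $y_0$; this geodesic lies in the totally geodesic hypersurface, so its initial velocity belongs to $S_{x_0}(\partial D_j)$ and returns $v$.

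For the submanifold claim I would work near a fixed $v_0\in\mathcal{M}_{jk}^{(\pm)}$, with tangency time $s_0$ and $y_0=\gamma_{v_0}(s_0)\in\partial D_k$, and produce a local defining map $\Phi$ with $n-1$ scalar components whose zero set is $\mathcal{M}_{jk}^{(\pm)}$. By the Hessian positivity \eqref{equation:hessian}, $s_0$ is a nondegenerate critical point of $t\mapsto f_k(\gamma_{v_0}(t))$, so the implicit function theorem yields a smooth critical time $s(v)$ for $v$ near $v_0$; take the first component of $\Phi$ to be the tangency value $f_k(\gamma_v(s(v)))$. Since $s(v)$ is a critical time, $\operatorname{grad}f_k(\gamma_v(s(v)))\perp\dot\gamma_v(s(v))$, and $P(0,s(v);\gamma_v)\nu_j(\pi(v))\perp\dot\gamma_v(s(v))$ as well because parallel transport is isometric and $\nu_j(\pi(v))\perp v$; moreover $|\operatorname{grad}f_k|$ stays near $1$ by Lemma~\ref{theorem:gradient}. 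Hence, inside the $(n-1)$-plane $\dot\gamma_v(s(v))^\perp$, the vanishing of the components of $\operatorname{grad}f_k(\gamma_v(s(v)))\mp P(0,s(v);\gamma_v)\nu_j(\pi(v))$ along a smooth local frame (chosen orthogonal to the two vectors at $v_0$) gives the remaining $n-2$ components of $\Phi$, and forces $\operatorname{grad}f_k=\pm P\nu_j$ there. The key step is that $D\Phi_{v_0}$ is onto $\mathbb{R}^{n-1}$: variations of $v$ in $S(\partial D_j)$ induce Jacobi fields along $\gamma_{v_0}$ of the product form in Lemma~\ref{theorem:jacobi2}, and one shows, using the strict convexity of $D_j$ and $D_k$ and the non-degeneracy $\Delta(\cdot,\cdot;\cdot)\neq0$ of Lemma~\ref{theorem:noconjugatepoint} (which is exactly ``no conjugate point'' en route), that suitable variations move the tangency value and the $n-2$ alignment components independently. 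The implicit function theorem then gives that $\mathcal{M}_{jk}^{(\pm)}$ is an embedded submanifold of dimension $(2n-3)-(n-1)=n-2$.

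Finally, connectedness: I would pass to the complete ambient space form and argue by convex geometry through the bijection of the first step. For the $+$ case, an element of $\mathcal{T}_{jk}^{(+)}$ is a supporting hypersurface of the geodesically convex hull $\operatorname{conv}(D_j\cup D_k)$ whose contact set meets both $\partial D_j$ and $\partial D_k$; since $D_j,D_k$ are disjoint and strictly convex, the ``ruled'' part of $\partial\operatorname{conv}(D_j\cup D_k)$ joining $\partial D_j$ to $\partial D_k$ is connected, and $\mathcal{T}_{jk}^{(+)}$ is its continuous Gauss image, hence connected. For the $-$ case, an element of $\mathcal{T}_{jk}^{(-)}$ separates $D_j$ from $D_k$, and via support-function calculus adapted to the space form it corresponds to a direction for which the supporting hypersurface of the Minkowski-type difference body $D_j+(-D_k)$ passes through the centre; disjointness of $D_j$ and $D_k$ keeps the centre outside that body, so these directions form the boundary of the intersection of the direction sphere with a salient closed convex cone of nonempty interior, which is a connected $(n-2)$-sphere. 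Pulling connectedness back through the bijection completes the proof. I expect the main obstacle to be the surjectivity of $D\Phi_{v_0}$ — the Jacobi-field transversality computation tying together \eqref{equation:hessian}, Lemma~\ref{theorem:jacobi2} and Lemma~\ref{theorem:noconjugatepoint} — together with making the convex-hull and difference-body arguments rigorous in a curved space form rather than in $\mathbb{R}^n$.
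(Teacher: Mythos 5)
Your argument for the submanifold claim is essentially the paper's: both reduce to producing a local defining function whose differential has full rank and invoking the implicit function theorem. The paper introduces the $\mathbb{R}^2\times T_{\cdot}(M^{\mathrm{int}})$-valued function $J(s,v)$ jointly in $(s,v)$ and shows $\operatorname{rank} DJ\vert_{(s_0,v_0)}=n$ on the $(2n-2)$-dimensional source $\mathbb{R}\times S(\partial D_j)$; your variant first solves for the critical time $s(v)$ (using the Hessian positivity \eqref{equation:hessian}) and then works with an $\mathbb{R}^{n-1}$-valued $\Phi$ on $S(\partial D_j)$, which is an equivalent bookkeeping. The crux you correctly identify, surjectivity of the differential, is exactly where the paper's work goes, but your description of the ingredients is off: the paper's computation of $DJ$ uses the shape operators $A_j,A_k$, the positivity \eqref{equation:hessian}, Lemma~\ref{theorem:gradient}, and the product form of Jacobi fields from Lemma~\ref{theorem:jacobi2}, then handles the degenerate case $R_1=O$ by a contradiction-from-strict-convexity argument; it does \emph{not} invoke the Wronskian non-degeneracy $\Delta\neq0$ of Lemma~\ref{theorem:noconjugatepoint}, which enters only later in Lemma~\ref{theorem:propagation}. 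So the step you leave as ``one shows\ldots'' is the entire content of the paper's proof, and the lemma you point at is not the one that does the job.

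Where you genuinely diverge from the paper is in two respects. First, the reformulation of $\mathcal{M}_{jk}^{(\pm)}$ as a space $\mathcal{T}_{jk}^{(\pm)}$ of totally geodesic tangent hypersurfaces is a correct and illuminating addition: the paper defines $H_j(x_0)$ and implicitly uses that Jacobi fields stay tangent to it, but does not make the bijection explicit. Second, and more significantly, you actually attempt a proof of connectedness via supporting hypersurfaces of a geodesic convex hull (for the $+$ sign) and a difference-body argument (for the $-$ sign); the paper explicitly declines to prove connectedness, calling it ``intuitively obvious'' from strict convexity. Your sketch is plausible but would require real work to make rigorous in a curved space form — in particular, ``geodesic convex hull'' and ``Minkowski-type difference body'' are not standard off-the-shelf tools outside $\mathbb{R}^n$, and the reduction to a connected $(n-2)$-sphere of directions needs a concrete construction. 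As it stands, neither your proposal nor the paper contains a complete connectedness proof.
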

\begin{proof}
Suppose $n\geqq3$ and $\Sigma_{jk}^{(1)}\ne\emptyset$. 
We shall show that $\mathcal{M}_{jk}^{(+)}$ is an $(n-2)$-dimensional 
submanifold of $S(\partial{D_j})$. 
Similarly we can show that so is $\mathcal{M}_{jk}^{(-)}$, and we omit the proof of this.
Consider the common tangential hyperplanes for $\partial{D_j}$ and $\partial{D_k}$, 
and move the hypersurface while maintaining tangency. 
The hypersurface comes back to the initial position 
since $\partial{D_j}$ and $\partial{D_k}$ are strictly convex. 
This implies that $\mathcal{M}_{jk}^{(+)}$ is connected.   
Fix arbitrary $v_0 \in \mathcal{M}_{jk}^{(+)}$. 
Then there exists $s_0 \in \bigl(\tau_-(v_0),\tau_+(v_0)\bigr)$ uniquely such that 
$$
u_0:=\dot\gamma_{v_0}(s_0)=\Psi(v_0,s_0) \in S(\partial{D_k}),
\quad
\nu_k\bigl(\pi(u_0)\bigr)=P(0,s_0,\gamma_{v_0})\nu_j\bigl(\pi(v_0)\bigr). 
$$
Set $x_0:=\pi(v_0)$ and $y_0:=\pi(u_0)$ for short. 
Pick up a neighborhood $U_j$ at $x_0$ in $M^\text{int}$, 
a neighborhood $U_k$ at $y_0$ in $M^\text{int}$, 
a smooth function $f_j \in C^\infty(U_j)$, 
and a smooth function $f_k \in C^\infty(U_k)$ such that 
\begin{alignat*}{2}
  \partial{D_j}{\cap}U_j
& =
  \{x \in U_j : f_j(x)=0\},
& \quad
  \nu_j(x)
& =
  \operatorname{grad}_xf_j
  \quad\text{for}\quad
  x \in \partial{D_j}{\cap}U_j,
\\
  \partial{D_k}{\cap}U_k
& =
  \{y \in U_k : f_k(y)=0\},
& \quad
  \nu_k(y)
& =
  \operatorname{grad}_yf_k
  \quad\text{for}\quad
  y \in \partial{D_k}{\cap}U_k.
\end{alignat*}
Then we have 
\begin{align*}
  S(\partial{D_j}{\cap}U_j)
& =
  \bigl\{
  v \in S(U_j)
  :
  f_j\bigl(\pi(v)\bigr)=0, 
  df_j(v)=0 
  \bigr\},
\\
  S(\partial{D_k}{\cap}U_k)
& =
  \bigl\{
  u \in S(U_k)
  :
  f_k\bigl(\pi(u)\bigr)=0, 
  df_k(u)=0 
  \bigr\}. 
\end{align*}
For $v \in S(\partial{D_j}{\cap}U_j)$ near $v_0$ and 
for $s \in \bigl(\tau_-(v),\tau_+(v)\bigr)$ near $s_0$, we set
\begin{align*}
  J(s,v)
& =
  \begin{bmatrix}
  f_k\bigl(\gamma_v(s)\bigr)
  \\
  df_k\bigl(\dot{\gamma}_v(s)\bigr)
  \\
  \operatorname{grad}_{\gamma_v(s)}f_k - P(0,s;\gamma_v)\operatorname{grad}_{\pi(v)}f_j 
  \end{bmatrix} 
\\
& =
  \begin{bmatrix}
  f_k\Bigl(\pi\bigl(\Psi(v,s)\bigr)\Bigr)
  \\
  df_k\bigl(\Psi(v,s)\bigr)
  \\
  \operatorname{grad}_{\pi\bigl(\Psi(v,s)\bigr)}f_k - P(0,s;\gamma_v)\operatorname{grad}_{\pi(v)}f_j 
  \end{bmatrix}.
\end{align*} 
The function $J(s,v)$ takes values in 
$\mathbb{R}^2 \times T_{\pi\bigl(\Psi(v,s)\bigr)}(M^\text{int})$, 
and $\mathcal{M}_{jk}^{(+)}$ is characterized by $J(s,v)=0$ near $(s_0,v_0)$. 
Indeed if $v \in \mathcal{M}_{jk}^{(+)}$, 
then there exists $s\in\bigl(\tau_-(v),\tau_+(v)\bigr)$ uniquely such that 
$J(s,v)=0$. Conversely if $J(s,v)=0$, then $v$ satisfies the condition for $v \in \mathcal{M}_{jk}^{(+)}$ with $s$. 
Hence it suffices to show that $\operatorname{rank}\bigl(DJ\vert_{(s_0,v_0)}\bigr)=n$ 
to prove that $\mathcal{M}_{jk}^{(+)}$ is an $(n-2)$-dimensional submanifold of $S(\partial{D_j}{\cap}U_j)$ 
since $(s,v)$ moves in the $(2n-2)$-dimensional manifold $\mathbb{R}{\times}S(\partial{D_j})$ 
and $s$ is uniquely determined by $v$. 
\par
To compute the differential $DJ\vert_{(s_0,v_0)}$, we now introduce a coordinate system in 
$T_{v_0}\bigl(S(\partial{D_j})\bigr)$. Roughly speaking, we can see this as 
\begin{align*}
  T_{v_0}\bigl(S(\partial{D_j})\bigr)
& =
  T_{x_0}(\partial{D_j})
  \times
  T_{v_0}\bigl(S_{x_0}(\partial{D_j})\bigr)
\\
& \simeq
  T_{x_0}(\partial{D_j})
  \times
  \{
  \zeta \in T_{x_0}(\partial{D_j})
  : 
  \langle{\zeta,v_0}\rangle=0
  \}.
\end{align*}
Pick up an orthonormal basis $\{v_0,v_1,\dotsc,v_{n-2}\}$ 
of $T_{x_0}(\partial{D_j})$, 
and an orthonormal basis  $\{\zeta_1,\dotsc,\zeta_{n-2}\}$ 
of $T_{v_0}\bigl(S_{x_0}(\partial{D_j})\bigr)$. 
Then 
$$
\{(v_0,0), (v_1,0), \dotsc, (v_{n-2},0), (0,\zeta_1), \dotsc, (0,\zeta_{n-2})\}
$$ 
becomes an orthonormal basis of $T_{v_0}\bigl(S(\partial{D_j})\bigr)$. 
Note that $\langle\zeta_j,v_0\rangle=0$ for $j=1,\dotsc,n-2$. 
Fix arbitrary $(X,\zeta) \in T_{v_0}\bigl(S(\partial{D_j})\bigr)$. 
We consider a curve $v(\sigma)$ on $S(\partial{D_j})$ of the form 
$$
\mathbb{R}\ni\sigma
\mapsto 
v(\sigma)
:=
v_0+\sigma(X,\zeta)+\mathcal{O}(\sigma^2)
$$
near $\sigma=0$. Let $Y(s)$ be the Jacobi field along $\gamma_{v_0}$ 
with $\bigl(Y(0),\nabla{Y(0)}\bigr)=(X,\zeta)$. Then we have 
\begin{align*}
  \bigl(Y(s),\nabla{Y(s)}\bigr)
& =
  \frac{d}{d\sigma}\bigg\vert_{\sigma=0}
  \Bigl(
  \pi\bigl(\Psi(v(\sigma),s)\bigr), \Psi(v(\sigma),s)
  \Bigr)
\\
& =
  \Bigl(
  D\pi\vert_{\Psi(v_0,s)}
  \circ
  D_v\Psi\vert_{(v_0,s)}(X,\zeta),
  D_v\Psi\vert_{(v_0,s)}(X,\zeta),
  \Bigr). 
\end{align*}
The assumption ({\bf A1}) ensures that 
$$
Y(s)
=
a(s)P(0,s;\gamma_{v_0})X,
\quad
\nabla{Y}(s)
=
b(s)P(0,s;\gamma_{v_0})\zeta,
$$
where $a(s)$ and $b(s)$ are solutions to 
\begin{alignat*}{3}
  a^{\prime\prime}(s)+k\bigl(\gamma_{v_0}(s)\bigr)a(s)
& =
  0,
& \quad
  a(0)
& =
  1,
& \quad
  a^\prime(0) 
& =
  0,
\\
  b^{\prime\prime}(s)+k\bigl(\gamma_{v_0}(s)\bigr)b(s)
& =
  0,
& \quad
  b(0)
& =
  0,
& \quad
  b^\prime(0) 
& =
  1. 
\end{alignat*}
Hence we have $Y(s), \nabla{Y(s)} \in T_{\gamma_{v_0}(s)}\bigl(H_j(x_0)\bigr)$, 
in particular, $Y(s_0), \nabla{Y(s_0)} \in T_{y_0}(\partial{D_k})$. 
We now introduce Jacobi fields along $\gamma_{v_0}$ by 
\begin{align*}
  Y_p(s)
& =
  D\pi\vert_{\Psi(v_0,s)}{\circ}D_v\Psi\vert_{(v_0,s)}(v_p,0),
  \quad
  p=0,1,\dotsc,n-2,
\\
  Z_q(s)
& =
  D\pi\vert_{\Psi(v_0,s)}{\circ}D_v\Psi\vert_{(v_0,s)}(0,\zeta_q),
  \quad
  q=1,\dotsc,n-2. 
\end{align*}
Obviously we have $Y_0(s)=\dot{\gamma}_{v_0}(s)$ and $Z_q(0)=0$ for $q=1,\dotsc,n-2$. 
Since $y_0=\gamma_{v_0}(s_0)$ is not a conjugate point of $x_0$, 
$$
\{u_0,Y_1(s_0),\dotsc,Y_{n-2}(s_0),Z_1(s_0),\dotsc,Z_{n-2}(s_0)\}
$$
becomes a basis of $T_{u_0}\bigl(S(\partial{D_k})\bigr)$.
\par
We now start to compute $DJ\vert_{(s_0,v_0)}$. 
Let $A_j$ be the shape operator for $\partial{D_j}$ at $x_0$, 
and let $A_k$ be the shape operator for $\partial{D_k}$ at $y_0$. 
It follows that $\operatorname{rank}(A_j)=n-1$ and 
$\operatorname{rank}(A_k)=n-1$ since $D_j$ and $D_k$ are strictly convex respectively. 
\par
First we compute $\partial J/\partial s$. 
Let $\Gamma^p_{qr}(y)$ be the Christoffel symbol of $(M,g)$ in $U_k$. 
Since $P(0,s;\gamma_{v_0})$ is the operator giving the parallel transport along $\gamma_{v_0}$, 
we deduce that 
\begin{align*}
  \frac{\partial J}{\partial s}(s_0,v_0)
& =
  \frac{\partial}{\partial s}\bigg\vert_{s=s_0}
  \begin{bmatrix}
  f_k\Bigl(\pi\bigl(\Psi(v_0,s)\bigr)\Bigr)
  \\
  df_k\bigl(\Psi(v_0,s)\bigr) 
  \\
  \operatorname{grad}_{\pi\bigl(\Psi(v_0,s)\bigr)}f_k
  -
  P(0,s;\gamma_{v_0})\operatorname{grad}_{x_0}f_j
  \end{bmatrix} 
\\
& =
  \frac{\partial}{\partial s}\bigg\vert_{s=s_0}
  \begin{bmatrix}
  f_k\Bigl(\pi\bigl(\Psi(v_0,s)\bigr)\Bigr)
  \\
  df_k\bigl(\Psi(v_0,s)\bigr) 
  \\
  \operatorname{grad}_{\pi\bigl(\Psi(u_0,s-s_0)\bigr)}f_k
  -
  P(0,s-s_0;\gamma_{u_0})\operatorname{grad}_{y_0}f_k
  \end{bmatrix} 
\\
& =
  \begin{bmatrix}
  df_k\bigl(\Psi(v_0,s_0)\bigr)
  \\
  {\nabla}df_k(u_0,u_0) 
  \\
  \nabla_{u_0}\operatorname{grad}f_k
  -
  \Bigl(\Gamma^p_{qr}(y_0)\cdot(u_0)_q\cdot(\operatorname{grad}_{y_0}f_k)_r\Bigr)
\vspace{3pt}
  \\
  \qquad\qquad\quad
  +
  \Bigl(\Gamma^p_{qr}(y_0)\cdot(u_0)_q\cdot(\operatorname{grad}_{y_0}f_k)_r\Bigr)
  \end{bmatrix}
\\
& =
  \begin{bmatrix}
  df_k\bigl(\Psi(v_0,s_0)\bigr)
  \\
  {\nabla}df_k(u_0,u_0) 
  \\
  \nabla_{u_0}\operatorname{grad}f_k
  \end{bmatrix}
\\
& =
  \begin{bmatrix}
  0
  \\
  {\nabla}df_k(u_0,u_0) 
  \\
  A_ku_0+\langle\nabla_{u_0}\operatorname{grad}f_k,\operatorname{grad}f_k\rangle\operatorname{grad}_{y_0}f_k 
  \end{bmatrix}. 
\end{align*}
Lemma~\ref{theorem:gradient} shows that 
$\langle\nabla_{u_0}\operatorname{grad}f_k,\operatorname{grad}f_k\rangle=0$, and we obtain 
\begin{equation}
\frac{\partial J}{\partial s}(s_0,v_0) 
=
\begin{bmatrix}
0
\\
{\nabla}df_k(u_0,u_0) 
\\
A_ku_0
\end{bmatrix}. 
\label{equation:000}
\end{equation}
\par
Secondly we compute the differentiation of $J(s,v)$ with respect to the variables of 
$T_{v_0}\bigl(S(\partial{D_j})\bigr)$. 
We consider curves on $S(\partial{D_j})$ of the form 
\begin{alignat*}{2}
  v(\alpha_p)
& =
  v_0+\alpha_pv_p+\mathcal{O}(\alpha_p^2),
& \quad
  p
& =
  0,1,\dotsc,n-2,
\\
  v(\beta_q)
& =
  v_0+\beta_q\zeta_q+\mathcal{O}(\beta_q^2),
& \quad
  q
& =
  1,\dotsc,n-2. 
\end{alignat*}
In other words we take local coordinates 
$(\alpha_0,\alpha_1,\dotsc,\alpha_{n-2},\beta_1,\dotsc,\beta_{n-2})$ 
of $T_{v_0}\bigl(S(\partial{D_j})\bigr)$ near the zero section at $v_0 \in S(\partial{D_j})$. 
Furthermore the following two mappings  
\begin{align*}
  \mathbb{R}^n
&  \ni 
  (h,\alpha_0,\alpha_1,\dotsc,\alpha_{n-2})
\\
& \mapsto 
  \exp_{x_0}\bigl(h\nu_j(x_0)+\alpha_0v_0+\alpha_1v_1+\dotsb+\alpha_{n-2}v_{n-2}\bigr) \in U_j,
\\
  \mathbb{R}^n
& \ni 
  (h,\alpha_0,\beta_1,\dotsc,\beta_{n-2})
\\
& \mapsto 
  \exp_{x_0}\bigl(h\nu_j(x_0)+\alpha_0v_0+\beta_1\zeta_1+\dotsb+\beta_{n-2}\zeta_{n-2}\bigr) \in U_j,
\end{align*}
give geodesic local coordinate systems in $U_j$ respectively. 
Hence if we use these coordinates, we may assume 
that all the Christoffel symbols of $(M,g)$ vanish at $x_0$, 
and differentiation of vector fields with respect to these variables 
at $x_0$ become covariant derivatives with respect to the corresponding vector fields. 
See \cite[Exercise~4 in Page~37 and its Solution in Page~326]{Sakai} 
for instance. We shall compute 
\begin{align*}
  \frac{\partial}{\partial \alpha_p}\bigg\vert_{\alpha_p=0}
  J\bigl(s_0,v(\alpha_p)\bigr)
& =
  \frac{\partial}{\partial \alpha_p}\bigg\vert_{\alpha_p=0}
  J\bigl(s_0,v_0+\alpha_p(v_p,0)\bigr),
\\
  \frac{\partial}{\partial \beta_q}\bigg\vert_{\beta_q=0}
  J\bigl(s_0,v(\beta_q)\bigr)
& =
  \frac{\partial}{\partial \beta_q}\bigg\vert_{\beta_q=0}
  J\bigl(s_0,v_0+\beta_q(0,\zeta_q)\bigr). 
\end{align*}
Thus we shall actually compute
\begin{align*}
& \frac{\partial}{\partial \alpha_p}\bigg\vert_{\alpha_p=0}
  J\bigl(s_0,v_0+\alpha_p(v_p,0)\bigr),
\\
  =
& \frac{\partial}{\partial \alpha_p}\bigg\vert_{\alpha_p=0}
  \begin{bmatrix}
  f_k\Bigl(\pi\bigl(\Psi(v_0+\alpha_p(v_p,0),s_0)\bigr)\Bigr)
  \\
  df_k\bigl(\Psi(v_0+\alpha_p(v_p,0),s_0)\bigr)
  \\
  \operatorname{grad}_{\pi\bigl(\Psi(v_0+\alpha_p(v_p,0),s_0)\bigr)}f_k
  -
  P(0,s_0;\gamma_{v_0+\alpha_p(v_p,0)})
  \operatorname{grad}_{\pi(v_0+\alpha_p(v_p,0))}f_j
  \end{bmatrix},
\\
& \frac{\partial}{\partial \beta_q}\bigg\vert_{\beta_q=0}
  J\bigl(s_0,v_0+\beta_q(0,\zeta_q)\bigr)
\\
  =
& \frac{\partial}{\partial \beta_q}\bigg\vert_{\beta_q=0}
  \begin{bmatrix}
  f_k\Bigl(\pi\bigl(\Psi(v_0+\beta_q(0,\zeta_q),s_0)\bigr)\Bigr)
  \\
  df_k\bigl(\Psi(v_0+\beta_q(0,\zeta_q),s_0)\bigr)
  \\
  \operatorname{grad}_{\pi\bigl(\Psi(v_0+\beta_q(0,\zeta_q),s_0)\bigr)}f_k
  -
  P(0,s_0;\gamma_{v_0+\beta_q(0,\zeta_q)})
  \operatorname{grad}_{\pi(v_0+\beta_q(0,\zeta_q))}f_j
  \end{bmatrix}
\end{align*} 
for $p=0,1,\dotsc,n-2$ and $q=1,\dotsc,n-2$. 
\par
We compute the first row. 
Since $u_0, Y_p(s_0), Z_q(s_0) \in T_{y_0}(\partial{D_k})$, 
we deduce that for $p,q=1,\dotsc,n-2$  
\begin{align}
  \frac{\partial}{\partial \alpha_0}\bigg\vert_{\alpha_0=0}
&  f_k\Bigl(\pi\bigl(\Psi(v_0+\alpha_0(v_0,0),s_0)\bigr)\Bigr)
\nonumber
\\
  =
& df_k\bigl(D\pi\vert_{\Psi(v_0,s_0)}{\circ}D_v\Psi\vert_{v_0,s_0}(v_0,0)\bigr)
\nonumber
\\
  =
& df_k\bigl(\Psi(v_0,s_0)\bigr)
  =
  df_k(u_0)
  =0,
\label{equation:011}
\\
  \frac{\partial}{\partial \alpha_p}\bigg\vert_{\alpha_p=0}
& f_k\Bigl(\pi\bigl(\Psi(v_0+\alpha_p(v_p,0),s_0)\bigr)\Bigr)
\nonumber
\\
  =
& df_k\bigl(D\pi\vert_{\Psi(v_0,s_0)}{\circ}D_v\Psi\vert_{v_0,s_0}(v_p,0)\bigr)
\nonumber
\\
  =
& df_k\bigl(Y_p(s_0)\bigr)
  =0, 
\label{equation:012}
\\
  \frac{\partial}{\partial \beta_q}\bigg\vert_{\beta_q=0}
& f_k\Bigl(\pi\bigl(\Psi(v_0+\beta_q(0,\zeta_q),s_0)\bigr)\Bigr)
\nonumber
\\
  =
& df_k\bigl(D\pi\vert_{\Psi(v_0,s_0)}{\circ}D_v\Psi\vert_{v_0,s_0}(0,\zeta_q)\bigr)
\nonumber
\\
  =
& df_k\bigl(Z_q(s_0)\bigr)
  =0.
\label{equation:013}
\end{align}
\par
We compute the second row. 
Since $X\bigl(df_k(Y)\bigr)=\nabla{df_k(X,Y)}+df_k(\nabla_XY)$ for vector fields $X$ and $Y$ on $U_k$, 
we deduce for $p,q=1,\dotsc,n-2$ 
\begin{align}
  \frac{\partial}{\partial \alpha_0}\bigg\vert_{\alpha_0=0}
& df_k\bigl(\Psi(v_0+\alpha_0(v_0,0),s_0)\bigr)
\nonumber
\\
  =
& {\nabla}df_k\bigl(D_v\Psi\vert_{(v_0,s_0)}(v_0,0),u_0\bigr)
  +
  df_k\bigl(\nabla_{D_v\Psi\vert_{(v_0,s_0)}(v_0,0)}u_0\bigr)
\nonumber
\\
  =
& {\nabla}df_k(u_0,u_0),
\label{equation:021}
\\
  \frac{\partial}{\partial \alpha_p}\bigg\vert_{\alpha_p=0}
& df_k\bigl(\Psi(v_0+\alpha_p(v_p,0),s_0)\bigr)
\nonumber
\\
  =
& {\nabla}df_k\bigl(D_v\Psi\vert_{(v_0,s_0)}(v_p,0),u_0\bigr)
  +
  df_k\bigl(\nabla_{D_v\Psi\vert_{(v_0,s_0)}(v_p,0)}u_0\bigr)
\nonumber
\\
  =
& {\nabla}df_k\bigl(\nabla_{u_0}Y_p(s_0),u_0\bigr)
  +
  df_k\bigl(\nabla_{\nabla_{u_0}Y_p(s_0)}u_0\bigr), 
\label{equation:022}
\\
  \frac{\partial}{\partial \beta_q}\bigg\vert_{\beta_q=0}
& df_k\bigl(\Psi(v_0+\beta_q(0,\zeta_q),s_0)\bigr)
\nonumber
\\
  =
& {\nabla}df_k\bigl(D_v\Psi\vert_{(v_0,s_0)}(0,\zeta_q,),u_0\bigr)
  +
  df_k\bigl(\nabla_{D_v\Psi\vert_{(v_0,s_0)}(0,\zeta_q)}u_0\bigr)
\nonumber
\\
  =
& {\nabla}df_k\bigl(\nabla_{u_0}Z_q(s_0),u_0\bigr)
  +
  df_k\bigl(\nabla_{\nabla_{u_0}Z_q(s_0)}u_0\bigr).  
\label{equation:023}
\end{align}
\par
We compute the third row. 
We remark again that the derivatives of vector fields at $x_0$ 
become the corresponding covariant derivatives. 
Note that $Y_0(0)=v_0$ and $Y_0(s_0)=u_0$. 
For the $\alpha_p$ variable ($p=0,1,\dotsc,n-2$), we deduce that 
\begin{align}
  \frac{\partial}{\partial \alpha_p}\bigg\vert_{\alpha_p=0}
& \Bigl\{
  \operatorname{grad}_{\pi\bigl(\Psi(v_0+\alpha_p(v_p,0),s_0)\bigr)}f_k
\nonumber
\\
& -
  P(0,s_0;\gamma_{v_0+\alpha_p(v_p,0)})
  \operatorname{grad}_{\pi\bigl(v_0+\alpha_p(v_p,0)\bigr)}f_j
  \Bigr\}
\nonumber
\\
  =
& \frac{\partial}{\partial \alpha_p}\bigg\vert_{\alpha_p=0}
  \bigl\{
  \operatorname{grad}_{\pi\bigl(\Psi(v_0+\alpha_p(v_p,0),s_0)\bigr)}f_k
\nonumber
\\
& \qquad\qquad\qquad
  -
  P(0,s_0;\gamma_{v_0+\alpha_p(v_p,0)})
  \operatorname{grad}_{\pi(v_0)}f_j
  \bigr\}
\nonumber
\\
  -
& P(0,s_0;\gamma_{v_0})
  \frac{\partial}{\partial \alpha_p}\bigg\vert_{\alpha_p=0}
  \operatorname{grad}_{\pi\bigl(v_0+\alpha_p(v_p,0)\bigr)}f_j
\nonumber
\\
  =
& Y_p(s_0)\bigl(\operatorname{grad}f_k-\operatorname{grad}f_k\bigr)
  -
  P(0,s_0;\gamma_{v_0})Y_p(0)\bigl(\operatorname{grad}f_j\bigr)
\nonumber
\\
  =
& - 
  P(0,s_0;\gamma_{v_0})\nabla_{Y_p(0)}\bigl(\operatorname{grad}f_j\bigr)
\nonumber
\\
  =
& - 
  P(0,s_0;\gamma_{v_0})
  \bigl\{A_jY_p(0)+\langle\nabla_{Y_p(0)}\operatorname{grad}f_j,\operatorname{grad}f_j\rangle\operatorname{grad}f_j\bigr\}
\nonumber
\\
  =
& - 
  P(0,s_0;\gamma_{v_0})A_jY_p(0)
\nonumber
\\
  =
& - 
  P(0,s_0;\gamma_{v_0})A_jv_p. 
\label{equation:0312}
\end{align}
Here we used Lemma~\ref{theorem:gradient}. 
In the same way we have 
\begin{align}
  \frac{\partial}{\partial \beta_q}\bigg\vert_{\beta_q=0}
& \Bigl\{
  \operatorname{grad}_{\pi\bigl(\Psi(v_0+\beta_q(0,\zeta_q),s_0)\bigr)}f_k
\nonumber
\\
& -
  P(0,s_0;\gamma_{v_0+\beta_q(0,\zeta_q)})
  \operatorname{grad}_{\pi(v_0+\beta_q(0,\zeta_q))}f_j
  \Bigr\}
\nonumber
\\
  =
& - 
  P(0,s_0;\gamma_{v_0})A_jZ_q(0)
  =
  0
\label{equation:033}
\end{align} 
since $Z_q(0)=0$. 
\par
Combining 
\eqref{equation:000}, 
\eqref{equation:011}, 
\eqref{equation:012}, 
\eqref{equation:013}, 
\eqref{equation:021}, 
\eqref{equation:022}, 
\eqref{equation:023}, 
\eqref{equation:0312} 
and 
\eqref{equation:033}, 
we obtain 
\begin{align*}
  DJ\vert_{(s_0,v_0)}
& =
  \begin{bmatrix}
  0 & 0
  \\
  J_1 & R_1
  \end{bmatrix},
\\
  J_1
& =
  \begin{bmatrix}
  \nabla{df_k}(u_0,u_0) 
  & 
  \bigl[\nabla{df_k}(u_0,u_0),M_1\bigr]
  \\
  A_ku_0 
  &
  \bigl[-P(0,s_0;\gamma_{v_0})A_jv_p\bigr]_{p=0,\dotsc,n-2}
  \end{bmatrix},
\\
  M_1
& =
  [m_1,\dotsc,m_{n-2}],
\\
  m_p
& :=
  {\nabla}df_k\bigl(\nabla_{u_0}Y_p(s_0),u_0\bigr)
  +
  df_k\bigl(\nabla_{\nabla_{u_0}Y_p(s_0)}u_0\bigr),
\\
  R_1
& =
  \begin{bmatrix}
  r_1 & \dotsb & r_{n-2}
  \\
  0 & \dotsb & 0 
  \end{bmatrix},
\\
  r_q
& :=
  {\nabla}df_k\bigl(\nabla_{u_0}Z_q(s_0),u_0\bigr)
  +
  df_k\bigl(\nabla_{\nabla_{u_0}Z_q(s_0)}u_0\bigr).
\end{align*}
We shall prove that $\operatorname{rank}([J_1,R_1])=n$. 
More precisely 
we show that $\operatorname{rank}([J_1,R_1])=n$ 
if $R_1\ne{O_{n\times(n-2)}}$, 
and we show that $\operatorname{rank}(J_1)=n$ if $R_1=O_{n\times(n-2)}$, 
where $O_{p{\times}q}$ denotes the $p{\times}q$ zero matrix. 
We now recall that \eqref{equation:hessian} shows ${\nabla}df_k(u_0,u_0)>0$. 
We set $w_p=-P(0,s_0;\gamma_{v_0})A_jv_p$, $p=0,1,\dotsc,n-2$ for short. 
It follows that $\{w_0,w_1,\dotsc,w_{n-2}\}$ 
forms a basis of $T_{y_0}(\partial{D_k})$ 
since 
$\{v_0,v_1,\dotsc,v_{n-2}\}$ 
forms a basis of $T_{x_0}(\partial{D_j})$, 
$\operatorname{rank}(A_j)=n-1$, 
$P(0,s_0;\gamma_{v_0})$ is the parallel transport of tangent vectors along $\gamma_{v_0}$, 
and 
$\dot{\gamma}_{v_0}(s_0) \in S(\partial{D_k})$. We have 
$$
J_1
=
\begin{bmatrix}
\nabla{df_k}(u_0,u_0) 
& 
\nabla{df_k}(u_0,u_0)
&
m_1
& 
\dotsb 
& 
m_{n-2}
\\
A_ku_0 
&
w_0
&
w_1
&
\dotsb
&
w_{n-2}
\end{bmatrix}.
$$
The $(2,1)$-element $A_ku_0$ belongs to $T_{y_0}(\partial{D_k})$, 
and is given by a linear combination of $w_0,w_1,\dotsc,w_{n-1}$. 
In particular $\operatorname{rank}(J_1) \geqq n-1$ since 
$w_0,w_1,\dotsc,w_{n-2}$ are linearly independent. 
\vspace{6pt}
\\
\underline{Case of $R_1{\ne}O_{n\times(n-2)}$}.\ 
Suppose that $R_1{\ne}O_{n\times(n-2)}$. 
Then there exists a $(2n-2)\times(2n-2)$ regular matrix $Q_1$ such that 
$[J_1,R_1]$ can be reduced to
\begin{align*}
  [J_1,R_1]Q_1
&  =
   [J_2,O_{2\times(n-3)}],
\\
  J_2
& =
  \begin{bmatrix}
  \nabla{df_k}(u_0,u_0) 
  & 
  \nabla{df_k}(u_0,u_0)
  &
  m_1
  & 
  \dotsb 
  & 
  m_{n-2}
  & 
  1 
  \\
  A_ku_0  
  &
  w_0
  &
  w_1
  &
  \dotsb
  &
  w_{n-2}
  & 
  0
  \end{bmatrix}.
\end{align*}
It suffices to show that $\operatorname{rank}(J_2)=n$. 
We can eliminate the first row using the $(1,n+1)$-element. 
There exists an $(n+1)\times(n+1)$ regular matrix $Q_2$ such that 
$$
J_2Q_2
=
\begin{bmatrix}
0
& 
0
&
0
& 
\dotsb 
& 
0
& 
1 
\\
A_ku_0  
&
w_0
&
w_1
&
\dotsb
&
w_{n-2}
& 
0
\end{bmatrix}.
$$
Since $A_ku_0$ can be expressed by a linear combination of 
$w_0,w_1,\dotsc,w_{n-2}$, there exists 
an $(n+1)\times(n+1)$ regular matrix $Q_3$ such that
$$
J_2Q_2Q_3
=
\begin{bmatrix}
0
& 
0
&
0
& 
\dotsb 
& 
0
& 
1 
\\
0
&
w_0
&
w_1
&
\dotsb
&
w_{n-2}
& 
0
\end{bmatrix}, 
$$
which shows that $\operatorname{rank}(J_2)=n$. 
\vspace{6pt}
\\
\underline{Case of $R_1=O_{n\times(n-2)}$}.\ 
Suppose that $R_1=O_{n\times(n-2)}$. 
We show $\operatorname{rank}(J_1)=n$, that is, 
all the columns of $J_1$ are linearly independent. 
To show $\operatorname{rank}(J_1)=n$, 
we suppose $\operatorname{rank}(J_1)=n-1$ and obtain a contradiction. 
We now assume that $\operatorname{rank}(J_1)=n-1$. 
Then we have 
$\operatorname{rank}(DJ\vert_{(s_0,v_0)})=n-1$, and 
$\operatorname{dim}\bigl(\operatorname{Ran}(DJ\vert_{(s_0,v_0)})\bigr)=n-1$. 
This means that the property $v_0 \in \mathcal{M}_{jk}^{(+)}$ 
is stable under the infinitesimal displacement of $x_0=\pi(v_0)$ 
in the $v_0$ direction since $\operatorname{dim}(\partial{D_j})=n-1$ 
and $v_0$ is uniquely determined by $x_0$. 
This contradicts the strictly convexity of 
$\partial{D_j}$ and $\partial{D_k}$. 
Indeed if we consider a curve on $\partial{D_j}$ of the form 
$x(\alpha)=x_0+\alpha{v_0}+\mathcal{O}(\alpha^2)$ near $\alpha=0$, 
we have $S_{x(\alpha)}(\partial{D_j})\cap\mathcal{M}_{jk}^{(\pm)}=\emptyset$ for $\alpha\ne0$ 
since the strictly convexity of $\partial{D_j}$ and $\partial{D_k}$. 
This completes the proof of Lemma~\ref{theorem:mjk}.
\end{proof}
Since the property $v \in \mathcal{M}_{jk}^{(\pm)}$ 
is uniquely determined by $x \in \partial{D_j}$, 
the restriction of the projection 
$\pi: S(M^\text{int}) \rightarrow M^\text{int}$ 
on $\mathcal{M}_{jk}^{(\pm)}$ are injective respectively. 
Then we can deduce the following immediately. 
\begin{corollary}
\label{theorem:ljk} 
Suppose {\rm ({\bf A0})}, {\rm ({\bf A1})} and {\rm ({\bf A2})}. 
If we set 
$\mathcal{B}_{jk}^{(\pm)}:=\{\pi(v) : v \in \mathcal{M}_{jk}^{(\pm)}\}$, 
then $\mathcal{B}_{jk}^{(\pm)}$ are 
$(n-2)$-dimensional connected submanifolds of $\partial{D_j}$. 
Moreover, if we set 
$$
\mathcal{L}_{jk}^{(\pm)}
=
\bigl\{
\gamma_v(s) : 
v \in \mathcal{M}_{jk}^{(\pm)}, 
s \in \bigl(\tau_-(v),\tau_+(v)\bigr)
\bigr\},
$$
then $\mathcal{L}_{jk}^{(\pm)}$ are hypersurfaces which are tangent to 
$\partial{D_j}$ on $\mathcal{B}_{jk}^{(\pm)}$ 
and $\partial{D_k}$ on $\mathcal{B}_{kj}^{(\pm)}$ respectively. 
See Figure~3. 
\end{corollary}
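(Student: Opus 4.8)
The plan is to treat the degenerate case $n=2$ separately and, for $n\geq3$, to exhibit $\mathcal{B}_{jk}^{(\pm)}$ near each of its points as the zero set of a single smooth function whose differential cannot vanish, the nonvanishing being forced by strict convexity together with the disjointness $\overline{D_j}\cap\overline{D_k}=\emptyset$. When $n=2$ the sets $\mathcal{M}_{jk}^{(\pm)}$ are finite, so $\mathcal{B}_{jk}^{(\pm)}=\pi(\mathcal{M}_{jk}^{(\pm)})$ is a finite (hence $0$-dimensional) subset of $\partial{D_j}$, and $\mathcal{L}_{jk}^{(\pm)}$ is a finite union of geodesic arcs, which are curves in $M$ tangent to $\partial{D_j}$ at the points of $\mathcal{B}_{jk}^{(\pm)}$ and to $\partial{D_k}$ at the points of $\mathcal{B}_{kj}^{(\pm)}$ directly from the definition of $\mathcal{M}_{jk}^{(\pm)}$. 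So I assume $n\geq3$ in what follows.

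Fix $x_0\in\mathcal{B}_{jk}^{(+)}$, let $v_0\in\mathcal{M}_{jk}^{(+)}$ be the unique element with $\pi(v_0)=x_0$, let $s_0\in\bigl(\tau_-(v_0),\tau_+(v_0)\bigr)$ be the associated tangency time, and put $y_0=\gamma_{v_0}(s_0)\in\partial{D_k}$. First I would define, for $x$ near $x_0$ in $\partial{D_j}$, the function $\varphi(x)$ equal to the extremal signed distance from $\overline{D_k}$ to the totally geodesic hypersurface $H_j(x)$, with the sign normalised so that $\varphi(x_0)=0$; strict convexity of $D_k$ makes that extremum nondegenerate and attained at a unique point of $\partial{D_k}$ depending smoothly on $x$, so $\varphi$ is smooth near $x_0$ and $\mathcal{B}_{jk}^{(+)}$ coincides near $x_0$ with the zero set of $\varphi$ on the appropriate sheet. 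The envelope theorem gives $d\varphi\vert_{x_0}(\dot{x})=\pm\langle W(s_0),\nu_k(y_0)\rangle$, where $W$ is a Jacobi field along $\gamma_{v_0}$ obtained by moving $x_0$ inside $\partial{D_j}$ with velocity $\dot{x}$ and tilting the common tangent geodesic accordingly; the normal component $\langle W(s_0),\nu_k(y_0)\rangle$ is independent of the choice of tilt. Splitting $W$ into its components parallel and perpendicular to $\dot{\gamma}_{v_0}$, using Lemma~\ref{theorem:gauss} for the former and Lemma~\ref{theorem:jacobi2} (i.e.\ assumption ({\bf A1})) for the latter, and inserting the second fundamental form of $\partial{D_j}$ at $x_0$, one arrives at $d\varphi\vert_{x_0}(\dot{x})=\pm b(s_0)\langle A_jv_0,\dot{x}\rangle$, where $A_j$ is the shape operator of $\partial{D_j}$ at $x_0$ and $b$ solves the scalar Jacobi equation with $b(0)=0$, $b'(0)=1$. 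This linear functional is nonzero: $A_jv_0\neq0$ by strict convexity of $D_j$, and $s_0\neq0$ by $\overline{D_j}\cap\overline{D_k}=\emptyset$, so $b(s_0)\neq0$ because $(M,g)$ has no conjugate points. Hence $\mathcal{B}_{jk}^{(+)}$ is an embedded $(n-2)$-dimensional submanifold of $\partial{D_j}$ with $T_{x_0}(\mathcal{B}_{jk}^{(+)})=(A_jv_0)^{\perp}\cap T_{x_0}(\partial{D_j})$; since $\pi\vert_{\mathcal{M}_{jk}^{(+)}}$ is injective (Lemma~\ref{theorem:mjk} and the remark preceding the corollary) with smooth inverse $x\mapsto v(x)$, it is a diffeomorphism onto $\mathcal{B}_{jk}^{(+)}$, which is therefore connected. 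The same argument applies to $\mathcal{M}_{jk}^{(-)}$.

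For the remaining assertion, $\mathcal{L}_{jk}^{(\pm)}$ is the image of the smooth map $(v,s)\mapsto\gamma_v(s)$ on the $\bigl((n-2)+1\bigr)$-dimensional manifold $\{(v,s):v\in\mathcal{M}_{jk}^{(\pm)},\ s\in(\tau_-(v),\tau_+(v))\}$, whose differential at $(v_0,0)$ is the direct sum of $D\pi\vert_{v_0}$ restricted to $T_{v_0}(\mathcal{M}_{jk}^{(+)})$, with image $(A_jv_0)^{\perp}\cap T_{x_0}(\partial{D_j})$, and the line through $\dot{\gamma}_{v_0}(0)=v_0$; since $\langle A_jv_0,v_0\rangle>0$ these are complementary and fill $T_{x_0}(\partial{D_j})$, so the map is an immersion near $(v_0,0)$, $\mathcal{L}_{jk}^{(+)}$ is an $(n-1)$-dimensional hypersurface near $x_0$, and $T_{x_0}(\mathcal{L}_{jk}^{(+)})=T_{x_0}(\partial{D_j})$, i.e.\ $\mathcal{L}_{jk}^{(+)}$ is tangent to $\partial{D_j}$ along $\mathcal{B}_{jk}^{(+)}$. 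Since the sign $+$ (the parallel transport of $\nu_j$ along the common tangent geodesic being $+\nu_k$ rather than $-\nu_k$) is symmetric under interchanging $j$ and $k$, the same family of geodesics equals $\mathcal{L}_{kj}^{(+)}$, and rerunning the argument with $D_j$ and $D_k$ swapped gives tangency to $\partial{D_k}$ along $\mathcal{B}_{kj}^{(+)}$; the case of the sign $-$ is identical.

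I expect the main obstacle to be the computation of $d\varphi\vert_{x_0}$ and the verification that it does not vanish: that is the one place where strict convexity of the $D_j$, the reduced form of the Jacobi equation coming from ({\bf A1}), and the disjointness of the $\overline{D_j}$ all enter in an essential way. Once this is established, the rest is a dimension count plus a further application of Lemma~\ref{theorem:jacobi2}.
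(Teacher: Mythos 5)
Your proof is correct, but it takes a genuinely different route from the paper's. The paper treats the corollary as essentially immediate from Lemma~\ref{theorem:mjk}: since $\mathcal{M}_{jk}^{(\pm)}$ are already known to be connected $(n-2)$-dimensional submanifolds of $S(\partial D_j)$ (proved there by a rank computation of the full Jacobian $DJ\vert_{(s_0,v_0)}$), and since $\pi\vert_{\mathcal{M}_{jk}^{(\pm)}}$ is injective because the direction $v$ is determined by the footpoint $x$, the image $\mathcal{B}_{jk}^{(\pm)}=\pi(\mathcal{M}_{jk}^{(\pm)})$ inherits the same structure, and $\mathcal{L}_{jk}^{(\pm)}$ follows by flowing out along geodesics. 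You instead go back to first principles: you produce a scalar defining function $\varphi$ on $\partial D_j$ near $x_0$ (an extremal value of $f_k$ over $H_j(x)$), invoke the envelope theorem to compute $d\varphi\vert_{x_0}(\dot x)=\pm\langle W(s_0),\nu_k(y_0)\rangle$, and then reduce this, via the Gauss Lemma~\ref{theorem:gauss} for the tangential part and the ({\bf A1})-reduced Jacobi decomposition of Lemma~\ref{theorem:jacobi2} for the normal part, to the closed form $\pm b(s_0)\langle A_jv_0,\dot x\rangle$. Strict convexity gives $A_jv_0\neq0$, disjointness of $\overline{D_j},\overline{D_k}$ gives $s_0\neq0$, and absence of conjugate points gives $b(s_0)\neq0$; so $d\varphi\neq0$ and the implicit function theorem yields the submanifold claim directly. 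For the $\mathcal{L}_{jk}^{(\pm)}$ statement you check the immersion property of $(v,s)\mapsto\gamma_v(s)$ at $(v_0,0)$, using $\langle A_jv_0,v_0\rangle>0$ to see that $T_{x_0}(\mathcal{B}_{jk}^{(+)})\oplus\mathbb{R}v_0=T_{x_0}(\partial D_j)$, which gives both the hypersurface property and the tangency in one stroke. The trade-off: the paper's route is shorter at the level of the corollary because the hard work is absorbed into the earlier rank calculation of the $n\times(2n-2)$ block of $DJ$; your route bypasses that calculation entirely and, as a bonus, produces the explicit conormal $N^\ast_{x_0}(\mathcal{B}_{jk}^{(\pm)})=\mathbb{R}\,\langle A_jv_0,\cdot\rangle$ and makes the tangency visibly an envelope phenomenon. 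You also make explicit the $n=2$ degenerate case, which the paper passes over. Two minor caveats: you still appeal to Lemma~\ref{theorem:mjk} for connectedness (as does the paper, and that lemma itself only sketches it); and the smoothness of the inverse $x\mapsto v(x)$ should be attributed to the nondegeneracy of the extremal problem defining $y(x)$ and $v(x)$ via strict convexity of $D_k$, which you do state but could flag more prominently as the point where the implicit function theorem enters on the $\partial D_k$ side.
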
 
We now state some elementary facts which follow immediately, 
and introduce some notation used later. 
\begin{itemize}
\item  
It follows that $\mathcal{L}_{jk}^{(\pm)}=\mathcal{L}_{kj}^{(\pm)}$. 
We set 
$$
\mathcal{L}_{jk}:=\mathcal{L}_{jk}^{(+)}\cup\mathcal{L}_{jk}^{(-)}, 
\quad
\mathcal{L}
:=
\bigcup_{1 \leqq j < k \leqq J}
\mathcal{L}_{jk}.
$$
\item 
We have 
$$
\Sigma_{jk}^{(1)}
=
\Sigma_{kj}^{(1)}
=
\bigl\{
F(v) : 
v \in \mathcal{M}_{jk}^{(+)}\cup\mathcal{M}_{jk}^{(-)}
\bigr\}
=
\bigl\{
F(\tilde{v}) : 
\tilde{v} \in \mathcal{M}_{kj}^{(+)}\cup\mathcal{M}_{kj}^{(-)}
\bigr\}. 
$$
If we set 
\begin{align*}
  \Delta(\mathcal{M}_{jk})
& :=
  \Delta(\mathcal{M}_{jk}^{(+)})\cup\Delta(\mathcal{M}_{jk}^{(-)}),
\\
  \Delta(\mathcal{M}_{jk}^{(\pm)})
& :=
  \bigl\{
  (v,\tilde{v})\in \mathcal{M}_{jk}^{(\pm)}\times\mathcal{M}_{kj}^{(\pm)}:
  \exists s\in\bigl(\tau_-(v),\tau_+(v)\bigr) 
  \ \text{s.t.}\ 
\\
& \qquad
  \tilde{v}=\dot{\gamma}_v(s), 
  P(0,s;\gamma_v)\eta_k\bigl(\pi(\tilde{v})\bigr)=\pm\eta_j\bigl(\pi(v)\bigr)
\bigr\}. 
\end{align*}
then we have 
\begin{align}
  N^\ast(\Sigma_{jk}^{(1)})
& =
  \bigl\{
  \xi+\tilde{\xi}:
  \xi,\tilde{\xi} \in T^\ast_{F(v)}\bigl(\partial_-S(M)\bigr), 
\nonumber
\\
& \qquad 
  \exists (v,\tilde{v}) \in \Delta(\mathcal{M}_{jk}), 
  \exists \eta \in N^\ast_{\pi(v)}(\partial{D_j}), 
  \exists \tilde{\eta} \in N^\ast_{\pi(\tilde{v})}(\partial{D_k})
  \ \text{s.t.}\ 
\nonumber
\\
& \qquad
  DF\vert^T_v\xi=D\pi\vert^T_v\eta, 
  DF\vert^T_{\tilde{v}}\tilde{\xi}=D\pi\vert^T_{\tilde{v}}\tilde{\eta} 
  \bigr\}.
\label{equation:nstarsigmajk1} 
\end{align}
\item 
Let $\Omega_{jk}$ be a connected subdomain of $\partial{D_j}$ 
enclosed by $\mathcal{B}_{jk}^{(+)}\cup\mathcal{B}_{jk}^{(-)}$. 
Fix arbitrary $x \in \Omega_{jk}$. 
See Figure~5 below. 
It follows that $H_j(x) \cap D_k \ne \emptyset$. 
Furthermore there exist 
$v_i \in S_x(\partial{D_j})$ and 
$s_i\in\bigl(\tau_-(v_i),\tau_+(v_i)\bigr)$ 
with $i=1,2$ such that $v_1 \ne v_2$ and 
$\dot{\gamma}_{v_i}(s_i) \in S(\partial{D_k})$. 
Clearly $H_j(x)$ is not tangent to $\partial{D_k}$ 
at $\gamma_{v_i}(s_i)$, and 
$P(0,s_i;\gamma_{v_i})\nu_j(x) \ne \pm\nu_k\bigl(\gamma_{v_i}(s_i)\bigr)$ 
for $i=1,2$. Hence we have 
\begin{align*}
  \Sigma_{jk}^{(2)}
& =
  \Sigma_{kj}^{(2)}
\\
& =
  \bigl\{
  F(v) : 
  \exists v \in S(\Omega_{jk}), 
  \exists s\in\bigl(\tau_-(v),\tau_+(v)\bigr)
  \ \text{s.t.}\ 
  \dot{\gamma}_{v}(s) \in S(\partial{D_k})
  \bigr\}
\\
& =
  \bigl\{
  F(\tilde{v}) : 
  \exists \tilde{v} \in S(\Omega_{kj}), 
  \exists s\in\bigl(\tau_-(\tilde{v}),\tau_+(\tilde{v})\bigr)
  \ \text{s.t.}\ 
  \dot{\gamma}_{\tilde{v}}(s) \in S(\partial{D_j})
  \bigr\}.
\end{align*}
If we set 
\begin{align*}
  \Delta\bigl(S(\Omega_{jk})\bigr)
& :=
  \bigl\{
  (v,\tilde{v})\in S(\Omega_{jk}){\times}S(\Omega_{kj}):  
  \exists s\in\bigl(\tau(v)_-,\tau_+(v)\bigr) 
  \ \text{s.t.}\ 
\\
& \qquad
  \tilde{v}=\dot{\gamma}_v(s), 
  P(0,s;\gamma_v)\nu_j\bigl(\pi(v)\bigr)\ne\pm\nu_k\bigl(\pi(\tilde{v})\bigr)
\bigr\},
\end{align*}
then we have 
\begin{align}
  N^\ast(\Sigma_{jk}^{(2)})
& =
  \bigl\{
  \xi+\tilde{\xi}:
  \xi,\tilde{\xi} \in T^\ast_{F(v)}\bigl(\partial_-S(M)\bigr), 
\nonumber
\\
& \qquad 
  \exists (v,\tilde{v}) \in \Delta\bigl(S(\Omega_{jk})\bigr), 
  \exists \eta \in N^\ast_{\pi(v)}(\partial{D_j}), 
  \exists \tilde{\eta} \in N^\ast_{\pi(\tilde{v})}(\partial{D_k})
  \ \text{s.t.}\ 
\nonumber
\\
& \qquad
  DF\vert^T_v\xi=D\pi\vert^T_v\eta, 
  DF\vert^T_{\tilde{v}}\tilde{\xi}=D\pi\vert^T_{\tilde{v}}\tilde{\eta} 
  \bigr\}.
\label{equation:nstarsigmajk2} 
\end{align}
\end{itemize} 
\begin{center}
\includegraphics[width=70mm]{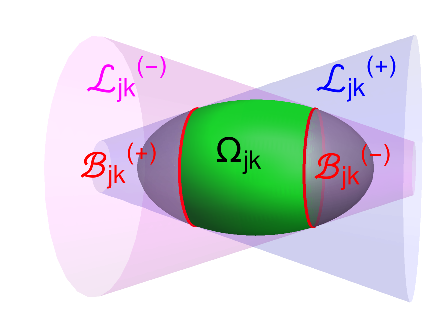}
\vspace{11pt}
\\
Figure~5. 
The situation of the neighborhood of $\partial{D_j}$ 
regarding the relationship with $\partial{D_k}$. 
\end{center}
%
%
%
%
%
\section{Main Theorem}
\label{section:MAIN}
Let $E\geqq0$ be a parameter describing the energy level of the X-ray beam, and let $E_0$ be a positive constant describing the central energy level. We study the very simple model of the beam hardening effect originated by \cite{ParkChoiSeo}. We assume the following. 
\begin{description}
\item[(A3)] 
The half density $f_E(x)$ of the distribution of the attenuation coefficient is of the form
$$
f_E(x)
=
f_{E_0}(x)
+
\alpha(E-E_0)1_D(x)\lvert{dv_g(x)}\rvert^{1/2}, 
\quad
x \in M^\text{int},
$$
where the half density $f_{E_0}$ describes the distribution of the attenuation coefficient of normal tissue, 
and $\alpha>0$ is a constant. The spectral function $\rho(E)$, which is a probability density function on $[0,\infty)$, is of the form 
$$
\rho(E)
=
\frac{1}{2\varepsilon}
1_{[E_0-\varepsilon,E_0+\varepsilon]}(E),
\quad
E\in[0,\infty),
$$
where $\varepsilon \in(0,E_0)$ is a constant. 
\end{description}
In this case the measurements $P$ become 
\begin{align*}
  P(w)
& :=
  -
  \log\left\{
       \int_0^\infty
       \rho(E)
       \exp\left(
           -
           \frac{\mathcal{X}[f_E]}{\lvert{d\mu}\rvert^{1/2}}(w)
           \right)
       dE
       \right\}
  \lvert{d\mu}\rvert^{1/2}
\\
& =
  \mathcal{X}[f_{E_0}](w)
  +
  \sum_{l=1}^\infty
  B_l
  \left(\alpha\varepsilon\frac{\mathcal{X}[1_D]}{\lvert{d\mu}\rvert^{1/2}}(w)\right)^{2l}
  \lvert{d\mu}\rvert^{1/2},
  \quad
  w \in \partial_-S(M) 
\end{align*}
with some sequence $\{B_l\}_{l=1}^\infty$ of real numbers. 
Let $Q$ be a local parametrix for $\mathcal{X}^T\circ\mathcal{X}$. 
We set 
\begin{align*}
  P_\text{MA}
& :=
  P-\mathcal{X}[f_{E_0}]
  =
  \sum_{l=1}^\infty
  B_l
  \left(\alpha\varepsilon\frac{\mathcal{X}[1_D]}{\lvert{d\mu}\rvert^{1/2}}\right)^{2l}
  \lvert{d\mu}\rvert^{1/2},
\\
  f_\text{CT}
& :=
  Q\circ\mathcal{X}^T[P]
  \equiv
  f_{E_0}+f_\text{MA}
  \quad
  \text{mod}
  \quad
  C^\infty(\Omega_{M^\text{int}}^{1/2}), 
\\
  f_\text{MA}
& :=
  Q\circ\mathcal{X}^T[P_\text{MA}]
  =
  \sum_{l=1}^\infty
  B_l(\alpha\varepsilon)^{2l}
  Q\circ\mathcal{X}^T
  \left[
  \left(\frac{\mathcal{X}[1_D]}{\lvert{d\mu}\rvert^{1/2}}\right)^{2l}
  \lvert{d\mu}\rvert^{1/2}
  \right]
\\
& =
  -
  \frac{(\alpha\varepsilon)^2}{6}
  Q\circ\mathcal{X}^T
  \left[
  \left(\frac{\mathcal{X}[1_D]}{\lvert{d\mu}\rvert^{1/2}}\right)^2
  \lvert{d\mu}\rvert^{1/2}
  \right]
  +
  \frac{(\alpha\varepsilon)^4}{180}
  Q\circ\mathcal{X}^T
  \left[
  \left(\frac{\mathcal{X}[1_D]}{\lvert{d\mu}\rvert^{1/2}}\right)^4
  \lvert{d\mu}\rvert^{1/2}
  \right]
  +
  \dotsb.
\end{align*}
\par
We now state our main theorem of the present paper. 
\begin{theorem}
\label{theorem:51}
Suppose {\rm ({\bf A0})}, {\rm ({\bf A1})}, 
{\rm ({\bf A2})} and {\rm ({\bf A3})}. 
Then we have
\begin{equation}
f_\text{MA} 
\in 
I^{-1/2-3n/4}\bigl(N^\ast(\mathcal{L})\setminus0\bigr)
\label{equation:MAIN} 
\end{equation}
away from $N^\ast(\partial{D})$. 
Moreover the principal symbol $(\alpha\varepsilon)^2\sigma_\text{prin}$ 
of the top term of $f_\text{MA}$ never vanish, 
and the principal symbol of $f_\text{MA}$ is 
$(\alpha\varepsilon)^2\{\sigma_\text{prin}+\mathcal{O}((\alpha\varepsilon)^2)\}$.
\end{theorem}
The proof of Theorem~\ref{theorem:51} 
is the investigation of the interaction of singularities in 
\begin{equation}
 Q\circ\mathcal{X}^T
  \left[
  \left(\frac{\mathcal{X}[1_D]}{\lvert{d\mu}\rvert^{1/2}}\right)^{2l}
  \lvert{d\mu}\rvert^{1/2}
  \right], 
\quad
l=1,2,3,\dotsc. 
\label{equation:singularity}
\end{equation}
First we study the quadratic term ($l=1$) below, 
and secondly we study the higher order terms ($l\geqq2$) in the next section. 
For the quadratic term we study the intersection calculus of 
$C_{\mathcal{X}^T}{\circ}N^\ast(\Sigma_j)$ and 
$C_{\mathcal{X}^T}{\circ}N^\ast(\Sigma_{jk})$. 
In other words, we need to know what kind of conormal singularities 
in $N^\ast(\Sigma_j)$ or $N^\ast(\Sigma_{jk})$ 
can be composable with the canonical relation $C_{\mathcal{X}^T}$, 
which is determined by the image of the adjoints 
of the differentials $DF$ and $D\pi$.  
Firstly we need to know the basic facts on the relationship between 
these two adjoints, which was proved by Holman and Uhlmann in 
\cite{HolmanUhlmann}. 
\begin{lemma}[{\bf Holman and Uhlmann \cite[Lemma~5]{HolmanUhlmann}}]
\label{theorem:holmanuhlmann}
We assume {\rm ({\bf A0})} without the simplicity condition, 
that is, conjugate points are allowed for $(M,g)$.  
Suppose that 
$$
v, \tilde{v} \in S(M^\text{int}),
\quad 
F(v)=F(\tilde{v}),
\quad 
\xi \in T^\ast_{F(v)}(\partial_-S(M)),
$$
$$
\eta \in T^\ast_{\pi(v)}(M^\text{int}),
\quad 
\tilde{\eta} \in T^\ast_{\pi(\tilde{v})}(M^\text{int}),
$$ 
$$
DF\vert^T_v\xi=D\pi\vert^T_v\eta,
\quad
DF\vert^T_{\tilde{v}}\xi=D\pi\vert^T_{\tilde{v}}\tilde{\eta}. 
$$
If $v \ne \tilde{v}$, then $v$ and $\tilde{v}$ are conjugate. 
\end{lemma}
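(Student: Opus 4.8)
The plan is to exploit the description of $\operatorname{Ker}(DF\vert_v)$ and $\operatorname{Ker}(DF\vert_{\tilde v})$ in terms of tangent vectors to the geodesic flow orbit, together with the Jacobi equation characterization of conjugacy. First I would recall that, since $F(v)=F(\tilde v)$, there is a time $t_0\in\bigl(\tau_-(v),\tau_+(v)\bigr)$ with $\tilde v=\Psi(v,t_0)=\dot\gamma_v(t_0)$, so $\tilde v$ lies on the same geodesic orbit. Consequently $\gamma_v$ and $\gamma_{\tilde v}$ are the same unparametrized geodesic (up to a shift of parameter), and $\pi(v)=\gamma_v(0)$, $\pi(\tilde v)=\gamma_v(t_0)$. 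The key is then to differentiate the relation $F=F\circ\Psi(\cdot,t_0)$: for every $\Xi\in T_v(S(M^\text{int}))$ one has $DF\vert_v\Xi=DF\vert_{\tilde v}\circ D_v\Psi\vert_{(v,t_0)}\Xi$, which dualizes to $DF\vert^T_v\xi=D_v\Psi\vert^T_{(v,t_0)}\circ DF\vert^T_{\tilde v}\xi$. Combining this with the two hypotheses $DF\vert^T_v\xi=D\pi\vert^T_v\eta$ and $DF\vert^T_{\tilde v}\xi=D\pi\vert^T_{\tilde v}\tilde\eta$ yields
$$
D\pi\vert^T_v\eta
=
D_v\Psi\vert^T_{(v,t_0)}\circ D\pi\vert^T_{\tilde v}\tilde\eta .
$$

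Next I would pair this covector identity against an arbitrary $\Xi\in\operatorname{Ker}(D\pi\vert_v)$, i.e.\ $\Xi=(0,\zeta)$ with $\zeta\perp v$. For such $\Xi$ the left side gives $D\pi\vert^T_v\eta(\Xi)=\eta(D\pi\vert_v\Xi)=0$, so the right side must vanish too; but by the Jacobi-field dictionary recalled in Section~3 (namely $D\pi\vert_{\Psi(v,s)}\circ D_v\Psi\vert_{(v,s)}\Xi=Y(s)$ where $Y$ is the Jacobi field with $(Y(0),\nabla Y(0))=\Xi$), the right side equals $\tilde\eta\bigl(Y(t_0)\bigr)$. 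Thus $\tilde\eta$ annihilates $\{Y(t_0):\Xi\in\operatorname{Ker}(D\pi\vert_v)\}$, the space of values at time $t_0$ of Jacobi fields vanishing in the component $Y(0)=0$ along $\gamma_v$. The assumption $v\ne\tilde v$ means $t_0\ne0$. If $\gamma_v(t_0)$ were \emph{not} conjugate to $\gamma_v(0)$, this space of Jacobi-field values $Y(t_0)$ would be all of the hyperplane $\{\tilde u\in T_{\pi(\tilde v)}(M^\text{int}):\langle\tilde u,\tilde v\rangle=0\}$ (Gauss's Lemma~\ref{theorem:gauss} controls the $\dot\gamma$-component, and non-conjugacy makes the map $\zeta\mapsto Y(t_0)$ a bijection onto that hyperplane); together with the elementary fact $\tilde\eta(\tilde v)=0$, which follows exactly as in \eqref{equation:etav}, one concludes $\tilde\eta=0$ in $T^\ast_{\pi(\tilde v)}(M^\text{int})$. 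Running the same argument symmetrically with the roles of $v$ and $\tilde v$ interchanged (differentiating $F=F\circ\Psi(\cdot,-t_0)$) forces $\eta=0$ as well, hence $DF\vert^T_v\xi=0$; since $F$ is a submersion, $DF\vert^T_v$ is injective, so $\xi=0$, contradicting nothing by itself—but in the intended application $\xi$ is nonzero, so the only escape is that $\gamma_v(t_0)$ \emph{is} conjugate to $\gamma_v(0)$, i.e.\ $v$ and $\tilde v$ are conjugate. (For the logical statement as phrased, one simply records: if $v\ne\tilde v$ and the four displayed identities hold with $\xi\ne0$, then $\gamma_v(0)$ and $\gamma_v(t_0)$ are conjugate along $\gamma_v$.)

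The main obstacle, and the step deserving the most care, is the claim that non-conjugacy of $\gamma_v(t_0)$ and $\gamma_v(0)$ makes the assignment $\operatorname{Ker}(D\pi\vert_v)\ni(0,\zeta)\mapsto Y(t_0)$ surject onto the orthogonal hyperplane to $\tilde v$ in $T_{\pi(\tilde v)}(M^\text{int})$: one must check both that the image stays inside that hyperplane (this is immediate from Gauss's lemma since $Y(0)=0$ and $\nabla Y(0)=\zeta\perp v$) and that it is full-dimensional (this is precisely the non-degeneracy of $D\exp_{\pi(v)}\vert_{t_0 v}$ on $v^\perp$, equivalent to the absence of a conjugate point at parameter $t_0$, via $Y(t_0)=t_0\,D\exp_{\pi(v)}\vert_{t_0 v}\zeta$). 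Once this linear-algebraic point is secured, together with the $\Psi$-naturality of $F$ and the duality computations, the remainder is the bookkeeping of transposes already illustrated in \eqref{equation:etav} and in the proof of Lemma~\ref{theorem:4-3}.
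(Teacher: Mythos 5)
The paper does not prove this lemma itself---it is quoted verbatim from Holman and Uhlmann \cite[Lemma~5]{HolmanUhlmann}---so I evaluate your argument on its own. Your proof is correct and follows exactly the route one would expect: use $F\circ\Psi(\cdot,t_0)=F$ to rewrite $DF\vert_v^T\xi$ via $D_v\Psi\vert^T_{(v,t_0)}$, substitute the two hypotheses to obtain $D\pi\vert^T_v\eta=D_v\Psi\vert^T_{(v,t_0)}\circ D\pi\vert^T_{\tilde v}\tilde\eta$, test against $\Xi=(0,\zeta)\in\operatorname{Ker}(D\pi\vert_v)$, interpret $D\pi\vert_{\tilde v}\circ D_v\Psi\vert_{(v,t_0)}\Xi$ as $Y(t_0)$ for the Jacobi field with $(Y(0),\nabla Y(0))=(0,\zeta)$, and conclude that $\tilde\eta$ annihilates the image of the Jacobi endpoint map. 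Your treatment of the two linear-algebraic facts---that Gauss's Lemma~\ref{theorem:gauss} confines $Y(t_0)$ to the hyperplane $\tilde v^{\perp}$, and that absence of conjugacy makes $\zeta\mapsto Y(t_0)=t_0\,D\exp_{\pi(v)}\vert_{t_0v}\zeta$ a bijection of $v^{\perp}$ onto $\tilde v^{\perp}$---is precisely the right justification, and combining with $\tilde\eta(\tilde v)=0$ (as in \eqref{equation:etav}) yields $\tilde\eta=0$, whence $\xi=0$ and $\eta=0$ by injectivity of $DF\vert^T$ and $D\pi\vert^T$.

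Two small remarks. First, once you have $\tilde\eta=0$ there is no need for the symmetric run: $DF\vert^T_{\tilde v}\xi=D\pi\vert^T_{\tilde v}\tilde\eta=0$ forces $\xi=0$ directly, and then $D\pi\vert^T_v\eta=DF\vert^T_v\xi=0$ forces $\eta=0$; the symmetry step is harmless but redundant. Second, you are right to flag that the lemma as literally printed is vacuously false for $\xi=\eta=\tilde\eta=0$; the intended (and, in Holman--Uhlmann's formulation, implicit) hypothesis is that the covectors lie in the cotangent bundle minus the zero section, which is exactly how the present paper uses it immediately after the statement. With that understood, your proof is complete and matches what the cited source does.
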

In the same notation of Lemma~\ref{theorem:holmanuhlmann}, 
\eqref{equation:etav} shows that 
$\eta(v)=0$ and $\tilde{\eta}(\tilde{v})=0$ hold. 
If we assume the simplicity condition on $(M,g)$, 
then Lemma~\ref{theorem:holmanuhlmann} shows that 
for any $\xi \in T^\ast\bigl(\partial_-S(M)\bigr)\setminus0$ 
with $w:=\pi_{\partial_-S(M)}(\xi)$, 
one of the following occurs:
\begin{itemize}
\item 
There exist  
$s \in \bigl(0,\tau_+(w)\bigr)$ and 
$\eta \in T^\ast_{\gamma_w(s)}(M^\text{int})$ uniquely 
such that 
$$
\eta\bigl(\dot{\gamma}_w(s)\bigr)=0, 
\quad
DF\vert^T_{\dot{\gamma}_w(s)}\xi=D\pi\vert^T_{\dot{\gamma}_w(s)}\eta.
$$
\item 
For any $s \in \bigl(0,\tau_+(w)\bigr)$ and 
$\eta \in T^\ast_{\gamma_w(s)}(M^\text{int})$ 
satisfying $\eta\bigl(\dot{\gamma}_w(s)\bigr)=0$, 
we have  
$$
DF\vert^T_{\dot{\gamma}_w(s)}\xi{\ne}D\pi\vert^T_{\dot{\gamma}_w(s)}\eta. 
$$
\end{itemize}
Roughly speaking, 
if we assume the simplicity condition for $(M,g)$, then 
for any $\xi \in T^\ast\bigl(\partial_-S(M)\bigr)\setminus0$, 
there exists only one covector $\eta \in T^\ast(M^\text{int})\setminus0$, if any, that  
satisfies the identity for $\xi$ and $\eta$. In this paper we need to study the propagation of singularities caused by $\eta, \tilde{\eta} \in N^\ast(\partial{D})\setminus0$ with $\pi(\eta)\ne\pi(\tilde{\eta})$. In other words we investigate the existence or the nonexistence 
of pairs of curves  $\xi(s) \in N^\ast(\Sigma_{jk})$ and 
$\eta(s) \in T^\ast(M^\text{int})$ satisfying 
$$
DF\vert^T_{\dot{\gamma}_w(s)}\xi(s)=D\pi\vert^T_{\dot{\gamma}_w(s)}\eta(s). 
$$ 
\par
We now set the notation. 
Suppose ({\bf A0}), ({\bf A1}) and ({\bf A2}). 
Fix arbitrary $w \in \Sigma_{jk}$ with $j{\ne}k$.  
Let $s,t_0,\tilde{t}_0 \in \bigl(0,\tau_+(w)\bigr)$ such that
$$
t_0\ne\tilde{t}_0,
\quad
\dot{\gamma}_w(t_0) \in S(\partial{D_j}),
\quad
\dot{\gamma}_w(\tilde{t}_0) \in S(\partial{D_k}). 
$$
Let $a(t;s)$ and $b(t;s)$ be solutions to 
\eqref{equation:aK} and \eqref{equation:bK} respectively, 
or \eqref{equation:akappa} and \eqref{equation:bkappa} respectively. 
Then Lemma~\ref{theorem:noconjugatepoint} ensures that 
$$
\Delta(t_0,\tilde{t}_0;s)
:=
a(t_0;s)b(\tilde{t}_0;s)-a(\tilde{t}_0;s)b(t_0;s)
\ne0. 
$$
Set 
\begin{align*}
  \eta 
& :=
  \bigl\langle\nu_j\bigl(\gamma_w(t_0)\bigr),\cdot\bigr\rangle 
  \in N^\ast_{\gamma_w(t_0)}(\partial{D_j}),
\\
  \tilde{\eta}
& :=
  \bigl\langle\nu_k\bigl(\gamma_w(\tilde{t}_0)\bigr),\cdot\bigr\rangle 
  \in N^\ast_{\gamma_w(\tilde{t}_0)}(\partial{D_k}),
\\
  \eta_1
& :=
  P(t_0,\tilde{t}_0;\gamma_w)^T\tilde{\eta} 
  \in T^\ast_{\gamma_w(t_0)}(M^\text{int}),
\\
  \eta(s)
& :=
  P(s,t_0;\gamma_w)^T\eta
  \in T^\ast_{\gamma_w(s)}(M^\text{int}),
\\
  \eta_1(s)
& :=
  P(s,t_0;\gamma_w)^T\eta_1
  =
  P(s,\tilde{t}_0;\gamma_w)^T\tilde{\eta}
  \in T^\ast_{\gamma_w(s)}(M^\text{int}). 
\end{align*}
Let $\xi$ and $\tilde{\xi}$ satisfy 
$$
\xi, \tilde{\xi} \in T^\ast_w\bigl(\partial_-S(M)\bigr), 
\quad
DF\vert^T_{\dot{\gamma}_w(t_0)}\xi
=
D\pi\vert^T_{\dot{\gamma}_w(t_0)}\eta, 
\quad
DF\vert^T_{\dot{\gamma}_w(\tilde{t}_0)}\tilde{\xi}
=
D\pi\vert^T_{\dot{\gamma}_w(\tilde{t}_0)}\tilde{\eta}.  
$$
The two covectors $\xi$ and $\tilde{\xi}$ are uniquely determined by 
$\eta$ and $\tilde{\eta}$ respectively. 
Then we have the following lemma regarding the existence or the nonexistence of 
pair of curves satisfying the desirable conditions. 
\begin{lemma}
\label{theorem:propagation}
Suppose {\rm ({\bf A0})}, {\rm ({\bf A1})} and {\rm ({\bf A2})}. 
\vspace{6pt}
\\
{\bf I}.\ 
Suppose that $\eta_1=\pm\eta$. 
If we set 
\begin{equation}
\xi(s)
:=
\frac{b(\tilde{t}_0;s)}{\Delta(t_0,\tilde{t}_0;s)}\xi
\mp
\frac{b(t_0;s)}{\Delta(t_0,\tilde{t}_0;s)}\tilde{\xi},
\quad
s\in \bigl(0,\tau_+(w)\bigr),
\label{equation:xiofs} 
\end{equation}
then  
\begin{equation}
DF\vert^T_{\dot{\gamma}_w(s)}\xi(s)
=
D\pi\vert^T_{\dot{\gamma}_w(s)}\eta(s),
\quad
s\in \bigl(0,\tau_+(w)\bigr). 
\label{equation:identity1}
\end{equation}
Moreover we have for any $s \in \bigl(0,\tau_+(w)\bigr)$
\begin{equation}
DF\vert^T_{\dot{\gamma}_w(s)}\bigl(N^\ast_w(\Sigma_{jk})\bigr)
\cap
D\pi\vert^T_{\dot{\gamma}_w(s)}\bigl(\mathcal{T}^\ast(s)\bigr)
=
\{\alpha D\pi\vert^T_{\dot{\gamma}_w(s)}\eta(s) : \alpha\in\mathbb{R}\},
\label{equation:identity2}
\end{equation}
where
$$
\mathcal{T}^\ast(s)
=
\bigl\{
\zeta \in T^\ast_{\gamma_w(s)}(M^\text{int}) : 
\zeta\bigl(\dot{\gamma}_w(s)\bigr)=0
\bigr\}. 
$$
{\bf II}.\ 
Suppose that $\eta_1\ne\pm\eta$. 
Then for any fixed 
$s \in \bigl(0,\tau_+(w)\bigr)\setminus\{t_0,\tilde{t}_0\}$, 
$$
DF\vert^T_{\dot{\gamma}_w(s)}\bigl(N^\ast_w(\Sigma_{jk})\bigr)
\cap
D\pi\vert^T_{\dot{\gamma}_w(s)}\bigl(\mathcal{T}^\ast(s)\bigr)
=
\{0\}.
$$
\end{lemma}
We remark that the condition $\eta_1=\pm\eta$ is equivalent 
to $w \in \Sigma_{jk}^{(1)}$ and 
to $\dot{\gamma}_w(t_0) \in \mathcal{M}_{jk}^{(\pm)}$ respectively, 
and that the condition $\eta_1\ne\pm\eta$ 
is equivalent to $w \in \Sigma_{jk}^{(2)}$ and to 
$\dot{\gamma}_w(t_0) \in S(\Omega_{jk})$ respectively. 
We show the figures of the curves $\xi(s)$ in $N^\ast_w(\Sigma_{jk})$ and 
$\eta(s)$ in $\bigcup_{s\in\bigl(0,\tau_+(w)\bigr)} T_{\gamma_w(s)}(M^\text{int})$, 
when $\eta_1=\eta$ and $(M,g)$ is a space of constant curvature $K=1,0,-1$ below. 
\begin{center}
\includegraphics[width=40mm]{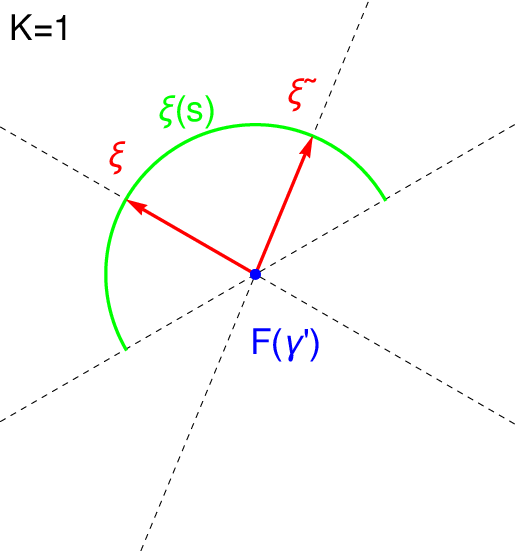}
\hspace{22pt}
\includegraphics[width=40mm]{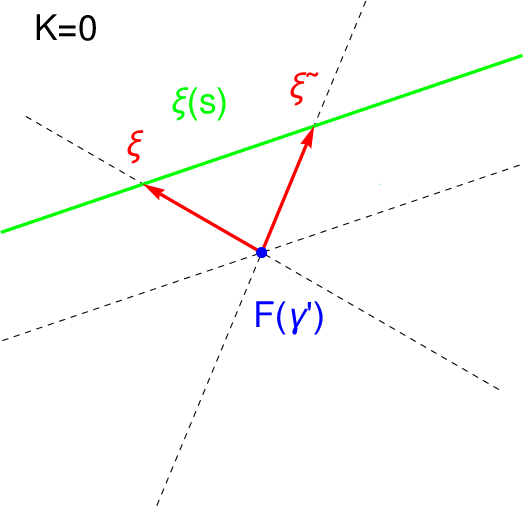} 
\vspace{11pt}\\
\includegraphics[width=40mm]{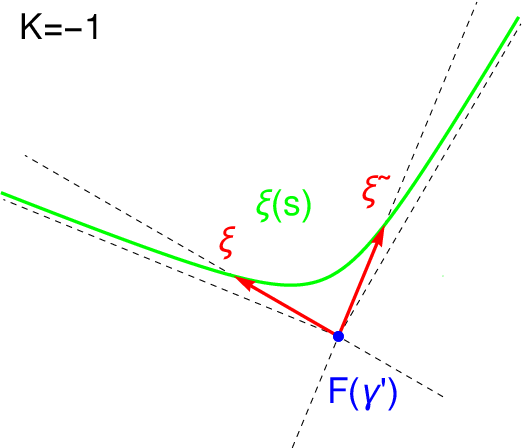}
\hspace{22pt}
\includegraphics[width=70mm]{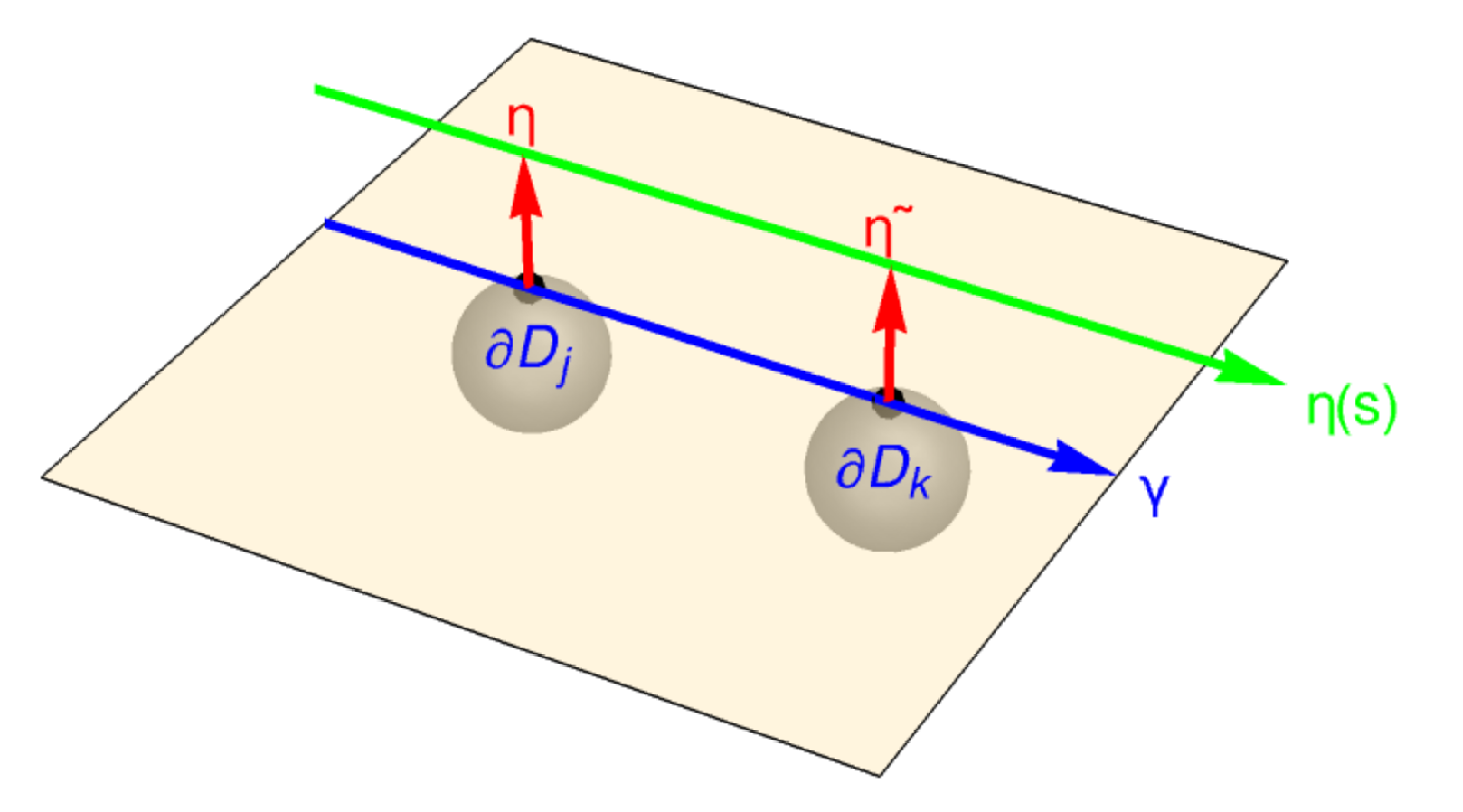}
\vspace{11pt}
\\
Figure~6.\ 
The curves $\xi(s)$ and $\eta(s)$ when $(M,g)$ is a space of constant curvature $K=1,0,-1$.
\end{center}
\begin{proof}[{\bf Proof of Lemma~\ref{theorem:propagation}}]
{\bf I}.\ 
Suppose that $\eta_1=\pm\eta$. Then $\eta_1(s)=\pm\eta(s)$. 
We first show \eqref{equation:identity1}. 
The definition of $\Delta(t_0,\tilde{t}_0;s)$ shows 
\begin{align}
\frac{b(\tilde{t}_0;s)}{\Delta(t_0,\tilde{t}_0;s)}
a(t_0;s)
-
\frac{b(t_0;s)}{\Delta(t_0,\tilde{t}_0;s)}
a(\tilde{t}_0;s)
& = 1,
\label{equation:a1}
\\
\frac{b(\tilde{t}_0;s)}{\Delta(t_0,\tilde{t}_0;s)}
b(t_0;s)
-
\frac{b(t_0;s)}{\Delta(t_0,\tilde{t}_0;s)}
b(\tilde{t}_0;s)
& = 0. 
\label{equation:b0}
\end{align}
Using \eqref{equation:tildexitildeeta} we deduce that 
\begin{align}
  DF\vert^T_{\dot{\gamma}_w(s)}\xi
& =
  D_v\Psi\vert^T_{\bigl(\dot{\gamma}_w(s),t_0-s\bigr)}
  \circ
  D\pi\vert^T_{\dot{\gamma}_w(t_0)}\eta,
\label{equation:dfdpi}
\\
  DF\vert^T_{\dot{\gamma}_w(s)}\tilde{\xi}
& =
  D_v\Psi\vert^T_{\bigl(\dot{\gamma}_w(s),\tilde{t}_0-s\bigr)}
  \circ
  D\pi\vert^T_{\dot{\gamma}_w(\tilde{t}_0)}\tilde{\eta}.
\label{equation:dfdpitilde} 
\end{align}
Let $\Xi=(X,\zeta) \in T_{\dot{\gamma}_w(s)}(S(M^\text{int}))$, 
and let $Z(t)$ be the Jacobi field along $\gamma_w$ with 
$\bigl(Z(s),{\nabla}Z(s)\bigr)=\Xi$. 
Lemma~\ref{theorem:jacobi2} shows that 
\begin{align}
  Z(t)
& =
  a(t;s)P(s,t;\gamma_w)X
  +
  b(t,s)P(s,t;\gamma_w)\zeta
\nonumber
\\
& =
  D\pi\vert_{\dot{\gamma}_w(t)}
  \circ
  D_v\Psi\vert_{\bigl(\dot{\gamma}_w(s),t-s\bigr)}
  \Xi. 
\label{equation:jacobiZ}
\end{align}
Recall \eqref{equation:xiofs}, which is the definition of $\xi(s)$. 
Using 
\eqref{equation:dfdpi}, 
\eqref{equation:dfdpitilde}, 
\eqref{equation:jacobiZ}, 
$\eta_1(s)=\pm\eta(s)$, 
\eqref{equation:a1} 
and 
\eqref{equation:b0} 
sequentially,  
we deduce that
\begin{align*}
  DF\vert^T_{\dot{\gamma}_w(s)}\xi(s)(\Xi)
& =
  \frac{b(\tilde{t}_0;s)}{\Delta(t_0,\tilde{t}_0;s)}
  DF\vert^T_{\dot{\gamma}_w(s)}\xi(\Xi)
  \mp
  \frac{b(t_0;s)}{\Delta(t_0,\tilde{t}_0;s)}
  DF\vert^T_{\dot{\gamma}_w(s)}\tilde{\xi}(\Xi)
\\
& =
  \frac{b(\tilde{t}_0;s)}{\Delta(t_0,\tilde{t}_0;s)}
  D_v\Psi\vert^T_{\bigl(\dot{\gamma}_w(s),t_0-s\bigr)}
  \circ
  D\pi\vert^T_{\dot{\gamma}_w(t_0)}\eta(\Xi)
\\
& \quad \mp
  \frac{b(t_0;s)}{\Delta(t_0,\tilde{t}_0;s)}
  D_v\Psi\vert^T_{\bigl(\dot{\gamma}_w(s),\tilde{t}_0-s\bigr)}
  \circ
  D\pi\vert^T_{\dot{\gamma}_w(\tilde{t}_0)}\tilde{\eta}(\Xi)
\\
& =
  \frac{b(\tilde{t}_0;s)}{\Delta(t_0,\tilde{t}_0;s)}
  \eta\left(
      D\pi\vert_{\dot{\gamma}_w(t_0)}
      \circ
      D_v\Psi\vert_{\bigl(\dot{\gamma}_w(s),t_0-s\bigr)}
      \Xi
      \right)
\\
& \quad \mp
  \frac{b(t_0;s)}{\Delta(t_0,\tilde{t}_0;s)}
  \tilde{\eta}\left(
              D\pi\vert_{\dot{\gamma}_w(\tilde{t}_0)}
              \circ
              D_v\Psi\vert_{\bigl(\dot{\gamma}_w(s),\tilde{t}_0-s\bigr)}
              \Xi
              \right)
\\
& =
  \frac{b(\tilde{t}_0;s)}{\Delta(t_0,\tilde{t}_0;s)}
  \eta\bigl(Z(t_0)\bigr)
  \mp
  \frac{b(t_0;s)}{\Delta(t_0,\tilde{t}_0;s)}
  \tilde{\eta}\bigl(Z(\tilde{t}_0)\bigr)
\\
& =
  \frac{b(\tilde{t}_0;s)}{\Delta(t_0,\tilde{t}_0;s)}
  \eta\bigl(a(t_0;s)P(s,t_0;\gamma_w)X
            +
            b(t_0,s)P(s,t_0;\gamma_w)\zeta
      \bigr)
\\
& \quad \mp
  \frac{b(t_0;s)}{\Delta(t_0,\tilde{t}_0;s)}
  \tilde{\eta}\bigl(
              a(\tilde{t}_0;s)P(s,\tilde{t}_0;\gamma_w)X
              +
              b(\tilde{t}_0,s)P(s,\tilde{t}_0;\gamma_w)\zeta
              \bigr)
\\
& =
  \frac{b(\tilde{t}_0;s)}{\Delta(t_0,\tilde{t}_0;s)}
  P(s,t_0;\gamma_w)^T\eta
  \bigl(a(t_0;s)X+b(t_0,s)\zeta\bigr)
\\
& \quad \mp
  \frac{b(t_0;s)}{\Delta(t_0,\tilde{t}_0;s)}
  P(s,\tilde{t}_0;\gamma_w)^T\tilde{\eta}
  \bigl(a(\tilde{t}_0;s)X+b(\tilde{t}_0,s)\zeta\bigr)
\\
& =
  \frac{b(\tilde{t}_0;s)}{\Delta(t_0,\tilde{t}_0;s)}
  P(s,t_0;\gamma_w)^T\eta
  \bigl(a(t_0;s)X+b(t_0,s)\zeta\bigr)
\\
& \quad \mp
  \frac{b(t_0;s)}{\Delta(t_0,\tilde{t}_0;s)}
  P(s,t_0;\gamma_w)^T\eta_1
  \bigl(a(\tilde{t}_0;s)X+b(\tilde{t}_0,s)\zeta\bigr)
\\
& =
  \frac{b(\tilde{t}_0;s)}{\Delta(t_0,\tilde{t}_0;s)}
  P(s,t_0;\gamma_w)^T\eta
  \bigl(a(t_0;s)X+b(t_0,s)\zeta\bigr)
\\
& \quad -
  \frac{b(t_0;s)}{\Delta(t_0,\tilde{t}_0;s)}
  P(s,t_0;\gamma_w)^T\eta
  \bigl(a(\tilde{t}_0;s)X+b(\tilde{t}_0,s)\zeta\bigr)
\\
& =
  \frac{b(\tilde{t}_0;s)}{\Delta(t_0,\tilde{t}_0;s)}
  \eta(s)
  \bigl(a(t_0;s)X+b(t_0,s)\zeta\bigr)
\\
& \quad -
  \frac{b(t_0;s)}{\Delta(t_0,\tilde{t}_0;s)}
  \eta(s)
  \bigl(a(\tilde{t}_0;s)X+b(\tilde{t}_0,s)\zeta\bigr)
\\
& =
  \left\{
  \frac{b(\tilde{t}_0;s)}{\Delta(t_0,\tilde{t}_0;s)}a(t_0;s)
  -
  \frac{b(t_0;s)}{\Delta(t_0,\tilde{t}_0;s)}a(\tilde{t}_0;s)
  \right\}
  \eta(s)(X)
\\
& \quad +
  \left\{
  \frac{b(\tilde{t}_0;s)}{\Delta(t_0,\tilde{t}_0;s)}b(t_0;s)
  -
  \frac{b(t_0;s)}{\Delta(t_0,\tilde{t}_0;s)}b(\tilde{t}_0;s)
  \right\}
  \eta(s)(\zeta)
\\
& =
  \eta(s)(X)
\\
& =
  \eta(s)\Bigl(D\pi\vert_{\dot{\gamma}_w(s)}\Xi\Bigr)
  \\
& =
  D\pi\vert^T_{\dot{\gamma}_w(s)}\eta(s)(\Xi),  
\end{align*} 
which proves \eqref{equation:identity1} and 
$$
DF\vert^T_{\dot{\gamma}_w(s)}\bigl(N^\ast_w(\Sigma_{jk})\bigr)
\cap
D\pi\vert^T_{\dot{\gamma}_w(s)}\bigl(\mathcal{T}^\ast(s)\bigr)
\supset
\{\alpha D\pi\vert^T_{\dot{\gamma}_w(s)}\eta(s) : \alpha\in\mathbb{R}\}. 
$$
It suffices to show the converse of the above inclusion relation. 
The conormal space $N^\ast_w(\Sigma_{jk})$ 
is spanned by $\xi$ and $\tilde{\xi}$. 
We have only to consider a linear combination 
$\alpha\xi+\beta\tilde{\xi}$ with $(\alpha,\beta)\ne(0,0)$. 
We show that if 
$DF\vert^T_{\dot{\gamma}_w(s)}(\alpha\xi+\beta\tilde{\xi}) \in D\pi\vert^T_{\dot{\gamma}_w(s)}\bigl(\mathcal{T}^\ast(s)\bigr)$, then 
\begin{equation}
\alpha b(t_0;s) \pm \beta b(\tilde{t}_0;s)=0, 
\label{equation:zerozero1} 
\end{equation}
\begin{equation}
DF\vert^T_{\dot{\gamma}_w(s)}(\alpha\xi+\beta\tilde{\xi})
=
D\pi\vert^T_{\dot{\gamma}_w(s)}
\Bigl(
\bigl(\alpha a(t_0;s)\pm \beta a(\tilde{t}_0;s)\bigr)
\eta(s)
\Bigr). 
\label{equation:zerozero2} 
\end{equation}
Let $\Xi=(X,\zeta) \in T_{\dot{\gamma}_w(s)}\bigl(M^\text{int})\bigr)$. 
In the same way as the above derivation of \eqref{equation:identity1}, 
we obtain 
\begin{align}
  DF\vert^T_{\dot{\gamma}_w(s)}(\alpha\xi+\beta\tilde{\xi})(\Xi)
& =
  \alpha\eta(s)\bigl(a(t_0;s)X+b(t_0;s)\zeta\bigr)
\nonumber
\\
& \quad +
  \beta\eta_1(s)\bigl(a(\tilde{t}_0;s)X+b(\tilde{t}_0;s)\zeta\bigr) 
\label{equation:5121}
\\
& =
  \alpha\eta(s)\bigl(a(t_0;s)X+b(t_0;s)\zeta\bigr)
\nonumber
\\
& \quad \pm
  \beta\eta(s)\bigl(a(\tilde{t}_0;s)X+b(\tilde{t}_0;s)\zeta\bigr) 
\nonumber
\\
& =
  \eta(s)
  \Bigl(\bigl(\alpha a(t_0;s)\pm\beta a(\tilde{t}_0;s)\bigr)X\Bigr)
\nonumber
\\
& \quad +
  \eta(s)
  \Bigl(\bigl(\alpha b(t_0;s)\pm\beta b(\tilde{t}_0;s)\bigr)\zeta\Bigr). 
\label{equation:zerozero3}
\end{align}
If 
$DF\vert^T_{\dot{\gamma}_w(s)}(\alpha\xi+\beta\tilde{\xi}) \in D\pi\vert^T_{\dot{\gamma}_w(s)}\bigl(\mathcal{T}^\ast(s)\bigr)$, 
then $DF\vert^T_{\dot{\gamma}_w(s)}(\alpha\xi+\beta\tilde{\xi})(\Xi)=0$ 
for $\Xi \in \operatorname{Ker}\bigl(D\pi\vert_{\dot{\gamma}_w(s)}\bigr)$,  
and \eqref{equation:zerozero3} shows \eqref{equation:zerozero1}. 
Hence \eqref{equation:zerozero3} becomes 
\begin{align*}
  DF\vert^T_{\dot{\gamma}_w(s)}(\alpha\xi+\beta\tilde{\xi})(\Xi) 
& =
  \eta(s)
  \Bigl(\bigl(\alpha a(t_0;s)\pm \beta a(\tilde{t}_0;s)\bigr)X\Bigr)
\\
& =
  \eta(s)
  \Bigl(
  \bigl(\alpha a(t_0;s)\pm \beta a(\tilde{t}_0;s)\bigr)
  D\pi\vert_{\dot{\gamma}_w(s)}\Xi
  \Bigr)
\\
& =
  D\pi\vert^T_{\dot{\gamma}_w(s)}
  \Bigl(
  \bigl(\alpha a(t_0;s)\pm \beta a(\tilde{t}_0;s)\bigr)\eta(s)
  \Bigr)
  (\Xi),
\end{align*}
which proves \eqref{equation:zerozero2}. 
\vspace{6pt}
\\
{\bf II}.\ 
Suppose that $s \ne t_0$, $s \ne \tilde{t}_0$ and $\eta_1\ne\pm\eta$. 
Then it follows that $b(t_0;s)\ne0$ and $b(\tilde{t}_0;s)\ne0$, 
and that $\eta$ and $\eta_1$ are linearly independent  
and so are $\eta(s)$ and $\eta_1(s)$.  
It suffices to show that 
$DF\vert^T_{\dot{\gamma}_w(s)}(\alpha\xi+\beta\tilde{\xi}) \not\in D\pi\vert^T_{\dot{\gamma}_w(s)}\bigl(\mathcal{T}^\ast(s)\bigr)$ 
for $(\alpha,\beta)\ne(0,0)$. 
In this setting we have 
$$
\theta
:=
\alpha b(t_0;s) \eta(s) 
+ 
\beta b(\tilde{t}_0;s) \eta_1(s)
\ne
0
\quad
\text{in}
\quad
T^\ast_{\dot{\gamma}_w(s)}(M^\text{int}). 
$$
For any $\Xi=(0,\zeta) \in \operatorname{Ker}\bigl(D\pi\vert_{\dot{\gamma}_w(s)}\bigr)$, \eqref{equation:5121} becomes
\begin{align*}
  DF\vert^T_{\dot{\gamma}_w(s)}(\alpha\xi+\beta\tilde{\xi})(\Xi)
& =
  \alpha\eta(s)\bigl(b(t_0;s)\zeta\bigr)
  +
  \beta\eta_1(s)\bigl(b(\tilde{t}_0;s)\zeta\bigr) 
\\
& = 
  \bigl(
  \alpha b(t_0;s) \eta(s) 
  + 
  \beta b(\tilde{t}_0;s) \eta_1(s)
  \bigr)
  (\zeta)
  =
  \theta(\zeta). 
\end{align*}
This implies that there exists 
$\Xi_0=(0,\zeta_0) \in \operatorname{Ker}\bigl(D\pi\vert_{\dot{\gamma}_w(s)}\bigr)$ such that 
$DF\vert^T_{\dot{\gamma}_w(s)}(\alpha\xi+\beta\tilde{\xi})(\Xi_0)=1$, 
and 
$DF\vert^T_{\dot{\gamma}_w(s)}(\alpha\xi+\beta\tilde{\xi}) \not\in D\pi\vert^T_{\dot{\gamma}_w(s)}\bigl(\mathcal{T}^\ast(s)\bigr)$. 
This completes the proof.
\end{proof}
\par
We can do the intersection calculus of 
$C_{\mathcal{X}^T}{\circ}N^\ast(\Sigma_j)$, 
$C_{\mathcal{X}^T}{\circ}N^\ast(\Sigma_{jk}^{(1)})$ 
and 
$C_{\mathcal{X}^T}{\circ}N^\ast(\Sigma_{jk}^{(2)})$ 
using Lemma~\ref{theorem:propagation}.   
\begin{lemma}
\label{theorem:compositionT}
Suppose 
{\rm ({\bf A0})}, 
{\rm ({\bf A1})} 
and  
{\rm ({\bf A2})}. 
Then the following compositions are all clean with some excess $e${\rm :}
\begin{alignat*}{2}
  C_{\mathcal{X}^T}{\circ}N^\ast(\Sigma_j)
& =
  N^\ast(\partial{D_j})\setminus0,
& \quad
  e
& =
  n-2,
\\ 
  C_{\mathcal{X}^T}{\circ}N^\ast(\Sigma_{jk}^{(1)})
& =
  N^\ast(\mathcal{L}_{jk})\setminus0,
& \quad
  e
& =
  0,
\\ 
  C_{\mathcal{X}^T}{\circ}N^\ast(\Sigma_{jk}^{(2)})
& =
  N^\ast(\Omega_{jk}\cup\Omega_{kj})\setminus0,
& \quad
  e
& =
  0.
\end{alignat*}
The last one is valid only for $n\geqq3$. 
\end{lemma}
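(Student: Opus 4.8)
The plan is to handle the three compositions by the same scheme already used for Theorem~\ref{theorem:geodesicxraytransform} and Lemma~\ref{theorem:4-1}. For a conic Lagrangian $\Lambda\subset T^\ast\bigl(\partial_-S(M)\bigr)$ — here $N^\ast(\Sigma_j)$, $N^\ast(\Sigma_{jk}^{(1)})$ or $N^\ast(\Sigma_{jk}^{(2)})$, described respectively by Lemma~\ref{theorem:4-1}, \eqref{equation:nstarsigmajk1} and \eqref{equation:nstarsigmajk2} — I would form the fiber product
$$
C
:=
\bigl(C_{\mathcal{X}^T}{\times}\Lambda\bigr)
\cap
\Bigl(T^\ast(M^\text{int}){\times}\Delta\bigl(T^\ast(\partial_-S(M))\bigr)\Bigr),
$$
give an explicit description of $C$, verify that the intersection is clean by the curve argument (deform a given $X\in T_c(C_{\mathcal{X}^T}{\times}\Lambda)\cap T_c(\mathcal{D})$ along a curve $\alpha\mapsto c+\alpha X+\mathcal{O}(\alpha^2)$ in $C_{\mathcal{X}^T}{\times}\Lambda$ and check it stays in $C$, exactly as in those proofs), observe that $\pi_C$ is proper with connected fibers, and then read off the excess from the codimension formula and the image $\pi_{T^\ast(M^\text{int})}(C)$. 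The two substantial points are the explicit form of $C$ and the identification of its image; both rest on Lemma~\ref{theorem:holmanuhlmann} (under simplicity, each $\xi$ has at most one dual covector $\eta$) together with Lemma~\ref{theorem:propagation}.

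For $C_{\mathcal{X}^T}{\circ}N^\ast(\Sigma_j)$ I would use $N^\ast(\Sigma_j)=C_{\mathcal{X}}{\circ}N^\ast(\partial{D_j})$ from Lemma~\ref{theorem:4-1}. Given $\xi\in N^\ast(\Sigma_j)$, the discussion following Lemma~\ref{theorem:holmanuhlmann} shows the dual covector $\eta$ is unique when it exists, while \eqref{equation:intersection4-1} exhibits $\xi$ as arising from some $\eta\in N^\ast(\partial{D_j})\setminus0$; conversely every $\eta\in N^\ast_x(\partial{D_j})\setminus0$ is attained, and its $\pi_C$-preimage is $\{v\in S_x(M^\text{int}):\eta(v)=0\}=S_x(\partial{D_j})$, an $(n-2)$-sphere. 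Hence $\pi_{T^\ast(M^\text{int})}(C)=N^\ast(\partial{D_j})\setminus0$ and the excess equals $n-2$.

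The $\Sigma_{jk}^{(1)}$ case is the crux, and here the first part of Lemma~\ref{theorem:propagation} does the work. Fixing $w\in\Sigma_{jk}^{(1)}$ with common tangent geodesic $\gamma_w$ meeting $\partial{D_j}$ at $\gamma_w(t_0)$ and $\partial{D_k}$ at $\gamma_w(\tilde{t}_0)$, \eqref{equation:identity1} shows the one-parameter family $s\mapsto\bigl(\eta(s),\xi(s)\bigr)$ lies in $C_{\mathcal{X}^T}$, and \eqref{equation:identity2} shows that over this $w$ the fiber product contains nothing more. To identify the image I would invoke Lemma~\ref{theorem:jacobi2}: under ({\bf A1}) the Jacobi fields spanning $T\bigl(\mathcal{L}_{jk}\bigr)$ along $\gamma_w$ are scalar multiples of parallel transports, so the tangent spaces of $\mathcal{L}_{jk}$ along $\gamma_w$ are parallel transports of one another; since $\eta_j\bigl(\gamma_w(t_0)\bigr)$ is conormal to $\partial{D_j}$ — hence to $\mathcal{L}_{jk}$, which is tangent to $\partial{D_j}$ on $\mathcal{B}_{jk}^{(\pm)}$ by Corollary~\ref{theorem:ljk} — its parallel transport $\eta(s)$ is conormal to $\mathcal{L}_{jk}$ at $\gamma_w(s)$. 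Thus $\{\eta(s):w\in\Sigma_{jk}^{(1)},\,s\}$ exhausts $N^\ast(\mathcal{L}_{jk})\setminus0$, the correspondence is one-to-one up to scaling, and $C_{\mathcal{X}^T}{\circ}N^\ast(\Sigma_{jk}^{(1)})=N^\ast(\mathcal{L}_{jk})\setminus0$ with excess $0$; cleanness is again the curve argument, now using the identity \eqref{equation:identity1} to keep the deformation inside $C$.

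For $C_{\mathcal{X}^T}{\circ}N^\ast(\Sigma_{jk}^{(2)})$, $n\geqq3$, the second part of Lemma~\ref{theorem:propagation} shows that for $w\in\Sigma_{jk}^{(2)}$ and $\xi^\prime\in N^\ast_w(\Sigma_{jk}^{(2)})\setminus0$ the dual-covector condition fails for every $s\notin\{t_0,\tilde{t}_0\}$. The endpoints $s=t_0,\tilde{t}_0$ lie outside the hypothesis of that part and must be treated directly: a short computation with \eqref{equation:5121}, using $b(\tilde{t}_0;t_0)\ne0$ from Lemma~\ref{theorem:noconjugatepoint} and the Gauss lemma, forces the surviving covector at $s=t_0$ into $N^\ast_{\gamma_w(t_0)}(\partial{D_j})$, and symmetrically at $s=\tilde{t}_0$ into $N^\ast_{\gamma_w(\tilde{t}_0)}(\partial{D_k})$. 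Tracking $\gamma_w(t_0)\in\Omega_{jk}$ and $\gamma_w(\tilde{t}_0)\in\Omega_{kj}$ as $w$ ranges over $\Sigma_{jk}^{(2)}$ then yields $\pi_{T^\ast(M^\text{int})}(C)=N^\ast(\Omega_{jk}\cup\Omega_{kj})\setminus0$, and the excess comes out of the codimension count. I expect the main obstacle to be precisely these bookkeeping steps — establishing cleanness in each case and pinning down the excesses — and, conceptually, confirming in the $\Sigma_{jk}^{(1)}$ case that no extra conormal directions appear, which is exactly what \eqref{equation:identity2} combined with Lemma~\ref{theorem:jacobi2} provides.
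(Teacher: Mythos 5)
Your proposal follows the same strategy as the paper's proof for all three compositions: form the fiber product $C=(C_{\mathcal{X}^T}\times\Lambda)\cap\mathcal{D}$, describe it explicitly with the help of Lemma~\ref{theorem:propagation}, confirm cleanness by the curve argument, and obtain the excess from the dimension count. Your observation that Lemma~\ref{theorem:propagation}~II excludes $s\in\{t_0,\tilde t_0\}$, so these endpoints must be checked directly from \eqref{equation:5121} using $b(t_0;t_0)=0$ and $b(\tilde t_0;t_0)\ne0$, is a correct reading of a step the paper leaves implicit.

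One caution in the $\Sigma_{jk}^{(1)}$ case: the claim that the tangent spaces $T_{\gamma_w(s)}(\mathcal{L}_{jk})$ are parallel transports of one another does \emph{not} follow from ({\bf A1}) and Lemma~\ref{theorem:jacobi2} alone, since those only give each generating Jacobi field as $Y(s)=a(s)P(t_0,s;\gamma_w)X+b(s)P(t_0,s;\gamma_w)\zeta$, a \emph{linear combination} of two parallel fields, not a scalar multiple of one. What makes the conclusion go through is the additional constraint that the varied geodesic remains tangent to $\partial D_k$: differentiating the tangency and using $\eta_1=\pm\eta$ together with $b(\tilde t_0;t_0)\ne0$ forces $\eta(\zeta)=\eta(\nabla Y(t_0))=0$ for each such $Y$, and only then does $a(s)X+b(s)\zeta$ stay in the parallel transport of $\ker\eta\cap T_{\gamma_w(t_0)}(\mathcal{L}_{jk})$. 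Since the paper simply asserts the identity $C_{\mathcal{X}^T}\circ N^\ast(\Sigma_{jk}^{(1)})=N^\ast(\mathcal{L}_{jk})\setminus0$ without writing this out, your attempt to justify it is a genuine improvement, but you would need to add this tangency step.
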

We set 
\begin{align*}
  \mathcal{Z}
& =
  T^\ast(M^\text{int})
  \times
  T^\ast\bigl(\partial_-S(M)\bigr)
  \times
  T^\ast\bigl(\partial_-S(M)\bigr),
\\
  \mathcal{D}
& =
  T^\ast(M^\text{int})
  \times
  \Delta\Bigl(T^\ast\bigl(\partial_-S(M)\bigr)\Bigr)
\end{align*}
for short until the end of this section.  
Then $\operatorname{dim}(\mathcal{Z})=10n-8$ 
and $\operatorname{codim}(\mathcal{D})=4n-4$. 
Since 
$C_{\mathcal{X}^T}$, 
$N^\ast(\Sigma_j)$ 
and 
$N^\ast(\Sigma_{jk}^{(i)})$ 
are conic Lagrangian submanifolds, we have for $i=1,2$
$$
\operatorname{dim}
\bigl(C_{\mathcal{X}^T}{\times}N^\ast(\Sigma_j)\bigr)
=
\operatorname{dim}
\bigl(C_{\mathcal{X}^T}{\times}N^\ast(\Sigma_{jk}^{(i)})\bigr)
=
\operatorname{dim}(M^\text{int})
+
2
\operatorname{dim}\bigl(\partial_-S(M)\bigr)
=
5n-4.
$$
We split the proof of Lemma~\ref{theorem:compositionT} 
into three parts, and prove the lemma for each composition. 
\begin{proof}[{\bf Proof of Lemma~\ref{theorem:compositionT}, Part~1: $C_{\mathcal{X}^T}{\circ}N^\ast(\Sigma_j)$}] 
Theorem~\ref{theorem:adointofgeodesicxraytransform}, 
Lemma~\ref{theorem:4-1} 
and 
\eqref{equation:nstarsigmaj} 
imply that  
\begin{align*}
  C_{\mathcal{X}^T}{\times}N^\ast(\Sigma_j)
& =
  \bigl\{
  (\eta,\xi,\zeta): 
  \exists v \in S(M^\text{int}), 
  \exists u \in S(\partial{D_j}), 
  \exists \theta \in N^\ast_{\pi(u)}(\partial{D_j})
  \ \text{s.t.}
\\
& \qquad
  \xi \in T^\ast_{F(v)}\bigl(\partial_-S(M)\bigr)\setminus\{0\}, 
  \eta \in T^\ast_{\pi(v)}(M^\text{int})\setminus\{0\}, 
  DF\vert^T_v\xi=D\pi\vert^T_v\eta,
\\
& \qquad
  \zeta \in T^\ast_{F(u)}\bigl(\partial_-S(M)\bigr), 
  DF\vert^T_u\zeta=D\pi\vert^T_u\theta 
  \bigr\}. 
\end{align*}
Then we have 
\begin{align*}
  C_0
& :=
  \bigl(C_{\mathcal{X}^T}{\times}N^\ast(\Sigma_j)\bigr)\cap\mathcal{D}
\\
& =
  \bigl\{
  (\eta,\xi,\xi): 
  \exists v \in S(\partial{D_j})  
  \ \text{s.t.}\ 
  \eta \in N^\ast_{\pi(v)}(\partial{D_j})\setminus\{0\},
\\
& \qquad
  \xi \in T^\ast_{F(v)}\bigl(\partial_-S(M)\bigr)\setminus\{0\}, 
  DF\vert^T_v\xi=D\pi\vert^T_v\eta
  \bigr\},   
\end{align*}
$$
\operatorname{dim}(C_0)
=
\operatorname{dim}\bigl(S(\partial{D_j})\bigr)
+
\operatorname{dim}\bigl(N^\ast_{\pi(v)}(\partial{D_j})\bigr)
=
2n-2
$$
since $\xi$ is uniquely determined by $\eta$ 
if $DF\vert^T_v\xi=D\pi\vert^T_v\eta$. 
Hence we have 
$$
C_{\mathcal{X}^T}{\circ}N^\ast(\Sigma_j)
=
\{\eta: \exists v \in S(\partial{D_j})  
  \ \text{s.t.}\ 
  \eta \in N^\ast_{\pi(v)}(\partial{D_j})\setminus\{0\}\}
=
N^\ast(\partial{D_j})\setminus0,    
$$
\begin{align*}
  e
& =
  \operatorname{codim}(\mathcal{D})
  +
  \operatorname{codim}
  \bigl(C_{\mathcal{X}^T}{\times}N^\ast(\Sigma_j)\bigr)
  -
  \operatorname{codim}(C_0)
\\
& =
  \operatorname{codim}(\mathcal{D})
  -
  \operatorname{dim}
  \bigl(C_{\mathcal{X}^T}{\times}N^\ast(\Sigma_j)\bigr)
  +
  \operatorname{dim}(C_0) 
\\
& =
  (4n-4)-(5n-4)+(2n-2)
  =
  n-2.
\end{align*}
\par
It remains to show that $C_{\mathcal{X}^T}{\circ}N^\ast(\Sigma_j)$ is clean. 
Consider the projection map $\pi_0:C_0\ni(\eta,\xi,\xi)\mapsto\eta\in{T^\ast(M^\text{int})}$. 
Note that for any $\eta \in N^\ast(\partial{D_j})$
$$
\pi_0^{-1}(\eta)
=
\bigl\{
\bigr(\eta,(DF\vert^T_v)^{-1}{\circ}D\pi\vert^T_v\eta,(DF\vert^T_v)^{-1}{\circ}D\pi\vert^T_v\eta\bigr): 
v \in S_{\pi(\eta)}(\partial{D_j}) 
\bigr\}, 
$$
where $(DF\vert^T_v)^{-1}$ is the inverse of 
the linear map $DF\vert^T_v$ of $T^\ast_v\bigl(\partial_-S(M)\bigr)$ onto the range. 
Hence $\pi_0$ is proper and $\pi_0^{-1}(\eta)$ is connected for any $\eta \in N^\ast(\partial{D_j})$. 
\par
Finally we show that $C_0$ is a clean intersection. 
Fix arbitrary $c_0 \in C_0$ and $X=(X_1,X_2,X_3) \in T_{c_0}\bigl(C_{\mathcal{X}^T}{\times}N^\ast(\Sigma_j)\bigr)$, consider a curve in $C_{\mathcal{X}^T}{\times}N^\ast(\Sigma_j)$ of the form 
$$
\mathbb{R}\ni\alpha \mapsto 
\bigl(\eta(\alpha),\xi(\alpha),\zeta(\alpha)\bigr)
=
c_0+\alpha{X}+\mathcal{O}(\alpha^2)
=
c_0+\alpha(X_1,X_2,X_3)+\mathcal{O}(\alpha^2).
$$
Then we have 
$$
X
=
(X_1,X_2,X_3)
=
\frac{d}{d\alpha}\bigg\vert_{\alpha=0}
\bigl(\eta(\alpha),\xi(\alpha),\zeta(\alpha)\bigr). 
$$
If $X$ also belongs to $T_{c_0}(\mathcal{D})$, then $X_2=X_3$ and 
$$
X=\frac{d}{d\alpha}\bigg\vert_{\alpha=0}
\bigl(\eta(\alpha),\zeta(\alpha),\zeta(\alpha)\bigr)
\in T_{c_0}(C_0). 
$$
This completes the proof. 
\end{proof}
\begin{proof}[{\bf Proof of Lemma~\ref{theorem:compositionT}, Part~2: $C_{\mathcal{X}^T}{\circ}N^\ast(\Sigma_{jk}^{(1)})$}]
Theorem~\ref{theorem:adointofgeodesicxraytransform} and \eqref{equation:nstarsigmajk1} imply that  
\begin{align*}
  C_{\mathcal{X}^T}{\times}N^\ast(\Sigma_{jk}^{(1)})
& =
  \bigl\{
  (\eta,\xi,\zeta+\tilde{\zeta}): 
  \exists v \in S(M^\text{int}), 
  \exists (u,\tilde{u}) \in \Delta(\mathcal{M}_{jk}), 
\\
& \qquad
  \exists \theta \in N^\ast_{\pi(u)}(\partial{D_j}), 
  \exists \tilde{\theta} \in N^\ast_{\pi(\tilde{u})}(\partial{D_k}), 
  \ \text{s.t.}\ F(u)=F(\tilde{u})
\\
& \qquad
  \xi \in T^\ast_{F(v)}\bigl(\partial_-S(M)\bigr)\setminus\{0\}, 
  \eta \in T^\ast_{\pi(v)}(M^\text{int})\setminus\{0\}, 
  DF\vert^T_v\xi=D\pi\vert^T_v\eta,
\\
& \qquad
  \zeta \in T^\ast_{F(u)}\bigl(\partial_-S(M)\bigr), 
  DF\vert^T_u\zeta=D\pi\vert^T_u\theta,
\\
& \qquad
  \tilde{\zeta} \in T^\ast_{F(u)}\bigl(\partial_-S(M)\bigr), 
  DF\vert^T_{\tilde{u}}\tilde{\zeta}=D\pi\vert^T_{\tilde{u}}\tilde{\theta},  
  \bigr\}. 
\end{align*}
Set $C_1:=\bigl(C_{\mathcal{X}^T}{\times}N^\ast(\Sigma_{jk}^{(1)})\bigr)\cap\mathcal{D}$. 
If $(\eta,\xi,\zeta+\tilde{\zeta}) \in C_{\mathcal{X}^T}{\times}N^\ast(\Sigma_{jk}^{(1)}){\cap}\mathcal{D}$, 
then Part~I of Lemma~\ref{theorem:propagation} implies that $\xi=\zeta+\tilde{\zeta}$ must be of the form 
$$
\xi
=
(DF\vert^T_{\dot{\gamma}_v(s)})^{-1}
{\circ}
D\pi\vert^T_{\dot{\gamma}_v(s)}
{\circ}
P(s,0;\gamma_v)^T\eta
$$
for some $v \in \mathcal{M}_{jk}$, $\eta \in N^\ast_{\pi(v)}(\partial{D_j})$ and  $s\in\bigl(\tau_-(v),\tau_+(v)\bigr)$. 
Hence we have 
\begin{align*}
  C_1
& =
  \bigl\{
  (\eta^\prime,\xi,\xi):
  \exists v \in \mathcal{M}_{jk}, 
  \exists \eta \in N^\ast_{\pi(v)}(\partial{D_j})\setminus\{0\}, 
  \exists s \in \bigl(\tau_-(v),\tau_+(v)\bigr)
  \ \text{s.t.}\
\\ 
& \qquad
  \eta^\prime=P(s,0:\gamma_v)^T\eta, 
  DF\vert^T_{\dot{\gamma}_v(s)}\xi=D\pi\vert^T_{\dot{\gamma}_v(s)}\eta^\prime
  \bigr\}, 
\end{align*}
$$
\operatorname{dim}(C_1)
=
\operatorname{dim}(\mathcal{M}_{jk})
+
\operatorname{dim}\bigl(N^\ast_{\pi(v)}(\partial{D_j})\bigr)
+
\operatorname{dim}\Bigl(\bigl(\tau_-(v),\tau_+(v)\bigr)\Bigr)
=
n. 
$$
Therefore we obtain
\begin{align*}
  C_{\mathcal{X}^T}{\circ}N^\ast(\Sigma_{jk}^{(1)})
& =
  \bigl\{
  P(s,0;\gamma_v)^T\eta:
  v \in \mathcal{M}_{jk}, 
  \eta \in N^\ast_{\pi(v)}(\partial{D_j})\setminus\{0\}, 
  s \in \bigl(\tau_-(v),\tau_+(v)\bigr)
  \bigr\}
\\
& =
  N^\ast(\mathcal{L}_{jk})\setminus0, 
\end{align*}
\begin{align*}
  e
& =
  \operatorname{codim}(\mathcal{D})
  +
  \operatorname{codim}
  \bigl(C_{\mathcal{X}^T}{\times}N^\ast(\Sigma_{jk}^{(1)})\bigr)
  -
  \operatorname{codim}(C_1)
\\
& =
  \operatorname{codim}(\mathcal{D})
  -
  \operatorname{dim}
  \bigl(C_{\mathcal{X}^T}{\times}N^\ast(\Sigma_{jk}^{(1)})\bigr)
  +
  \operatorname{dim}(C_1) 
\\
& =
  (4n-4)-(5n-4)+n
  =
  0.
\end{align*}
In the same way as $\pi_0$, we can show that 
$\pi_1:C_1\ni(\eta,\xi,\xi)\mapsto\eta\in{T^\ast(M^\text{int})}$ is proper, 
$\pi_1^{-1}(\eta)$ is connected for any $\eta \in N^\ast(\mathcal{L}_{jk})$, 
and $C_1$ is a clean intersection. We omit the detail. This completes the proof. 
\end{proof}
\begin{proof}[{\bf Proof of Lemma~\ref{theorem:compositionT}, Part~3: $C_{\mathcal{X}^T}{\circ}N^\ast(\Sigma_{jk}^{(2)})$}]
Theorem~\ref{theorem:adointofgeodesicxraytransform} and \eqref{equation:nstarsigmajk2} imply that  
\begin{align*}
  C_{\mathcal{X}^T}{\times}N^\ast(\Sigma_{jk}^{(2)})
& =  
  \bigl\{
  (\eta,\xi,\zeta+\tilde{\zeta}): 
  \exists v \in S(M^\text{int}), 
  \exists (u,\tilde{u}) \in \Delta\bigl(S(\Omega_{jk})\bigr), 
\\
& \qquad
  \exists \theta \in N^\ast_{\pi(u)}(\partial{D_j}), 
  \exists \tilde{\theta} \in N^\ast_{\pi(\tilde{u})}(\partial{D_k}), 
  \ \text{s.t.}\ 
\\
& \qquad
  \xi \in T^\ast_{F(v)}\bigl(\partial_-S(M)\bigr)\setminus\{0\}, 
  \eta \in T^\ast_{\pi(v)}(M^\text{int})\setminus\{0\}, 
\\
& \qquad
  DF\vert^T_v\xi=D\pi\vert^T_v\eta,
\\
& \qquad
  \zeta \in T^\ast_{F(u)}\bigl(\partial_-S(M)\bigr), 
  DF\vert^T_u\zeta=D\pi\vert^T_u\theta,
\\
& \qquad
  \tilde{\zeta} \in T^\ast_{F(u)}\bigl(\partial_-S(M)\bigr), 
  DF\vert^T_{\tilde{u}}\tilde{\zeta}=D\pi\vert^T_{\tilde{u}}\tilde{\theta},  
  \bigr\}. 
\end{align*}
Set $C_2:=\bigl(C_{\mathcal{X}^T}{\times}N^\ast(\Sigma_{jk}^{(2)})\bigr)\cap\mathcal{D}$. 
If $(\eta,\xi,\zeta+\tilde{\zeta}) \in C_{\mathcal{X}^T}{\times}N^\ast(\Sigma_{jk}^{(2)}){\cap}\mathcal{D}$, 
then Part~II of Lemma~\ref{theorem:propagation} implies that $\xi=\zeta+\tilde{\zeta}$ must be one of the following: 
\begin{alignat*}{2}
  \xi
& =
  (DF\vert^T_u)^{-1}{\circ}D\pi\vert^T_u\eta, 
& \quad
  \eta
& \in
  N^\ast_{\pi(u)}(\Omega_{jk})\setminus\{0\},
\\
  \xi
& =
  (DF\vert^T_{\tilde{u}})^{-1}{\circ}D\pi\vert^T_{\tilde{u}}\tilde{\eta}, 
& \quad
  \tilde{\eta}
& \in
  N^\ast_{\pi(\tilde{u})}(\Omega_{kj})\setminus\{0\},
\end{alignat*}  
for $(u,\tilde{u}) \in \Delta\bigl(S(\Omega_{jk})\bigr)$. 
Hence we have 
\begin{align*}
  C_2
& =
  \bigl\{
  (\eta,\xi,\xi):
  \exists (u,\tilde{u}) \in \Delta\bigl(S(\Omega_{jk})\bigr)
  \ \text{s.t.}\  
  \eta \in N^\ast_{\pi(u)}(\Omega_{jk})\setminus\{0\},  
  DF\vert^T_u\xi=D\pi\vert^T_u\eta
  \bigr\}
\\
& \cup
  \bigl\{
  (\tilde{\eta},\tilde{\xi},\tilde{\xi}):
  \exists (\tilde{u},u) \in \Delta\bigl(S(\Omega_{kj})\bigr)
  \ \text{s.t.}\ 
  \tilde{\eta} \in N^\ast_{\pi(\tilde{u})}(\Omega_{kj})\setminus\{0\},  
  DF\vert^T_{\tilde{u}}\tilde{\xi}=D\pi\vert^T_{\tilde{u}}\tilde{\eta}
  \bigr\},
\end{align*}
$$
\operatorname{dim}(C_2)
=
\operatorname{dim}\bigl(N^\ast(\Omega_{jk})\bigr)
=
n. 
$$
Therefore we obtain
$C_{\mathcal{X}^T}{\circ}N^\ast(\Sigma_{jk}^{(2)})=N^\ast(\Omega_{jk}\cup\Omega_{kj})\setminus0$ and $e=0$. 
In the same way as $\pi_0$, we can show that 
$\pi_2:C_2\ni(\eta,\xi,\xi)\mapsto\eta\in{T^\ast(M^\text{int})}$ is proper, 
$\pi_2^{-1}(\eta)$ is connected for any $\eta \in N^\ast(\Omega_{jk}\cup\Omega_{kj})$, 
and $C_2$ is a clean intersection. We omit the detail. This completes the proof. 
\end{proof}
%
%
\section{Proof of the Main Theorem}
\label{section:proof}
Finally we prove Theorem~\ref{theorem:51} in several steps. 
Throughout this section we assume {\rm ({\bf A0})}, {\rm ({\bf A1})}, {\rm ({\bf A2})} and {\rm ({\bf A3})}.   
We study the quadratic interactions firstly, and  higher order interactions later.  
We begin by confirming the singularities of $1_D$ and $\mathcal{X}[1_D]$. 
We denote by $S^m(\mathbb{R}^N\times\mathbb{R}^n)$ 
the set of all smooth functions $a(x,\xi)$ of $(x,\xi)\in\mathbb{R}^N\times\mathbb{R}^k$ satisfying 
$$
\lvert\partial_x^\beta\partial_\xi^\alpha a(x,\xi)\rvert
\leqq
C_{\alpha,\beta}
(1+\lvert\xi\rvert)^{m-\lvert\alpha\rvert}. 
$$
\begin{lemma}
\label{theorem:6-1}
We have 
\begin{align}
  1_{D_j}
& \in 
  I^{-1/2-n/4}\bigl(N^\ast(\partial{D_j})\setminus0; \Omega^{1/2}_{M^\text{int}}\bigr),
\label{equation:6-1} 
\\
  \mathcal{X}[1_{D_j}]
& \in
  I^{-(n+1)/2}(N^\ast(\Sigma_j)\setminus0; \Omega_{\partial_-S(M)}^{1/2}) 
\label{equation:6-2}
\end{align}
for $j=1,\dotsc,J$. 
Moreover the principal symbols of these conormal distributions do not vanish on the associated conormal bundles.
\end{lemma}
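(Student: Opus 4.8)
The plan is to establish the two conormal statements separately: \eqref{equation:6-1} by reducing $1_{D_j}$ to a local model, and \eqref{equation:6-2} by the Fourier integral operator composition calculus.

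First I would prove \eqref{equation:6-1}. Since $\partial D_j$ is a smooth embedded hypersurface of $M^\text{int}$, near any $x_0\in\partial D_j$ one can take local coordinates $(x^\prime,x_n)$ in which $D_j=\{x_n>0\}$, so that $1_{D_j}$ agrees with the Heaviside function $H(x_n)$ up to a smooth nowhere-vanishing half-density factor. The Fourier transform of $H$ equals, after cutting away the origin in the dual variable, a classical elliptic symbol of order $-1$, so $1_{D_j}$ is a conormal distribution associated to $N^\ast(\partial D_j)$ whose amplitude lies in $S^{-1}$; the order bookkeeping $m+n/4-1/2=-1$ forces $m=-1/2-n/4$, and the principal symbol, the class of $(i\xi_n)^{-1}$, is elliptic on $N^\ast(\partial D_j)\setminus 0$. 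Patching these local representations over $\partial D_j$ gives \eqref{equation:6-1} together with the non-vanishing of the principal symbol; note that only smoothness of $\partial D_j$, not strict convexity, is needed for this part.

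Next I would prove \eqref{equation:6-2} by composition. By Theorem~\ref{theorem:geodesicxraytransform}, $\mathcal{X}\in I^{-n/4}\bigl(\partial_-S(M){\times}M^\text{int},C_\mathcal{X}^\prime;\Omega^{1/2}\bigr)$ is an elliptic Fourier integral operator, and by Lemma~\ref{theorem:4-1} the composition $C_\mathcal{X}{\circ}N^\ast(\partial D_j)$ is transversal — hence clean with excess $e=0$ — and coincides with $N^\ast(\Sigma_j)$. Feeding \eqref{equation:6-1} into the clean-composition theorem \cite[Theorem~25.2.3]{Hoermander4} then yields
$$
\mathcal{X}[1_{D_j}]
\in
I^{-n/4+(-1/2-n/4)+0}\bigl(N^\ast(\Sigma_j)\setminus 0;\Omega^{1/2}_{\partial_-S(M)}\bigr)
=
I^{-(n+1)/2}\bigl(N^\ast(\Sigma_j)\setminus 0;\Omega^{1/2}_{\partial_-S(M)}\bigr) .
$$
For the non-vanishing of the principal symbol of $\mathcal{X}[1_{D_j}]$, I would use that, the excess being $e=0$, the projection $\pi_C\colon C\to T^\ast\bigl(\partial_-S(M)\bigr)$ has zero-dimensional connected fibres, so the symbol-composition formula involves no fibre integration: the principal symbol of $\mathcal{X}[1_{D_j}]$ is, up to the usual half-density and Maslov normalizations, the product of the nowhere-vanishing principal symbol of $\mathcal{X}$, pulled back to $C$, with that of $1_{D_j}$, and hence is nowhere zero on $N^\ast(\Sigma_j)\setminus 0$.

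The main obstacle is not conceptual but a matter of normalization: one must check that the clean-composition order formula $m_1+m_2+e/2$ and the symbol-product formula are applied consistently with the half-density conventions of the paper, and that the conormal order of a characteristic function is indeed $-1/2-n/4$ in these conventions; all of the geometric input — transversality of the composition and the identity $N^\ast(\Sigma_j)=C_\mathcal{X}{\circ}N^\ast(\partial D_j)$ — is already available from Theorem~\ref{theorem:geodesicxraytransform} and Lemma~\ref{theorem:4-1}.
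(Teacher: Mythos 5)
Your proposal is correct and follows essentially the same route as the paper: for \eqref{equation:6-1} you reduce $1_{D_j}$ to a local Heaviside-type model and read off the conormal order from the $S^{-1}$ amplitude, exactly as the paper does (citing \cite[Theorem~18.2.8]{Hoermander3}), and for \eqref{equation:6-2} you invoke the transversality $C_\mathcal{X}\circ N^\ast(\partial D_j)=N^\ast(\Sigma_j)$ from Lemma~\ref{theorem:4-1} together with the clean-composition theorem \cite[Theorem~25.2.3]{Hoermander4}, with the same order bookkeeping $-n/4+(-1/2-n/4)+0=-(n+1)/2$. The ellipticity arguments also agree with the paper's, which likewise notes that the ellipticity of $1_{D_j}$ carries over to $\mathcal{X}[1_{D_j}]$ under a transversal composition with an elliptic FIO.
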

\begin{proof}
We remark that the proof of \eqref{equation:6-1} is due to local argument 
in a small neighborhood at an arbitrary point $x \in \partial{D}_j$. 
Then \eqref{equation:6-1} can be proved by the exactly same symbolic calculus 
in \cite[Lemma~5.1]{Chihara} on the Euclidean space. 
Indeed $1_{D_j}/\lvert{dv_g}\rvert^{1/2}$ can be locally expressed as the characteristic function of a half space 
$\{(0,x_1,\dotsc,x_n) : x_2,\dotsc,x_n \in \mathbb{R}\}$ in $\mathbb{R}^n$ near $\partial{D_j}$, 
and is given by an oscillatory integral of the form 
$$
\frac{1_{D_j}}{\lvert{dv_g}\rvert}(x_1,\dotsc,x_n)
=
\int_{\mathbb{R}}
e^{ix_1\xi_1}
a(x_2,\dotsc,x_n,\xi_1)
d\xi_1,
$$
$$
a(x_2,\dotsc,x_n,\xi_1)
\in 
S^{-1}(\mathbb{R}^{n-1}\times\mathbb{R}),
\quad
a(x_2,\dotsc,x_n,\xi_1)
\simeq
\frac{1}{\xi_1}.  
$$
Then \eqref{equation:6-1} follows from \cite[Theorem~18.2.8]{Hoermander3}. 
The local symbol $a(x_2,\dotsc,x_n,\xi_1)$ of the conormal distribution 
$1_{D_j}$ is elliptic. 
Since $C_\mathcal{X}{\circ}N^\ast(\partial{D_j})=N^\ast(\Sigma_j)$ is transversal, and 
$$
\mathcal{X} \in I^{-n/4}\bigl(\partial_-S(M){\times}M^\text{int},C_\mathcal{X}^\prime; \Omega_{\partial_-S(M{\times}M^\text{int})}\bigr), 
$$
then \cite[Theorem~25.2.3]{Hoermander4} shows \eqref{equation:6-2}. 
The ellipticity of the conormal distribution $1_{D_j}$ implies that of $\mathcal{X}[1_{D_j}]$.  
\end{proof}
To see the interaction of singularities, we need to split the square into the several parts:
\begin{align}
  \left(
  \frac{\mathcal{X}[1_D]}{\lvert{d\mu}\rvert^{1/2}}
  \right)^2
  \lvert{d\mu}\rvert^{1/2}
& =
  \sum_{j=1}^J
  \left(
  \frac{\mathcal{X}[1_{D_j}]}{\lvert{d\mu}\rvert^{1/2}}
  \right)^2
  \lvert{d\mu}\rvert^{1/2}
\nonumber
\\
& +
  2
  \sum_{1 \leqq j < k \leqq J}
  \left(
  \frac{\mathcal{X}[1_{D_j}]}{\lvert{d\mu}\rvert^{1/2}}
  \cdot
  \frac{\mathcal{X}[1_{D_k}]}{\lvert{d\mu}\rvert^{1/2}}
  \right)
  \lvert{d\mu}\rvert^{1/2}
\label{equation:split1D}
\end{align}
For the first term of the right hand side of \eqref{equation:split1D} we have the following. 
\begin{lemma}
\label{theorem:6-2}
Let $Q$ be a parametrix of the elliptic pseudodifferential operator $\chi^T\circ\chi$ of order $-1$. 
If $u,v \in I^{-(n+1)/2}\bigl(N^\ast(\Sigma_j); \Omega_{\partial_-S(M)}^{1/2}\bigr)$, then 
\begin{align}
  \left(
  \frac{u}{\lvert{d\mu}\rvert^{1/2}}
  \cdot
  \frac{v}{\lvert{d\mu}\rvert^{1/2}}
  \right)
  \lvert{d\mu}\rvert^{1/2}
& \in 
  I^{-(n+1)/2}\bigl(N^\ast(\Sigma_j)\setminus0; \Omega_{\partial_-S(M)}^{1/2}\bigr),
\label{equation:6-3} 
\\
  Q\circ\mathcal{X}^T
  \left[
  \left(
  \frac{u}{\lvert{d\mu}\rvert^{1/2}}
  \cdot
  \frac{v}{\lvert{d\mu}\rvert^{1/2}}
  \right)
  \lvert{d\mu}\rvert^{1/2}
  \right]
& \in 
  I^{-1/2-n/4}
  \bigl(N^\ast(\partial{D_j})\setminus0; \Omega_{M^\text{int}}^{1/2}\bigr).
\label{equation:6-4} 
\end{align}
\end{lemma}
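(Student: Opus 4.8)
The plan is to prove \eqref{equation:6-3} by a local symbolic computation near the hypersurface $\Sigma_j$, and then to deduce \eqref{equation:6-4} from \eqref{equation:6-3} together with the clean composition $C_{\mathcal{X}^T}{\circ}N^\ast(\Sigma_j)=N^\ast(\partial{D_j})\setminus0$ of Lemma~\ref{theorem:compositionT} and the ellipticity of the local parametrix $Q$.

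For \eqref{equation:6-3}, I would first note that away from $\Sigma_j$ the half densities $u$ and $v$ are smooth, so by a partition of unity it suffices to work in a coordinate chart in which $\Sigma_j=\{s=0\}$ with $s\in\mathbb{R}$ and $y$ the remaining $2n-3$ coordinates. By Lemma~\ref{theorem:4-1} the conormal fibre of $\Sigma_j$ is one dimensional, so, modulo $C^\infty$,
$$
\frac{u}{\lvert{d\mu}\rvert^{1/2}}(s,y)
=
\int_{\mathbb{R}}e^{is\sigma}a(y,\sigma)\,d\sigma,
\qquad
\frac{v}{\lvert{d\mu}\rvert^{1/2}}(s,y)
=
\int_{\mathbb{R}}e^{is\tau}b(y,\tau)\,d\tau,
$$
with $a,b\in S^{\mu}\bigl(\mathbb{R}^{2n-3}\times\mathbb{R}\bigr)$, $\mu=-(n+1)/2+(2n-2)/4-1/2=-3/2$, by the definition of $I^{-(n+1)/2}\bigl(N^\ast(\Sigma_j);\Omega^{1/2}_{\partial_-S(M)}\bigr)$. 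Since $\mu<-1$, both integrals converge absolutely, so $u/\lvert{d\mu}\rvert^{1/2}$ and $v/\lvert{d\mu}\rvert^{1/2}$ are continuous and their product is defined pointwise (note that $N^\ast(\Sigma_j)$ contains antipodal covectors, so this point is not automatic); substituting $\rho=\sigma+\tau$,
$$
\frac{u}{\lvert{d\mu}\rvert^{1/2}}\cdot\frac{v}{\lvert{d\mu}\rvert^{1/2}}(s,y)
=
\int_{\mathbb{R}}e^{is\rho}c(y,\rho)\,d\rho,
\qquad
c(y,\rho):=\int_{\mathbb{R}}a(y,\rho-\tau)\,b(y,\tau)\,d\tau.
$$
The key point is that the fibre convolution of two symbols of order $\mu=-3/2$ on $\mathbb{R}$ is again of order $\mu$. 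Writing $b=b_{\mathrm{loc}}+b_\infty$ with $b_{\mathrm{loc}}$ compactly supported in $\tau$ and $b_\infty$ vanishing for $\lvert\tau\rvert\le1$, and likewise for $a$, one checks that convolution with a symbol compactly supported in the fibre variable preserves the order, while the convolution $a_\infty\ast b_\infty$ of the two ``far'' parts has the lower order $2\mu+1<\mu$ (here $\mu<-1$ is used); hence $c\in S^{-3/2}$, which is the symbolic calculus already carried out in the Euclidean setting of \cite{Chihara}. Multiplying back by $\lvert{d\mu}\rvert^{1/2}$ and reading off the order, the resulting half density lies in $I^{m'}$ with $m'+(2n-2)/4-1/2=-3/2$, i.e.\ $m'=-(n+1)/2$, which gives \eqref{equation:6-3}.

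For \eqref{equation:6-4}, set $w:=\bigl(u/\lvert{d\mu}\rvert^{1/2}\cdot v/\lvert{d\mu}\rvert^{1/2}\bigr)\lvert{d\mu}\rvert^{1/2}\in I^{-(n+1)/2}\bigl(N^\ast(\Sigma_j)\setminus0;\Omega^{1/2}_{\partial_-S(M)}\bigr)$ by \eqref{equation:6-3}. By Theorem~\ref{theorem:adointofgeodesicxraytransform} we have $\mathcal{X}^T\in I^{-n/4}\bigl(M^\text{int}\times\partial_-S(M),C_{\mathcal{X}^T}^\prime;\Omega^{1/2}_{M^\text{int}\times\partial_-S(M)}\bigr)$, and by Part~1 of Lemma~\ref{theorem:compositionT} the composition $C_{\mathcal{X}^T}{\circ}N^\ast(\Sigma_j)=N^\ast(\partial{D_j})\setminus0$ is clean with excess $e=n-2$. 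Hence the clean composition theorem \cite[Theorem~25.2.3]{Hoermander4} gives
$$
\mathcal{X}^T[w]
\in
I^{-n/4-(n+1)/2+(n-2)/2}\bigl(N^\ast(\partial{D_j})\setminus0;\Omega^{1/2}_{M^\text{int}}\bigr)
=
I^{-3/2-n/4}\bigl(N^\ast(\partial{D_j})\setminus0;\Omega^{1/2}_{M^\text{int}}\bigr).
$$
Finally, since $\mathcal{X}^T\circ\mathcal{X}$ is an elliptic pseudodifferential operator of order $-1$, the local parametrix $Q$ is an elliptic pseudodifferential operator of order $+1$; it preserves the conormal bundle $N^\ast(\partial{D_j})$ and raises the order by one, so $Q\circ\mathcal{X}^T[w]\in I^{-1/2-n/4}\bigl(N^\ast(\partial{D_j})\setminus0;\Omega^{1/2}_{M^\text{int}}\bigr)$, which is \eqref{equation:6-4}.

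The main obstacle is the symbolic estimate in the second paragraph, namely that forming the product does not raise the order: one has to make sure that the fibre convolution of two order-$(-3/2)$ symbols along the one-dimensional conormal fibre of the hypersurface $\Sigma_j$ remains of order $-3/2$, including the gain of $\lvert\rho\rvert^{-1}$ under each $\rho$-derivative, which is what forces the splitting into ``local'' and ``far'' parts. This is exactly the place where it matters that $\Sigma_j$ is a hypersurface and that the common symbol order $\mu=-3/2$ satisfies $\mu<-1$. Everything afterwards — the order bookkeeping for $\mathcal{X}^T$, the clean composition supplied by Lemma~\ref{theorem:compositionT}, and the action of the elliptic parametrix $Q$ — is routine.
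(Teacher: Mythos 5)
Your argument mirrors the paper's proof: both reduce \eqref{equation:6-3} to a local symbolic computation (the fibre convolution of two one-dimensional symbols of order $-3/2$ stays of order $-3/2$) and obtain \eqref{equation:6-4} from the clean composition $C_{\mathcal{X}^T}\circ N^*(\Sigma_j) = N^*(\partial D_j)\setminus 0$ of Lemma~\ref{theorem:compositionT} with excess $n-2$; applying $\mathcal{X}^T$ and $Q$ in two steps, as you do, produces the same final order as the paper's treatment of $Q\circ\mathcal{X}^T$ as a single FIO of order $1-n/4$. Your remark that $\mu=-3/2<-1$ makes $u$ and $v$ continuous, so the pointwise product is well defined even though $N^*(\Sigma_j)$ is invariant under $\xi\mapsto -\xi$, is a useful clarification the paper leaves implicit. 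One small slip in the symbol estimate: for $\mu<-1$ the far--far convolution $a_\infty*b_\infty$ is not $O(\rho^{2\mu+1})$ --- as $|\rho|\to\infty$ the dominant contribution comes from the edge regions $|\tau|\approx 1$ and $|\rho-\tau|\approx 1$, giving $O(\rho^\mu)$, and the gain of $\rho^{-1}$ per derivative then relies on the cancellation $\int\partial b_\infty\,d\tau = 0$. This does not affect your conclusion (the total convolution is still $S^{-3/2}$, which is exactly what \eqref{equation:6-3} needs), and the paper itself only asserts this step by reference to \cite[Lemma~5.2]{Chihara} and omits the detail.
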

\begin{proof}
On one hand \eqref{equation:6-3} can be proved by the exactly same symbolic calculus 
for \cite[Lemma~5.2]{Chihara} on the Euclidean space. 
Set $N:=\operatorname{dim}\bigl(\partial_-S(M)\bigr)=2n-2$. 
Then there exist smooth symbols $a(x,\xi_1), b(x,\xi_1) \in S^{-3/2}(\mathbb{R}^N\times\mathbb{R})$ such that 
$$
\frac{u}{\lvert{d\mu}\rvert^{1/2}}(x)
=
\int_\mathbb{R}
e^{ix_1\xi_1}
a(x,\xi_1)
d\xi_1,
\quad
\frac{v}{\lvert{d\mu}\rvert^{1/2}}(x)
=
\int_\mathbb{R}
e^{ix_1\xi_1}
b(x,\xi_1)
d\xi_1
$$
locally. 
Hence we have 
\begin{align*}
  \left(
  \frac{u}{\lvert{d\mu}\rvert^{1/2}}
  \cdot
  \frac{v}{\lvert{d\mu}\rvert^{1/2}}
  \right)
  (x)
& =
  \int_{\mathbb{R}}
  e^{ix_1\xi_1}
  c(x,\xi_1)
  d\xi_1,
\\
  c(x,\xi_1)
& =
  \int_{\mathbb{R}}
  a(x,\xi_1-\eta_1)b(x,\eta_1)
  d\eta_1.
\end{align*}
We can show that $c(x,\xi_1) \in S^{-3/2}(\mathbb{R}^N\times\mathbb{R})$. 
We omit the detail. 
On the other hand \eqref{equation:6-4} immediately follows from \cite[Theorem~25.2.3]{Hoermander4} 
with Lemma~\ref{theorem:compositionT}, Lemma~\ref{theorem:adointofgeodesicxraytransform} and 
$$
Q\circ\mathcal{X}^T
\in
I^{1-n/4}
\bigl(
M^\text{int}{\times}\partial_-S(M),C_{\mathcal{X^T}}^\prime; \Omega_{M^\text{int}{\times}\partial_-S(M)}^{1/2}
\bigr). 
$$
We omit the detail. 
\end{proof}
Lemma~\ref{theorem:6-2} says that each 
$I^{-(n+1)/2}\bigl(N^\ast(\Sigma_j)\setminus0; \Omega_{\partial_-S(M)}^{1/2}\bigr)$, 
$j=1,\dotsc,J$ forms an algebra. Moreover 
$$
1_{D_j},
\quad
Q\circ\mathcal{X}^T\circ\mathcal{X}[1_{D_j}], 
\quad
Q\circ\mathcal{X}^T
\left[
\left(
\frac{\mathcal{X}[1_{D_j}]}{\lvert{d\mu}\rvert^{1/2}}
\right)^{2l}
\lvert{d\mu}\rvert^{1/2}
\right]
$$
belong to 
$$
I^{-1/2-n/4}\bigl(N^\ast(\partial{D_j})\setminus0; \Omega_{M^\text{int}}^{1/2}\bigr),
\quad
l=1,2,3,\dotsc, \quad j=1,\dotsc,J.
$$
To deal with the second term of the right hand side of \eqref{equation:split1D}, 
we now introduce classes of paired Lagrangian distributions 
(See \cite{MelroseUhlmann,GuilleminUhlmann,GreenleafUhlmann}), 
and quote the results on the product of conormal distributions, 
which are due to Greenleaf and Uhlmann in \cite{GreenleafUhlmann}.  
\begin{definition}
\label{theorem:6-3}
Let $X$ be a smooth manifold, and let $\mu, \nu \in \mathbb{R}$. 
Suppose that $\Lambda_0$ and $\Lambda_1$ 
are conic Lagrangian submanifolds of $T^\ast{X}\setminus0$, 
and that they intersect cleanly. 
We say that $u \in I^{\mu,\nu}(\Lambda_0,\Lambda_1;\Omega^{1/2}_X)$ 
if $u \in \mathcal{D}^\prime(\Omega^{1/2}_X)$, 
$\operatorname{WF}(u) \subset \Lambda_0\cup\Lambda_1$, and microlocally away from $\Lambda_0\cap\Lambda_1$ 
$$
I^{\mu,\nu}(\Lambda_0,\Lambda_1;\Omega^{1/2}_X)
\subset
I^{\mu+\nu}(\Lambda_0\setminus\Lambda_1;\Omega^{1/2}_X),
\quad
I^{\mu,\nu}(\Lambda_0,\Lambda_1;\Omega^{1/2}_X)
\subset
I^{\mu}(\Lambda_1;\Omega^{1/2}_X). 
$$
Such a distribution $u$ is said to be a paired Lagrangian distribution associated 
to the pair $(\Lambda_0,\Lambda)$ of order $(\mu,\nu)$. 
\end{definition}
We now define a product of half-densities. 
We use the same notation of Definition~\ref{theorem:6-3}, and 
denote by $\lvert{d\mu_X}\rvert$ a volume element of $X$. 
We define the set of formal products of 
$I^\mu(\Lambda_0;\Omega^{1/2}_X)$ and $I^\nu(\Lambda_1;\Omega^{1/2}_X)$ by 
\begin{align*}
& I^\mu(\Lambda_0;\Omega^{1/2}_X){\cdot}I^\nu(\Lambda_1;\Omega^{1/2}_X)
\\
:=
& \left\{
\frac{u}{\lvert{d\mu_X}\rvert^{1/2}}
\cdot
\frac{v}{\lvert{d\mu_X}\rvert^{1/2}}
\cdot
\lvert{d\mu_X}\rvert^{1/2}, 
u \in I^\mu(\Lambda_0;\Omega^{1/2}_X), 
v \in I^\nu(\Lambda_1;\Omega^{1/2}_X) 
\right\}. 
\end{align*}
\par
We use the following lemma to deal with the product of conormal distributions:
\begin{lemma}[{\bf Greenleaf and Uhlmann \cite[Lemma~1.1]{GreenleafUhlmann}}]
\label{theorem:6-4} 
Let $X$ be an $N$-dimensional smooth manifold. 
Suppose that $Y$ and $Z$ are closed submanifolds of $X$ such that 
$\operatorname{codim}(Y)=d_1$, $\operatorname{codim}(Y)=d_2$, and 
$Y$ and $Z$ intersect transversely, that is, $\operatorname{codim}(Y{\cap}Z)=d_1+d_2$. 
Let $m_1$ and $m_2$ be real numbers. 
Then we have 
\begin{align}
& I^{m_1+d_1/2-N/4}\bigl(N^\ast(Y)\setminus0; \Omega_X^{1/2}\bigr)
  \cdot
  I^{m_2+d_2/2-N/4}\bigl(N^\ast(Z)\setminus0; \Omega_X^{1/2}\bigr)
\nonumber
\\
  \subset
& I^{m_1+d_1/2-N/4, m_2+d_2/2}\bigl(N^\ast(Y{\cap}Z)\setminus0, N^\ast(Y)\setminus0; \Omega_X^{1/2}\bigr)
\nonumber
\\
  +
& I^{m_2+d_2/2-N/4, m_1+d_1/2}\bigl(N^\ast(Y{\cap}Z)\setminus0, N^\ast(Z)\setminus0; \Omega_X^{1/2}\bigr).
\label{equation:productconormal}
\end{align}
\end{lemma}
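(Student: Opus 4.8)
The plan is to reduce to the local coordinate model furnished by the transversality hypothesis and then to recognize the product as a sum of \emph{product-type} oscillatory integrals, each of which is a paired Lagrangian distribution in the sense of Definition~\ref{theorem:6-3}.

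First, since the assertion is local and is formulated modulo smooth half densities, I would use a partition of unity to reduce to the model in which $Y=\{x^\prime=0\}$ and $Z=\{x^{\prime\prime}=0\}$ for a splitting $x=(x^\prime,x^{\prime\prime},x^{\prime\prime\prime}) \in \mathbb{R}^{d_1}\times\mathbb{R}^{d_2}\times\mathbb{R}^{N-d_1-d_2}$; transversality provides such coordinates and gives $Y{\cap}Z=\{x^\prime=x^{\prime\prime}=0\}$. After the standard renormalization of orders of conormal distributions one may write, microlocally and modulo smooth half densities,
\[
  u(x)=\int_{\mathbb{R}^{d_1}}e^{ix^\prime\cdot\xi^\prime}a(x,\xi^\prime)\,d\xi^\prime,
  \qquad
  v(x)=\int_{\mathbb{R}^{d_2}}e^{ix^{\prime\prime}\cdot\eta^{\prime\prime}}b(x,\eta^{\prime\prime})\,d\eta^{\prime\prime},
\]
with $a \in S^{m_1}(\mathbb{R}^N\times\mathbb{R}^{d_1})$ and $b \in S^{m_2}(\mathbb{R}^N\times\mathbb{R}^{d_2})$, the shifts $d_i/2-N/4$ being exactly absorbed in the passage from the $I$-order to the symbol order. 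Hence
\[
  u(x)v(x)=\iint_{\mathbb{R}^{d_1}\times\mathbb{R}^{d_2}}e^{i(x^\prime\cdot\xi^\prime+x^{\prime\prime}\cdot\eta^{\prime\prime})}a(x,\xi^\prime)b(x,\eta^{\prime\prime})\,d\xi^\prime\,d\eta^{\prime\prime},
\]
whose phase parametrizes $N^\ast(Y{\cap}Z)$, while the amplitude $a(x,\xi^\prime)b(x,\eta^{\prime\prime})$ is in general only a symbol of product type: it is a joint symbol of order $m_1+m_2$ in $(\xi^\prime,\eta^{\prime\prime})$ only where $|\xi^\prime|$ and $|\eta^{\prime\prime}|$ are comparable, and it degenerates along the coordinate cones $\{\eta^{\prime\prime}=0\}$ and $\{\xi^\prime=0\}$. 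This degeneration is precisely what produces the two extra Lagrangians $N^\ast(Y)$ and $N^\ast(Z)$.

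The key device is a conic decomposition of the fibre variable. Choose $\chi_0,\chi_1 \in C^\infty(\mathbb{R}^{d_1+d_2}\setminus0)$, homogeneous of degree $0$, with $\chi_0+\chi_1\equiv1$ there, $\operatorname{supp}\chi_0\subset\{|\eta^{\prime\prime}|\leqq2|\xi^\prime|\}$ and $\operatorname{supp}\chi_1\subset\{|\xi^\prime|\leqq2|\eta^{\prime\prime}|\}$ (a cutoff near the origin contributes a smooth term and is discarded). On $\operatorname{supp}\chi_0$ one treats $\xi^\prime$ as the large variable: $\chi_0ab$ is a symbol of order $m_1+m_2$ in $\xi^\prime$ depending classically on $\eta^{\prime\prime}/|\xi^\prime|$, so the associated oscillatory integral has the iterated-regularity properties characterizing $I^{\mu_0,\nu_0}\bigl(N^\ast(Y{\cap}Z),N^\ast(Y);\Omega_X^{1/2}\bigr)$. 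Counting phase variables and amplitude orders gives $\mu_0=m_1+d_1/2-N/4$ and $\nu_0=m_2+d_2/2$; this is consistent with the required behaviour $I^{\mu_0+\nu_0}$ on $N^\ast(Y{\cap}Z)\setminus N^\ast(Y)$ and $I^{\mu_0}$ on $N^\ast(Y)\setminus N^\ast(Y{\cap}Z)$, the latter because over points of $Y$ off $Y{\cap}Z$ the factor $v$ is smooth, so $uv$ is conormal to $Y$ of the same order as $u$. By the symmetric argument the $\chi_1$ piece lies in $I^{m_2+d_2/2-N/4,\,m_1+d_1/2}\bigl(N^\ast(Y{\cap}Z),N^\ast(Z);\Omega_X^{1/2}\bigr)$. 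Summing the two pieces and the smooth remainder and transporting back through the partition of unity yields \eqref{equation:productconormal}.

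The main obstacle is the bookkeeping at the degenerate cones: one must check that $\chi_0ab$ genuinely satisfies the two-parameter anisotropic symbol estimates defining a paired Lagrangian symbol near $\{\eta^{\prime\prime}=0\}$, i.e.\ that each derivative in $\eta^{\prime\prime}$ gains a power of $|\xi^\prime|^{-1}$ while the value at $\eta^{\prime\prime}=0$ is a bona fide symbol of order $m_1$ in $\xi^\prime$, and symmetrically near $\{\xi^\prime=0\}$. This is the standard but delicate dictionary between the oscillatory-integral presentation and the definition of $I^{\mu,\nu}$ due to Melrose and Uhlmann; once it is in place and the order shifts are tracked carefully, the remaining estimates are routine.
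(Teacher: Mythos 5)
The paper does not prove this lemma; it is quoted verbatim from Greenleaf and Uhlmann \cite[Lemma~1.1]{GreenleafUhlmann} and used as a black box. So there is no ``paper's own proof'' to compare against, only the cited source, and your sketch follows essentially the standard argument given there: reduce by transversality to the coordinate model $Y=\{x'=0\}$, $Z=\{x''=0\}$, write each factor as a one--group oscillatory integral, multiply to get a double oscillatory integral whose phase parametrizes $N^\ast(Y\cap Z)$, and then split the fiber $(\xi',\eta'')$--space into the two conic regions $\{|\eta''|\lesssim|\xi'|\}$ and $\{|\xi'|\lesssim|\eta''|\}$ so that each piece carries a product--type symbol adapted to $\bigl(N^\ast(Y\cap Z),N^\ast(Y)\bigr)$, resp.\ $\bigl(N^\ast(Y\cap Z),N^\ast(Z)\bigr)$. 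Your order bookkeeping is consistent: with $\operatorname{codim}(Y)=d_1$ the passage $I^{m_1+d_1/2-N/4}\leftrightarrow S^{m_1}$ is exactly Hörmander's normalization, the joint order $m_1+m_2$ off $N^\ast(Y)\cup N^\ast(Z)$ recovers $I^{\mu_0+\nu_0}(N^\ast(Y\cap Z))$ with $\mu_0+\nu_0=m_1+m_2+(d_1+d_2)/2-N/4$, and the sanity check that $v$ is smooth near $Y\setminus(Y\cap Z)$ gives $I^{\mu_0}(N^\ast(Y))$ there.

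The only place you genuinely defer is the verification that $\chi_0ab$ satisfies the two--index (anisotropic) symbol estimate
$\lvert\partial_z^\gamma\partial_{\xi'}^\alpha\partial_{\eta''}^\beta(\chi_0ab)\rvert\lesssim(1+|\xi'|+|\eta''|)^{m_1-|\alpha|}(1+|\eta''|)^{m_2-|\beta|}$
defining the paired Lagrangian class; you flag it as ``standard but delicate,'' which is a fair characterization, but note that on $\operatorname{supp}\chi_0$ this is in fact an immediate consequence of the product rule, the $S^{m_1}$ and $S^{m_2}$ estimates for $a$ and $b$, the homogeneity of $\chi_0$, and the equivalence $1+|\xi'|\sim1+|\xi'|+|\eta''|$ valid there — so the ``delicate'' step is actually routine once the correct symbol class is written down. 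A second small omission: you should discard a compactly supported cutoff in the fibers (yielding a smooth contribution) before the conic decomposition, as you mention only parenthetically, since $\chi_0+\chi_1\equiv1$ is asserted only on $\mathbb{R}^{d_1+d_2}\setminus0$. Neither point is a gap in the idea; both are completions of detail. In short, the proposal is correct and is the standard route.
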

Combining \cite[Theorem~25.2.3]{Hoermander4} and Lemma~\ref{theorem:6-4}  
we have the following. 
\begin{lemma}
\label{theorem:6-5}
Let $X$ and $Y$ be smooth manifolds, and let $m$, $\mu$ and $\nu$ be real numbers. 
Suppose that the distribution kernel of a Fourier integral operator $A$ belongs to 
$I^m(X{\times}Y,C^\prime; \Omega_{X{\times}Y}^{1/2})$ with a canonical relation $C$. 
Suppose that $\Lambda_0$ and $\Lambda_1$ are conic Lagrangian submanifolds of $T^\ast(Y)$ such that 
$\Lambda_0$ and $\Lambda_1$ intersect cleanly, and $C\circ\Lambda_i$ 
is clean with excess $e_i$ for $i=0,1$. 
If $u \in I^{\mu, \nu}(\Lambda_0, \Lambda_1; \Omega_Y^{1/2})$, then 
$$
Au 
\in 
I^{m+\mu+e_1/2, \nu+(e_0-e_1)/2}(C\circ\Lambda_0, C\circ\Lambda_1; \Omega_X^{1/2}). 
$$
\end{lemma}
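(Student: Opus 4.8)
The plan is to prove the statement microlocally, separating the region away from $\Lambda_0\cap\Lambda_1$, where everything reduces to the classical clean composition theorem, from the region near $\Lambda_0\cap\Lambda_1$, where one must work with the product-type normal form. First I would choose a conic microlocal partition of unity on $T^\ast(Y)\setminus0$ adapted to the pair $(\Lambda_0,\Lambda_1)$ and, using $\operatorname{WF}(Au)\subset C\circ\operatorname{WF}(u)\subset C\circ\Lambda_0\cup C\circ\Lambda_1$, reduce to analyzing $Au$ separately on $C\circ\Lambda_0$ away from $C\circ\Lambda_1$, on $C\circ\Lambda_1$ away from $C\circ\Lambda_0$, and in a conic neighborhood of $C\circ(\Lambda_0\cap\Lambda_1)$. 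One first records that $(C\circ\Lambda_0,C\circ\Lambda_1)$ is again a cleanly intersecting pair of conic Lagrangians — this follows from the cleanness hypotheses together with the fact that $C$ carries $\Lambda_0\cap\Lambda_1$ into $C\circ\Lambda_0\cap C\circ\Lambda_1$ — so that the target class $I^{\mu^\prime,\nu^\prime}(C\circ\Lambda_0,C\circ\Lambda_1)$ is defined.

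Away from the intersection the argument is pure bookkeeping. By Definition~\ref{theorem:6-3}, microlocally on $\Lambda_0\setminus\Lambda_1$ one has $u\in I^{\mu+\nu}(\Lambda_0;\Omega_Y^{1/2})$, and microlocally on $\Lambda_1\setminus\Lambda_0$ one has $u\in I^{\mu}(\Lambda_1;\Omega_Y^{1/2})$. Applying \cite[Theorem~25.2.3]{Hoermander4} to the clean composition $C\circ\Lambda_0$ of excess $e_0$ gives $Au\in I^{m+\mu+\nu+e_0/2}(C\circ\Lambda_0;\Omega_X^{1/2})$ microlocally on $C\circ\Lambda_0\setminus C\circ\Lambda_1$, and applying it to $C\circ\Lambda_1$ of excess $e_1$ gives $Au\in I^{m+\mu+e_1/2}(C\circ\Lambda_1;\Omega_X^{1/2})$ microlocally on $C\circ\Lambda_1\setminus C\circ\Lambda_0$. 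Matching these against the two defining inclusions of $I^{\mu^\prime,\nu^\prime}(C\circ\Lambda_0,C\circ\Lambda_1)$ in Definition~\ref{theorem:6-3} — order $\mu^\prime+\nu^\prime$ on the $C\circ\Lambda_0$ side and order $\mu^\prime$ on the $C\circ\Lambda_1$ side — forces $\mu^\prime=m+\mu+e_1/2$ and $\nu^\prime=\nu+(e_0-e_1)/2$, which are exactly the orders in the statement.

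The substantive step is the analysis near $\Lambda_0\cap\Lambda_1$. Here I would invoke the microlocal normal form for a cleanly intersecting pair of conic Lagrangians (see \cite{MelroseUhlmann,GuilleminUhlmann}): conjugating by elliptic Fourier integral operators of order zero on $Y$ and on $X$ — which preserves all orders, replaces $A$ by an FIO still of order $m$ whose canonical relation composes cleanly with the transformed pairs with the same excesses $e_0,e_1$, and is undone at the end without changing the final class — one may assume $(\Lambda_0,\Lambda_1)$ and $(C\circ\Lambda_0,C\circ\Lambda_1)$ are model pairs $\bigl(N^\ast(\{0\}),N^\ast(\{x^{\prime\prime}=0\})\bigr)$ in suitable coordinates. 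In the model $u$ is represented by a product-type oscillatory integral $\int e^{ix\cdot\xi}b(x,\xi)\,d\xi$ with a two-sided symbol $b$ encoding the bidegree $(\mu,\nu)$, and the kernel of $A$ is an oscillatory integral $\int e^{i\varphi(x,y,\theta)}a(x,y,\theta)\,d\theta$. Writing $Au$ as the resulting iterated oscillatory integral and performing stationary phase in the variables $y,\theta$ along the non-degenerate directions singled out by the clean compositions — the remaining $e_0$, resp.\ $e_1$, degenerate directions over the two strata are left un-integrated and contribute precisely the order shifts $+e_0/2$, resp.\ $+e_1/2$ — one obtains again a product-type oscillatory integral, and a symbol computation of the same nature as in Lemma~\ref{theorem:6-4} shows that its amplitude is two-sided of the bidegree $\bigl(m+\mu+e_1/2,\ \nu+(e_0-e_1)/2\bigr)$, with no singularity stronger than conormal created along $C\circ\Lambda_0\cap C\circ\Lambda_1$. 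Patching the three microlocal pieces then gives the claim.

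I expect the main obstacle to be exactly this last step: carrying the two distinct excess parameters $e_0$ and $e_1$ through the partial stationary phase, since the composition with $C$ is only clean — not transversal — along both strata and with possibly different excesses, and verifying that the \emph{interpolating} bidegree, rather than something larger, survives. The clean-excess hypotheses are what guarantee that the critical manifold of the composition phase is smooth and that the un-integrated fibres have the predicted dimensions $e_0$ and $e_1$, which is precisely what makes the order arithmetic come out as stated.
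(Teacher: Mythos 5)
The paper does not supply a proof of this lemma at all: it is stated after the single remark ``Combining [H\"ormander, Theorem~25.2.3] and Lemma~\ref{theorem:6-4}, we have the following,'' with no \verb|proof| environment. So there is nothing of the author's to compare against in detail, and what you have written is considerably more substantive than what appears in the source. Your strategy — microlocal decomposition into the three regions $C\circ\Lambda_0\setminus C\circ\Lambda_1$, $C\circ\Lambda_1\setminus C\circ\Lambda_0$, and a neighborhood of $C\circ(\Lambda_0\cap\Lambda_1)$; clean-composition bookkeeping in the first two; normal form plus partial stationary phase in the third — is the standard route for results of this type (it is essentially the Antoniano--Uhlmann / Greenleaf--Uhlmann argument for FIOs acting on $I^{\mu,\nu}$-classes), and your arithmetic away from the intersection is correct: matching $\mu'+\nu'=m+\mu+\nu+e_0/2$ on the $\Lambda_0$-side with $\mu'=m+\mu+e_1/2$ on the $\Lambda_1$-side forces $\nu'=\nu+(e_0-e_1)/2$, which is precisely the claimed bidegree and confirms that no other exponents are even possible.

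Two places in your outline deserve more than the passing mention you give them. First, you assert that $(C\circ\Lambda_0,C\circ\Lambda_1)$ is again a cleanly intersecting pair as a consequence of the hypotheses; this does not follow formally from cleanness of each composition $C\circ\Lambda_i$ separately, and in fact it should either be added as a hypothesis or verified for the particular $C$, $\Lambda_0$, $\Lambda_1$ at hand (which the paper does in Lemma~\ref{theorem:compositionT} for its applications, but the lemma as stated is silent about it). Second, the reduction to a model pair by conjugating $A$ with order-zero elliptic FIOs must be checked to preserve the two excesses $e_0,e_1$ of the compositions $C\circ\Lambda_i$ — this is true, since homogeneous canonical transformations are diffeomorphisms of the cotangent bundles and therefore carry clean intersections to clean intersections of the same excess, but it is exactly the kind of statement that should be recorded rather than waved at, especially since, as you correctly flag, carrying two \emph{distinct} excesses through the partial stationary phase is the one genuinely delicate point. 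With those two items filled in, your argument is a correct proof where the paper offers only an assertion.
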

For the second term of the right hand side of \eqref{equation:split1D}, we have the following.
\begin{lemma}
\label{theorem:6-6}
Assume that $j{\ne}k$. Then we have 
\begin{equation}
\left(
\frac{\mathcal{X}[1_{D_j}]}{\lvert{d\mu}\rvert^{1/2}}
\cdot 
\frac{\mathcal{X}[1_{D_k}]}{\lvert{d\mu}\rvert^{1/2}}
\right)
\lvert{d\mu}\rvert^{1/2}
\in 
\mathcal{A}_{jk},
\label{equation:6-5}
\end{equation}
\begin{align*}
  \mathcal{A}_{jk}
& :=
  I^{-(n+1)/2, -1}\bigl(N^\ast(\Sigma_{jk})\setminus0,N^\ast(\Sigma_j)\setminus0; \Omega_{\partial_-S(M)}^{1/2}\bigr)
\\
& +
  I^{-(n+1)/2, -1}\bigl(N^\ast(\Sigma_{jk})\setminus0,N^\ast(\Sigma_k)\setminus0; \Omega_{\partial_-S(M)}^{1/2}\bigr),  
\end{align*}
and the principal symbol of this product does not vanish on $N^\ast(\Sigma_{jk})\setminus0$. 
Furthermore we have 
\begin{equation}
Q{\circ}\mathcal{X}^T
\left[
\left(
\frac{\mathcal{X}[1_{D_j}]}{\lvert{d\mu}\rvert^{1/2}}
\cdot 
\frac{\mathcal{X}[1_{D_k}]}{\lvert{d\mu}\rvert^{1/2}}
\right)
\lvert{d\mu}\rvert^{1/2}
\right]
\in 
\mathcal{W}_{jk},
\label{equation:6-6}
\end{equation}
\begin{align*}
  \mathcal{W}_{jk}
& :=
  I^{-1/2-n/4, -n/2}\bigl(N^\ast(\mathcal{L}_{jk})\setminus0,N^\ast(\partial{D_j})\setminus0; \Omega_{M^\text{int}}^{1/2}\bigr)
\\
& +
  I^{-1/2-n/4, -n/2}\bigl(N^\ast(\mathcal{L}_{jk})\setminus0,N^\ast(\partial{D_k})\setminus0; \Omega_{M^\text{int}}^{1/2}\bigr)
\\
& =
  I^{-1/2-n/4, -n/2}
  \bigl(
  N^\ast(\mathcal{L}_{jk})\setminus0,
  N^\ast(\partial{D_j}\cup\partial{D_k})\setminus0; 
  \Omega_{M^\text{int}}^{1/2}
  \bigr).  
\end{align*}
and the principal symbol of this product does not vanish on $N^\ast(\mathcal{L}_{jk})\setminus0$. 
\end{lemma}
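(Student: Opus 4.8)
The plan is to derive \eqref{equation:6-5} from the product formula of Greenleaf and Uhlmann, Lemma~\ref{theorem:6-4}, and then \eqref{equation:6-6} from its operator companion, Lemma~\ref{theorem:6-5}, feeding in the conormal orders from Lemma~\ref{theorem:6-1}, the transversality from Lemma~\ref{theorem:4-3}, and the three clean compositions with $C_{\mathcal{X}^T}$ recorded in Lemma~\ref{theorem:compositionT}. For \eqref{equation:6-5}: by Lemma~\ref{theorem:6-1} we have $\mathcal{X}[1_{D_j}] \in I^{-(n+1)/2}\bigl(N^\ast(\Sigma_j)\setminus0;\Omega_{\partial_-S(M)}^{1/2}\bigr)$ and $\mathcal{X}[1_{D_k}] \in I^{-(n+1)/2}\bigl(N^\ast(\Sigma_k)\setminus0;\Omega_{\partial_-S(M)}^{1/2}\bigr)$, both with nonvanishing principal symbols, and $\Sigma_j,\Sigma_k$ are hypersurfaces in the $(2n-2)$-dimensional manifold $\partial_-S(M)$ which by Lemma~\ref{theorem:4-3} meet transversally along the codimension-two submanifold $\Sigma_{jk}$. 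First I would apply Lemma~\ref{theorem:6-4} with $N=2n-2$, $Y=\Sigma_j$, $Z=\Sigma_k$, $d_1=d_2=1$, and $m_1=m_2=-3/2$: then $m_i+d_i/2-N/4=-(n+1)/2$ matches the orders above, and the first orders of the two terms in the conclusion become $\bigl(-(n+1)/2,-1\bigr)$, which is exactly $\mathcal{A}_{jk}$. The half-density $\lvert{d\mu}\rvert^{1/2}$ is a fixed smooth nonvanishing section, so dividing $\mathcal{X}[1_{D_j}]$ and $\mathcal{X}[1_{D_k}]$ by it, multiplying the resulting functions, and multiplying back by $\lvert{d\mu}\rvert^{1/2}$ merely trivializes and re-dresses densities and leaves the conormal structure intact. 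For the nonvanishing of the symbol on $N^\ast(\Sigma_{jk})\setminus0$ I would use the symbol part of Lemma~\ref{theorem:6-4}: writing $\zeta\in N^\ast_w(\Sigma_{jk})$, via the transversal splitting $N^\ast_w(\Sigma_{jk})=N^\ast_w(\Sigma_j)\oplus N^\ast_w(\Sigma_k)$, as $\xi+\tilde\xi$, the principal symbol of the product on the deep Lagrangian is an elliptic multiple of the product of the two principal symbols evaluated at $\xi$ and $\tilde\xi$, which is nonzero by Lemma~\ref{theorem:6-1}.

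For \eqref{equation:6-6}: since $Q$ is a parametrix of the elliptic pseudodifferential operator $\mathcal{X}^T\circ\mathcal{X}$ of order $-1$, it is elliptic of order $+1$, and by Theorem~\ref{theorem:adointofgeodesicxraytransform} $\mathcal{X}^T\in I^{-n/4}\bigl(M^\text{int}{\times}\partial_-S(M),C_{\mathcal{X}^T}^\prime\bigr)$, so $Q\circ\mathcal{X}^T\in I^{1-n/4}\bigl(M^\text{int}{\times}\partial_-S(M),C_{\mathcal{X}^T}^\prime\bigr)$, the canonical relation being unchanged by composition with a pseudodifferential operator. Next I would split the distribution from the previous step microlocally along $N^\ast(\Sigma_{jk})=N^\ast(\Sigma_{jk}^{(1)})\cup N^\ast(\Sigma_{jk}^{(2)})$ (only the first piece occurs for $n=2$). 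On the $\Sigma_{jk}^{(1)}$ part, Lemma~\ref{theorem:6-5} applies with $A=Q\circ\mathcal{X}^T$ of order $m=1-n/4$, $\Lambda_0=N^\ast(\Sigma_{jk}^{(1)})$, $\Lambda_1=N^\ast(\Sigma_j)$ (and symmetrically $N^\ast(\Sigma_k)$ for the second term of $\mathcal{A}_{jk}$), and $(\mu,\nu)=\bigl(-(n+1)/2,-1\bigr)$; by Lemma~\ref{theorem:compositionT} the compositions $C_{\mathcal{X}^T}\circ N^\ast(\Sigma_{jk}^{(1)})=N^\ast(\mathcal{L}_{jk})$ and $C_{\mathcal{X}^T}\circ N^\ast(\Sigma_j)=N^\ast(\partial{D_j})$ are clean with excesses $e_0=0$ and $e_1=n-2$, and the arithmetic $m+\mu+e_1/2=1-n/4-(n+1)/2+(n-2)/2=-1/2-n/4$ and $\nu+(e_0-e_1)/2=-1-(n-2)/2=-n/2$ produces exactly the orders of $\mathcal{W}_{jk}$. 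On the $\Sigma_{jk}^{(2)}$ part (present only for $n\geqq3$), the same lemma together with $C_{\mathcal{X}^T}\circ N^\ast(\Sigma_{jk}^{(2)})=N^\ast(\Omega_{jk}\cup\Omega_{kj})$ of excess $0$ yields a paired Lagrangian distribution both of whose Lagrangians lie inside $N^\ast(\partial{D_j}\cup\partial{D_k})$, and since the second index $-n/2$ is negative this is absorbed into $I^{-1/2-n/4,-n/2}\bigl(N^\ast(\mathcal{L}_{jk})\setminus0,N^\ast(\partial{D_j}\cup\partial{D_k})\setminus0\bigr)$, the reassembled form of $\mathcal{W}_{jk}$. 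Finally the principal symbol on $N^\ast(\mathcal{L}_{jk})\setminus0$ is nonvanishing because $Q\circ\mathcal{X}^T$ is elliptic on $C_{\mathcal{X}^T}$ and the clean-composition symbol formula multiplies the nonzero symbol from the previous step by elliptic factors.

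The order and excess bookkeeping above is routine; the genuinely delicate points are to verify that the paired Lagrangian framework of Lemma~\ref{theorem:6-5} truly applies, that is, that the source pairs $\bigl(N^\ast(\Sigma_{jk}^{(i)}),N^\ast(\Sigma_j)\bigr)$ and the target pairs $\bigl(N^\ast(\mathcal{L}_{jk}),N^\ast(\partial{D_j})\bigr)$ intersect cleanly --- on the source side this follows from the nesting $\Sigma_{jk}^{(i)}\subset\Sigma_j$, and on the target side from Corollary~\ref{theorem:ljk}, which states that $\mathcal{L}_{jk}$ is tangent to $\partial{D_j}$ precisely along $\mathcal{B}_{jk}^{(\pm)}$, a tangency along a submanifold which makes the two conormal bundles meet cleanly --- and to set up the microlocal partition separating the $\Sigma_{jk}^{(1)}$ and $\Sigma_{jk}^{(2)}$ contributions without spurious overlap, checking that the $\Sigma_{jk}^{(2)}$ contribution only adds conormal singularities over $\partial{D}$, consistently with $\mathcal{W}_{jk}$ and with the clause ``away from $N^\ast(\partial{D})$'' in Theorem~\ref{theorem:51}.
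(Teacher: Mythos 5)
Your proof follows the same route as the paper: you derive \eqref{equation:6-5} from Lemma~\ref{theorem:6-1} and Lemma~\ref{theorem:6-4} with $N=2n-2$, $m_1=m_2=-3/2$, $d_1=d_2=1$, and then obtain \eqref{equation:6-6} by applying Lemma~\ref{theorem:6-5} with $m=1-n/4$, $(\mu,\nu)=(-(n+1)/2,-1)$, $e_0=0$, $e_1=n-2$, splitting $N^\ast(\Sigma_{jk})$ into the $\Sigma_{jk}^{(1)}$ and $\Sigma_{jk}^{(2)}$ pieces via Lemma~\ref{theorem:compositionT} and observing that the $\Sigma_{jk}^{(2)}$ contribution (with both Lagrangians inside $N^\ast(\partial D_j\cup\partial D_k)$ because $\Omega_{jk}$ is open in $\partial D_j$) is absorbed into $\mathcal{W}_{jk}$. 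The only quibble is cosmetic: the absorption is not because the second order index $-n/2$ is negative, but because $N^\ast(\Omega_{jk}\cup\Omega_{kj})\subset N^\ast(\partial D_j\cup\partial D_k)$, so those terms are genuinely conormal to $\partial D_j\cup\partial D_k$ and hence already of the form appearing in $\mathcal{W}_{jk}$.
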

\begin{proof}
\eqref{equation:6-5} immediately follows from 
Lemma~\ref{theorem:6-1} and Lemma~\ref{theorem:6-4} 
with $N=2n-2$, $m_1=m_2=-3/2$ and $d_1=d_2=1$. 
Moreover Lemma~\ref{theorem:6-1} shows that $\mathcal{X}[1_{D_j}]$ and $\mathcal{X}[1_{D_k}]$ are 
elliptic conormal distributions with 
$\operatorname{WF}(\mathcal{X}[1_{D_j}]) \subset N^\ast(\Sigma_j)\setminus0$ 
and 
$\operatorname{WF}(\mathcal{X}[1_{D_k}]) \subset N^\ast(\Sigma_k)\setminus0$. 
Then it follows that the product \eqref{equation:6-5} is also an elliptic conormal distribution 
restricted on $N^\ast(\Sigma_{jk})\setminus0$. 
\par
Next we prove \eqref{equation:6-6}. 
We apply 
Theorem~\ref{theorem:adointofgeodesicxraytransform},  
Lemma~\ref{theorem:compositionT} 
and  Lemma~\ref{theorem:6-5} to \eqref{equation:6-5} with 
$m=-n/4+1$, $\mu=-(n+1)/2$, $\nu=-1$, $e_1=n-2$ and $e_0=0$, that is, 
$$
m+\mu+\frac{e_1}{2}=-\frac{1}{2}-\frac{n}{4},
\quad
\nu+\frac{e_0-e_1}{2}=-\frac{n}{2}.           
$$
We deduce that
\begin{align*}
& Q{\circ}\mathcal{X}^T
  \left[
  \left(
  \frac{\mathcal{X}[1_{D_j}]}{\lvert{d\mu}\rvert^{1/2}}
  \cdot 
  \frac{\mathcal{X}[1_{D_k}]}{\lvert{d\mu}\rvert^{1/2}}
  \right)
  \lvert{d\mu}\rvert^{1/2}
  \right]
\\
& \in
  I^{-1/2-n/4, -n/2}
  \bigl(
  C_{\mathcal{X}^T}{\circ}N^\ast(\Sigma_{jk})\setminus0, 
  C_{\mathcal{X}^T}{\circ}N^\ast(\Sigma_j)\setminus0; 
  \Omega_{M^\text{int}}^{1/2}
  \bigr)
\\
& \quad +
  I^{-1/2-n/4, -n/2}
  \bigl(
  C_{\mathcal{X}^T}{\circ}N^\ast(\Sigma_{jk})\setminus0, 
  C_{\mathcal{X}^T}{\circ}N^\ast(\Sigma_k)\setminus0; 
  \Omega_{M^\text{int}}^{1/2}
  \bigr)
\\
& =
  I^{-1/2-n/4, -n/2}
  \bigl(
  C_{\mathcal{X}^T}{\circ}N^\ast(\Sigma_{jk}^{(1)})\setminus0, 
  C_{\mathcal{X}^T}{\circ}N^\ast(\Sigma_j)\setminus0; 
  \Omega_{M^\text{int}}^{1/2}
  \bigr)
\\
& \quad +
  I^{-1/2-n/4, -n/2}
  \bigl(
  C_{\mathcal{X}^T}{\circ}N^\ast(\Sigma_{jk}^{(2)})\setminus0, 
  C_{\mathcal{X}^T}{\circ}N^\ast(\Sigma_j)\setminus0; 
  \Omega_{M^\text{int}}^{1/2}
  \bigr)
\\
& \quad \quad +
  I^{-1/2-n/4, -n/2}
  \bigl(
  C_{\mathcal{X}^T}{\circ}N^\ast(\Sigma_{jk}^{(1)})\setminus0, 
  C_{\mathcal{X}^T}{\circ}N^\ast(\Sigma_k)\setminus0; 
  \Omega_{M^\text{int}}^{1/2}
  \bigr)
\\
& \quad \quad \quad +
  I^{-1/2-n/4, -n/2}
  \bigl(
  C_{\mathcal{X}^T}{\circ}N^\ast(\Sigma_{jk}^{(2)})\setminus0, 
  C_{\mathcal{X}^T}{\circ}N^\ast(\Sigma_k)\setminus0; 
  \Omega_{M^\text{int}}^{1/2}
  \bigr) 
\\
& =
  I^{-1/2-n/4, -n/2}
  \bigl(
  N^\ast(\mathcal{L}_{jk})\setminus0, 
  N^\ast(\partial{D_j})\setminus0; 
  \Omega_{M^\text{int}}^{1/2}
  \bigr)
\\
& \quad +
  I^{-1/2-n/4, -n/2}
  \bigl(
  N^\ast(\Omega_{jk}\cup\Omega_{kj})\setminus0, 
  N^\ast(\partial{D_j})\setminus0; 
  \Omega_{M^\text{int}}^{1/2}
  \bigr)
\\
& \quad \quad +
  I^{-1/2-n/4, -n/2}
  \bigl(
  N^\ast(\mathcal{L}_{jk})\setminus0, 
  N^\ast(\partial{D_k})\setminus0; 
  \Omega_{M^\text{int}}^{1/2}
  \bigr)
\\
& \quad \quad \quad +
  I^{-1/2-n/4, -n/2}
  \bigl(
  N^\ast(\Omega_{jk}\cup\Omega_{kj})\setminus0, 
  N^\ast(\partial{D_k})\setminus0; 
  \Omega_{M^\text{int}}^{1/2}
  \bigr)
\\
& =
  \mathcal{W}_{jk}
\\
& \quad +
  I^{-1/2-n/4, -n/2}
  \bigl(
  N^\ast(\Omega_{jk}\cup\Omega_{kj})\setminus0, 
  N^\ast(\partial{D_j})\setminus0; 
  \Omega_{M^\text{int}}^{1/2}
  \bigr)
\\
& \quad \quad +
  I^{-1/2-n/4, -n/2}
  \bigl(
  N^\ast(\Omega_{jk}\cup\Omega_{kj})\setminus0, 
  N^\ast(\partial{D_k})\setminus0; 
  \Omega_{M^\text{int}}^{1/2}
  \bigr)
\\
& \subset
  \mathcal{W}_{jk}. 
\end{align*}
Recall that the product \eqref{equation:6-5} is an elliptic conormal distribution restricted on 
$N^\ast(\Sigma_{jk})$ and Lemma~\ref{theorem:compositionT}. Then we deduce that the product 
\eqref{equation:6-6} is also an elliptic conormal distribution restricted on 
$N^\ast(\mathcal{L}_{jk})\setminus0$ 
since $Q\circ\mathcal{X}^T$ is an elliptic Fourier integral operator.   
This completes the proof. 
\end{proof}
In what follows we study higher order terms in $P_\text{MA}$. 
We now introduce notation:
$$
\mathcal{A}
:=
\sum_{1 \leqq j < k \leqq J}
\mathcal{A}_{jk},
\quad
\mathcal{W}
:=
\sum_{1 \leqq j < k \leqq J}
\mathcal{W}_{jk}. 
$$
We set for an integer $r\geqq2$ 
$$
h_k
:=
\left(
\frac{\chi[1_{D_k}]}{\lvert{d\mu}\rvert^{1/2}}
\right)^{p_k}
\lvert{d\mu}\rvert^{1/2},
\quad
k=1,\dotsc,r,
\quad
p_k=1,2,3,\dotsc. 
$$
Also we set 
$$
\Sigma_{k_1 \dotsc k_m}
:=
\bigcap_{j=1}^m\Sigma_{k_j}, 
\quad
1 \leqq k_1 < k_2 < \dotsb <k_m \leqq r. 
$$
Lemma~\ref{equation:split1D} implies that 
$h_k \in I^{-(n+1)/2}\bigl(N^\ast(\Sigma_k)\setminus0;\Omega_{\partial_-S(M)}^{1/2}\bigr)$ 
for $k=1,\dotsc,r$. 
Using Lemma~\ref{theorem:6-4}, we have 
\begin{align*}
  \operatorname{WF}\left(
  \left\{\prod_{k=1}^2\frac{h_k}{\lvert{du}\rvert^{1/2}}\right\}
  \lvert{du}\rvert^{1/2}
  \right)
& \subset 
  N^\ast(\Sigma_1)\setminus0 
  \cup 
  N^\ast(\Sigma_2)\setminus0 
  \cup 
  N^\ast(\Sigma_{12})\setminus0,
\\
  \operatorname{WF}\left(
  \left\{\prod_{k=1}^3\frac{h_k}{\lvert{du}\rvert^{1/2}}\right\}
  \lvert{du}\rvert^{1/2}
  \right)
& \subset 
  \bigl(
  N^\ast(\Sigma_1)\setminus0 
  \cup 
  N^\ast(\Sigma_2)\setminus0 
  \cup 
  N^\ast(\Sigma_{12})\setminus0
  \bigr)
\\
& \cup
  \bigl(
  N^\ast(\Sigma_{13})\setminus0 
  \cup 
  N^\ast(\Sigma_{23})\setminus0 
  \cup 
  N^\ast(\Sigma_{123})\setminus0
  \bigr)
\\
& =
  \bigl(
  N^\ast(\Sigma_1)\setminus0 
  \cup 
  N^\ast(\Sigma_2)\setminus0 
  \cup 
  N^\ast(\Sigma_3)\setminus0
  \bigr)
\\
& \cup
  \bigl(
  N^\ast(\Sigma_{12})\setminus0 
  \cup 
  N^\ast(\Sigma_{13})\setminus0 
  \cup 
  N^\ast(\Sigma_{23})\setminus0
  \bigr)
\\
& \cup
  N^\ast(\Sigma_{123})\setminus0.
\end{align*}
Then we deduce that 
$$
\operatorname{WF}\left(
\left\{\prod_{k=1}^r\frac{h_k}{\lvert{du}\rvert^{1/2}}\right\}
\lvert{du}\rvert^{1/2}
\right)
\subset
\bigcup_{m=1}^r
\left(
\bigcup_{1 \leqq k_1 < k_2 < \dotsb <k_m \leqq r}
N^\ast(\Sigma_{k_1 \dotsc k_m})\setminus0
\right). 
$$
We study the more detail about this. 
So we need the extension of Lemma~\ref{theorem:propagation} for the higher order terms. 
We need some notation as follows. 
For any $w \in \Sigma_{12{\dotsc}r} \subset \partial_-S(M)$, 
there exist $t_k \in (0,\tau_+(w))$ and $u_k \in S(\partial{D_k})$ for $k=1,\dotsc, r$ such that 
$t_j{\ne}t_k$ for $j{\ne}k$, 
$F(u_k)=w$ and $u_k=\Psi(w,t_k)=\dot{\gamma}_w(t_k)$ for $k=1,\dotsc, r$. 
We set 
\begin{align*}
  \tilde{\eta}_k
& :=
  \bigl\langle\nu_k\bigl(\pi(u_k)\bigr),\cdot\bigr\rangle
  \in
  N^\ast_{\pi(u_k)}(\partial{D_k}), 
\\
  \eta_k
& :=
  P(t_1,t_k;\gamma_w)^T\tilde{\eta_k}
  \in
  T^\ast_{\pi(u_1)}(M^\text{int}),
\\
  \xi_k
& :=
  (DF\vert^T_{u_k})^{-1}
  \circ
  D\pi\vert^T_{u_k}
  \tilde{\eta}_k
  \in
  T^\ast_w\bigl(\partial_-S(M)\bigr). 
\end{align*}
Note that $\eta_1=\tilde{\eta}_1$. We have the following. 
\begin{lemma}
\label{theorem:propagation2}
Suppose that $\eta_1, \dotsc, \eta_r$ are linearly independent. 
\begin{itemize}
\item 
If $s \in \bigl(0,\tau_+(w)\bigr)\setminus\{t_1,\dotsc,t_r\}$ and 
$(\alpha_1,\dotsc,\alpha_r)\ne(0,\dotsc,0)$, then there exists 
$\Xi_0=(0,\zeta_0) \in \operatorname{Ker}(D\pi\vert_{\dot{\gamma}_w(s)})\setminus\{0\}$ 
such that 
$DF\vert^T_{\dot{\gamma}_w(s)}(\alpha_1\xi_1+\dotsb+\alpha_r\xi_r)(\Xi_0)=1$, that is, 
$DF\vert^T_{\dot{\gamma}_w(s)}(\alpha_1\xi_1+\dotsb+\alpha_r\xi_r) \not\in D\pi\vert_{\dot{\gamma}_w(s)}^T\bigl(T^\ast_{\gamma_w(s)}(M^\text{int})\bigr)$.
\item 
If at least two of $\alpha_1,\dotsc,\alpha_r$ are not zero, then for any $s \in \bigl(0,\tau_+(w)\bigr)$, 
then there exists 
$\Xi_1=(0,\zeta_1) \in \operatorname{Ker}(D\pi\vert_{\dot{\gamma}_w(s)})\setminus\{0\}$ 
such that 
$DF\vert^T_{\dot{\gamma}_w(s)}(\alpha_1\xi_1+\dotsb+\alpha_r\xi_r)(\Xi_1)=1$, that is, 
$DF\vert^T_{\dot{\gamma}_w(s)}(\alpha_1\xi_1+\dotsb+\alpha_r\xi_r) \not\in D\pi\vert_{\dot{\gamma}_w(s)}^T\bigl(T^\ast_{\gamma_w(s)}(M^\text{int})\bigr)$.
\end{itemize}
\end{lemma}
\begin{proof}
Fix arbitrary $s \in \bigl(0,\tau_+(w)\bigr)$. 
In the same way as \eqref{equation:dfdpi}-\eqref{equation:dfdpitilde}, we have 
\begin{align*}
  DF\vert^T_{\dot{\gamma}_w(s)}\xi_k
& =
  D_v\Psi\vert^T_{\bigl(\dot{\gamma}_w(s),t_k-s\bigr)}
  \circ
  DF\vert^T_{u_k}\xi_k
\\
& =
  D_v\Psi\vert^T_{\bigl(\dot{\gamma}_w(s),t_k-s\bigr)}
  \circ
  D\pi\vert^T_{u_k}\tilde{\eta}_k,
  \quad
  k=1,\dotsc,r. 
\end{align*}
For any $\Xi=(X,\zeta) \in T_{\dot{\gamma}_w(s)}\bigl(S(M^\text{int})\bigr)$, we deduce that 
$$
D\pi\vert_{\Psi(w,t)}
\circ
D_v\Psi\vert_{\bigl(\dot{\gamma}_w(s),t-s\bigr)}\Xi
=
a(t;s)P(s,t;\gamma_w)X+b(t;s)P(s,t;\gamma_w)\zeta. 
$$
Then for any $(0,\zeta) \in \operatorname{Ker}(D\pi\vert_{\dot{\gamma}_w(s)})$, we deduce that 
\begin{align*}
  DF\vert^T_{\dot{\gamma}_w(s)}\left(\sum_{k=1}^r\alpha_k\xi_k\right)(0,\zeta)
& =
  \sum_{k=1}^r
  \alpha_k
  DF\vert^T_{\dot{\gamma}_w(s)}\xi_k(0,\zeta)
\\
& =
  \sum_{k=1}^r
  \alpha_k 
  D_v\Psi\vert^T_{\bigl(\dot{\gamma}_w(s),t_k-s\bigr)}
  \circ
  D\pi\vert^T_{u_k}\tilde{\eta}_k(0,\zeta)
\\
& =
  \sum_{k=1}^r
  \alpha_k
  \tilde{\eta}_k
  \left(
  D\pi\vert_{u_k}
  \circ
  D_v\Psi\vert_{\bigl(\dot{\gamma}_w(s),t_k-s\bigr)}
  (0,\zeta)
  \right) 
\\
& =
  \sum_{k=1}^r
  \alpha_k
  \tilde{\eta}_k
  \bigl(
  b(t_k;s)P(s,t_k;\gamma_w)\zeta
  \bigr)
\\
& =
  \sum_{k=1}^r
  \alpha_k
  b(t_k;s)
  P(s,t_k;\gamma_w)^T\tilde{\eta_k}(\zeta)
\\
& =
  \left(
  \sum_{k=1}^r
  \alpha_k
  b(t_k;s)
  \eta_k(s)
  \right)(\zeta), 
\end{align*}
where $\eta_k(s):=P(s,t_k;\gamma_w)^T\tilde{\eta_k}$. 
Note that $\eta_1(s),\dotsc,\eta_r(s)$ are linearly independent 
for any $s\in\bigl(0,\tau_+(w)\bigr)$ since 
$\eta_k(t_1)=\eta_k$ and $\eta_1,\dotsc,\eta_r$ are linearly independent.    
\par
Suppose that $\sum_{k=1}^r\alpha_k\xi_k=0$ in $T^\ast_w\bigl(\partial_-S(M)\bigr)$. 
Then we have 
$$
\left(
\sum_{k=1}^r
\alpha_k
b(t_k;s)
\eta_k(s)
\right)(\zeta)
=
0
$$
for any $\zeta \in T_{\gamma_w(s)}(M^\text{int})$ and for any $s\in\bigl(0,\tau_+(w)\bigr)$, 
and therefore we deduce that 
$$
\sum_{k=1}^r
\alpha_k
b(t_k;s)
\eta_k(s)
=
0
\quad
\text{in}
\quad
T^\ast_{\gamma_w(s)}(M^\text{int})
$$
for any $s \in \bigl(0,\tau_+(w)\bigr)$. 
Hence we obtain $\alpha_kb(t_k;s)=0$, $k=1,\dotsc,r$ 
for any $s\in\bigl(0,\tau_+(w)\bigr)$ 
since $\eta_1(s),\dotsc,\eta_r(s)$ are linearly independent 
for any $s\in\bigl(0,\tau_+(w)\bigr)$.  
\par
If $s \ne t_1,\dotsc,t_r$, then $b(t_k;s)\ne0$ for all $k=1,\dotsc,r$ and we obtain 
$(\alpha_1,\dotsc,\alpha_r)=(0,\dotsc,0)$. 
Hence if $(\alpha_1,\dotsc,\alpha_r)\ne(0,\dotsc,0)$, then 
$$
\sum_{k=1}^r
\alpha_k
b(t_k;s)
\eta_k(s)
\ne
0
\quad
\text{in}
\quad
T^\ast_{\gamma_w(s)}(M^\text{int}), 
$$
and there exists $\zeta_0 \in T_{\gamma_w(s)}(M^\text{int})\setminus\{0\}$ such that 
$$
1
=
\left(
\sum_{k=1}^r
\alpha_k
b(t_k;s)
\eta_k(s)
\right)(\zeta_0)
=
DF\vert^T_{\dot{\gamma}_w(s)}\left(\sum_{k=1}^r\alpha_k\xi_k\right)(0,\zeta_0),
$$
which proves the first half of Lemma~\ref{theorem:propagation2}. 
\par
Finally we suppose that two of $\alpha_1,\dotsc,\alpha_r$ are not zero, say, 
$\alpha_1\ne0$ and $\alpha_2\ne0$. 
We may assume that $s{\ne}t_1$ 
since at least one of $s{\ne}t_1$ and $s{\ne}t_2$ holds. 
Then $\alpha_1b(t_1;s)\ne0$, and 
$$
\sum_{k=1}^r
\alpha_k
b(t_k;s)
\eta_k(s)
\ne
0
\quad
\text{in}
\quad
T^\ast_{\gamma_w(s)}(M^\text{int}).
$$
There exists $\zeta_1 \in T_{\gamma_w(s)}(M^\text{int})\setminus\{0\}$ such that 
$$
1
=
\left(
\sum_{k=1}^r
\alpha_k
b(t_k;s)
\eta_k(s)
\right)(\zeta_1)
=
DF\vert^T_{\dot{\gamma}_w(s)}\left(\sum_{k=1}^r\alpha_k\xi_k\right)(0,\zeta_1),
$$
which proves the second half of Lemma~\ref{theorem:propagation2}. 
\end{proof}
Next we compute the singularities of higher order terms. 
\begin{lemma}
\label{theorem:higherorderterms} 
Let $l\geqq2$ and $\nu:=\operatorname{codim}(\Sigma_{12\dotsc{l}})$. 
Note that $\nu\geqq2$. Suppose that 
$h \in I^{-(n+1)/2}\bigl(N^\ast(\Sigma_{l+1})\setminus0;\Omega_{\partial_-S(M)}^{1/2}\bigr)$, 
and that 
$H \in I^{-(n+1)/2-(\nu-1)}\bigl(N^\ast(\Sigma_{12\dotsc{l}})\setminus0;\Omega_{\partial_-S(M)}^{1/2}\bigr)$ 
except for $\cup_{k=1}^l\Sigma_k\setminus\Sigma_{12\dotsc{l}}$. 
\begin{itemize}
\item
If  $\Sigma_{12\dotsc(l+1)}:=\Sigma_{12\dotsc{l}}\cap\Sigma_{l+1}$ is a transversal intersection, 
then $\operatorname{codim}(\Sigma_{12\dotsc(l+1)})=\nu+1$, and 
$h{\cdot}H \in  I^{-(n+1)/2-\nu}\bigl(N^\ast(\Sigma_{12\dotsc{(l+1)}})\setminus0;\Omega_{\partial_-S(M)}^{1/2}\bigr)$ 
except for $\cup_{k=1}^l\Sigma_k\setminus\Sigma_{12\dotsc{(l+1)}}$. 
\item
If  $\Sigma_{12\dotsc(l+1)}$ is not a transversal intersection, that is, 
$\Sigma_{12\dotsc(l+1)}=\Sigma_{12\dotsc{l}}$. 
Then there exists $\tilde{\Sigma}$ such that 
$\operatorname{codim}(\tilde{\Sigma})=\nu-1$, 
$\Sigma_{12\dotsc{l}}=\tilde{\Sigma}\cap\Sigma_{l+1}$, and 
$h{\cdot}H \in  I^{-(n+1)/2-(\nu-1)}\bigl(N^\ast(\Sigma_{12\dotsc{l}})\setminus0;\Omega_{\partial_-S(M)}^{1/2}\bigr)$ 
except for $\tilde{\Sigma}$. 
\end{itemize}
\end{lemma}
\begin{proof}
We can see the regularities of $h$ and $H$ as follows:
\begin{itemize}
\item 
$h \in I^{-3/2+1/2-(2n-2)/4}\bigl(N^\ast(\Sigma_{l+1})\setminus0;\Omega_{\partial_-S(M)}^{1/2}\bigr)$.
\item
$H \in I^{-3\nu/2+\nu/2-(2n-2)/4}\bigl(N^\ast(\Sigma_{12\dotsc{l}})\setminus0;\Omega_{\partial_-S(M)}^{1/2}\bigr)$ 
except for $\cup_{k=1}^l\Sigma_k\setminus\Sigma_{12\dotsc{l}}$. 
\end{itemize}
\par
Suppose that $\Sigma_{12\dotsc(l+1)}:=\Sigma_{12\dotsc{l}}\cap\Sigma_{l+1}$ is a transversal intersection. 
Then $\operatorname{codim}(\Sigma_{12\dotsc(l+1)})=\nu+1$ holds. 
Lemma~\ref{theorem:6-4} implies that  
$h{\cdot}H \in  I^{-(n+1)/2-\nu}\bigl(N^\ast(\Sigma_{12\dotsc{(l+1)}})\setminus0;\Omega_{\partial_-S(M)}^{1/2}\bigr)$ 
except for  $\cup_{k=1}^l\Sigma_k\setminus\Sigma_{12\dotsc{(l+1)}}$. 
\par
Suppose that $\Sigma_{12\dotsc(l+1)}:=\Sigma_{12\dotsc{l}}\cap\Sigma_{l+1}$ is not a transversal intersection. 
Then there exists $\tilde{\Sigma}$ such that 
$\operatorname{codim}(\tilde{\Sigma})=\nu-1$, 
$\Sigma_{12\dotsc{l}}=\tilde{\Sigma}\cap\Sigma_{l+1}$ locally. 
In what follows we discuss locally.  
Fix arbitrary $p \in \Sigma_{12\dotsc{l}}$, and consider $h{\cdot}H$ near $p$. 
We choose local coordinates 
$(x,y,z) \in \mathbb{R}\times\mathbb{R}^{\nu-1}\times\mathbb{R}^{2n-2-\nu}$ 
near $p$ so that 
$\bigl(x(p),y(p),z(p)\bigr)=(0,0,0)$, 
$\Sigma_{l+1}=\{(0,y,z)\}$, $\tilde{\Sigma}=\{(x,0,z)\}$ and $\Sigma_{12\dotsc{l}}=\{(0,0,z)\}$. 
Then we have 
\begin{align*}
   N^\ast(\Sigma_{l+1})
& =
  \{(0,y,z;\xi,0,0) \in T^\ast(\mathbb{R}\times\mathbb{R}^{\nu-1}\times\mathbb{R}^{2n-2-\nu})\},
\\
   N^\ast(\Sigma_{12\dotsc{l}})
& =
  \{(0,0,z;\xi,\eta,0) \in T^\ast(\mathbb{R}\times\mathbb{R}^{\nu-1}\times\mathbb{R}^{2n-2-\nu})\}
\end{align*}
locally. Hence there exist symbols 
\begin{align*}
  a(y,z,\xi)
& \in 
  S^{-3/2}\bigl((\mathbb{R}^{\nu-1}\times\mathbb{R}^{2n-2-\nu})\times\mathbb{R}\bigr),
\\
  b(z,\xi,\eta)
& \in 
  S^{-3\nu/2}\bigl(\mathbb{R}^{2n-2-\nu}\times(\mathbb{R}\times\mathbb{R}^{\nu-1})\bigr),
\end{align*}
such that $h(x,y,z)$ and $H(x,y,z)$ are given by 
\begin{align*}
   h(x,y,z)
& = 
  \int_{\mathbb{R}}
  e^{ix\xi}
  a(y,z,\xi)
  d\xi,
\\
  H(x,y,z)
& =
  \iint_{\mathbb{R}\times\mathbb{R}^{\nu-1}}
  e^{i(x\xi+y\cdot\eta)}
  b(z,\xi,\eta)
  d\xi
  d\eta
\end{align*}
near $(x,y,z)=(0,0,0)$. 
See \cite[Section~18.2]{Hoermander3} for the characterization of conormal distributions. 
Then we have 
$$
h{\cdot}H(x,y,z)
=
\iint_{\mathbb{R}\times\mathbb{R}^{\nu-1}}
e^{i(x\xi+y\cdot\eta)}
c(y,z,\xi,\eta)
d\xi
d\eta, 
$$
$$
c(y,z,\xi,\eta)
=
\int_{\mathbb{R}}
a(y,z,\xi-\zeta)b(z,\zeta,\eta)
d\zeta. 
$$
\par
Let $\alpha=(\alpha_1,\alpha_2) \in \mathbb{N}_0\times\mathbb{N}_0^{\nu-1}$ and 
$\beta=(\beta_2,\beta_3) \in \mathbb{N}_0^{\nu-1}\times\mathbb{N}_0^{2n-2-\nu}$, 
where $\mathbb{N}_0=\{0,1,2,\dotsc\}$. 
By the direct computation and the integration by parts, we deduce that 
\begin{align}
&  \partial_y^{\beta_2}\partial_z^{\beta_3}\partial_\xi^{\alpha_1}\partial_\eta^{\alpha_2}
  c(y,z,\xi,\eta)
\nonumber
\\
& =
  \sum_{\gamma_3\leqq\beta_3}
  \frac{\beta_3!}{\gamma_3!(\beta_3-\gamma_3)!}
  \int_\mathbb{R}
  \partial_y^{\beta_2}\partial_z^{\gamma_3}\partial_\xi^{\alpha_1}a(y,z,\xi-\zeta)
  \partial_z^{\beta_3-\gamma_3}\partial_\eta^{\alpha_2}b(z,\zeta,\eta)
  d\zeta
\label{equation:wanton1}
\\
& =
  \sum_{\gamma_3\leqq\beta_3}
  \frac{\beta_3!}{\gamma_3!(\beta_3-\gamma_3)!}
  \int_\mathbb{R}
  \partial_y^{\beta_2}\partial_z^{\gamma_3}a(y,z,\xi-\zeta)
  \partial_z^{\beta_3-\gamma_3}\partial_\zeta^{\alpha_1}\partial_\eta^{\alpha_2}b(z,\zeta,\eta)
  d\zeta.
\label{equation:wanton2}
\end{align}
We split our argument into three parts: 
1. $\lvert\xi\rvert\leqq2\lvert\eta\rvert$, 
2. $\lvert\xi\rvert>2\lvert\eta\rvert$ and $\lvert\xi\rvert>2\lvert\zeta\rvert$, 
and 3. $\lvert\xi\rvert>2\lvert\eta\rvert$ and $\lvert\xi\rvert\leqq2\lvert\zeta\rvert$.  
We should introduce a smooth cut-off function for this purpose, 
but we omit the cut-off to simplify the notation.    
\par
Suppose that $\lvert\xi\rvert\leqq2\lvert\eta\rvert$. 
Using \eqref{equation:wanton2}, we deduce that 
\begin{align*}
  \partial_y^{\beta_2}\partial_z^{\beta_3}\partial_\xi^{\alpha_1}\partial_\eta^{\alpha_2}
  c(y,z,\xi,\eta)
& =
  \int_{\mathbb{R}}
  \mathcal{O}
  \bigl(
  (1+\lvert\xi-\zeta\rvert)^{-3/2}
  (1+\lvert\zeta\rvert+\lvert\eta\rvert)^{-3\nu/2-\lvert\alpha\rvert}
  \bigr)
  d\zeta
\\
& =
  \int_{\mathbb{R}}
  \mathcal{O}
  \bigl(
  (1+\lvert\xi-\zeta\rvert)^{-3/2}
  (1+\lvert\eta\rvert)^{-3\nu/2-\lvert\alpha\rvert}
  \bigr)
  d\zeta
\\
& =
  \mathcal{O}
  \bigl(
  (1+\lvert\eta\rvert)^{-3\nu/2-\lvert\alpha\rvert}
  \bigr). 
\end{align*}
Note that $(1+\lvert\eta\rvert)\geqq(1+\lvert\xi\rvert+\lvert\eta\rvert)/3$ 
since $\lvert\xi\rvert\leqq2\lvert\eta\rvert$. Then we obtain 
\begin{equation}
c(y,z,\xi,\eta)
\in 
S^{-3\nu/2}
\bigl(
(\mathbb{R}^{\nu-1}\times\mathbb{R}^{2n-2-\nu})
\times
(\mathbb{R}\times\mathbb{R}^{\nu-1})
\bigr) 
\quad
\text{for}
\quad
\lvert\xi\rvert\leqq2\lvert\eta\rvert
\label{equation:wanton3}
\end{equation}
\par
Suppose that $\lvert\xi\rvert>2\lvert\eta\rvert$ 
and $\lvert\xi\rvert>2\lvert\zeta\rvert$. 
Using \eqref{equation:wanton1}, we deduce that 
\begin{align}
&  \partial_y^{\beta_2}\partial_z^{\beta_3}\partial_\xi^{\alpha_1}\partial_\eta^{\alpha_2}
  c(y,z,\xi,\eta)
\nonumber
\\
& =
  \int_{\mathbb{R}}
  \mathcal{O}
  \bigl(
  (1+\lvert\xi-\zeta\rvert)^{-3/2-\alpha_1}
  (1+\lvert\zeta\rvert+\lvert\eta\rvert)^{-3\nu/2-\lvert\alpha_2\rvert}
  \bigr)
  d\zeta
\nonumber
\\
& =
  (1+\lvert\eta\rvert)^{-3(\nu-1)/2-\lvert\alpha_2\rvert}
  \int_{\mathbb{R}}
  \mathcal{O}
  \bigl(
  (1+\lvert\xi-\zeta\rvert)^{-3/2-\alpha_1}
  (1+\lvert\zeta\rvert)^{-3/2}
  \bigr)
  d\zeta
\nonumber
\\
& \leqq
  C
  (1+\lvert\eta\rvert)^{-3(\nu-1)/2-\lvert\alpha_2\rvert}
  \int_{\mathbb{R}}
  (1+\lvert\xi\rvert)^{-3/2-\alpha_1}
  (1+\lvert\zeta\rvert)^{-3/2}
  d\zeta
\nonumber
\\
& =
  \mathcal{O}
  \bigl(
  (1+\lvert\xi\rvert)^{-3/2-\alpha_1}  
  (1+\lvert\eta\rvert)^{-3(\nu-1)/2-\lvert\alpha_2\rvert}
  \bigr)
\nonumber
\\
& =
  \mathcal{O}
  \bigl(
  (1+\lvert\xi\rvert+\lvert\eta\rvert)^{-3/2-\alpha_1}  
  (1+\lvert\eta\rvert)^{-3(\nu-1)/2-\lvert\alpha_2\rvert}
  \bigr)
  \quad\text{for}\quad
  \lvert\xi\rvert>2\lvert\eta\rvert, 
  \lvert\xi\rvert>2\lvert\zeta\rvert. 
\label{equation:wanton4}  
\end{align}
Suppose that $\lvert\xi\rvert>2\lvert\eta\rvert$ 
and $\lvert\xi\rvert\leqq2\lvert\zeta\rvert$. 
Using \eqref{equation:wanton2}, we deduce that 
\begin{align}
&  \partial_y^{\beta_2}\partial_z^{\beta_3}\partial_\xi^{\alpha_1}\partial_\eta^{\alpha_2}
  c(y,z,\xi,\eta)
\nonumber
\\
& =
  \int_{\mathbb{R}}
  \mathcal{O}
  \bigl(
  (1+\lvert\xi-\zeta\rvert)^{-3/2}
  (1+\lvert\zeta\rvert+\lvert\eta\rvert)^{-3\nu/2-\lvert\alpha\rvert}
  \bigr)
  d\zeta
\nonumber
\\
& \leqq
  C
  (1+\lvert\eta\rvert)^{-3(\nu-1)/2-\lvert\alpha_2\rvert}
  \int_{\mathbb{R}}
  (1+\lvert\xi-\zeta\rvert)^{-3/2}
  (1+\lvert\zeta\rvert)^{-3/2-\alpha_1}
  d\zeta
\nonumber
\\
& \leqq
  C
  (1+\lvert\eta\rvert)^{-3(\nu-1)/2-\lvert\alpha_2\rvert}
  \int_{\mathbb{R}}
  \mathcal{O}
  \bigl(
  (1+\lvert\xi-\zeta\rvert)^{-3/2}
  (1+\lvert\xi\rvert)^{-3/2-\alpha_1}
  \bigr)
  d\zeta
\nonumber
\\
& =
  \mathcal{O}
  \bigl(
  (1+\lvert\xi\rvert)^{-3/2-\alpha_1}  
  (1+\lvert\eta\rvert)^{-3(\nu-1)/2-\lvert\alpha_2\rvert}
  \bigr)
\nonumber
\\
& =
  \mathcal{O}
  \bigl(
  (1+\lvert\xi\rvert+\lvert\eta\rvert)^{-3/2-\alpha_1}  
  (1+\lvert\eta\rvert)^{-3(\nu-1)/2-\lvert\alpha_2\rvert}
  \bigr)
  \quad\text{for}\quad
  \lvert\xi\rvert>2\lvert\eta\rvert, 
  \lvert\xi\rvert\leqq2\lvert\zeta\rvert. 
\label{equation:wanton5}  
\end{align}
Combining \eqref{equation:wanton4} and \eqref{equation:wanton5}, we obtain 
\begin{equation}
c(y,z,\xi,\eta)
\in 
S^{-3/2,-3(\nu-1)/2}
\bigl(
(\mathbb{R}^{\nu-1}\times\mathbb{R}^{2n-2-\nu})
\times
(\mathbb{R}\times\mathbb{R}^{\nu-1})
\bigr) 
\quad
\text{for}
\quad
\lvert\xi\rvert>2\lvert\eta\rvert, 
\label{equation:wanton6}
\end{equation}
that is, for $\lvert\xi\rvert>2\lvert\eta\rvert$
$$
\partial_y^{\beta_2}\partial_z^{\beta_3}\partial_\xi^{\alpha_1}\partial_\eta^{\alpha_2}
c(y,z,\xi,\eta)
=
\mathcal{O}
\bigl(
(1+\lvert\xi\rvert+\lvert\eta\rvert)^{-3/2-\alpha_1}  
(1+\lvert\eta\rvert)^{-3(\nu-1)/2-\lvert\alpha_2\rvert}
\bigr). 
$$
This shows that $c(y,z,\xi,\eta)$ is an amplitude of a paired Lagrangian distribution. 
See \cite{MelroseUhlmann,GuilleminUhlmann,GreenleafUhlmann}. 
\par
Combining \eqref{equation:wanton3} and \eqref{equation:wanton6}, 
we deduce that except for  $\cup_{k=1}^l\Sigma_k\setminus\Sigma_{12\dotsc{l}}$ 
\begin{align*}
   h{\cdot}H 
& \in 
    I^{3\nu/2+\nu/2-(2n-2)/4}\bigl(N^\ast(\Sigma_{12\dotsc{l}})\setminus0\bigr)
\\
& +
   I^{-3/2+1/2-(2n-2)/2,-3(\nu-1)/2+(\nu-1)/2}\bigl(N^\ast(\Sigma_{12\dotsc{l}})\setminus0,N^\ast(\tilde{\Sigma})\setminus0\bigr)
\\
& =
    I^{-(n+1)/2-(\nu-1)}\bigl(N^\ast(\Sigma_{12\dotsc{l}})\setminus0\bigr)
\\
& +
   I^{-(n+1)/2-2,-\nu+1}\bigl(N^\ast(\Sigma_{12\dotsc{l}})\setminus0,N^\ast(\tilde{\Sigma})\setminus0\bigr)
\\
& \subset 
   I^{-(n+1)/2-(\nu-1)}\bigl(N^\ast(\Sigma_{12\dotsc{l}})\setminus0\bigr)
\\
& +
   I^{-(n+1)/2-(\nu-1)}\Bigl(\bigl(N^\ast(\Sigma_{12\dotsc{l}})\setminus0\bigr) \setminus \bigl(N^\ast(\tilde{\Sigma})\setminus0\bigr)\Bigr) 
\\
& +
   I^{-(n+1)/2-2}\bigl(N^\ast(\tilde{\Sigma})\setminus0\bigr)
\\
& \subset
    I^{-(n+1)/2-(\nu-1)}\bigl(N^\ast(\Sigma_{12\dotsc{l}})\setminus0\bigr).
\end{align*}
This completes the proof.  
\end{proof}
In view of Lemmas~\ref{theorem:compositionT} and \ref{theorem:propagation2}, 
we notice that the singularities of the higher order interactions which are composable with $\mathcal{C}_{\chi^T}$, 
are limited to the elements of $N^\ast(\Sigma_{jk}^{(1)})$. 
More precisely, Lemma~\ref{theorem:propagation2} shows that no linear combination of two or more linearly independent covectors in the space of normal geodesics cannot come back to the real world $(M,g)$ under the canonical relation $\mathcal{C}_{\mathcal{X}^T}$. 
Then, in the same way as Lemmas~\ref{theorem:compositionT}, we can deduce that only the parallel transport of common singularities in the space of normal geodesics can be composable with $\mathcal{C}_{\chi^T}$. 
We need to know the order of singularities arising in the higher order interactions, 
and prepare the following.  
\begin{lemma}
\label{theorem:conormalx}
Let $X$ be a smooth manifold, 
let $Y$ be a smooth submanifold of $X$ with 
$\operatorname{codim}(Y)=\rho\geqq1$, 
and $Z$ be a smooth submanifold of $Y$ with 
$\operatorname{codim}(Z)=\rho+\mu>\rho$ in $X$. 
If $u \in I^m(X,N^\ast(Z)\setminus0;\Omega^{1/2}_X)$ 
and $\operatorname{WF}(u)\subset N^\ast(Y)\setminus0$, then 
$u \in I^{m-\mu/2}(X,N^\ast(Y)\setminus0;\Omega^{1/2}_X)$.  
\end{lemma}
\begin{proof}
We prove Lemma~\ref{theorem:conormalx} using the symbolic calculus of conormal distributions. 
Set $N:=\dim(X)$ for short. 
Pick up an arbitrary point $p \in Z$, and choose local coordinates 
$(x,y,z) \in \mathbb{R}^\rho\times\mathbb{R}^\mu\times\mathbb{R}^{N-\rho-\mu}$ near $p$ so that 
$\bigl(x(p),y(p),z(p)\bigr)=(0,0,0)$, and $Y$ and $Z$ are locally expressed as 
$Y=\{(0,y,z)\}$ and $Z=\{(0,0,z)\}$ respectively. 
Since $u \in I^m(X,N^\ast(Z)\setminus0;\Omega^{1/2}_X)$, there exists a symbol 
$a(z,\xi,\eta) \in S^{m_0}\bigl(\mathbb{R}^{N-\rho-\mu}\times(\mathbb{R}^\rho\times\mathbb{R}^\mu)\bigr)$ 
such that $m_0$ satisfies $m=m_0+(\rho+\mu)/2-N/4$ and 
$$
u(x,y,z)
=
\iint_{\mathbb{R}^\rho\times\mathbb{R}^\mu}
e^{ix\cdot\xi+iy\cdot\eta}
a(z,\xi,\eta)
d\xi
d\eta
=
\int_{\mathbb{R}^\rho}
e^{ix\cdot\xi}
b(y,z,\xi)
d\xi,
$$
$$
b(y,z,\xi)
=
\int_{\mathbb{R}^\mu}
e^{iy\cdot\eta}
a(z,\xi,\eta)
d\eta. 
$$
It suffices to show that 
$b(y,z,\xi) \in S^{m_0}\bigl((\mathbb{R}^\mu\times\mathbb{R}^{N-\rho-\mu})\times\mathbb{R}^\rho\bigr)$. 
Since $u(x,y,z)$ is smooth in $(y,z)$, 
we may assume that for any $L>0$ and for any 
$(\alpha,\gamma)\in\mathbb{N}_0^\rho\times\mathbb{N}_0^{N-\rho-\mu}$, 
\begin{align*}
  \partial_\xi^\alpha\partial_z^\gamma a(z,\xi,\eta)
& =
  \mathcal{O}
  \bigl(
  (1+\lvert\xi\rvert+\lvert\eta\rvert)^{m_0-\lvert\alpha\rvert}
  (1+\lvert\eta\rvert)^{-L}
  \bigr)
\\
& =
  \mathcal{O}
  \bigl(
  (1+\lvert\xi\rvert)^{m_0-\lvert\alpha\rvert}
  (1+\lvert\eta\rvert)^{-L}
  \bigr). 
\end{align*}
Then for any $L>0$ and for any 
$(\alpha,\beta,\gamma)\in\mathbb{N}_0^\rho\times\mathbb{N}_0^\mu\times\mathbb{N}_0^{N-\rho-\mu}$, 
we deduce that 
\begin{align*}
  \partial_\xi^\alpha\partial_y^\beta\partial_z^\gamma b(y,z,\xi)
& =
  \int_{\mathbb{R}^\mu}
  e^{iy\cdot\eta}
  (i\eta)^\beta
  \partial_\xi^\alpha \partial_z^\gamma a(z,\xi,\eta)
  d\eta
\\
& =
  \int_{\mathbb{R}^\mu}
  \mathcal{O}
  \bigl(
  (1+\lvert\xi\rvert)^{m_0-\lvert\alpha\rvert}
  (1+\lvert\eta\rvert)^{-L+\lvert\beta\rvert}
  \bigr)
  d\eta
\\
& =
  \mathcal{O}
  \bigl(
  (1+\lvert\xi\rvert)^{m_0-\lvert\alpha\rvert}
  \bigr)
\end{align*}
provided that $L$ satisfies $L-\lvert\beta\rvert>\mu$. 
This shows that 
$b(y,z,\xi) \in S^{m_0}\bigl((\mathbb{R}^\mu\times\mathbb{R}^{N-\rho-\mu})\times\mathbb{R}^\rho\bigr)$, 
and this completes the proof.    
\end{proof}
\begin{proof}[{\bf Proof of Theorem~\ref{theorem:51}}]
Finally we complete the proof of Theorem~\ref{theorem:51}. 
In the same way as the proof of the latter half of Lemma~\ref{theorem:6-6}, 
we obtain 
\begin{align*}
  f_\text{MA}
& :=
  Q\circ\mathcal{X}^T[P_\text{MA}]
  =
  \sum_{l=1}^\infty
  B_l(\alpha\varepsilon)^{2l}
  Q\circ\mathcal{X}^T
  \left[
  \left(\frac{\mathcal{X}[1_D]}{\lvert{d\mu}\rvert^{1/2}}\right)^{2l}
  \lvert{d\mu}\rvert^{1/2}
  \right]
\\
& =
  (\alpha\varepsilon)^2f_\text{MA,1}
  +
  (\alpha\varepsilon)^4f_\text{MA,2},
\\
  f_\text{MA,1}
& =
  -
  \frac{1}{6}
  Q\circ\mathcal{X}^T
  \left[
  \left(\frac{\mathcal{X}[1_D]}{\lvert{d\mu}\rvert^{1/2}}\right)^2
  \lvert{d\mu}\rvert^{1/2}
  \right],
\\
  f_\text{MA,2}
& =
  \sum_{l=2}^\infty
  B_l(\alpha\varepsilon)^{2(l-2)}
  Q\circ\mathcal{X}^T
  \left[
  \left(\frac{\mathcal{X}[1_D]}{\lvert{d\mu}\rvert^{1/2}}\right)^{2l}
  \lvert{d\mu}\rvert^{1/2}
  \right].
\end{align*}
Lemmas~\ref{theorem:6-2} and \ref{theorem:6-6} show that $f_\text{MA,1} \in \mathcal{W}$, 
and the principal symbol of $f_\text{MA,1}$ is elliptic. 
\par
Next we consider the higher order terms in $f_\text{MA,2}$. 
Note that for $l\geqq2$ we have 
\begin{align*}
  \left(
  \frac{\chi[1_D]}{\lvert{d\mu}\rvert^{1/2}}
  \right)^{2l}
  \lvert{d\mu}\rvert^{1/2}
& =
  \sum_{l_1+\dotsc+l_J=2l}
  \frac{(2l)!}{l_1!\dotsb{l_J!}}
  \prod_{j=1}^J
  \left(
  \frac{\chi[1_{D_j}]}{\lvert{d\mu}\rvert^{1/2}}
  \right)^{l_j}
  \cdot
  \lvert{d\mu}\rvert^{1/2}
\end{align*}
Then we have only to consider terms of the form 
$$
\left(
\prod_{k=1}^r
\frac{h_k}{\lvert{d\mu}\rvert^{1/2}}
\right)
\lvert{d\mu}\rvert^{1/2},
\quad
h_k \in I^{-(n+1)/2}\bigl(N^\ast(\Sigma_k)\setminus0\bigr),
\quad
r=1,\dotsc,J.
$$
For this purpose we now introduce notation. 
For $l=1,\dotsc,r-1$ and $1 \leqq j_1 <\dotsb<j_l \leqq r-1$, set
$$
\tilde{\Sigma}_{j_1\dotsc{j_l}}(r)
:=
\Sigma_{j_1\dotsc{j_l}}
\setminus
\bigcup_{\substack{k=1,\dotsc,r \\ k{\ne}j_1,\dotsc,j_l}}
\Sigma_{j_1{\dotsc}j_lk}, 
\quad
\tilde{\Sigma}_{1\dotsc{r}}(r):=\Sigma_{1\dotsc{r}}. 
$$
Using this notation, we have 
$$
\left(
\prod_{k=1}^2
\frac{h_k}{\lvert{d\mu}\rvert^{1/2}}
\right)
\lvert{d\mu}\rvert^{1/2}
\in 
\sum_{k=1,2}
I^{-(n+1)/2}\bigl(N^\ast(\tilde{\Sigma}_k(2))\setminus0\bigr)
+
I^{(n+1)/2-1}\bigl(N^\ast(\tilde{\Sigma}_{12}(2))\setminus0\bigr). 
$$
Multiply the above by $h_3$ and apply 
Lemma~\ref{theorem:higherorderterms} to this. 
If $\Sigma_{123}:=\Sigma_{12}\cap\Sigma_3$ is a transversal intersection, we deduce that 
$$
\left(
\prod_{k=1}^3
\frac{h_k}{\lvert{d\mu}\rvert^{1/2}}
\right)
\lvert{d\mu}\rvert^{1/2}
\in 
\sum_{l=1}^3
\sum_{1 \leqq j_1 < \dotsb <j_l \leqq 3}
I^{-(n+1)/2-(l-1)}
\bigl(N^\ast(\tilde{\Sigma}_{j_1{\dotsc}j_l}(3))\setminus0\bigr). 
$$
If $\Sigma_{123}:=\Sigma_{12}\cap\Sigma_3$ is not a transversal intersection, 
we deduce that $\Sigma_{123}=\Sigma_{12}$ and 
\begin{align*}
  \left(
  \prod_{k=1}^3
  \frac{h_k}{\lvert{d\mu}\rvert^{1/2}}
  \right)
  \lvert{d\mu}\rvert^{1/2}
& \in 
  \sum_{l=1}^2
  \sum_{1 \leqq j_1 < \dotsb <j_l \leqq 3}
  I^{-(n+1)/2-(l-1)}\bigl(N^\ast(\tilde{\Sigma}_{j_1{\dotsc}j_l}(3))\setminus0\bigr)
\\
& \subset
  \sum_{l=1}^3
  \sum_{1 \leqq j_1 < \dotsb <j_l \leqq 3}
  I^{-(n+1)/2-(l-1)}\bigl(N^\ast(\tilde{\Sigma}_{j_1{\dotsc}j_l}(3))\setminus0\bigr). 
\end{align*}
In both cases, the same regularity estimates hold. 
Repeating this argument up to $r$, we deduce that 
$$
\left(
\prod_{k=1}^r
\frac{h_k}{\lvert{d\mu}\rvert^{1/2}}
\right)
\lvert{d\mu}\rvert^{1/2}
\in 
\sum_{l=1}^r
\sum_{1 \leqq j_1 < \dotsb <j_l \leqq 3}
I^{-(n+1)/2-(l-1)}\bigl(N^\ast(\tilde{\Sigma}_{j_1{\dotsc}j_l}(r))\setminus0\bigr). 
$$
In view of Lemma~\ref{theorem:propagation2}, 
we have only to consider the microlocal singularities 
contained in $N^\ast(\Sigma_{jk}^{(1)})$ for $j{\ne}k$. 
The other microlocal singularities cannot be composed with the canonical relation $\mathcal{C}_\chi^T$.
Hence using Lemma~\ref{theorem:conormalx}, we deduce that  
$$
\left(
\prod_{k=1}^r
\frac{h_k}{\lvert{d\mu}\rvert^{1/2}}
\right)
\lvert{d\mu}\rvert^{1/2}
\in 
\mathcal{A} 
\quad
+
\quad 
\text{terms with singularities rejected by $\mathcal{C}_\chi^T$}.
$$
Thus we obtain 
$$
Q\circ\chi^T
\left[
\left(
\prod_{k=1}^r
\frac{h_k}{\lvert{d\mu}\rvert^{1/2}}
\right)
\lvert{d\mu}\rvert^{1/2}
\right]
\in 
\mathcal{W}. 
$$ 
Hence 
$f_\text{MA}=(\alpha\varepsilon)^2\{f_\text{MA,1}+(\alpha\varepsilon)^2f_\text{MA,2}\}\in\mathcal{W}$, 
and in particular, it follows that away from $\partial{D}$,  
$f_\text{MA} \in I^{-1/2-3n/4}\bigl(N^\ast(\mathcal{L})\setminus0;\Omega_{M}^{1/2}\bigr)$. 
This completes the proof.  
\end{proof} 
%
%
%
%
%
%

\end{document}